\newtheorem{lemma}{Lemma}[section]
\newtheorem{corollary}[lemma]{Corollary}
\newtheorem{theorem}[lemma]{Theorem}
\newtheorem{setting}[lemma]{Setting}
\renewcommand{\O}{\mathcal O}
\renewcommand{\L}{\mathcal{L}}
\providecommand{\X}{{\ensuremath{\mathcal{X}}}}
\providecommand{\N}{{\ensuremath{\mathbbm{N}}}}
\providecommand{\R}{{\ensuremath{\mathbbm{R}}}}
\providecommand{\E}{{\ensuremath{\mathbb{E}}}}
\providecommand{\B}{{\ensuremath{\mathcal{B}}}}
\providecommand{\F}{{\ensuremath{\mathcal{F}}}}
\providecommand{\f}{{\ensuremath{\mathbb{F}}}}
\providecommand{\M}{{\ensuremath{\mathcal{M}}}}
\renewcommand{\H}{{\ensuremath{\mathbb{H}}}}
\renewcommand{\P}{{\ensuremath{\mathbb{P}}}}
\providecommand{\1}{{\ensuremath{\mathbbm{1}}}}
\providecommand{\values}{{\ensuremath{\mathfrak{v}}}}
\providecommand{\N}{{\ensuremath{\mathbbm{N}}}}
\providecommand{\R}{{\ensuremath{\mathbbm{R}}}}
\providecommand{\E}{{\ensuremath{\mathbb{E}}}}
\renewcommand{\P}{{\ensuremath{\mathbb{P}}}}
\providecommand{\1}{{\ensuremath{\mathbbm{1}}}}
\providecommand{\HS}{{\ensuremath{\textup{HS}}}}
\begin{document}

\title{Spatial Sobolev regularity for stochastic Burgers\\ equations with additive trace class noise}
\author{Arnulf Jentzen$^{1} $, Felix Lindner$^{2} $, and Primo\v{z} Pu\v{s}nik$^{3} $
	\bigskip 
	\\
	\small{$^1$ Seminar for Applied Mathematics, Department of Mathematics,}
	\\
	\small{ETH Zurich, Switzerland, 
		e-mail: arnulf.jentzen@sam.math.ethz.ch} 
	\smallskip
	\\
	\small{$^2$  Institute of Mathematics, Faculty of Mathematics and Natural Sciences,}
	\\
	\small{University of Kassel, Germany,
		e-mail: lindner@mathematik.uni-kassel.de}
	\smallskip
	\\
	\small{$^3$ Seminar for Applied Mathematics, Department of Mathematics,}
	\\
	\small{ETH Zurich, Switzerland,
		e-mail: primoz.pusnik@sam.math.ethz.ch}
}


\maketitle

\begin{abstract}
	In this article we investigate the spatial Sobolev regularity of mild solutions to stochastic Burgers equations with additive trace class noise. Our findings are based on a combination of suitable bootstrap-type arguments and a detailed analysis of the nonlinearity in the equation.
\end{abstract}

\tableofcontents
%
\newpage

\section{Introduction}
%
%
%
%
%

In the literature, there are nowadays various results on existence, uniqueness, and regularity of solutions
to 
stochastic Burgers equations. 
%
%
%
In particular, existence and uniqueness results for mild solutions to stochastic Burgers equations with additive space-time white noise and zero Dirichlet boundary conditions on the unit interval $ (0,1) $ 
taking values in the space 
$ L^p((0, 1), \R) $ for $ p \in [2,\infty) $, 
in the space
$ \mathcal{C}([0, 1], \R) $ of continuous functions, 
and in $ L^2((0, 1), \R) $-Sobolev-type spaces 
of order up to $ \nicefrac{1}{2} $
can be found, e.g., in
Da Prato et al.\ \cite{DaPratoDebusscheTemam1994}, 
Bl\"omker \& Jentzen~\cite{BloemkerJentzen2013}, 
Jentzen et al.\ \cite{JentzenSalimovaWelti2019},
and Mazzonetto \& Salimova~\cite{MazzonettoSalimova2019}.
Results on existence, uniqueness, and regularity of solutions to stochastic Burgers equations with multiplicative space-time white noise and zero Dirichlet boundary conditions  on the unit interval have been established, e.g., in 
Da Prato \& Gatarek~\cite{DaPratoGatarek1995} 
and Gy\"ongy~\cite{Gyongy1998}. 
Existence, uniqueness, and regularity results for solutions to stochastic Burgers equations on the whole real line  can be found, e.g., in Bertini et al.\ \cite{BertiniCancriniJonaLasinio1994},
Gy\"ongy \& Nualart~\cite{GyongyNualart1999},
Kim~\cite{Kim2006},
and
Lewis \& Nualart~\cite{LewisNualart2017}.
Results on  existence, uniqueness, and regularity of  mild solutions to stochastic Burgers equations driven by L\'evy noise are presented, e.g., in Dong \& Xu~\cite{DongXu2007} and Hausenblas \& Giri~\cite{HausenblasGiri2013}. 
We also refer to 
Brze{\'z}niak et al.\ 
\cite{brzezniak2011ergodic},
Da Prato \& Zabczyk~\cite[Section~14]{DaPratoZabczyk1996},  
Da Prato \& Zabczyk~\cite[Section~13.9]{DaPratoZabczyk2014}, 
R\"ockner et al.\ \cite{rrz2014},
and the references mentioned therein for further existence, uniqueness, 
and regularity results for stochastic Burgers-type
equations. 
%
%
%
%
In this paper, we 
present a higher order regularity result for stochastic Burgers equations with additive trace-class 
noise and zero Dirichlet boundary conditions on the unit interval $ (0,1) $. 
More specifically, 
in Theorem~\ref{theorem:existence_Burgers}, which is the main result of
this article, we establish the unique existence of mild solutions taking values in $ L^2((0, 1), \R) $-Sobolev-type
spaces of order up to $ 2 $. A slightly simplified version of our main result is given in the following
theorem.

\begin{theorem}
	\label{theorem:BurgersExistence}
	Let
	$ ( H, \langle \cdot, \cdot \rangle _H, \left \| \cdot \right\|_H ) $ 
	be the $ \R $-Hilbert space
	of equivalence classes
	of Lebesgue-Borel square-integrable functions
	from $ (0,1) $ to $ \R $,
	let
	$ A \colon D( A ) \subseteq H \to H $ 
	be the
	Laplacian with zero Dirichlet boundary conditions on $ H $,  
	let
	$ (H_r, \langle \cdot, \cdot \rangle_{H_r}, \left \| \cdot \right \|_{H_r} ) $, 
	$ r \in \R $, be a family of interpolation spaces associated to $ -A $,
	%
	%
	let   
	$ \beta \in ( - \nicefrac{1}{4}, \infty ) $,  
	$ \gamma \in ( \nicefrac{1}{4}, 
	\min \{ 1, \nicefrac{1}{2} + \beta \} ) $,
	$ T \in (0,\infty) $,
	$ \xi \in H_1 $, 
	$ B \in \HS(H, H_\beta) $,
	let
	$ ( \Omega, \F, \P ) $
	be a probability space, 
	and
	let
	$ (W_t)_{t\in [0,T]} $
	be an
	$ \operatorname{Id}_H $-cylindrical 
	Wiener process.
	Then 
	\begin{enumerate}[(i)] 
	\item \label{item:Nonlinearity F} there exists a unique
	continuous function 
	$ F \colon H_{ \nicefrac{1}{8} } \to H_{ - \nicefrac{1}{2} } $   
	which satisfies for every $ v \in H_{ \nicefrac{1}{2} } $
	that $ F(v) = - v'  v $
	and
	\item there exists an up to indistinguishability 
		unique stochastic process 
		$ X  \colon [0,T] \times \Omega \to H_\gamma $
		with continuous sample paths
		which satisfies that
		for every
		$ t \in [0,T] $ 
		it holds
		$ \P $-a.s.\ that
		\begin{equation} 
		\label{eq:model equation}
		X_t 
		= 
		e^{tA} \xi 
		+
		\int_0^t e^{(t-s)A} F(X_s) \, ds 
		+
		\int_0^t e^{(t-s)A} 
		B
		\, dW_s
		.
		\end{equation} 
\end{enumerate}
\end{theorem}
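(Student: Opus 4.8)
The plan is to treat \eqref{eq:model equation} as a fixed-point problem for the mild solution map on an appropriate space of $H_\gamma$-valued processes and to obtain the nonlinear term from a careful estimate on $F$. I would first establish part~\eqref{item:Nonlinearity F}: the map $v\mapsto -v'v$ is well defined and continuous from $H_{\nicefrac12}$ to $H_{-\nicefrac12}$ (using $H_{\nicefrac12}\hookrightarrow H^1_0((0,1),\R)$ and the Sobolev multiplication/duality $H^1\cdot L^2\hookrightarrow (H^1_0)^*$), and then I would show that $v\mapsto -v'v$ extends continuously to $F\colon H_{\nicefrac18}\to H_{-\nicefrac12}$. The key analytic ingredient here is a bound of the form $\|v'w\|_{H_{-\nicefrac12}}\lesssim \|v\|_{H_{\nicefrac58}}\|w\|_{H_{\nicefrac18}}$ (or a symmetrized version), which together with the algebra structure and density of $H_{\nicefrac12}$ in $H_{\nicefrac18}$ gives uniqueness of the continuous extension; I would expect this Sobolev interpolation estimate to be supplied by a lemma earlier in the paper.

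Next I would set up the stochastic convolution $O_t=\int_0^t e^{(t-s)A}B\,dW_s$ and record, via the factorization method and $B\in\HS(H,H_\beta)$ with $\gamma<\nicefrac12+\beta$, that $O$ has a modification with continuous sample paths in $H_\gamma$ (indeed in $H_\rho$ for any $\rho<\nicefrac12+\beta$). Writing $X=O+Y$, equation \eqref{eq:model equation} becomes
\begin{equation*}
Y_t=e^{tA}\xi+\int_0^t e^{(t-s)A}F(O_s+Y_s)\,ds,
\end{equation*}
a pathwise (random) integral equation. Since $\xi\in H_1$, the inhomogeneity $e^{tA}\xi$ lies in $C([0,T],H_\gamma)$, so the natural space is $Z=C([0,T],H_\gamma)$ (with $\gamma>\nicefrac14$ strictly, which is what allows $H_\gamma\hookrightarrow$ a space on which the quadratic nonlinearity is controlled); here the lower regularity $H_{\nicefrac18}$ in part~\eqref{item:Nonlinearity F} is convenient because $O_s$ need only be estimated in a weak norm.

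The main work is the fixed-point argument for $\Phi(Y)_t=e^{tA}\xi+\int_0^t e^{(t-s)A}F(O_s+Y_s)\,ds$ on $Z$. Using $F(v)\in H_{-\nicefrac12}$ and the smoothing bound $\|e^{tA}\|_{L(H_{-\nicefrac12},H_\gamma)}\lesssim t^{-(\gamma+\nicefrac12)/1}$ with $\gamma+\nicefrac12<\nicefrac32$ wait—more carefully, the exponent $(\gamma+\nicefrac12)\cdot\tfrac12<1$ must hold, which forces $\gamma<\nicefrac12$; since $\gamma<\min\{1,\nicefrac12+\beta\}$ and we also need $\gamma>\nicefrac14$, one works on the range $\gamma\in(\nicefrac14,\nicefrac12)$ first and then bootstraps. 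The integrability of $t^{-(\gamma+\nicefrac12)/2}$ gives local-in-time contraction after combining with the quadratic Lipschitz estimate $\|F(u)-F(v)\|_{H_{-\nicefrac12}}\lesssim(\|u\|_{H_\gamma}+\|v\|_{H_\gamma})\|u-v\|_{H_\gamma}$ (from part~\eqref{item:Nonlinearity F} restricted to $H_\gamma\hookrightarrow H_{\nicefrac18}$, sharpened for Lipschitz dependence). A standard a priori energy/testing estimate — testing the equation against $Y$ and using $\langle F(v),v\rangle\le 0$ type cancellation for the Burgers nonlinearity, or a Gronwall argument on $\|Y_t\|_{H_\gamma}$ — rules out blow-up and extends the local solution to all of $[0,T]$; pathwise uniqueness follows from the same contraction estimate. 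Finally, the bootstrap: once $X\in C([0,T],H_\gamma)$ is known, I would feed this regularity back into $\int_0^t e^{(t-s)A}F(X_s)\,ds$ and, using that $F$ now maps into better spaces and that $O$ and $e^{tA}\xi$ have regularity up to order $\min\{1,\nicefrac12+\beta\}$ (respectively $1$), iterate to upgrade $X$ to $H_\rho$ for every $\rho<\min\{1,\nicefrac12+\beta\}$, which in the full Theorem~\ref{theorem:existence_Burgers} reaches order up to $2$; for the simplified statement only $\gamma$ is claimed. Measurability and the "up to indistinguishability unique" clause follow from continuity of the fixed-point map in the data $O$ together with a standard argument that the pathwise solution is adapted.

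The step I expect to be the main obstacle is the sharp bilinear estimate for $F$ in negative-order Sobolev spaces — pinning down exactly which norms of $u$ and $v$ control $\|u'v\|_{H_{-\nicefrac12}}$ so that (a) the fixed-point exponent $t^{-(\gamma+\nicefrac12)/2}$ stays integrable and (b) the a priori bound closes without a loss of regularity — and then threading the bootstrap so that the gain at each stage is strictly positive and the procedure terminates at the claimed order. The cancellation property $\langle F(v),v\rangle_H\le 0$ (valid for smooth $v$ with zero boundary values, since $\int_0^1 v'v\,v\,dx=\tfrac13\int_0^1 (v^3)'\,dx=0$, and more relevantly $\int_0^1 v'v\,v'' $-type terms when testing in $H_1$) is what I would lean on to get global existence, and making that rigorous at the level of the mild formulation — rather than for classical solutions — is the delicate point.
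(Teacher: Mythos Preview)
Your outline is a viable strategy in principle, but it is \emph{not} the route the paper takes, and the difference is precisely at the point you yourself flag as delicate.

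The paper does \emph{not} run a direct fixed-point argument on the mild formulation. Instead it (i) builds finite-dimensional spectral Galerkin approximations $X^I$ via an ODE existence result (Corollary~\ref{corollary:Existence}/Lemma~\ref{Lemma:Existence of approximation processes}), (ii) proves uniform-in-$I$ pathwise a priori bounds in $H_\gamma$ by first using the exact cancellation $\langle x,F(x)\rangle_H=0$ to get an $H$-bound (Lemma~\ref{lemma:AprioriBound}) and then bootstrapping up to $H_\gamma$ via the lemmas of Section~\ref{section:AprioriBound} (Lemma~\ref{lemma:AprioriBound3}), (iii) proves pathwise convergence rates for the Galerkin-projected stochastic convolution (Lemma~\ref{lemma:PathwiseRates}), and (iv) feeds all of this into the abstract convergence/existence result of Bl\"omker \& Jentzen~\cite[Theorem~3.1]{BloemkerJentzen2013}. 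The local Lipschitz property of $F$ is used only to verify the hypotheses of that abstract theorem, not to set up a contraction map directly.

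Two concrete points where your plan diverges. First, you map $F$ into $H_{-1/2}$ and then need $\gamma+\tfrac12<1$ for integrability of the semigroup factor, forcing $\gamma<\tfrac12$ and a subsequent bootstrap. The paper instead uses the sharper target space $H_{-\nu}$ with $\nu=\tfrac{2-4\min\{\gamma,1/2\}}{3}$ (Lemma~\ref{lemma:Extension of Function F part1}), for which $\gamma+\nu<1$ holds for \emph{every} $\gamma\in(\tfrac14,1)$; this is what lets the argument cover the full range of $\gamma$ in one pass. Second, and more importantly, the ``delicate point'' you identify---justifying the energy cancellation at the level of mild solutions rather than classical ones---is exactly what the Galerkin route sidesteps: on each $P_I(H)$ the equation is a genuine ODE, the identity $\langle x,P_IF(x)\rangle_H=\langle x,F(x)\rangle_H=0$ holds for $x\in P_I(H)$, and Gronwall gives a clean $H$-bound; the bootstrap to $H_\gamma$ is then a purely deterministic estimate on these finite-dimensional processes. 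Your proposal would have to supply this justification directly for the mild solution (e.g.\ via a separate approximation argument), which is doable but is work the paper has organized differently.
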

Theorem~\ref{theorem:BurgersExistence}
is a direct consequence of Theorem~\ref{theorem:existence_Burgers} 
(with
	$ T = T $,
	$ \varepsilon = 1 - \gamma $,
	$ c_0 = 1 $,
	$ c_1 = - 1 $,
	$ \beta = \beta $,
	$ \gamma = \gamma $, 
	$ A = A $,
	$ H_r = H_r $,
	$ ( \Omega, \F, \P ) 
	=
	( \Omega, \F, \P ) $,
	$ ( W_t )_{ t \in [0,T] } = ( W_t )_{ t \in [0,T] } $,
	$ B = B $,
	$ \xi = ( \Omega \ni \omega \mapsto \xi \in H_1 ) $ 
	for 
	$ r \in \R $,
	$ \gamma \in ( \nicefrac{1}{4}, 
	\min \{ 1, \nicefrac{1}{2} + \beta \} ) $ 
	in the notation of
	Theorem~\ref{theorem:existence_Burgers})
%
in 
Section~\ref{section:Existence}
below.
Note that the assumption in Theorem~\ref{theorem:BurgersExistence} above that 
$ (H_r, \langle \cdot, \cdot \rangle_{H_r}, \left \| \cdot \right \|_{H_r} ) $, 
$ r \in \R $,  
  is a family of interpolation spaces associated to 
  $ - A $ ensures that for every
  $ r \in [0, \infty) $ 
  	it holds that 
  	$ (H_r, \langle\cdot,\cdot\rangle_{H_r}, \left \| \cdot \right \|_{H_r} )
  	=
  	( D((-A)^r), 
  	\langle(-A)^r (\cdot),(-A)^r (\cdot )\rangle_H,
  	\left \| (-A)^r (\cdot) \right\|_H) $.
The equation in~\eqref{eq:model equation}
above is referred to as stochastic evolution equation (SEE)
or stochastic partial differential equation (SPDE) in the scientific literature
and,
roughly speaking,
there are mainly three common approaches for 
%
describing and
analyzing solutions of
SPDEs:
$ (i) $ the martingale measure approach (cf., e.g., Walsh~\cite{Walsh1986}), 
$ (ii) $ the variational (weak solution) approach 
(cf., e.g., 
Grecksch \& Tudor~\cite{GreckschTudor1996},
Liu \& R\"ockner~\cite{LiuRoeckner2015Book},
Pr\'ev\^ot \& R\"ockner~\cite{PrevotRoeckner2007},
and
Rozovski\u{\i}~\cite{r90}),
and 
$ (iii) $ the semigroup (mild solution)
approach
(cf., e.g., 
Da Prato \& Zabczyk~\cite{DaPratoZabczyk1996, DaPratoZabczyk2014},
Grecksch \& Tudor~\cite{GreckschTudor1996},
and Liu \& R\"ockner~\cite{LiuRoeckner2015Book})
in the literature.
Theorem~\ref{theorem:BurgersExistence}
and most of the other results in this article
are formulated within the semigroup approach.
The proof of 
Theorem~\ref{theorem:BurgersExistence}
and
Theorem~\ref{theorem:existence_Burgers},
respectively,
is mainly based on  
combining 
Corollary~\ref{corollary:Existence},
Lemma~\ref{lemma:Extension of Function F part1},
Corollary~\ref{corollary:Extension of Function F part2},
Lemma~\ref{lemma:AprioriBound3},
and
Lemma~\ref{lemma:PathwiseRates}.
Corollary~\ref{corollary:Existence}
establishes the unique existence of suitable spatial spectral Galerkin approximations of  stochastic Burgers equations 
(see the proof of 
Lemma~\ref{Lemma:Existence of approximation processes}
and~\eqref{eq:processEx}
in the proof of Theorem~\ref{theorem:existence_Burgers} below).
An existence and uniqueness result for stochastic differential equations (SDEs) similar to 
Corollary~\ref{corollary:Existence} 
can be found, e.g., in  
%
%
%
Liu \& R\"ockner~\cite[Theorem~3.1.1]{LiuRoeckner2015Book}.
Lemma~\ref{lemma:Extension of Function F part1}
and
Corollary~\ref{corollary:Extension of Function F part2}
(cf., e.g., Bl\"omker \& Jentzen~\cite[Lemma~4.7]{BloemkerJentzen2013})
prove
that the involved nonlinearity $ F $
(see item~\eqref{item:Nonlinearity F}
in Theorem~\ref{theorem:BurgersExistence} above)
satisfies specific local Lipschitz conditions
(see~\eqref{eq:Local lip}
in the proof of Theorem~\ref{theorem:existence_Burgers} below).
Lemma~\ref{lemma:AprioriBound3}
establishes appropriate  
pathwise uniform a priori bounds
for the  
spatial spectral Galerkin approximations of the considered stochastic Burgers equation
(see~\eqref{eq:unifBound} in
the proof of Theorem~\ref{theorem:existence_Burgers} below).
%
Its proof
is based on consecutive applications of
suitable bootstrap-type arguments
in Section~\ref{section:AprioriBound}
to establish appropriate a priori  
bounds for the solution processes
of the considered SDEs
in higher order smoothness spaces.
Related bootstrap-type arguments 
can be found, e.g., 
in 
Jentzen \& Pu\v{s}nik~\cite[Section~3]{JentzenPusnik2019Published},
Jentzen \& R\"ockner~\cite[Theorem~1]{jr12},
and
Zhang~\cite[Section~3]{Zhang2007}.
Lemma~\ref{lemma:PathwiseRates}
(cf., e.g., Bl\"omker \& Jentzen~\cite[Lemma~4.3]{BloemkerJentzen2013})
demonstrates  
pathwise uniform
convergence rates of
spatial spectral Galerkin approximations 
of the considered stochastic integral
(see~\eqref{eq:Galerking_Speed} in
the proof of Theorem~\ref{theorem:existence_Burgers} below).
Its proof is essentially based on an application of the factorization method for stochastic convolutions 
in Lemma~\ref{lemma:GalerkinRegularity}.
Combining these mentioned results
with
the existence and uniqueness result in Bl\"omker \& Jentzen~\cite[Theorem~3.1]{BloemkerJentzen2013}
proves Theorem~\ref{theorem:existence_Burgers}.
%
%
%
%

%
The remainder of this article is structured as follows.
In Section~\ref{section:strongApriori}
we recall some elementary
existence and uniqueness results for random 
ordinary differential equations (ODEs).
In 
Section~\ref{section:AprioriBound}
we
employ bootstrap-type arguments to establish    
suitable a priori bounds for certain approximation processes. 
In Subsection~\ref{subsection:Sobolev} 
we recall some elementary properties of  
Sobolev-Slobodeckij  
and interpolation
spaces. 
In Subsection~\ref{subsection:Nonlinearity} we  
recall and derive several auxiliary results on the regularity properties  
of the nonlinearity appearing in the
stochastic Burgers equation.
In Section~\ref{section:Existence}
we combine the results
in
Sections~\ref{section:strongApriori}--\ref{section:Properties of the nonlinearity}
to establish the main result of this article
in Theorem~\ref{theorem:existence_Burgers}
below.
%
%
%
%
 %
%
 %
%
 %
%
%
%
 %
%
%
%
\subsection{General setting}
\label{subsec:Main setting}
Throughout this article the following
setting is frequently used.
\begin{setting}
\label{setting:main}
%
%
\sloppy 
For every measurable space 
$ (\Omega_1, \F_1) $ and every 
measurable space   
$ (\Omega_2, \F_2) $ 
let $ \M( \F_1 , \F_2 ) $ be  
the set of all $ \F_1/\F_2 $-measurable functions from $ \Omega_1 $ to $ \Omega_2 $,
let
$ ( H, \langle \cdot, \cdot \rangle _H, \left \| \cdot \right\|_H ) $  
be a separable $ \R $-Hilbert space, 
let 
$ \H \subseteq H $
be a non-empty orthonormal basis of $ H $,  
let
$ \values \colon \H \to \R $
be a function which
satisfies
$ \sup_{h\in \H} \values_h < 0 $,
let
$ A \colon D( A ) \subseteq H \to H $ 
be the linear operator which satisfies
$ D(A) = \{
v \in H \colon \sum_{ h \in \H}  | \values_h \langle h, v \rangle_H  |^2 <  \infty
 \} $ 
and  
$ \forall \, v \in D(A) \colon 
A v = \sum_{ h \in \H} \values_h \langle h, v \rangle _H h $,
and
let
$ (H_r, \langle \cdot, \cdot \rangle_{H_r}, \left \| \cdot \right \|_{H_r} ) $, 
$ r\in \R $, be a family of interpolation spaces associated to $ -A $
(cf., e.g., \cite[Section~3.7]{SellYou2002}).
\end{setting}
Note that the assumption in Setting~\ref{setting:main} above that 
$ (H_r, \langle \cdot, \cdot \rangle_{H_r}, \left \| \cdot \right \|_{H_r} ) $, 
$ r \in \R $,  
is a family of interpolation spaces associated to 
$ - A $ ensures that for every
$ r \in [0, \infty) $ 
it holds that 
$ (H_r, \langle\cdot,\cdot\rangle_{H_r}, \left \| \cdot \right \|_{H_r} )
=
( D((-A)^r), 
\langle(-A)^r (\cdot),(-A)^r (\cdot )\rangle_H,
\left \| (-A)^r (\cdot) \right\|_H) $.
\section{Pathwise solvability for a class of random ODEs}
\label{section:strongApriori}

 In this section we analyze in Corollary~\ref{corollary:Existence} the solvability of a specific class of abstract random ODEs. The considered equations can  be thought of as spectral Galerkin discretizations in space of an underlying stochastic Burgers equation. Corollary~\ref{corollary:Existence} is based on an elementary and essentially well-known   
  pathwise existence and uniqueness result for random ODEs with non-globally Lipschitz continuous coefficient functions presented in Lemma~\ref{lemma:ProveExistence} 
(cf., e.g., Liu \& R\"ockner~\cite[Theorem~3.1.1]{LiuRoeckner2015Book}).
In addition,
we also recall
elementary results on  
measurability
in
Lemma~\ref{lemma:JointMeasurability}
(see, e.g., 
Aliprantis \& Border~\cite[Lemma~4.51]{AliprantisBorder2006})
and Lemma~\ref{lemma:Tonelli}
(cf., e.g., 
in Klenke~\cite[Theorem~14.16]{Klenke2008}).
For the sake of completeness we include the proof of Lemma~\ref{lemma:Tonelli}.
\begin{lemma}
	\label{lemma:JointMeasurability}
	%
	Let $ ( \Omega, \F) $ be a measurable space,
	let $ (X, d_X) $ be a separable metric space,
	let $ (Y, d_Y) $ be a metric space, 
	let
	$ f \colon X \times \Omega \to Y $
	be a function,
	assume for every 
	$ x \in X $
	that  
	$ \Omega \ni \omega \mapsto f( x, \omega ) \in Y $
	is
    $ \F / \B(Y) $-measurable,
    and assume for every 
    $ \omega \in \Omega $ 
    that  
	$ ( X \ni x \mapsto f( x, \omega) \in Y )\in \mathcal{C}( X, Y ) $.
	Then it holds that   
	$ f \colon X \times \Omega \to Y $
	is 
	$ ( \mathcal{B}(X) \otimes \F ) / \B(Y) $-measurable.
\end{lemma}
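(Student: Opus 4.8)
The plan is to realize $f$ as a pointwise limit of $(\B(X)\otimes\F)/\B(Y)$-measurable functions and then to invoke the fact that such limits remain measurable when the target space carries a metric. So there are really two ingredients: a separability-based discretization in the $x$-variable, and stability of measurability under pointwise limits into $(Y,d_Y)$.

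First I would exploit the separability of $(X,d_X)$. Fix a countable dense subset $\{x_n : n \in \N\} \subseteq X$ and, for each $m \in \N$ and $n \in \N$, set $B_{m,n} := \{x \in X : d_X(x,x_n) < \nicefrac{1}{m}\} \setminus \bigcup_{k=1}^{n-1} \{x \in X : d_X(x,x_k) < \nicefrac{1}{m}\}$. Density guarantees $\bigcup_{n\in\N} B_{m,n} = X$; moreover the $B_{m,n}$, $n\in\N$, are pairwise disjoint Borel subsets of $X$, so the map $\phi_m \colon X \to X$ determined by $\phi_m(x) = x_n$ for $x \in B_{m,n}$ is well defined and satisfies $d_X(\phi_m(x),x) < \nicefrac{1}{m}$ for all $x \in X$. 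Define $f_m \colon X \times \Omega \to Y$ by $f_m(x,\omega) := f(\phi_m(x),\omega)$. For every open $U \subseteq Y$ one then has $f_m^{-1}(U) = \bigcup_{n\in\N} \big( B_{m,n} \times \{\omega \in \Omega : f(x_n,\omega) \in U\} \big)$, which — using the hypothesis that $\Omega \ni \omega \mapsto f(x_n,\omega) \in Y$ is $\F/\B(Y)$-measurable for each $n$ — is a countable union of sets in $\B(X)\otimes\F$. Since the open sets generate $\B(Y)$, each $f_m$ is $(\B(X)\otimes\F)/\B(Y)$-measurable.

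Next I would let $m \to \infty$. For fixed $\omega \in \Omega$ the map $X \ni x \mapsto f(x,\omega) \in Y$ is continuous by assumption, and $d_X(\phi_m(x),x) < \nicefrac{1}{m} \to 0$, so $f_m(x,\omega) \to f(x,\omega)$ in $(Y,d_Y)$ for every $(x,\omega) \in X\times\Omega$. It therefore remains to show that a pointwise limit of $(\B(X)\otimes\F)/\B(Y)$-measurable maps into the metric space $(Y,d_Y)$ is again $(\B(X)\otimes\F)/\B(Y)$-measurable. Given an open $U \subseteq Y$ with $U \ne Y$, put $U_j := \{y \in Y : d_Y(y, Y\setminus U) > \nicefrac{1}{j}\}$ for $j \in \N$; each $U_j$ is open (the function $y \mapsto d_Y(y, Y\setminus U)$ being $1$-Lipschitz), and an elementary $\varepsilon$-argument via the triangle inequality for $d_Y$ gives $f^{-1}(U) = \bigcup_{j\in\N} \bigcup_{N\in\N} \bigcap_{m \ge N} f_m^{-1}(U_j)$, which lies in $\B(X)\otimes\F$; the case $U = Y$ is trivial. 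Taking unions over a generating family of open subsets of $Y$ then shows that $f$ is $(\B(X)\otimes\F)/\B(Y)$-measurable, as claimed.

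The verifications that the $B_{m,n}$, $n \in \N$, form a Borel partition of $X$ and that $f_m^{-1}(U)$ and $f^{-1}(U)$ admit the asserted set-theoretic representations are routine. The only step demanding genuine care is the last one, the stability of measurability under pointwise limits into $Y$: this is precisely where the metric (rather than merely topological) structure of $Y$ is used, through the distance functions $y \mapsto d_Y(y, Y\setminus U)$, and notably no completeness or second-countability assumption on $Y$ is needed.
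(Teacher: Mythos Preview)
Your proof is correct. The paper does not supply its own proof of this lemma; it merely cites Aliprantis \& Border~[Lemma~4.51] as a reference. Your argument---discretize the $x$-variable via a countable dense subset, show the resulting simple-in-$x$ approximants $f_m$ are jointly measurable, and pass to the pointwise limit using the metric structure of $Y$---is precisely the standard proof one finds in that reference, carried out cleanly and with the measurability-under-limits step spelled out explicitly rather than quoted. All steps check out, including the set-theoretic identity $f^{-1}(U) = \bigcup_{j}\bigcup_{N}\bigcap_{m\ge N} f_m^{-1}(U_j)$ and your observation that only the metric (not completeness or separability) of $Y$ is needed there.
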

Note that for every topological space $ (X, \tau ) $ 
it holds that $ \mathcal B(X) $ is the smallest sigma-algebra 
on $ X $ which contains all elements of $ \tau $. 
\begin{lemma}
	\label{lemma:Tonelli}
	%
	Let
	$ (X, \left\| \cdot \right\|_X) $ 
	be an $ \R $-Banach space,
	let $ ( \Omega, \F ) $
	be a measurable space,
	let
	$ a \in \R $, $ b \in (a, \infty) $,
	let 
	$ f \colon [a,b] \times \Omega \to X $
	be a
	strongly 
	$ ( \B( [a,b] ) \otimes \F ) / (X, \left \| \cdot \right \|_X ) $-measurable function,
	assume for every  
	$ \omega \in \Omega $
	that
	$ \int_a^b \| f( s, \omega ) \|_X \, ds < \infty $,
	and let
	$ F \colon \Omega \to X $
	be the function which satisfies
	for every
	$ \omega \in \Omega $
	that
	$ F(\omega) = \int_a^b f(s, \omega) \, ds $.
	Then it holds that
	$ F $  is 
	strongly 
	$ \F / (X, \left \| \cdot \right \|_X ) $-measurable. 
\end{lemma}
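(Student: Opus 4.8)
\emph{Proof idea.} The plan is to approximate $f$ pointwise by product-measurable simple functions that are dominated by a constant multiple of $\|f(\cdot,\cdot)\|_X$, to evaluate the $ds$-integral of each such simple function by means of the classical scalar Tonelli theorem, and then to pass to the limit with Bochner's dominated convergence theorem. First I would record a few consequences of the hypotheses: the strong measurability of $f$ implies that $f$ is $(\B([a,b])\otimes\F)/\B(X)$-measurable and that the range of $f$ is contained in a separable closed linear subspace of $X$; in particular, for every $\omega\in\Omega$ the section $[a,b]\ni s\mapsto f(s,\omega)\in X$ is strongly measurable, which together with the assumption $\int_a^b\|f(s,\omega)\|_X\,ds<\infty$ shows that $f(\cdot,\omega)$ is Bochner integrable and hence that $F(\omega)$ is a well-defined element of the closed linear span of the range of $f$.

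Next I would fix a sequence $(x_k)_{k\in\N}$ in $X$ with $x_1=0$ whose closure contains the range of $f$, and for every $n\in\N$ define $f_n\colon[a,b]\times\Omega\to X$ by letting $f_n(s,\omega)$ be the element $x_j$, $j\in\{1,\dots,n\}$, for which $\|x_j-f(s,\omega)\|_X$ is minimal, choosing the smallest such index in case of ties. Each level set $f_n^{-1}(\{x_j\})$ is then a finite intersection of sublevel sets of the $(\B([a,b])\otimes\F)/\B(\R)$-measurable maps $(s,\omega)\mapsto\|x_i-f(s,\omega)\|_X$, so $f_n$ is a $(\B([a,b])\otimes\F)$-measurable simple function; moreover $f_n\to f$ pointwise by the density of $(x_k)_{k\in\N}$, and, since $x_1=0$ is among the competitors, $\|f_n(s,\omega)\|_X\le\|x_1-f(s,\omega)\|_X+\|f(s,\omega)\|_X=2\|f(s,\omega)\|_X$ for all $(s,\omega)\in[a,b]\times\Omega$.

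Writing $f_n=\sum_{j=1}^{m_n}\1_{C_{n,j}}\,y_{n,j}$ with pairwise disjoint sets $C_{n,j}\in\B([a,b])\otimes\F$ and vectors $y_{n,j}\in X$, one obtains $\int_a^b f_n(s,\omega)\,ds=\sum_{j=1}^{m_n}\lambda\bigl(\{s\in[a,b]\colon(s,\omega)\in C_{n,j}\}\bigr)\,y_{n,j}$, where $\lambda$ denotes the Lebesgue--Borel measure on $[a,b]$. Applying the classical Tonelli theorem (e.g., Klenke~\cite[Theorem~14.16]{Klenke2008}) to the nonnegative $(\B([a,b])\otimes\F)$-measurable function $\1_{C_{n,j}}$ shows that $\omega\mapsto\lambda(\{s\in[a,b]\colon(s,\omega)\in C_{n,j}\})$ is $\F/\B([0,b-a])$-measurable; consequently $\omega\mapsto\int_a^b f_n(s,\omega)\,ds$, being a finite linear combination of $\F$-measurable real functions with coefficients in $X$, is strongly $\F/(X,\|\cdot\|_X)$-measurable. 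Finally, for each fixed $\omega$ the function $2\|f(\cdot,\omega)\|_X\in L^1([a,b],\R)$ dominates $(\|f_n(\cdot,\omega)\|_X)_{n\in\N}$ and $f_n(\cdot,\omega)\to f(\cdot,\omega)$ pointwise, so Bochner's dominated convergence theorem yields $\int_a^b f_n(s,\omega)\,ds\to\int_a^b f(s,\omega)\,ds=F(\omega)$; hence $F$ is the pointwise limit of strongly $\F$-measurable functions and is therefore itself strongly $\F/(X,\|\cdot\|_X)$-measurable.

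The main points requiring care are the verification that the greedily constructed $f_n$ really are product-measurable simple functions dominated by $2\|f(\cdot,\cdot)\|_X$, and the invocation of scalar Tonelli for the measurability of $\omega\mapsto\lambda(\{s\colon(s,\omega)\in C_{n,j}\})$; the remaining steps are bookkeeping. A streamlined alternative would bypass the simple-function construction and appeal to the Pettis measurability theorem instead: for every $\varphi\in X^{\ast}$ one has $\varphi(F(\omega))=\int_a^b\varphi(f(s,\omega))\,ds$ by linearity and continuity of $\varphi$, the right-hand side is $\F$-measurable in $\omega$ by the scalar Fubini theorem, and $F$ takes values in a separable subspace of $X$, so $F$ is strongly measurable.
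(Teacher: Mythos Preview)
Your argument is correct and follows the same strategy as the paper's proof: approximate $f$ by product-measurable simple functions, invoke scalar Tonelli to see that $\omega\mapsto\int_a^b f_n(s,\omega)\,ds$ is $\F$-measurable, and pass to the limit. The only differences are cosmetic---where you cite scalar Tonelli and build the simple approximants by an explicit nearest-point construction with the domination $\|f_n\|_X\le 2\|f\|_X$, the paper proves the Tonelli step (measurability of $\omega\mapsto\int_a^b\1_C(s,\omega)\,ds$ for $C\in\B([a,b])\otimes\F$) from scratch via a Dynkin system argument and instead cites Pr\'ev\^ot \& R\"ockner~\cite[Lemma~A.1.4]{PrevotRoeckner2007} for the simple approximants; your Pettis-based alternative is not pursued in the paper.
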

\begin{proof}[Proof of Lemma~\ref{lemma:Tonelli}]
	Throughout this proof
	let
	$ \lambda 
	\colon
	\B( \R )
	\rightarrow [0,\infty] $
	be the Lebesgue-Borel
	measure on
	$ \R $, 
	let
	$ \mathcal{C} \subseteq ( \B([a,b]) \otimes \F ) $
	be the set given by
	\begin{equation} 
	\label{eq:DefineC}
	\mathcal{C} = \Big\{ C \in  ( \B([a,b]) \otimes \F )
	\colon \Big( \Omega \ni \omega \mapsto \int_a^b \1_C(s, \omega) \, ds \in \R \Big)
	\text{ is }  \F / \B( \R ) \text{-measurable} \Big\} 
	,
	\end{equation}
	for every set $ S $ 
	let $ \mathcal{P}( S ) $
	be the power set of $ S $,	
	for every set
	$ S $
	and every
	$ \mathcal{A} \subseteq \mathcal{P}( S ) $
	let
	$ \sigma_S ( \mathcal{A} ) $
	be the smallest
	sigma-algebra on $ S $
	which contains $ \mathcal{A} $, 
	and 
	for every set
	$ S $
	and every 
	$ \mathcal{A} \subseteq \mathcal{P}( S ) $
	let
	$ \delta_S( \mathcal{A} ) $
	be
	the smallest
	Dynkin system on $ S $
	which contains $ \mathcal{A} $.
	First, we intend to prove that 
	\begin{equation} 
	\label{eq:ProveDynkin}
	\mathcal{C} = \B( [a,b] ) \otimes \F 
	.
	\end{equation} 
	For this note that for every
	$ A \in \B( [a,b] ) $,
	$ B \in \F $,
	$ \omega \in \Omega $ 
	it holds that
	\begin{equation}
	\int_a^b
	\1_{ A \times B }( s, \omega ) 
	\,ds
	= 
	\int_a^b
	\1_A(s) \, \1_B(\omega) \, ds
	=
	\lambda (A)  \, \1_B(\omega)
	.
	\end{equation}
	This ensures that
	\begin{equation} 
	\label{eq:Dynkin1}
	\{ A \times B \colon A \in \B( [a,b] ), B \in \F \}
	\subseteq 
	\mathcal{C}  
	\qquad
	\text{and}
	\qquad
	( [a,b] \times \Omega ) \in \mathcal{C} 
	.
	\end{equation}
	The fact that 
	$ \{ A \times B  \colon A \in \B( [a,b] ), B \in \F \} $ is $ \cap $-stable
	and
	Dynkin's Lemma 
	therefore 
	prove that 
	\begin{equation}
	\begin{split}
	\label{eq:ProveForIndicator} 
	\B([a,b]) \otimes \F
	&
	=
	\sigma_{[a,b] \times \Omega} 
	(
	\{ A \times B  \colon A \in \B( [a,b] ), B \in \F \} 
	)
	\\ 
	&
	=
	\delta_{ [a,b] \times \Omega }
	(
	\{ A \times B  \colon A \in \B( [a,b] ), B \in \F \}
	)
	\\
	&
	\subseteq 
	\delta_{ [a,b] \times \Omega }
	( \mathcal{C} ) 
	\subseteq 
	\delta_{ [a,b] \times \Omega }
	( \B([a,b]) \otimes \F )
	= 
	\B( [a,b] ) \otimes \F 
	.
	\end{split}
	\end{equation}
	This
	shows that
	\begin{equation} 
	\label{eq:ProvedDynkin}
	\delta_{ [a,b] \times \Omega }(\mathcal{C}) 
	= 
	\B([a,b] ) \otimes \F  
	.
	\end{equation}
	Moreover, note that for every 
	$ C \in \mathcal{C} $,
	$ \omega \in \Omega $ 
	it holds that
	\begin{equation}
	\begin{split} 
	\int_a^b \1_{ ( [a,b] \times \Omega ) \backslash C}(s, \omega) \, ds
	&=
	\int_a^b ( \1_{ [a,b] \times \Omega }(s, \omega) 
	-
	\1_{ C}(s, \omega) ) \,ds
	\\
	&=
	\int_a^b \1_{ [a,b] \times \Omega }(s, \omega) \,ds
	-
	\int_a^b
	\1_{ C}(s, \omega) \,ds
	.
	\end{split} 
	\end{equation}
	This and~\eqref{eq:Dynkin1} imply that
	for
	every $ C \in \mathcal{C} $ 
	it holds that
	\begin{equation}
	\label{eq:Dynkin2}
	  ( ( [a,b] \times \Omega ) \backslash C )
	\in \mathcal{C}
	.
	\end{equation}
	Furthermore,
	note that the monotone convergence theorem
	proves that for all 
	pairwise disjoint sets
	$ C_n \in \mathcal{C} $,
	$ n \in \N $,
	it holds that
	\begin{equation}
	\begin{split}
	\int_a^b
	\1_{ \cup_{n \in \N } C_n } ( s, \omega) \, ds
	&=
	\int_a^b 
	\sum_{ n= 1 }^\infty
	\1_{ C_n }( s, \omega ) \, ds
	\\
	&
	=
	\int_a^b
	\lim_{ k \to \infty}
	\sum_{ n= 1 }^k 
	\1_{ C_n }( s, \omega ) \, ds
	= 
	\lim_{ k \to \infty}
	\int_a^b
	\sum_{ n= 1 }^k 
	\1_{ C_n }( s, \omega ) \, ds
	.
	\end{split}
	\end{equation}
	Therefore, we obtain that
	for all
	pairwise disjoint sets
	$ C_n \in \mathcal{C} $,
	$ n \in \N $,
	it holds that
	$
	\cup_{n \in \N } C_n \in \mathcal{C} 
	$.
	Combining this, \eqref{eq:Dynkin1}, and~\eqref{eq:Dynkin2}
	implies that
	$ \mathcal{C} $ 
	is a Dynkin system on 
	$ [a,b] \times \Omega $.
	Combining this
	and~\eqref{eq:ProvedDynkin} 
	establishes~\eqref{eq:ProveDynkin}.
	Next we intend to establish the statement of Lemma~\ref{lemma:Tonelli}.
	For this observe that the fact that
	$ f \colon [a,b] \times \Omega \to X $
	is strongly
	$ ( \B([a,b]) \otimes \F) / (X, \left \| \cdot \right \|_X) $-measurable and, e.g., 
	Pr\'ev\^ot \& R\"ockner~\cite[Lemma~A.1.4]{PrevotRoeckner2007}
	imply that there exist
	$ ( \B( [a,b] ) \otimes \F) / \B(X) $-measurable functions
	$ f_n \colon [a,b] \times \Omega \to X $,
	$ n \in \N $,
	which satisfy that
	\begin{enumerate}[(a)]
	\item \label{item:necessary 1} 
	it holds
	for every $ n \in \N $ that 
	$ f_n( [a, b]  \times \Omega) $
	is a finite set and
	\item \label{item:necessary 2} 
	it holds for every  
	$ \omega \in \Omega $
	that
	\begin{equation}
	\begin{split}
	\label{eq:Limsup}
	\limsup_{ n \to \infty }
	\Big\| 
	\int_a^b f_n(\omega, s) \, ds
	-
	\int_a^b f(\omega, s) \, ds
	\Big\|_X
	\leq
	\limsup_{ n \to \infty }
	\int_a^b
	\| f_n(\omega, s) - f(\omega,s) \|_X
	\, ds
	=
	0.
	\end{split}
	\end{equation}
	\end{enumerate} 
	Note that item~\eqref{item:necessary 1}  
	shows that
	for every
	$ n \in \N $,
	$ s \in [a,b] $,
	$ \omega \in \Omega $
	it holds that
	\begin{equation}
	f_n(s, \omega) 
	=
	\sum_{ x \in f_n([a,b] \times \Omega ) }
	x
	\1_{ (f_n)^{-1}(\{x\})}
	(s, \omega)
	.
	\end{equation}
	The fact that for every $ n \in \N $
	it holds that
	$ f_n( [a,b] \times \Omega) $
	is a finite set, the fact that
	for every $ n \in \N $,
	$ x \in X $
	it holds that
	$ (f_n)^{-1}(\{x\}) \in ( \B([a,b]) \otimes \F ) $,
	and~\eqref{eq:ProveDynkin}  
	hence prove that
	for every $ n \in \N $
	it holds that
	$ \Omega \ni \omega \mapsto
	\int_a^b f_n(s, \omega) \, ds \in X $  
	is strongly 
	$ \F / (X, \left \| \cdot \right \|_X ) $-measurable.
	%
	%
	%
	Combining 
	item~\eqref{item:necessary 1},
	item~\eqref{item:necessary 2},
	and, e.g.,
	Pr\'ev\^ot \& R\"ockner~\cite[item~(i)
	of Proposition~A.1.3]{PrevotRoeckner2007}
	therefore establishes that
	$ F $
	is strongly
	$ \F / (X, \left \| \cdot \right \|_X ) $-measurable.
    The proof of 
	Lemma~\ref{lemma:Tonelli}
	is thus completed.
\end{proof} 
\begin{lemma}
\label{lemma:ProveExistence}
	Let
	$ (H, \left\| \cdot \right\|_H, 
	\langle \cdot, \cdot \rangle_H ) $ 
	be a separable $ \R $-Hilbert space,
	let
	$ T \in (0, \infty) $, 
	$ s \in [0,T) $,
	let $ ( \Omega, \F, \P, ( \f_t )_{t \in [s,T]} ) $
	be a filtered probability space,
let 
$ \xi \colon \Omega \to H $
be an $ \f_s / \B(H) $-measurable function, 
let
$ f \colon [s,T] \times H \times \Omega \to H $
and
$ K \colon [s,T] \times (0,\infty) \times \Omega 
\to [0, \infty) $ 
	be functions,
	assume for every
	$ t \in [s,T] $,
	$ x \in H $
	that
	$ \Omega 
	\ni \omega  
	\mapsto 
	f(t, x, \omega) \in H $
	is $ \f_t / \B(H) $-measurable, 
	assume for every $ \omega \in \Omega $,
	$ r \in (0, \infty) $
	that
	$ ( [s,T] \times H \ni (t, x) \mapsto f(t, x, \omega) \in H ) \in \mathcal{C}( [s,T] \times H, H ) $, 
	$ ( [s,T] \ni t \mapsto K_t(r,\omega) \in [0, \infty) ) 
	\in \mathcal{C}( [s,T], [0, \infty) ) $,
	and
	$ \sup_{ t \in [s,T] }
	\sup_{x\in H,\|x\|_H\leq r} \|f(t, x, \omega)\|_H
	< \infty $,
	and assume for every 
	$ t \in [s,T] $,
	$ x, y \in H $,
	$ \omega \in \Omega $,
	$ r \in (0, \infty) $ 
	with 
	$ \max \{ \| x \|_H, \| y \|_H \} \leq r $
	that 
	%
	%
	%
	%
	%
	%
	%
	%
	$ 2 \langle x, f(t, x, \omega) \rangle_H 
	\leq K_t(1, \omega) (1 + \| x \|_H^2) $
	and
	\begin{equation} 
	\label{eq:AssMonotone} 
	2
	\langle x - y, f(t, x, \omega) - f(t, y, \omega) \rangle_H
	\leq
	K_t(r, \omega) \| x - y \|_H^2
	.
	\end{equation}
	%
	%
	%
%
	%
	%
	%
	Then 
	\begin{enumerate}[(i)]
		\item \label{item:Estistence}
	there exists a unique  
	function
	$ X \colon [s,T] \times \Omega \to H $
	which satisfies
	for every 
	$ t \in [s,T] $, $ \omega \in \Omega $
	that
	$ ( [s,T] \ni u \mapsto X_u(\omega) \in H ) 
	\in \mathcal{C}( [s,T], H ) $
	and
	\begin{equation}
	\label{eq:Result}
	X_t(\omega) = \xi(\omega) + \int_s^t f( u, X_{ u}(\omega), \omega ) \, d u  
	\end{equation} 
	and
	\item \label{item:StochasticProcess}
	it holds that 
	$ X \colon [s,T] \times \Omega \to H $
	is $ ( \f_t )_{ t \in [s,T] } $-adapted.
	\end{enumerate}
\end{lemma}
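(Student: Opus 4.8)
The plan is to argue pathwise: we fix $\omega\in\Omega$, drop it from the notation, and first prove the deterministic assertion that there is a unique $X\in\mathcal{C}([s,T],H)$ with $X_t=\xi+\int_s^tf(u,X_u)\,du$ for all $t\in[s,T]$; measurability in $\omega$ and $(\f_t)$-adaptedness are dealt with afterwards. Since $f$ is jointly continuous and $X$ continuous, $u\mapsto f(u,X_u)$ is automatically continuous, so every such solution in fact lies in $\mathcal{C}^1([s,T],H)$ with $X'_t=f(t,X_t)$, and we use this repeatedly. \emph{Uniqueness} is then easy: two solutions $X,Y$ are bounded on the compact interval, say by $r$, and by the chain rule, $X_s-Y_s=0$, and the local monotonicity~\eqref{eq:AssMonotone} with radius $r$ one gets $\tfrac{d}{dt}\|X_t-Y_t\|_H^2\le K_t(r)\|X_t-Y_t\|_H^2$, whence Gr\"onwall (with $t\mapsto K_t(r)$ continuous, hence bounded) forces $X=Y$. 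Similarly, coercivity and the chain rule give, for any solution, $\tfrac{d}{dt}\|X_t\|_H^2\le K_t(1)(1+\|X_t\|_H^2)$, so Gr\"onwall produces the a priori bound $\sup_{t\in[s,T]}\|X_t\|_H\le R$, with $R=R(\omega):=[(1+\|\xi(\omega)\|_H^2)\exp(\int_s^TK_u(1,\omega)\,du)]^{1/2}$ depending only on $\|\xi(\omega)\|_H$ and $K_\cdot(1,\omega)$.

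For \emph{existence} I would use a Galerkin scheme. Let $(e_k)_{k\in\N}$ be an orthonormal basis of $H$ (if $\dim H<\infty$ the scheme collapses to its first step), let $P_n$ be the orthogonal projection onto $V_n:=\operatorname{span}\{e_1,\dots,e_n\}$, and solve the finite dimensional ODE $\dot X^n_t=P_nf(t,X^n_t)$, $X^n_s=P_n\xi$, on $V_n$: Peano's theorem gives a local solution, and since $X^n_t\in V_n$ forces $\langle X^n_t,P_nf(t,X^n_t)\rangle_H=\langle X^n_t,f(t,X^n_t)\rangle_H$, the coercivity estimate above applies verbatim and shows $\sup_t\|X^n_t\|_H\le R$ with the same $R$, uniformly in $n$, so the solution is global; it is unique by the monotonicity argument, and $\sup_t\|\dot X^n_t\|_H\le M:=\sup_{t\in[s,T],\|x\|_H\le R}\|f(t,x,\omega)\|_H<\infty$. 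Hence $(X^n)$ is bounded in $W^{1,2}([s,T],H)$, and along a subsequence $X^n\rightharpoonup X$ and $\dot X^n\rightharpoonup\dot X$ weakly in $L^2([s,T],H)$ with $X$ Lipschitz, $X_t=\xi+\int_s^t\dot X_u\,du$, $\|X\|_{L^\infty([s,T],H)}\le R$, and (further subsequence) $f(\cdot,X^n_\cdot)\rightharpoonup Y$; testing $P_m\dot X^n=P_mf(\cdot,X^n_\cdot)$ (valid for $n\ge m$) and letting $m\to\infty$ gives $\dot X=Y$. The remaining step, identifying $Y_t=f(t,X_t)$ for a.e.\ $t$, is the crux, and I would run the classical monotonicity/Minty argument (this is exactly what the cited Liu \& R\"ockner~\cite[Theorem~3.1.1]{LiuRoeckner2015Book} supplies in the stochastic setting): multiply the local monotonicity inequality $2\langle X^n_t-\varphi_t,f(t,X^n_t)-f(t,\varphi_t)\rangle_H\le K_t(R)\|X^n_t-\varphi_t\|_H^2$ — valid for any $\varphi\in L^2([s,T],H)$ with $\|\varphi\|_{L^\infty([s,T],H)}\le R$ — by the nonnegative weight $\exp(-\int_s^tK_u(R)\,du)$, integrate over $[s,T]$, and pass to the limit using the energy identity $\|X^n_T\|_H^2=\|P_n\xi\|_H^2+2\int_s^T\langle X^n_u,f(u,X^n_u)\rangle_H\,du$, its limiting analogue $\|X_T\|_H^2=\|\xi\|_H^2+2\int_s^T\langle X_u,Y_u\rangle_H\,du$, and weak lower semicontinuity of the relevant convex functionals; then take $\varphi_t=X_t+\varepsilon\1_I(t)v$ for a subinterval $I\subseteq[s,T]$ and $v\in H$, divide by $\varepsilon$, and let $\varepsilon\downarrow0$ (joint continuity of $f$ and dominated convergence) to deduce $\langle Y_t-f(t,X_t),v\rangle_H=0$ for a.e.\ $t$ and all $v$ in a countable dense set, hence $Y=f(\cdot,X)$. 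Thus $X_t=\xi+\int_s^tf(u,X_u)\,du$ with continuous integrand, so $X$ is the desired $\mathcal{C}^1$ solution. The difficulty is that this passage to the limit must be carried out with no compact embedding of path spaces available, which is why the weighting and the energy bookkeeping are needed.

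For \emph{measurability and adaptedness} I would first upgrade the Galerkin convergence to a strong one: keeping the projection errors in the previous estimate yields $\tfrac{d}{dt}\|X^n_t-X_t\|_H^2\le K_t(R)\|X^n_t-X_t\|_H^2+4M\sup_{u\in[s,T]}\|(\operatorname{Id}-P_n)X_u\|_H+4R\sup_{u\in[s,T]}\|(\operatorname{Id}-P_n)f(u,X_u)\|_H$, and since $X([s,T])$ and $\{f(u,X_u)\colon u\in[s,T]\}$ are compact (continuous images of $[s,T]$) while $(\operatorname{Id}-P_n)$ tends to $0$ uniformly on compact sets, together with $\|(\operatorname{Id}-P_n)\xi\|_H\to0$ and Gr\"onwall this gives $\sup_{t\in[s,T]}\|X^n_t-X_t\|_H\to0$; hence it suffices to show each $X^n$ is $(\f_t)$-adapted. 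Fixing $t_0\in[s,T]$ and restricting the $n$-th Galerkin equation to $[s,t_0]$, it involves only $\xi$ and $\{f(u,\cdot,\cdot)\colon u\in[s,t_0]\}$, all $\f_{t_0}$-measurable in $\omega$; the explicit Euler approximations on $[s,t_0]$ have iterates that are $\f_{t_0}$-measurable functions of $\omega$ (by induction and Lemma~\ref{lemma:JointMeasurability}, with Lemma~\ref{lemma:Tonelli} keeping the time integrals measurable) and converge to $X^n|_{[s,t_0]}$ (equiboundedness and equicontinuity of the Euler polygons in the finite dimensional $V_n$, Arzel\`a--Ascoli, and uniqueness), so $X^n_t$ is $\f_{t_0}$-measurable for all $t\in[s,t_0]$, in particular $X^n_{t_0}$ is $\f_{t_0}$-measurable. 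Since $t_0\in[s,T]$ was arbitrary, $X^n$, and hence $X$, is $(\f_t)$-adapted, which completes the plan.
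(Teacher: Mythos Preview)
Your argument is correct but takes a genuinely different route from the paper's. The paper discretises in \emph{time}: it builds explicit Euler iterates $X^n$ with step $(T-s)/n$, uses the coercivity assumption together with a stopping-time argument to show that for $n$ large the iterates stay in a fixed ball, then uses the local monotonicity~\eqref{eq:AssMonotone} (applied to shifted arguments $X^n_u+p^n_u$, $X^m_u+p^m_u$) to show directly that $(X^n)$ is Cauchy in $\mathcal{C}([s,T],H)$; the limit solves the equation, and adaptedness is immediate because each Euler iterate is $(\f_t)$-adapted by construction (via Lemmas~\ref{lemma:JointMeasurability} and~\ref{lemma:Tonelli}). Your scheme discretises in \emph{space}: you project onto finite-dimensional $V_n$, invoke Peano plus the a priori bound for global existence, pass to the limit via weak compactness in $W^{1,2}([s,T],H)$ and the Minty trick to identify the nonlinearity, and only afterwards recover strong convergence and adaptedness---the latter through a second, Euler, layer of approximation inside each $V_n$.

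Both approaches are valid; the trade-offs are that the paper's argument is more elementary and self-contained (no weak compactness, no Minty localisation, a single approximation layer that simultaneously yields existence and adaptedness), while your Galerkin/Minty route is the standard variational template from Liu--R\"ockner~\cite{LiuRoeckner2015Book} and would extend more naturally to unbounded $f$ or to genuine SPDEs. Two minor points worth tightening in your write-up: in the Minty step you should use a radius strictly larger than $R$ (say $R+1$) so that the perturbed test function $X+\varepsilon\1_I v$ stays inside the ball where~\eqref{eq:AssMonotone} applies; and the Euler equiboundedness you invoke for adaptedness in $V_n$ is itself a discrete-Gr\"onwall/stopping-time argument of exactly the kind the paper carries out in detail, so it is not quite as free as ``Arzel\`a--Ascoli and uniqueness'' suggests.
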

\begin{proof}[Proof of Lemma~\ref{lemma:ProveExistence}]
	Throughout this proof
	let
	$ X^n \colon [s,T] \times \Omega \to H $, $ n \in \N $,
	be the functions
	which satisfy for every
	$ n \in \N $,
	$ k \in \{ 0, 1, \ldots, n-1 \} $,
	$ t \in  (s+\frac{k(T-s)}{n},
	s + \frac{(k+1)(T-s)}{n} ] $, 
	$ \omega \in \Omega $
	that
	$ X_s^n( \omega ) = \xi( \omega ) $
	and
	\begin{equation}
	\begin{split} 
	\label{eq:Start}
	X_t^n( \omega )
	=
	X_{ s + (\nicefrac{k(T-s)}{n} ) }^n( \omega )
	+
	\int_{ s + ( \nicefrac{k(T-s)}{n} ) }^t
	f
	\big( 
	u, 
	X_{ s + ( \nicefrac{k(T-s)}{n} ) }^n(  \omega ), 
	\omega 
	\big) \, du 
	,
	\end{split} 
	\end{equation}
	%
	%
	let
	$ L \colon (0, \infty) \times \Omega \to [0, \infty) $
	be the function which satisfies for every
	$ r \in (0, \infty) $,
	$ \omega \in \Omega $
	that
	\begin{equation}
L_r(\omega) =
	\sup\nolimits_{ t \in [s,T] }
	\sup\nolimits_{ h \in H, \| h \|_H \leq r }
	\| f ( t, h, \omega ) \|_H
	,
	\end{equation}
	let
	$ \kappa \colon \N \times [s,T] \to [s, T] $ 
	be the function which
	satisfies for every
	$ n \in \N $, 
	$ k \in \{ 0, 1, \ldots, n-1 \} $,
	%
%
	%
	$ t \in ( s + \frac{k(T-s)}{n}, 
	s + \frac{(k+1)(T-s)}{n} ] $  
	that
$ \kappa(n, s) = s $
and
\begin{equation}
\kappa(n,t) =
s
+
\tfrac{ k ( T - s ) }{ n }
,
\end{equation}
	let
	$ \mathcal{K} \colon [s,T] \times (0, \infty) \times \Omega \to [0, \infty) $
	and
	$ \alpha \colon [s,T] \times (0, \infty) \times \Omega \to [0, \infty) $
	be the functions which satisfy for every
	$ t \in [s,T] $,
	$ r \in (0, \infty) $, 
	$ \omega \in \Omega $
	that
\begin{equation} 
\mathcal{K}_t(r, \omega) 
=
\max \{ 
K_t(r, \omega)
,
L_r(\omega) 
\} 
\qquad 
%
\text{and} 
\qquad
\alpha_t(r, \omega)
=
\int_s^t \mathcal{K}_u(r, \omega) \, du,
\end{equation}
let
$ \tau^n \colon (0, \infty) \times \Omega \to [0,T] $, 
$ n \in \N $,
be the functions which satisfy for every
$ n \in \N $,
$ r \in (0,\infty) $,
$ \omega \in \Omega $
that
\begin{equation} 
 \tau_r^n(\omega) = \inf ( \{T\} \cup 
\{ t \in [0,T] \colon  \| X_t^n(\omega) \|_H \geq r \} ),
\end{equation}
and
	let
	$ p^n \colon [s,T] \times \Omega \to H $, $ n \in \N $,
	be the functions 
	which satisfy for every
	$ n \in \N $,
	$ t \in [s,T] $,
	$ \omega \in \Omega $
	that
	\begin{equation}
	p^n_t(\omega) =
	X^n_{ \kappa(n,t) }(\omega) - X_t^n(\omega)
	.
	\end{equation}
	First, we 
	establish 
	item~\eqref{item:Estistence}. 
	For this note
	that
	for every
	$ r \in (0, \infty) $,
	$ n \in \N $,
	$ \omega \in \Omega $,
	$ t \in [s, \tau_r^n(\omega)] $
	it holds that
	\begin{equation}
	\begin{split}
	\label{eq:uniformEstimate}
	&
	\|p_t^n (\omega) \|_H 
	\leq
	\int_{ \kappa(n,t) }^t
	\| f ( u, X^n_{ \kappa(n, u) } (\omega), \omega ) \|_H \, du
	\\
	&
	\leq
	\int_{ \kappa(n,t) }^t 
	L_r(\omega)
	\, du
	\leq 
	( t - \kappa(n,t) )
	L_r(\omega)
	\leq
	\tfrac{ (T-s) }{ n } L_r( \omega ) 
	<
	\infty
	.
	\end{split}
	\end{equation}
	This ensures
	for every  
	$ r \in (0, \infty) $,
	$ \omega \in \Omega $ 
	that 
	\begin{equation}
	\label{eq:Convergence}
	\limsup\nolimits_{ n \to \infty }
	\sup\nolimits_{ t \in [s,T] }
	\1_{ [s, \tau_r^n(\omega) ]}(t)
	\,
	\| p^n_t(\omega) \|_H
	=
	0
	.
	\end{equation}
	The dominated convergence theorem hence
	shows that for every
	$ r \in (0, \infty) $,
	$ \omega \in \Omega $ it holds that
	\begin{equation}
	\label{eq:IMportantLimit}
	\limsup\nolimits_{ n \to \infty }
	\int_s^T
	\1_{ [s, \tau_r^n(\omega) ] }
	(u)
	\,
	\| p^n_u(\omega) \|_H
	\mathcal{K}_u(r, \omega) \, du
	=
	0
	.
	\end{equation}
	In the next step we observe that
	for every
	$ t \in [s,T] $,
	$ n \in \N $,
	$ \omega \in \Omega $ it holds that
	\begin{equation}
	\label{eq:Needed}
	X_t^n(\omega)=
	\xi(\omega) 
	+
	\int_s^t 
	f( u, X^n_{\kappa(n,u)}(\omega), \omega )
	\, 
	du
	.
	\end{equation}
	Furthermore, note that the fact that
	for every
	$ \omega \in \Omega $,
	$ x \in H $
	it holds that
	$ ( [s,T] \ni u \mapsto f(u, x, \omega) \in H ) \in \mathcal{C}( [s,T], H ) $
	and, e.g., \cite[Corollary~2.7]{JentzenLindnerPusnik2017a} 
	(with
	$ V = H $,
	$ W = \R $,
	$ a = s $,
	$ b = T $,
	$ \phi = ( [s,T] \times H \ni (t, x) \mapsto \| x \|_H^2 e^{ - \alpha_t(1,\omega)} \in \R ) $,
	$ f = ( [s,T] \ni t \mapsto 
	f(t, X_{ \kappa(n, t) }^n(\omega), \omega) \in H ) $,
	$ F = ( [s,T] \ni t \mapsto  X_t^n(\omega) \in H ) $
	for 
	$ n \in \N $, 
	$ \omega \in \Omega $
	in the notation of~\cite[Corollary~2.7]{JentzenLindnerPusnik2017a}) 
	prove that for every
	$ r \in (0, \infty) $,
	$ n \in \N $,
	$ \omega \in \Omega $,
	$ t \in [s, \tau_r^n(\omega)] $ 
	it holds that
	\begin{equation}
	\begin{split}
	&
	\| X_t^n(\omega) \|_H^2 
	e^{ - \alpha_t(1, \omega) }
	\\
	&= 
	\| \xi (\omega)\|_H^2 
	+ 
	\int_s^t
	e^{ - \alpha_u(1, \omega) } 
	\big[
	2 
	\langle 
	X_u^n(\omega), 
	f(u, X_{ \kappa(n,u) }^n(\omega), \omega) 
	\rangle_H 
	-
	\mathcal{K}_u(1, \omega) 
	\| X_{ u}^n(\omega) \|_H^2
	\big]
	\, du 
	\\
	& 
	=
	\| \xi (\omega)\|_H^2 
	+
	\int_s^t
	e^{ - \alpha_u(1, \omega) } 
	\big[
	2
	\langle 
	X_{ \kappa(n,u) }^n(\omega), 
	f(u, X_{ \kappa(n,u) }^n(\omega), \omega) 
	\rangle_H  
	\\
	&
	\quad
	-
	2 
	\langle 
	p_u^n( \omega ), 
	f(u, X_{ \kappa(n,u) }^n(\omega), \omega) 
	\rangle_H 
	-
	\mathcal{K}_u(1, \omega) 
	\| X_{ u}^n(\omega) \|_H^2
	\big]
	\, du 
	. 
	\end{split}
	\end{equation}
	\sloppy 
	Combining this, 
	the assumption that
	for every
	$ t \in [s,T] $,
	$ x \in H $,  
	$ \omega \in \Omega $
	it holds that
	$ 2 \langle x, f(t, x, \omega) \rangle_H 
	 \leq K_t(1, \omega) (1 + \| x \|_H^2) $,
	the Cauchy-Schwarz inequality,
	and~\eqref{eq:Needed} 
	implies that for every
	$ r \in (0, \infty) $,
	$ n \in \N $,
	$ \omega \in \Omega $,
	$ t \in [s, \tau_r^n(\omega)] $
	it holds that
	\begin{equation}
	\begin{split}
	&
	\| X_t^n(\omega) \|_H^2 
	e^{ - \alpha_t(1, \omega) }
\leq 
\| \xi( \omega ) \|_H^2 
\\
&
\quad
+
\int_s^t
e^{ - \alpha_u(1, \omega) } 
\big[
\mathcal{K}_u(1, \omega) ( 1 + \| X_{ \kappa(n,u) }^n (\omega) \|_H^2 ) 
+
2
\| p_u^n(\omega) \|_H 
\| f(u, X_{ \kappa(n,u) }^n(\omega), \omega) \|_H
\big]
\, du 
\\
&
\leq 
\| \xi( \omega ) \|_H^2 
+
\int_s^t
e^{ - \alpha_u(1, \omega) } 
\big[
\mathcal{K}_u(1, \omega) ( 1 + \| X_{ \kappa(n,u) }^n (\omega) \|_H^2 ) 
+
2
\mathcal{K}_u(r, \omega)
\| p_u^n(\omega) \|_H 
\big]
\, du
. 
	\end{split}
	\end{equation}
	The fact that
	for every
	$ r \in (0, \infty) $, 
	$ n \in \N $,
	$ \omega \in \Omega $,
	$ u \in [s, T] $
	it holds that
	$ \1_{  [s, \tau_r^n(\omega)]  }(u)
	\,
	\| X_u^n(\omega) \|_H^2 
	\leq r^2 $,
	Fatou's Lemma,
	and~\eqref{eq:IMportantLimit}
	hence assure that
	for every
	$ r \in (0, \infty) $,
	$ \omega \in \Omega $,
	$ t \in [s,T] $ 
	it holds that
	\begin{equation}
	\begin{split}
	&
	\limsup\nolimits_{n \to \infty}
	\sup\nolimits_{ u \in [s,t] }
	\big( 
	\1_{  [s, \tau_r^n(\omega)]  }(u)
	\,
	\| X_u^n(\omega) \|_H^2 
	\big)
	e^{ - \alpha_t(1, \omega) }
	\\
	&
	\leq 
	\| \xi( \omega ) \|_H^2 
	+
	\int_s^t
	e^{ - \alpha_u(1, \omega) } 
	\mathcal{K}_u(1, \omega)
	\big[ 
	1 
	+  
	\limsup\nolimits_{n \to \infty}
	\sup\nolimits_{ v \in [s,u] }
	\big( 
	\1_{  [s, \tau_r^n(\omega)]  }(v)
	\,
	\| X_v^n ( \omega ) \|_H^2  
	\big) 
	\big]
	\, du
	\\
	&
	\quad
	+
	2
	\limsup\nolimits_{n \to \infty} 
	\int_s^t 
	\1_{  [s, \tau_r^n(\omega)]  }( u )
	\,
	\mathcal{K}_u(r, \omega)
	\| p_u^n(\omega) \|_H   
	\, du
	\\
	&
	=
	\| \xi( \omega ) \|_H^2 
	+
	\int_{ s}^{ t }
	e^{ - \alpha_u(1, \omega) }  
	\mathcal{K}_u(1, \omega) 
	\, du
	\\
	&
	\quad
	+
	\int_s^t   
	\mathcal{K}_u(1, \omega)
	\limsup\nolimits_{n \to \infty}
	\sup\nolimits_{ v \in [s,u] }
	\big( 
	\1_{ [s, \tau_r^n(\omega)]  }(v)
	\,
	\| X_v^n ( \omega ) \|_H^2   
	\big) 
		e^{ - \alpha_u(1, \omega) } 
	\, du
	. 
	\end{split}
	\end{equation}
	Gronwall's lemma  
	therefore demonstrates 
	that
	for every
	$ r \in (0, \infty) $, 
	$ \omega \in \Omega $,
	$ t \in [s, T] $
	it holds that
	\begin{equation}
	\begin{split}
	&
	\limsup\nolimits_{n \to \infty}
	\sup\nolimits_{ u \in [s,t] }
	\big( 
	\1_{ [s, \tau_r^n(\omega)] }(u)
	\,
	\| X_u^n(\omega) \|_H^2 
	\big)
	e^{ - \alpha_t(1, \omega) }
	\\
	&
	\leq 
	\Big[ 
	\| \xi( \omega ) \|_H^2
	+
	\int_{ s}^{ t }
	e^{ - \alpha_u(1, \omega) }  
	\mathcal{K}_u(1, \omega) 
	\, du
	\Big]
	\exp
	\Big(
	\int_s^t  
	\mathcal{K}_u(1, \omega)
	\, du
	\Big)
	.
	\end{split} 
	\end{equation} 
	The change of variables formula
	hence establishes 
	that
	for every
	$ r \in (0, \infty) $, 
	$ \omega \in \Omega $,
	$ t \in [s, T] $
	it holds that
	\begin{equation}
	\begin{split}
	&
	\limsup\nolimits_{n \to \infty}
	\sup\nolimits_{ u \in [s,t] }
	\big( 
	\1_{  [s, \tau_r^n(\omega)]  }(u)
	\,
	\| X_u^n(\omega) \|_H^2 
	\big)
	\\
	&
	\leq
	e^{ 2 \alpha_t(1, \omega) }
	\Big[ 
	\| \xi( \omega ) \|_H^2
	+
	\int_{ \alpha_s(1, \omega) }^{ \alpha_t(1, \omega) } 
	e^{ - v } 
	\, dv
	\Big]
	\leq
	e^{ 2 \alpha_t(1, \omega) }
	[ 
	\| \xi( \omega ) \|_H^2
	+
	1
	]
	. 
	\end{split}
	\end{equation}
	This shows for every
		$ r \in (0, \infty) $, 
	$ \omega \in \Omega $ 
	that
	\begin{equation}
	\begin{split}
	\label{eq:LimSupEstimate}
	&
	\limsup\nolimits_{n \to \infty}
	\sup\nolimits_{ u \in [s, T] }
	\big( 
	\1_{  [s, \tau_r^n(\omega)]  }(u)
	\,
	\| X_u^n(\omega) \|_H^2 
	\big)
	\leq
	e^{ 2 \alpha_T(1, \omega) }
	[ 
	\| \xi( \omega ) \|_H^2
	+
	1
	]
	. 
	\end{split}
	\end{equation}
	Therefore, we obtain that 
	there exist functions
	$ N \colon \Omega \to \N $
	and
	$ M \colon \Omega \to (0,\infty) $
	which satisfy that 
	for every
	$ \omega \in \Omega $, 
	$ n \in [ N(\omega), \infty ) \cap \N $
	it holds that
	$ M(\omega) =
	1
	+
	e^{ \alpha_T(1, \omega) }
	\sqrt{ 
	\| \xi( \omega ) \|_H^2
	+
	1 
} $
and 
	\begin{equation}
	\begin{split}
	\label{eq:Strict}
	\sup\nolimits_{ u \in [s,T] }
	\big( 
	\1_{  [s, \tau_{ M(\omega)  }^n(\omega)]  }(u)
	\,
	\| X_u^n(\omega) \|_H^2 
	\big)
	\leq
	 [M( \omega ) - 1]^2   
	 +
	 1
	<
	[ M(\omega) ]^2
	.
	\end{split}
	\end{equation}
	%
	%
	Note that~\eqref{eq:Strict} shows that
	for every
	$ \omega \in \Omega $,
	$ n \in [ N(\omega), \infty ) \cap \N $
	it holds that
	$ \tau_{ M(\omega) }^n ( \omega ) = T $
	and
	\begin{equation}
	\begin{split}
	\label{eq:Crucial}
	\sup\nolimits_{ u \in [s,T] } 
	\| X_u^n(\omega) \|_H 
	\leq
	\sqrt{ [ M(\omega) - 1 ]^2 + 1 }
	\leq
	M(\omega)
	.
	\end{split}
	\end{equation}
	Furthermore,
	note that
	for every
	$ t \in [s,T] $, 
	$ r \in (0, \infty) $,
	$ \omega \in \Omega $,
	$ m, n \in \N $
	it holds that
	\begin{equation}
	\begin{split}
	&
	\| X_t^n( \omega ) - X_t^m( \omega ) \|_H^2 
	e^{ - 2 \alpha_t(r, \omega) } 
	\\
	&
	=
	2
	\int_s^t  
	\big[
	\langle X_u^n(\omega) - X_u^m(\omega),
	f( u, X_u^n(\omega) + p_u^n(\omega), \omega )
	-
	f( u, X_u^m(\omega) + p_u^m(\omega), \omega )
	\rangle_H
	\\
	&
	-
    \mathcal{K}_u(r, \omega) 
	\| X_u^n(\omega) - X_u^m(\omega) \|_H^2 
	\big]
	e^{ - 2 \alpha_u(r, \omega) } 
	\, du
	\\
	&
	=
	2
	\int_s^t  
	\big[
	\langle
	p_u^m(\omega) 
	- 
	p_u^n(\omega),
	f( u, X_u^n(\omega) + p_u^n(\omega), \omega )
	-
	f( u, X_u^m(\omega) + p_u^m(\omega), \omega )
	\rangle_H
	\\
	&
	+
	\langle  X_u^n(\omega) + p_u^n(\omega)  
	- 
	X_u^m(\omega) - p_u^m(\omega)   
	,
	f( u, X_u^n(\omega) + p_u^n(\omega), \omega )
	-
	f( u, X_u^m(\omega) + p_u^m(\omega), \omega )
	\rangle_H
	\\
	&
	-
    \mathcal{K}_u(r, \omega) 
	\| X_u^n(\omega) - X_u^m(\omega) \|_H^2 
	\big]
	e^{ - 2 \alpha_u(r, \omega)  } 
	\, du
	.
	\end{split}
	\end{equation}
	Combining~\eqref{eq:AssMonotone},
	\eqref{eq:Crucial}, 
	and the Cauchy-Schwarz inequality
	hence
	ensures that for every
	$ t \in [s, T] $,
	$ \omega \in \Omega $,
	$ m, n \in [ N(\omega), \infty) \cap \N $ 
	it holds that
	\begin{equation}
	\begin{split}
	&
	\| X_t^n(\omega) - X_t^m(\omega) \|_H^2 
	e^{ - 2 \alpha_t( M(\omega), \omega)  }
	\leq
	\int_s^t 
	e^{ - 2 \alpha_u( M(\omega), \omega)  }
	\\
	&
	\quad
	\cdot
	\big[
	2
	\langle 
	p_u^m(\omega) - p_u^n(\omega),
	f( u, X_u^n(\omega) + p_u^n(\omega), \omega )
	-
	f( u, X_u^m(\omega) + p_u^m(\omega), \omega )
	\rangle_H 
	\\ 
	&
	\quad
	+
	\mathcal{K}_u( M(\omega), \omega )
	\| 
	( X_u^n(\omega) - X_u^m(\omega) ) 
	+
	( p_u^n(\omega) - p_u^m(\omega) )
	\|_H^2
	\\
	&
	\quad 
	-
	2 \mathcal{K}_u( M(\omega), \omega) 
	\| X_u^n(\omega) - X_u^m(\omega) \|_H^2
	\big]
	\, du
	\\
	&
	\leq
	\int_s^t 
	e^{ - 2 \alpha_u( M(\omega) , \omega)   }
	%
	\big[
	2
	\|
	p_u^m(\omega) - p_u^n(\omega)
	\|_H
	\|
	f( u, X_u^n(\omega) + p_u^n(\omega), \omega )
	-
	f( u, X_u^m(\omega) + p_u^m(\omega), \omega )
	\|_H 
	\\
	&
	\quad
	+
	2
	\mathcal{K}_u( M(\omega), \omega )
	\big(
	\| X_u^n(\omega) - X_u^m(\omega) \|_H^2 
	+
	\| p_u^n(\omega) - p_u^m(\omega) \|_H^2
	\big)
	\\
	&
	\quad
	-
	2 \mathcal{K}_u( M(\omega), \omega) 
	\| X_u^n(\omega) - X_u^m(\omega) \|_H^2
	\big]
	\, d u
	.
	\end{split} 
	\end{equation} 
	This implies
	for every
	$ t \in [s, T] $, 
	$ \omega \in \Omega $,
	$ m, n \in [N(\omega), \infty) \cap \N $
	that
	\begin{equation}
	\begin{split}
	\label{eq:BigEstimate}
	&
	\| X_t^n(\omega) - X_t^m(\omega) \|_H^2 
	e^{ - 2 \alpha_t( M(\omega), \omega)  }
	\\
	&
	\leq
	2
	\int_s^t 
	e^{ - 2 \alpha_u( M(\omega), \omega) }
	\big[
	2
	\|
	p_u^m(\omega) - p_u^n(\omega)
	\|_H 
	L_{ M(\omega)  }(\omega)
	%
	%
	+ 
	\mathcal{K}_u(  M(\omega),\omega)
	\| p_u^n(\omega) - p_u^m(\omega) \|_H^2 
	\big]
	\, du
	\\
	&
	\leq
	2
	\int_s^t 
	e^{ - 2 \alpha_u(  M(\omega), \omega)  }
	\mathcal{K}_u( M(\omega), \omega) 
	\big[
	2 \| p_u^m(\omega) - p_u^n(\omega) \|_H
	+
	\| p_u^m(\omega) - p_u^n(\omega) \|_H^2 
	\big]
	\, du
	.
	\end{split}
	\end{equation}
	Moreover, 
	note that~\eqref{eq:Crucial}
	establishes for every
	$ u \in [s, T ] $, 
	$ \omega \in \Omega $,
	$ m, n \in [ N(\omega), \infty) \cap \N $
	that
	\begin{equation}
	\| p_u^m(\omega) - p_u^n(\omega) \|_H^2 
	\leq
	4 M (\omega) ( \| p_u^m(\omega) \|_H 
	+
	\| p_u^n(\omega) \|_H )
	.
	\end{equation}
	Combining
	this and~\eqref{eq:BigEstimate}
	shows that for every
	$ t \in [s, T ] $, 
	$ \omega \in \Omega $,
		$ m, n \in [ N(\omega), \infty) \cap \N $
	it holds that
	\begin{equation}
	\begin{split} 
	\label{eq:CauchyEstimate}
	&
	\| X_t^n(\omega) - X_t^m(\omega) \|_H^2 
	e^{ - 2 \alpha_t( M(\omega), \omega) }
	\\
	&
	\leq
	4 ( 1 + 2 M(\omega) )
	\Big(
	\int_s^T
	\mathcal{K}_u( M(\omega), \omega) 
	( 
	\| p^n_u(\omega) \|_H
	+
	\| p^m_u(\omega) \|_H
	)
	\, du
	\Big)
	.
	\end{split}
	\end{equation}
	In addition, observe that
	the fact that
	for every $ \omega \in \Omega $, 
	$ n \in [ N(\omega), \infty) \cap \N $
	it holds that
	$ \tau_{ M(\omega) }^n(\omega) = T $,
	\eqref{eq:IMportantLimit},
	and~\eqref{eq:Crucial} 
	assure that for every $ \omega \in \Omega $ it holds that
	\begin{equation}
	\begin{split}
	\label{eq:IMportantLimit2}
	&
	\limsup\nolimits_{ n \to \infty }
	\int_s^T
	\| p^n_u(\omega) \|_H
	\,
	\mathcal{K}_u( M(\omega), \omega) \, du
	\\
	&
	=
	\limsup\nolimits_{ n \to \infty }
	\int_s^T
	\1_{ [s, \tau_{ M(\omega) }^n(\omega) ] }
	(u)
	\,
	\| p^n_u(\omega) \|_H
	\,
	\mathcal{K}_u( M(\omega), \omega) \, du
	=
	0
	.
	\end{split} 
	\end{equation}
	This and~\eqref{eq:CauchyEstimate}
	demonstrate that for every
	$ \omega \in \Omega $
	it holds
	that
	$ ( [s,T] \ni t \mapsto X_t^n(\omega)\in H ) \in \mathcal{C}( [s,T], H ) $,
	$ n \in \N $,
	is a Cauchy sequence.
	The fact that the space 
	$ \mathcal{C}( [s,T], H ) $
	with the supremum norm
	is complete
	hence ensures that
	there exists
	a function
	$ X \colon [s,T] \times \Omega \to H $
	which satisfies for every 
	$ \omega \in \Omega $
	that  
	$ ( [s,T] \ni t \mapsto X_t(\omega) \in H ) 
	\in \mathcal{C}( [s,T], H ) $
	and
	\begin{equation}
	\label{eq:Existence}
	\limsup\nolimits_{ n \to \infty }
	\sup\nolimits_{ t \in [s,T] }\| X^n_t(\omega) - X_t(\omega) \|_H 
	=
	0
	.
	\end{equation} 
	\sloppy 
	Observe that
	the assumption that
	for every
	$ \omega \in \Omega $
	it holds that 
	$ ( [s,T] \times H \ni (t,x) \mapsto f(t,x,\omega) \in H ) \in \mathcal{C}( [s,T] \times H, H ) $,
the assumption that
for every
$ r \in (0, \infty) $,
$ \omega \in \Omega $
it holds that
$ \sup_{ t \in [s,T] }
\sup_{ x \in H, \| x \|_H \leq r } \| f ( t, x, \omega ) \|_H
< \infty $,
%
%
\eqref{eq:Crucial},
\eqref{eq:Existence},
	and
	the
	dominated convergence theorem
	prove that for every
	$ t \in [s,T] $,
	$ \omega \in \Omega $ 
	it holds that
	\begin{equation} 
	\begin{split} 
	\label{eq:Part1}
	&
	\limsup\nolimits_{n \to \infty}
	\Big\|
	\int_s^t 
	f( u, X^n_u(\omega), \omega ) \, du 
	-
	\int_s^t 
	f(u, X_u(\omega), \omega) \, du
	\Big\|_H
	\\
	&
	\leq
	\limsup\nolimits_{n \to \infty}
	\int_s^t 
	\| 
	f(u, X^n_u(\omega), \omega ) 
	-  
	f(u, X_u(\omega), \omega) 
	\|_H
	\, du
	\\
	&
	=
	\int_s^t
	\limsup\nolimits_{n \to \infty} 
	\| 
	f(u, X^n_u(\omega), \omega ) 
	-  
	f(u, X_u(\omega), \omega) 
	\|_H 
	\, du
	=
	0.
	\end{split} 
	\end{equation}
	Moreover,
	observe that~\eqref{eq:Existence}
	assures that for every
	$ \omega \in \Omega $
	it holds that 
	the sequence 
	$ X^n(\omega) \in \mathcal{C}( [s,T], H ) $,
	$ n \in \N $,
	is uniformly equicontinuous.
	This implies for every
	$ \omega \in \Omega $ that
	\begin{equation}
	\begin{split}
	\limsup\nolimits_{ n \to \infty }
	\sup\nolimits_{ u \in [s,T] }\| X^n_{\kappa(n, u)}(\omega) - X_u^n(\omega) \|_H
	=
	0.
	\end{split}
	\end{equation}
	\sloppy 
	The assumption that
	for every 
	$ \omega \in \Omega $
	it holds that
	$ ( [s,T] \times H \ni (t,x) \mapsto f(t,x,\omega) \in H ) \in \mathcal{C}( [s,T] \times H, H ) $,
%
the assumption that
for every
$ r \in (0, \infty) $,
$ \omega \in \Omega $
it holds that
$ \sup_{ t \in [s,T] }
\sup_{ x \in H, \| x \|_H \leq r  }
\| f ( t, x, \omega ) \|_H
< \infty $,
	\eqref{eq:Crucial}, 
	and the dominated convergence theorem therefore show that
	for every
	$ t \in [s,T] $,
	$ \omega \in \Omega $ 
	it holds that
	\begin{equation}
	\begin{split}
	\label{eq:Part2}
	&
	\limsup\nolimits_{n \to \infty}
	\Big\|
	\int_s^t 
	f(  u, X^n_{ \kappa(n, u) }(\omega), \omega ) \, du
	-
	\int_s^t 
	f(u, X^n_u(\omega), \omega) \, du
	\Big\|_H
	\\
	&
	\leq
	\limsup\nolimits_{n \to \infty} 
	\int_s^t  
	\|
	f( u, X^n_{ \kappa(n,u) }(\omega), \omega )   
	- 
	f( u, X^n_u(\omega), \omega) 
	\|_H 
	\, du 
	\\
	&
	\leq	
	\int_s^t 
	\limsup\nolimits_{n \to \infty} 
	\|
	f( u, X^n_{ \kappa(n,u) }(\omega), \omega )   
	- 
	f( u, X^n_u(\omega), \omega) 
	\|_H 
	\, du 
	=
	0.
	\end{split}
	\end{equation}
	The triangle inequality and~\eqref{eq:Part1}
	hence
	ensure
	that
	for every
	$ t \in [s, T ] $,
	$ \omega \in \Omega $ 
	it holds that
	\begin{equation}
	\begin{split} 
	& 
	\limsup\nolimits_{n \to \infty}
	\Big\|
	\int_s^t 
	f(  u, X^n_{ \kappa(n, u) }(\omega), \omega ) \, du 
	-
	\int_s^t 
	f( u, X_u(\omega), \omega) \, du
	\Big\|_H
	=
	0
	.
	\end{split} 
	\end{equation}
	Combining this, \eqref{eq:Needed}, 
	and~\eqref{eq:Existence}
	implies
	that for every
	$ t \in [s,T] $, 
	$ \omega \in \Omega $ 
	it holds that
	\begin{equation}
	\label{eq:ProvedExistence}
	X_t(\omega)
	=
	\xi(\omega)
	+
	\int_s^t f( u, X_u(\omega), \omega) \, du
	.
	\end{equation}
	%
	%
%
	%
	%
	%
	Next note that~\eqref{eq:AssMonotone}  
	proves that for every function
	$ \mathbf{X} \colon [s,T] \times \Omega \to H $
	with
	$ \forall \, \omega\in\Omega\colon ([s,T]\ni t\mapsto \mathbf X_t(\omega)\in H)\in \mathcal{C}([s,T],H) $
	and
%
%
	$ \forall \,  t \in [s, T] $,
	$ \omega \in \Omega \colon \mathbf{X}_t(\omega)
	=
	\xi(\omega)
	+
	\int_s^t f( u, \mathbf{X}_{u }(\omega), \omega) \, du
	$
	and every 
    $ t \in [s, T] $,
	$ \omega \in \Omega $,
	$ r \in (\sup_{u \in [s,T] } 
	\max \{ \| X_u(\omega) \|_H, \| \mathbf{X}_u(\omega) \|_H \}, \infty) $
    it holds that
	\begin{equation}
	\begin{split}
	&
	e^{ - 2 \alpha_t(r, \omega) } 
	\| X_t( \omega ) - \mathbf{X}_t( \omega ) \|_H^2  
	\\ 
	&
	=
	2 
	\int_s^t   e^{ - 2 \alpha_u(r, \omega) } 
	\big[
	\langle X_u(\omega) - \mathbf{X}_u(\omega),
	f( u, X_u(\omega), \omega )
	-
	f( u, \mathbf{X}_u(\omega), \omega )
	\rangle_H
	\\
	&
	\quad
	-
	\mathcal{K}_u(r, \omega) 
	\| X_u(\omega) - \mathbf{X}_u(\omega) \|_H^2 
	\big] 
	\, du
	\\
	&
	\leq
	\int_s^t 
	\mathcal{K}_u(r, \omega)
	e^{ - 2 \alpha_u(r, \omega) }
	\| X_u(\omega) - \mathbf{X}_u(\omega) \|_H^2
	\, du
	\\
	&
	\leq
	\sup\nolimits_{ u \in [s,T] }
	| \mathcal{K}_u ( r, \omega ) |
	\int_s^t 
	e^{ - 2 \alpha_u(r, \omega) }
	\| X_u(\omega) - \mathbf{X}_u(\omega) \|_H^2
	\, du
	< \infty
	.
	\end{split}
	\end{equation}
	Gronwall's lemma hence implies that
	for every function
	$ \mathbf{X} \colon [s,T] \times \Omega \to H $
	with
	$ \forall \, \omega\in\Omega\colon ([s,T]\ni t\mapsto \mathbf X_t(\omega)\in H)\in \mathcal{C}([s,T],H) $
	and
%
	$ \forall \, t \in [s,T] $,
	$ \omega \in \Omega \colon
	\mathbf{X}_t(\omega)
	=
	\xi(\omega)
	+
	\int_s^t f( u, \mathbf{X}_u(\omega), \omega) \, du
	$
	and every 
	$ t \in [s,T] $, $ \omega \in \Omega $ 
	it holds
    that
    \begin{equation} 
    X_t(\omega) = \mathbf{X}_t(\omega) 
    .
    \end{equation}
	Combining this
	and~\eqref{eq:ProvedExistence} establishes item~\eqref{item:Estistence}.
	In addition, 
	note that
	the assumption that
	for every 
	$ \omega \in \Omega $
	it holds that
	$ ( [s,T] \times H \ni ( t, x ) \mapsto 
	f( t, x, \omega) \in H ) \in
	\mathcal{C}( [s,T] \times H, H ) $,
	the assumption that
	for every 
	$ t \in [s,T] $, 
	$ u \in [s,t] $, 
	$ x \in H $
	it holds that  
	$ \Omega \ni \omega \mapsto f(u, x, \omega) \in H $
	is $ \f_t / \B(H) $-measurable,
	and 
	Lemma~\ref{lemma:JointMeasurability}
	(with
	$ ( \Omega, \F ) = ( \Omega, \f_t ) $,
	$ X = [s,T] \times H $,
	$ d_X = ( [s,T] \times H \times [s,T] \times H  \ni (t_1, x_1, t_2, x_2) \mapsto 
	| t_1 - t_2 | + \| x_1 - x_2 \|_H \in [0, \infty) ) $,
	$ Y = H $,
	$ d_Y = ( H \times H \ni (x_1, x_2) \mapsto
	\| x_1 - x_2 \|_H \in [0, \infty) )$,
	$ f = ( [s,t] \times H \times \Omega 
	\ni ( u, x, \omega) \mapsto f(u, x, \omega) \in H ) $
	for
	$ t \in [s,T] $
	in the notation of Lemma~\ref{lemma:JointMeasurability})
	show that
	for every
	$ t \in [s,T] $ 
	it holds that
	\begin{equation} 
	[s,t] \times H \times \Omega \ni ( u, x, \omega) \mapsto f(u, x, \omega) \in H 
	\end{equation} 
	is $ ( \B( [s, t] ) \otimes \B(H) \otimes \f_t ) / \B(H) $-measurable.
	The fact that
	for every $ t \in [s,T] $
	and every $ \f_t / \B(H) $-measurable 
	function $ \zeta \colon \Omega \to H $ 
	it holds that
	$ [s,t] \times \Omega \ni ( u, \omega ) \mapsto ( u, \zeta(\omega), \omega) \in [s,t] \times H \times \Omega $ 
	is
	$ ( \B( [s,t] ) \otimes \f_t ) / ( \B( [s,t] ) \otimes \B(H) \otimes \f_t ) $-measurable
	hence assures that 
	for every
	$ t \in [s,T] $ 
	and every $ \f_t / \B(H) $-measurable 
	function $ \zeta \colon \Omega \to H $ 
	it holds that
	\begin{equation} 
	 [s,t] \times \Omega \ni (u, \omega) \mapsto f( u, \zeta(\omega), \omega) \in H 
	\end{equation}  
	is
	$ ( \B( [s,t] ) \otimes \f_t ) / \B(H) $-measurable. 
			The assumption that
			$ \xi \colon \Omega \to H $
			is $ \f_s / \B(H) $-measurable, 
			\eqref{eq:Start},
			and  
			Lemma~\ref{lemma:Tonelli}
%
%
			(with
			$ X = H $,
			$ \Omega = \Omega $,
			$ \F = \f_t $,
			$ a = s + ( \nicefrac{k(T-s)}{n} ) $,
			$ b = t $, 
			$ f =  (  [ s + ( \nicefrac{k(T-s)}{n} ), t  ] \times \Omega \ni (u, \omega) \mapsto 
			f (u, X_{ s + ( \nicefrac{k(T-s)}{n} )  }^n(\omega), \omega  ) \in H  ) $, 
			$ F =  ( \Omega
			\ni \omega 
			\to 
			\int_{ s + ( \nicefrac{k(T-s)}{n} ) }^t
			f  ( u, X_{ s + ( \nicefrac{k(T-s)}{n} )  }^n(\omega), \omega  ) \, du \in H  ) $
			for 
			$ t \in ( s + (\nicefrac{k(T-s)}{n}), s + (\nicefrac{(k+1)(T-s)}{n} ) ] $,
			$ k \in \{ 0, 1, \ldots, n-1 \} $,
			$ n \in \N $
			in the notation of
			Lemma~\ref{lemma:Tonelli}) 
			therefore
			imply that
		for every $ n \in \N $ it holds that
		$ (X_t^n)_{t \in [s,T]} $
		is $ ( \f_t )_{t \in [s,T] } $-adapted. 
		Combining this and~\eqref{eq:Existence}
		establishes item~\eqref{item:StochasticProcess}. 
	The proof of Lemma~\ref{lemma:ProveExistence}
	is thus completed.
\end{proof}
\begin{corollary}
	\label{corollary:Existence} 
	%
	%
	Assume Setting~\ref{setting:main},
	assume that
	$ \dim(H) < \infty $,
	let 
	$ T \in (0,\infty) $,  
	$ s \in [0,T] $,
	$ C, c \in [0,\infty) $,   
	$ \delta, \kappa \in \R $,  
	$ F \in \mathcal{C}( H, H ) $,  
	$ \Phi \in \mathcal{C}(H, [0, \infty) ) $,
	let 
	$ ( \Omega, \F, \P, ( \f_t )_{t \in [0,T]} ) $
	be a filtered probability space,
	let
	$ \xi \in \M( \f_s, \B(H) ) $,
	let 
	$ O \colon [0,T] \times \Omega \to H $
	be an $ ( \f_t )_{ t \in [0,T] } $-adapted
	stochastic 
	process with continuous sample paths,  
	and assume
	for every  
	$ x, y \in H $
	that   
	%
	%
	$
	\| F( x )- F( y )  \|_H 
	\leq 
	C  \| x - y \|_{H_\delta} ( 1 + \| x \|_{H_\kappa}^c + \| y \|_{H_\kappa}^c ) $
	and
	$ \langle x, Ax +  F( x + y) \rangle_H
	\leq
	\Phi( y ) ( 1 + \| x \|_H^2 ) $.
	Then
	\begin{enumerate}[(i)]
		\item \label{item:Existence0} 
		there exists a unique function
		$ X \colon [s,T] \times \Omega \to H $
%
		which satisfies for every
		$ t \in [s,T] $,
		$ \omega \in \Omega $ 
		that
		$ ( [s,T]\ni u \mapsto X_u (\omega) \in H )
		\in \mathcal{C}( [s,T], H ) $
		and
		\begin{equation} 
		X_t (\omega)
		= 
		e^{ ( t - s ) A } \xi(\omega)
		+
		\int_s^t e^{ ( t - u ) A }  F( X_u (\omega)   ) \, du + O_t(\omega) - e^{(t-s)A} O_s(\omega) 
		%
		\end{equation}
		and
		\item \label{item:Adaptedness0} 
		it holds that
		$ X \colon [s,T] \times \Omega \to H $
		is $ ( \f_t )_{ t \in [s,T] } $-adapted.
	\end{enumerate}
\end{corollary}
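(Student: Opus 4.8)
The plan is to remove the inhomogeneous term $O$ by a change of variables and to reduce the resulting equation to the random-ODE existence and uniqueness result of Lemma~\ref{lemma:ProveExistence} (the degenerate case $s=T$ being trivial, so I assume $s<T$). Throughout, write $\theta_u(\omega) := O_u(\omega) - e^{(u-s)A} O_s(\omega)$ for $u \in [s,T]$, $\omega \in \Omega$; since $O$ has continuous sample paths and is $(\f_t)_{t\in[s,T]}$-adapted, so is $\theta$. I would first observe that a continuous process $X \colon [s,T] \times \Omega \to H$ solves the equation in item~\eqref{item:Existence0} if and only if the process $v$ defined by $v_t := X_t - \theta_t$ satisfies, for all $t \in [s,T]$, $\omega \in \Omega$,
\begin{equation}
v_t(\omega) = e^{(t-s)A} \xi(\omega) + \int_s^t e^{(t-u)A} F\big( v_u(\omega) + \theta_u(\omega) \big) \, du .
\end{equation}
Because $\dim(H) < \infty$, the operator $A$ is bounded, and the standard variation-of-constants manipulation (differentiating $u \mapsto e^{-(u-s)A} v_u$, or a Fubini argument, exactly as in the proof of Lemma~\ref{lemma:ProveExistence}) shows that this mild equation for $v$ is in turn equivalent to the random ODE
\begin{equation}
v_t(\omega) = \xi(\omega) + \int_s^t f\big( u, v_u(\omega), \omega \big) \, du , \qquad f(u,x,\omega) := A x + F\big( x + \theta_u(\omega) \big) .
\end{equation}

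Next I would verify that $f$, together with a suitable coefficient $K$, satisfies the hypotheses of Lemma~\ref{lemma:ProveExistence}. Adaptedness of $O$ gives that $\omega \mapsto f(u,x,\omega)$ is $\f_u / \B(H)$-measurable for every $u \in [s,T]$, $x \in H$; continuity of $F$ and of $u \mapsto \theta_u(\omega)$ gives that $(u,x) \mapsto f(u,x,\omega)$ is continuous; and continuity of $F$ together with boundedness of the sample path $[s,T] \ni u \mapsto \theta_u(\omega)$ gives $\sup_{u \in [s,T]} \sup_{x \in H, \, \|x\|_H \leq r} \|f(u,x,\omega)\|_H < \infty$. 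For the one-sided bounds, set $\mathfrak{m}(\omega) := \sup_{u \in [s,T]} \|\theta_u(\omega)\|_H < \infty$, let $\mathfrak{c}_\delta, \mathfrak{c}_\kappa \in [1,\infty)$ be constants for which the finite-dimensional norm equivalences $\|z\|_{H_\delta} \leq \mathfrak{c}_\delta \|z\|_H$ and $\|z\|_{H_\kappa} \leq \mathfrak{c}_\kappa \|z\|_H$ hold for all $z \in H$, and define
\begin{equation}
K_u(r,\omega) := 2\,\Phi\big( \theta_u(\omega) \big) + 2 C \mathfrak{c}_\delta \big( 1 + 2 \mathfrak{c}_\kappa^{\,c} ( r + \mathfrak{m}(\omega) )^c \big) .
\end{equation}
The coercivity assumption $\langle x, A x + F(x+y) \rangle_H \leq \Phi(y)(1 + \|x\|_H^2)$ applied with $y = \theta_u(\omega)$ gives $2 \langle x, f(u,x,\omega) \rangle_H \leq K_u(1,\omega)(1 + \|x\|_H^2)$. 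For the monotonicity bound~\eqref{eq:AssMonotone} I would split $\langle x - x', f(u,x,\omega) - f(u,x',\omega) \rangle_H$ into the term $\langle x - x', A(x-x') \rangle_H \leq ( \sup_{h \in \H} \values_h ) \|x-x'\|_H^2 \leq 0$ and the term $\langle x - x', F(x + \theta_u(\omega)) - F(x' + \theta_u(\omega)) \rangle_H$, which by the Cauchy--Schwarz inequality, the assumed Lipschitz bound on $F$, and the norm equivalences is at most $\tfrac{1}{2} K_u(r,\omega) \|x-x'\|_H^2$ whenever $\max\{\|x\|_H, \|x'\|_H\} \leq r$; moreover $u \mapsto K_u(r,\omega)$ is continuous since $\theta$ and $\Phi$ are.

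With these checks in place, Lemma~\ref{lemma:ProveExistence} yields a unique continuous $(\f_t)_{t \in [s,T]}$-adapted process $v \colon [s,T] \times \Omega \to H$ solving the random ODE, hence the equivalent mild equation for $v$, and $X_t := v_t + \theta_t$ is then a continuous $(\f_t)$-adapted process solving item~\eqref{item:Existence0}, which establishes existence in~\eqref{item:Existence0} together with item~\eqref{item:Adaptedness0}. For uniqueness, any continuous solution $X$ of~\eqref{item:Existence0} produces, via $v = X - \theta$, a continuous solution of the random ODE, to which the uniqueness part of Lemma~\ref{lemma:ProveExistence} applies, forcing $v$, and hence $X$, to coincide with the constructed process.

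I expect the only genuinely non-routine point to be the construction of $K$: one has to convert the two hypotheses on $F$ — a Lipschitz estimate measured in the $H_\delta$-norm with polynomial growth in the $H_\kappa$-norm, and a coercivity estimate phrased through $\Phi$ — into a single coefficient function that is continuous in $u$, locally bounded in $r$, $\f_u$-measurable in $\omega$, and simultaneously dominates both $2 \langle x, f(u,x,\omega) \rangle_H$ and the increment $2 \langle x - x', f(u,x,\omega) - f(u,x',\omega) \rangle_H$ in the plain $H$-inner product. The reduction $\dim(H) < \infty$ — yielding equivalence of the norms $\|\cdot\|_{H_r}$ and negativity of $A$ — is exactly what makes this possible, and checking that the mild and ODE formulations for $v$ coincide (a short bounded-operator computation) is the only other step that requires an argument rather than bookkeeping.
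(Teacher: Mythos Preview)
Your proposal is correct and follows essentially the same route as the paper: reduce to Lemma~\ref{lemma:ProveExistence} by subtracting a process built from $O$, convert the mild equation into an ODE using that $A$ is bounded, and build the coefficient $K$ from the local-Lipschitz estimate (via finite-dimensional norm equivalence) together with the $\Phi$-coercivity bound. The only cosmetic difference is that the paper subtracts $O_t$ itself (taking initial value $\xi - O_s$ and shift $O_u$) rather than your $\theta_t = O_t - e^{(t-s)A}O_s$, and chooses $K$ independent of $u$ by taking a supremum; neither change affects the argument.
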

\begin{proof}[Proof of Corollary~\ref{corollary:Existence} ]
	\sloppy 
	Throughout this proof let
	$ K \colon (0, \infty) \times \Omega \to [0, \infty) $
	be the function which satisfies for every
	$ r \in (0, \infty) $,
	$ \omega \in \Omega $
	that 
	$ K(r, \omega) =  
	\max \{  
	C  \| (-A)^\delta \|_{L(H)} 
	\max \{  \| (-A)^\kappa \|_{L(H)}^c, 1 \} 
		( 1 
	+ 2 ( r + \sup\nolimits_{ u \in [0,T] } \| O_u(\omega) \|_H )^c ),
	\sup_{ u \in [0,T] } \Phi( O_u ( \omega ) )
	\} $.
	Note that 
	the assumption that
	for every
	$ x, y \in H $
	it holds that 
	$ \| F( x )- F( y )  \|_H 
	\leq 
	C  \| x - y \|_{H_\delta} ( 1 + \| x \|_{H_\kappa}^c + \| y \|_{H_\kappa}^c ) $
	implies that	
	for every 
	$ t \in [0, T] $, 
	$ x, y \in H $,
	$ r \in (0,\infty) $,
	$ \omega \in \Omega $
	with 
	$ \min \{ \| x \|_H, \| y \|_H \} \leq r $
	it holds that
	\begin{equation}
	\begin{split}
	\label{eq:Monotone}
	&
	\langle x - y,
	A( x - y ) + F( x + O_t(\omega)  ) 
	- F( y + O_t(\omega)  ) \rangle_H
	\\
	&
	\leq
	\langle x-y, A(x-y) \rangle_H
	+
	\| x - y \|_H
	\| F ( x + O_t(\omega) ) - F( y + O_t( \omega ) ) \|_H
	\\
	&
	\leq
	C
	\| x - y \|_H
	\| x - y \|_{H_\delta} 
	( 1 
	+ 
	\| x + O_t(\omega) \|_{H_\kappa}^c
	+ 
	\| y + O_t(\omega) \|_{H_\kappa}^c 
	)
	\\
	&
	\leq
	C 
	\| ( -A )^\delta \|_{L(H)}
	\max \{ \| ( - A )^\kappa \|_{L(H)}^c, 1 \}
	( 1 
	+ 2( r + \sup\nolimits_{ u \in [0,T] } \| O_u(\omega) \|_H )^c )
	\| x - y \|_H^2 
	\\
	&
	\leq 
	K(r, \omega) 
	\| x - y \|_H^2
	<
	\infty
	.
	\end{split}
	\end{equation}
	In addition,
	observe that
	the assumption that for every
	$ x, y \in H $
	it holds that
	$ \langle x, Ax +  F( x + y) \rangle_H
	\leq
	\Phi( y ) ( 1 + \| x \|_H^2 ) $
	shows that for every
	$ t \in [0,T] $,
	$ x, y \in H $,
	$ \omega \in \Omega $
	it holds that	
	\begin{equation}
	\begin{split} 
	\label{eq:coercivity0}
	\langle x, A x + F( x + O_t(\omega) ) \rangle_H
	&
	\leq
	\Phi( O_t( \omega) ) ( 1 + \| x \|_H^2 )
	%
	\leq
	\sup\nolimits_{ u \in [0,T] } 
	\Phi( O_u(\omega) ) ( 1 + \| x \|_H^2 ) 
	.
	\end{split}
	\end{equation} 
	Moreover, note that the 
	assumption that
	$ \dim(H) < \infty $,
	the assumption that $ F \in \mathcal{C}(H, H) $,
	and
	the assumption that
	$ O \colon [0,T] \times \Omega \to H $
	has continuous sample paths
	ensure that
	for every 
	$ r \in (0, \infty) $,
	$ \omega \in \Omega $
	it holds that 
	$ ( [s,T] \times H \ni (u ,x) \mapsto  
	( A x 
	+
	F(x + O_u(\omega) )
	)
	\in H
	)
	\in
	\mathcal{C}([s,T] \times H, H ) $
	and  
	\begin{equation}
	\begin{split}
	\sup\nolimits_{ u \in [s,T] }
	\sup\nolimits_{ x \in H, \| x \|_H \leq r }
	\| A x + F( x + O_u(\omega) ) \|_H 
	<
	\infty
	. 
	\end{split}
	\end{equation}
	%
	%
	The assumption that
	$ (O_t)_{ t \in [0,T]} $ is 
	$ ( \f_t )_{ t \in [0,T] } $-adapted,
	\eqref{eq:Monotone},
	\eqref{eq:coercivity0},
	and Lemma~\ref{lemma:ProveExistence}
	(with
	$ H = H $,
	$ T = T $,
	$ s = s $,
	$ ( \Omega, \F, \P, ( \f_u )_{u \in [s,T]} )
	=
	( \Omega, \F, \P, ( \f_u )_{u \in [s,T]} ) $,
	$ \xi = \xi - O_s $,
	$ f = ( [s,T] \times H \times \Omega \ni (u, h, \omega) 
	\mapsto 
	A h + F( h + O_u( \omega ) ) \in H ) $,
	$ K_t(r, \omega) = 2 K(r, \omega) $
	for
	$ t \in [s,T] $,
	$ r \in (0, \infty) $ 
	in the notation of Lemma~\ref{lemma:ProveExistence})
	therefore prove that 
	\begin{enumerate}[(a)]
		\item \label{item:Uniqueness}
		there exists a unique function
		$  \X \colon [s,T] \times \Omega 
		\to H $
		which satisfies for every
		$ t \in [s,T] $,
		$ \omega \in \Omega $
		that
		$ ( [s,T] \ni u \mapsto
		\X_u (\omega) \in H )
		\in \mathcal{C}( [s,T], H ) $
		and
		\begin{equation} 
		\label{eq:EquationSatisfied}
		\X_t
		(\omega)
		=
		\xi(\omega) - O_s(\omega)
		+
		\int_s^t [ A \X_u (\omega) + F ( \X_u (\omega) + O_u(\omega) ) ] \, du
		%
		\end{equation} 
		and
		\item \label{item:Adaptedness1}
		it holds that
		$  \X \colon [s,T] \times \Omega 
		\to H $
		is $ ( \f_t )_{ t \in [s,T] } $-adapted.
	\end{enumerate} 
Next let $ X \colon [s,T] \times \Omega \to H $ 
be the stochastic process with continuous sample paths 
	which satisfies for every
	$ t \in [s,T] $,
	$ \omega \in \Omega $ 
	that
	\begin{equation} 
	\label{eq:defineHelp} 
	X_t(\omega)
	= 
	\X_t
	( \omega )
	+
	O_t(\omega) 
	.
	\end{equation}
	In addition, observe that~\eqref{eq:EquationSatisfied} 
	implies for every 
	$ t \in [s,T] $,
	$ \omega \in \Omega $
	that
	\begin{equation}
		\begin{split} 
			\label{eq:FirstHelp1}
			\X_t(\omega)
			=
			e^{(t-s)A} ( \xi(\omega) - O_s(\omega)  )
			+
			\int_s^t
			e^{(t-u)A}
			F( \X_u(\omega)  + O_u(\omega) ) \, du  
			.
		\end{split}
	\end{equation}
    This and~\eqref{eq:defineHelp}   
    show that for every
	$ t \in [s,T] $,
	$ \omega \in \Omega $ 
	it holds that
	\begin{equation}
	\label{eq:Uniqueness argument}
	X_t(\omega) 
	= 
	e^{ ( t - s ) A } \xi(\omega)
	+
	\int_s^t e^{ ( t - u ) A } F( X_u(\omega)   ) \, du + O_t(\omega) - e^{(t-s)A} O_s(\omega) 
	.
	\end{equation}
	Moreover, observe that for every function 
	$ Y \colon [s,T] \times \Omega \to H $
	%
	with
	$ \forall \, \omega\in\Omega \colon ([s,T] \ni t \mapsto Y_t (\omega))\in \mathcal{C}([s,T],H) $ 
	and
	$ \forall \, t \in [s,T] $, 
	$ \omega \in \Omega \colon Y_t(\omega)  
	= 
	e^{ ( t - s ) A } \xi(\omega)
	+
	\int_s^t e^{ ( t - u ) A } F( Y_u(\omega)   ) \, du + O_t(\omega) - e^{(t-s)A} O_s(\omega) $
	%
	and every
	$ t\in[s,T] $, 
	$ \omega \in \Omega $  
	it holds that
%
	$ Y_t(\omega) - O_t(\omega)
	=
	\xi(\omega) - O_s(\omega)
	+
	\int_s^t [ A ( Y_u(\omega) - O_u(\omega) ) 
	+ F ( [ Y_u(\omega) - O_u(\omega) ] + O_u(\omega) ) ] \, du $.
	The fact that
	for every  
	$ \omega \in \Omega $ 
	it holds that
	$ ( [s, T] \ni t \mapsto 
	[ X_t(\omega) - O_t(\omega) ] \in H )
	\in \mathcal{C}( [s,T], H ) $,
	item~\eqref{item:Uniqueness}, 
	and~\eqref{eq:Uniqueness argument} 
	therefore establish   
	item~\eqref{item:Existence0}. 
	Furthermore, 
	note that 
	item~\eqref{item:Adaptedness1},
	the fact that
	$ ( O_t )_{ t \in [0,T] } $ is $ ( \f_t )_{ t \in [0,T] } $-adapted,
	and~\eqref{eq:defineHelp} 
	establish item~\eqref{item:Adaptedness0}.
	The proof of  
    Corollary~\ref{corollary:Existence} 
	is thus completed.
\end{proof}
\section{Strong a priori bounds based on bootstrap-type arguments}
\label{section:AprioriBound}
In this section we provide in
Lemmas~\ref{lemma:F_Burgers_bootstrap20}--\ref{lemma:F_Burgers_bootstrap220}
appropriate a priori bounds for the  
 approximation process $ ( Y_t )_{ t \in [0,T] } $ introduced in 
 Setting~\ref{setting:AprioriBound} below.
The considered equations can,
in particular, be thought of as discretizations in space and time of an underlying stochastic Burgers equation.
The proofs of Lemmas~\ref{lemma:F_Burgers_bootstrap20}--\ref{lemma:F_Burgers_bootstrap220}
are based on suitable bootstrap-type arguments,
which have been intensively used in the literature to establish regularity properties of solutions to
(stochastic) evolution equations
(cf., e.g., 
\cite{JentzenPusnik2019Published, jr12}
and the references mentioned therein).
%
%
\sloppy 
\begin{setting}
\label{setting:AprioriBound} 
Assume Setting~\ref{setting:main}, 
let $ ( \Omega, \F, \P ) $
be a probability space,
let
$ T \in (0,\infty) $, 
$ \beta \in [0, 1) $, 
$ \gamma \in [0, \beta] $,
$ \xi \in \M( \F, \B(H_{ \beta } ) ) $,
$ F \in \M( \B( H_\gamma), \B(H) ) $,
$ \kappa \in \M( \B([0,T]), \B([0,T]) ) $,
$ Z \in \M( \B( [0,T] ) \otimes \F, \B( H_\gamma) ) $
satisfy for every
$ t \in [0,T] $ that
$ \kappa(t) \leq t $ 
and
$ \sup_{ u \in [0,T] } \| Z_u \|_H +
\int_0^t \| e^{(t-\kappa(s)) A}
F( Z_s ) \|_H \, ds < \infty $,
and let 
$ O \colon [0,T] \times \Omega \to H_\beta $
and 
$ Y \colon [0,T] \times \Omega \to H $ 
be  
stochastic processes
with continuous sample paths
which satisfy for every $ t \in [0,T] $ that
$ Y_t = e^{tA} \xi 
+
\int_0^t e^{(t-\kappa(s)) A}
F( Z_s ) \, ds
+
O_t $.
\end{setting}
%
%
%
%
%
%
\begin{lemma}
	\label{lemma:F_Burgers_bootstrap20}
	%
Assume Setting~\ref{setting:AprioriBound},
	let 
	$ p \in [1, \infty ) $, 
	$ \rho \in [0, \beta] $, 
	$ \alpha \in [0, 1 - \rho ) $,
	and assume that
	\begin{equation} 
	\label{eq:AssumptionFinite}
	\qquad
	\Big( \sup\nolimits_{ v \in H_{\gamma} } 
	\tfrac{ \| F(v) \|_{H_{-\alpha} } }{ 1 + \| v \|_H^2 }
	\Big) < \infty 
	.
	\end{equation} 
	Then 
	\begin{enumerate}[(i)]
	\item \label{item:statement 1} it holds for every
	$ t \in [0,T] $ that $ Y_t(\Omega) \subseteq H_\rho $,
	\item \label{item:statement 2} it holds for every $ t \in [0,T] $ that
	\begin{equation}
	\begin{split}
	\label{eq:esta1}
	\| Y_t \|_{H_\rho} 
	&
	\leq
	\|
	\xi
	\|_{ H_\rho}
	+ 
	\| O_t \|_{ H_\rho }
	+
	\tfrac{ T^{1-\alpha - \rho} }{ 1 - \alpha - \rho }
	\Big(
	\sup\nolimits_{ v \in H_{\gamma}  } 
	\tfrac{ \| F(v) \|_{H_{-\alpha} } }{ 1 + \| v \|_H^2 } 
	\Big) 
	\big( 
	1
	+
	\sup\nolimits_{ u \in [0,T] }
	\| Z_u \|_{H}^2 
	\big)
	< \infty
	,
	\end{split}
	\end{equation}
	and
	\item \label{item:statement 3} it holds for every $ t \in [0,T] $ that 
	\begin{equation}
	\begin{split}
	\label{eq:esta2}
	\| Y_t \|_{\L^{ p}(\P; H_\rho)} 
	&
	\leq
	\|
	\xi
	\|_{\L^{p}(\P; H_\rho)}
	+ 
	\| O_t \|_{ \L^p( \P; H_\rho )}
	\\
	&
	\quad
	+
	\tfrac{   T^{1-\alpha - \rho} }{ 1 - \alpha - \rho }
	\Big(
	\sup\nolimits_{ v \in H_{\gamma} } 
	\tfrac{ 1 + \| F(v) \|_{H_{-\alpha} } }{ 1 +  \| v \|_H^2 } 
	\Big) 
	\big( 
	1
	+
	\sup\nolimits_{ u \in [0,T] }
	\| Z_u
	\|_{\L^{2p}(\P; H ) }^2 
	\big) 
	.
	\end{split}
	\end{equation}
	\end{enumerate}
\end{lemma}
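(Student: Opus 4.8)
The plan is to exploit the decomposition of $Y_t$ built into Setting~\ref{setting:AprioriBound}, namely $Y_t = e^{tA}\xi + I_t + O_t$ with $I_t := \int_0^t e^{(t-\kappa(s))A} F(Z_s)\, ds$, and to bound each of the three summands in $H_\rho$. Since $\rho \le \beta$ the inclusion $H_\beta \subseteq H_\rho$ is continuous, which shows that $\xi$, $e^{tA}\xi$, and $O_t$ take values in $H_\rho$; moreover, because $\sup_{h\in\H}\values_h < 0$ forces $\|e^{tA}\|_{L(H)} \le 1$ and $e^{tA}$ commutes with $(-A)^\rho$, one obtains $\|e^{tA}\xi\|_{H_\rho} \le \|\xi\|_{H_\rho}$, which is finite for every $\omega$ since $\|(-A)^{\rho-\beta}\|_{L(H)} < \infty$. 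The summand $O_t$ is simply carried along. Hence the whole argument reduces to showing that $I_t$ takes values in $H_\rho$ together with the quantitative bound on $\|I_t\|_{H_\rho}$.

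For $I_t$ the decisive ingredient is the standard parabolic smoothing estimate $\|(-A)^{\rho+\alpha}e^{rA}\|_{L(H)} \le (\tfrac{\rho+\alpha}{e})^{\rho+\alpha} r^{-(\rho+\alpha)} \le r^{-(\rho+\alpha)}$ for $r \in (0,\infty)$, available because $\rho+\alpha \in [0,1)$, together with the elementary inequality $t - \kappa(s) \ge t - s$ coming from the hypothesis $\kappa(s) \le s$. Using the factorisation $(-A)^\rho e^{rA} = (-A)^{\rho+\alpha} e^{rA} (-A)^{-\alpha}$ on $H$ for $r > 0$, this yields for a.e.\ $s \in [0,t]$ that
\[
\|(-A)^\rho e^{(t-\kappa(s))A} F(Z_s)\|_H
\le (t-\kappa(s))^{-(\rho+\alpha)} \|F(Z_s)\|_{H_{-\alpha}}
\le (t-s)^{-(\rho+\alpha)} \|F(Z_s)\|_{H_{-\alpha}} .
\]
Since $Z_s$ takes values in $H_\gamma$, \eqref{eq:AssumptionFinite} bounds $\|F(Z_s)\|_{H_{-\alpha}}$ by $(\sup_{v\in H_\gamma}\tfrac{\|F(v)\|_{H_{-\alpha}}}{1+\|v\|_H^2})(1+\sup_{u\in[0,T]}\|Z_u\|_H^2) < \infty$, and $\int_0^t (t-s)^{-(\rho+\alpha)}\, ds = \tfrac{t^{1-\alpha-\rho}}{1-\alpha-\rho} \le \tfrac{T^{1-\alpha-\rho}}{1-\alpha-\rho}$. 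As $(-A)^\rho$ is closed and, by the above, $s \mapsto (-A)^\rho e^{(t-\kappa(s))A}F(Z_s)$ is Bochner integrable in $H$, interchanging the closed operator with the Bochner integral gives $I_t(\omega) \in H_\rho$ with $(-A)^\rho I_t(\omega) = \int_0^t (-A)^\rho e^{(t-\kappa(s))A}F(Z_s(\omega))\, ds$, whence the asserted bound on $\|I_t\|_{H_\rho}$. Combining the three estimates establishes items~\eqref{item:statement 1} and~\eqref{item:statement 2}.

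For item~\eqref{item:statement 3} I would take $\L^p(\P)$-norms in the pathwise decomposition; the triangle inequality in $\L^p(\P;H_\rho)$ turns the $e^{tA}\xi$- and $O_t$-contributions into $\|\xi\|_{\L^p(\P;H_\rho)}$ and $\|O_t\|_{\L^p(\P;H_\rho)}$, while for $I_t$ one starts from $\|I_t\|_{H_\rho} \le \int_0^t (t-s)^{-(\rho+\alpha)}\|F(Z_s)\|_{H_{-\alpha}}\, ds$, pulls the $\L^p(\P)$-norm inside the $ds$-integral via Minkowski's integral inequality, bounds $\|F(Z_s)\|_{H_{-\alpha}} \le 1 + \|F(Z_s)\|_{H_{-\alpha}} \le (\sup_{v\in H_\gamma}\tfrac{1+\|F(v)\|_{H_{-\alpha}}}{1+\|v\|_H^2})(1+\|Z_s\|_H^2)$, and then uses $\|\,1+\|Z_s\|_H^2\,\|_{\L^p(\P)} \le 1 + \|Z_s\|_{\L^{2p}(\P;H)}^2 \le 1 + \sup_{u\in[0,T]}\|Z_u\|_{\L^{2p}(\P;H)}^2$; integrating the remaining kernel once more produces the factor $\tfrac{T^{1-\alpha-\rho}}{1-\alpha-\rho}$, which is precisely~\eqref{eq:esta2}. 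I expect the only slightly delicate point to be the measurability bookkeeping — namely that $(s,v)\mapsto e^{(t-\kappa(s))A}v$ is jointly measurable, so that $s\mapsto e^{(t-\kappa(s))A}F(Z_s)$ is measurable, and that $I_t$ is measurable as an $H_\rho$-valued map — which I would settle with Lemma~\ref{lemma:JointMeasurability}, Lemma~\ref{lemma:Tonelli}, and the closedness of $(-A)^\rho$; the estimates themselves are routine consequences of parabolic smoothing and the bound $\kappa(s)\le s$.
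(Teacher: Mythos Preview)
Your proposal is correct and follows essentially the same route as the paper: decompose $Y_t$ into the three summands, use the semigroup contraction on $e^{tA}\xi$, apply the parabolic smoothing estimate $\|(-A)^{\rho+\alpha}e^{rA}\|_{L(H)}\le r^{-(\rho+\alpha)}$ together with $t-\kappa(s)\ge t-s$ and~\eqref{eq:AssumptionFinite} to bound the convolution term, and then pass to $\L^p(\P)$ via Minkowski's integral inequality. The paper's proof is slightly terser about the $H_\rho$-membership of the Bochner integral and does not discuss measurability explicitly, but the core computations are identical.
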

\begin{proof}[Proof of Lemma~\ref{lemma:F_Burgers_bootstrap20}]
	Throughout this proof assume w.l.o.g.\ that
	$ \sup_{ v \in H_\gamma } \| F(v) \|_H > 0 $.
	Note that 
	the assumption that 
	$ \forall \, t \in [0,T] \colon \kappa(t) \leq t $
	implies 
	that
	for every
	$ t \in (0, T] $ 
	it holds 
	that
	\begin{equation}
	\begin{split} 
	\label{eq:SomeImport}
	&
	\int_0^t
	\| 
	e^{(t-\kappa(u))A}  
	F( Z_u )
	\|_{ H_\rho } 
	\, 
	du
	\leq
	\int_0^t
	\| 
	(-A)^{ \alpha + \rho }
	e^{(t-\kappa(u))A}  
	\|_{ L(H) }
	\| 
	F( Z_u )
	\|_{ H_{- \alpha} } 
	\, 
	du
	\\
	& \leq
	\int_0^t 
	(t - \kappa(u) )^{-\alpha-\rho} 
	\| F( Z_u )
	\|_{ H_{ - \alpha } } 
	\, 
	du
	\\
	&
	\leq 
	\Big(
	\sup\nolimits_{ v \in H_\gamma } 
	\tfrac{ \| F(v) \|_{H_{-\alpha} } }{ 1 + \| v \|_H^2 } 
	\Big) 
	\int_0^t
	( t - \kappa(u))^{- \alpha - \rho}
	\big( 
	1
	+
	\| Z_u
	\|_{  H }^2
	\big) 
	\, 
	du
	\\
	&
	\leq 
	\Big(
	\sup\nolimits_{ v \in H_\gamma } 
	\tfrac{ \| F(v) \|_{H_{-\alpha} } }{ 1 +  \| v \|_H^2 } 
	\Big) 
	\int_0^t
	( t - u )^{- \alpha - \rho} 
	\big(
	1
	+
	\| Z_u
	\|_H^2 
	\big)
	\, du
	.
	\end{split} 
	\end{equation} 
	Hence, we obtain that for every $ t \in (0,T] $ it holds that
	\begin{equation}
	\begin{split} 
	&
	\int_0^t
	\| 
	e^{(t-\kappa(u))A}  
	F( Z_u )
	\|_{ H_\rho } 
	\, 
	du
	\leq 
	\tfrac{   t^{1-\alpha - \rho} }{ 1 - \alpha - \rho }
	\Big(
	\sup\nolimits_{ v \in H_\gamma } 
	\tfrac{ \| F(v) \|_{H_{-\alpha} } }{ 1 + \| v \|_H^2 } 
	\Big) 
	\big( 
	1
	+
	\sup\nolimits_{ u \in [0,T] }
	\| Z_u
	\|_H^2 
	\big) 
	.
	\end{split}
	\end{equation}
	%
	%
	%
	%
	%
	The triangle inequality,
	the assumption that 
	$ \sup_{ u \in [0,T] } \| Z_u \|_H < \infty $,
	and~\eqref{eq:AssumptionFinite} 
	therefore 
	prove that for every
	$ t \in [0,T] $ it holds that 
	$ Y_t(\Omega) \subseteq H_\rho $
	and
	\begin{equation}
	\begin{split}
	\label{eq:Tria10}
	\| Y_t \|_{H_\rho}
	&
	\leq
	\|  
	\xi
	\|_{H_\rho}
	+
	\int_0^t
	\| 
	e^{(t-\kappa(u) )A}  
	F( Z_u )
	\|_{ H_\rho } 
	\, 
	du
	+
	\| O_t \|_{ H_\rho } 
	\\
	&
	\leq 
	\|  
	\xi
	\|_{H_\rho}
	+
	\| O_t \|_{ H_\rho } 
	+
	\tfrac{   T^{1-\alpha - \rho} }{ 1 - \alpha - \rho }
	\Big(
	\sup\nolimits_{ v \in H_\gamma } 
	\tfrac{ \| F(v) \|_{H_{-\alpha} } }{ 1 + \| v \|_H^2 } 
	\Big) 
	\big( 
	1
	+
	\sup\nolimits_{ u \in [0,T] }
	\| Z_u
	\|_H^2 
	\big) 
	< \infty 
	.
	\end{split}
	\end{equation}
	This establishes items~\eqref{item:statement 1}
	and~\eqref{item:statement 2}.
	%
	%
%
%
Next note that~\eqref{eq:SomeImport}
and
Minkowski's integral inequality  
(see, e.g., \cite[Proposition~8 in A.1]{jk12})
ensure that
for every $ t \in (0, T] $ it holds that
\begin{equation}
\begin{split} 
\label{eq:InLp}
&
\int_0^t
\| 
e^{(t-\kappa(u))A}  
F( Z_u )
\|_{ \L^p(\P; H_\rho) } 
\, 
du
\leq 
\tfrac{   t^{1-\alpha - \rho} }{ 1 - \alpha - \rho }
\Big(
\sup\nolimits_{ v \in H_\gamma } 
\tfrac{ 1 + \| F(v) \|_{H_{-\alpha} } }{ 1 + \| v \|_H^2 } 
\Big) 
\big( 
1
+
\sup\nolimits_{ u \in [0,T] }
\| Z_u \|_{ \L^{2p}(\P; H) }^2  
\big) 
.
\end{split}
\end{equation}
%
	The triangle inequality 
	therefore
	establishes
	item~\eqref{item:statement 3}. 
	This completes the proof of Lemma~\ref{lemma:F_Burgers_bootstrap20}.
\end{proof}
\begin{lemma}
	\label{lemma:F_Burgers_bootstrap0}
	%
Assume Setting~\ref{setting:AprioriBound},
	let  
	$ p \in [1,\infty) $,  
	$ \rho \in [ 0, \beta ] $,
	$ \eta \in [ \rho, \beta ] $, 
	$ \alpha_1 \in [0, 1 - \rho ) $,
	$ \alpha_2 \in [0, 1 - \eta ) $,
	and
	assume for every
	$ t \in [0,T] $ that 
	$ Z_t( \Omega ) \subseteq H_\rho $,   
	$ \sup\nolimits_{ u \in [0,T] }
	\| Z_u
	\|_{ H_\rho }
	\leq
	\sup\nolimits_{ u \in [0,T] }
	\| Y_u
	\|_{ H_\rho } $, 
	$ \sup\nolimits_{ u \in [0,T] }
	\| Z_u
	\|_{\L^{2p}(\P; H_\rho ) }
	\leq
	\sup\nolimits_{ u \in [0,T] }
	\| Y_u
	\|_{\L^{2p}(\P; H_\rho ) } $,
	and
	\begin{equation} 
	\label{eq:Assumption1}
	\qquad
	\Big( 
	\sup\nolimits_{ v \in H_{ \max \{ \gamma, \rho \} } } 
	\tfrac{ \| F(v) \|_{H_{-\alpha_2} } }{ 1 + \| v \|_{H_\rho}^2 } 
	\Big)
	+
	\Big( 
	\sup\nolimits_{ v \in H_{\gamma}  } 
	\tfrac{ \| F(v) \|_{H_{-\alpha_1} } }{ 1 + \| v \|_{H}^2 }
	\Big) 
	< \infty 
	.
	\end{equation} 
	Then 
	\begin{enumerate}[(i)]
	\item \label{item:statement 1 strong}
	it holds for every $ t \in [0,T] $ that
	$ Y_t(\Omega) \subseteq H_\eta $,
	\item \label{item:statement 2 strong}
	it holds for every $ t \in [0,T] $ that
	\begin{equation} 
	\begin{split}
	\label{eq:Result1}  
	\|
	Y_t
	\|_{ H_\eta }  
	&
	\leq
	\|  
	\xi
	\|_{ H_\eta }
	+ 
	\| O_t \|_{ H_\eta }
	+
	\tfrac{ T^{1 - \alpha_2 - \eta} }{ 1 - \alpha_2 - \eta}
	\Big(
	\sup\nolimits_{ v \in H_{ \max \{ \gamma, \rho \} } }
	\tfrac{  \| F(v) \|_{H_{ - \alpha_2 } } }
	{ 1 + \| v \|_{ H_\rho }^2 }
	\Big) 
	\\
	&
	\cdot 
	\Big[
	1
	+
	\|
	\xi
	\|_{ H_\rho }
	+
	\sup\nolimits_{ u \in [0,T] }
	\| O_u \|_{ H_\rho }
	\\
	&
	\quad 
	+
	\tfrac{   T^{1-\alpha_1 - \rho} }{ 1 - \alpha_1 - \rho }
	\Big(
	\sup\nolimits_{ v \in H_{\gamma} } 
	\tfrac{ \| F(v) \|_{H_{-\alpha_1} } }{ 1 + \| v \|_H^2 } 
	\Big) 
	\big( 
	1
	+
	\sup\nolimits_{ u \in [0,T] }
	\| Z_u
	\|_H^2 
	\big)
	\Big]^2
	< \infty 
	,
	\end{split}
	\end{equation}
	and
	\item \label{item:statement 3 strong} 
	it holds for every $ t \in [0,T] $ that
	\begin{equation} 
	\begin{split} 
	\label{eq:Result2} 
	\|
	Y_t
	\|_{\L^p(\P; H_\eta ) }  
	&
	\leq
	\|  
	\xi
	\|_{\L^{p}(\P; H_\eta)}
	+
	\| O_t \|_{ \L^{p}( \P; H_\eta )}
	+
	\tfrac{ T^{1 - \alpha_2 - \eta} }{ 1 - \alpha_2 - \eta}
	\Big(
	\sup\nolimits_{ v \in H_{ \max \{ \gamma, \rho \} } }
	\tfrac{ 1 + \| F(v) \|_{H_{ - \alpha_2 } } }
	{ 1 + \| v \|_{ H_\rho }^2 }
	\Big) 
	\\
	&
	\cdot 
	\Big[
	1
	+
	\|
	\xi
	\|_{\L^{2 p}(\P; H_\rho)}
	+
	\sup\nolimits_{ u \in [0,T] }
	\| O_u \|_{ \L^{2p}( \P; H_\rho )}
	\\
	&
	\quad
	+
	\tfrac{   T^{1-\alpha_1 - \rho} }{ 1 - \alpha_1 - \rho }
	\Big(
	\sup\nolimits_{ v \in H_{\gamma} } 
	\tfrac{ 1 + \| F(v) \|_{H_{-\alpha_1} } }{  1 + \| v \|_H^2 } 
	\Big) 
	\big( 
	1
	+
	\sup\nolimits_{ u \in [0,T] }
	\| Z_u
	\|_{\L^{4 p}(\P; H ) }^2 
	\big) 
	\Big]^2
	.
	\end{split}
	\end{equation}
	\end{enumerate} 
\end{lemma}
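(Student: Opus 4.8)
The plan is to run a single bootstrap step: first invoke Lemma~\ref{lemma:F_Burgers_bootstrap20} at the lower smoothness level $\rho$ to control $\sup_{u\in[0,T]}\|Y_u\|_{H_\rho}$ pathwise and in $\L^{2p}(\P)$, and then feed those bounds into a second parabolic smoothing estimate carried out at the higher level $\eta$. Concretely, I would apply Lemma~\ref{lemma:F_Burgers_bootstrap20} once with $(p,\rho,\alpha)$ replaced by $(p,\rho,\alpha_1)$ and once with $(p,\rho,\alpha)$ replaced by $(2p,\rho,\alpha_1)$; this is admissible since $\rho\in[0,\beta]$, $\alpha_1\in[0,1-\rho)$, and the finiteness hypothesis~\eqref{eq:AssumptionFinite} of that lemma with $\alpha=\alpha_1$ is precisely the second summand of~\eqref{eq:Assumption1}. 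This produces $Y_t(\Omega)\subseteq H_\rho$ for every $t\in[0,T]$ together with the bounds~\eqref{eq:esta1} and~\eqref{eq:esta2} (the latter read with $p$ replaced by $2p$). Combining these with the two standing hypotheses $\sup_u\|Z_u\|_{H_\rho}\le\sup_u\|Y_u\|_{H_\rho}$ and $\sup_u\|Z_u\|_{\L^{2p}(\P;H_\rho)}\le\sup_u\|Y_u\|_{\L^{2p}(\P;H_\rho)}$ then yields explicit bounds for $\sup_u\|Z_u\|_{H_\rho}$ and for $\sup_u\|Z_u\|_{\L^{2p}(\P;H_\rho)}$ by the right-hand sides of~\eqref{eq:esta1} (supremized over $t$) and of~\eqref{eq:esta2}, respectively.

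Next I would produce the higher-order convolution estimate. Since $Z_u\in H_\gamma$ and, by assumption, $Z_u(\Omega)\subseteq H_\rho$, we have $Z_u(\Omega)\subseteq H_{\max\{\gamma,\rho\}}$, so the first supremum in~\eqref{eq:Assumption1} applies to $F(Z_u)$. Repeating the computation~\eqref{eq:SomeImport} from the proof of Lemma~\ref{lemma:F_Burgers_bootstrap20} with $(\rho,\alpha)$ replaced by $(\eta,\alpha_2)$, using $\|(-A)^{\alpha_2+\eta}e^{(t-\kappa(u))A}\|_{L(H)}\le(t-\kappa(u))^{-(\alpha_2+\eta)}\le(t-u)^{-(\alpha_2+\eta)}$ (valid since $\kappa(u)\le u$) and the bound on $\|F(\cdot)\|_{H_{-\alpha_2}}$ from~\eqref{eq:Assumption1}, I obtain for every $t\in(0,T]$ that
\[
\int_0^t\|e^{(t-\kappa(u))A}F(Z_u)\|_{H_\eta}\,du
\le
\tfrac{t^{1-\alpha_2-\eta}}{1-\alpha_2-\eta}
\Big(\sup\nolimits_{v\in H_{\max\{\gamma,\rho\}}}\tfrac{\|F(v)\|_{H_{-\alpha_2}}}{1+\|v\|_{H_\rho}^2}\Big)
\big(1+\sup\nolimits_{u\in[0,T]}\|Z_u\|_{H_\rho}^2\big),
\]
the integral $\int_0^t(t-u)^{-(\alpha_2+\eta)}\,du$ being finite because $\alpha_2<1-\eta$. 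Decomposing $Y_t=e^{tA}\xi+\int_0^te^{(t-\kappa(u))A}F(Z_u)\,du+O_t$, and using that $e^{tA}$ is a contraction on every $H_r$, that $\xi\in H_\eta$ (as $\eta\le\beta$), and that $O_t\in H_\beta\subseteq H_\eta$, this shows $Y_t(\Omega)\subseteq H_\eta$, i.e.\ item~\eqref{item:statement 1 strong}. Inserting into this the bound for $\sup_u\|Z_u\|_{H_\rho}$ from the first step and finishing with the elementary inequality $1+a^2\le(1+a)^2$ for $a\ge0$ (to recast $1+\sup_u\|Z_u\|_{H_\rho}^2$ as the squared bracket) then gives~\eqref{eq:Result1}, i.e.\ item~\eqref{item:statement 2 strong}.

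For item~\eqref{item:statement 3 strong} I would rerun the same argument in $\L^p(\P;H_\eta)$: the triangle inequality in $\L^p(\P;H_\eta)$ handles $e^{tA}\xi$ and $O_t$; Minkowski's integral inequality (as in the proof of Lemma~\ref{lemma:F_Burgers_bootstrap20}, \cite[Proposition~8 in A.1]{jk12}) moves the $\L^p(\P)$-norm inside the $du$-integral; and the pointwise estimate $\|F(Z_u)\|_{\L^p(\P;H_{-\alpha_2})}\le\big(\sup\nolimits_{v\in H_{\max\{\gamma,\rho\}}}\tfrac{1+\|F(v)\|_{H_{-\alpha_2}}}{1+\|v\|_{H_\rho}^2}\big)\big(1+\|Z_u\|_{\L^{2p}(\P;H_\rho)}^2\big)$ — which uses $\|(1+\|Z_u\|_{H_\rho}^2)\|_{\L^p(\P)}\le 1+\|Z_u\|_{\L^{2p}(\P;H_\rho)}^2$ — reduces matters to the $\L^{2p}(\P)$-bound for $\sup_u\|Z_u\|_{H_\rho}$ obtained in the first step. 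One further application of $1+a^2\le(1+a)^2$ then matches the stated right-hand side of~\eqref{eq:Result2}.

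I do not expect a genuinely hard step; the proof is essentially bookkeeping. The one point that requires care is that the higher-order estimate above cannot be obtained by a second direct invocation of Lemma~\ref{lemma:F_Burgers_bootstrap20}: the denominator of the relevant supremum over $F$ changes from $1+\|v\|_H^2$ to $1+\|v\|_{H_\rho}^2$, so one has to reproduce the convolution estimate~\eqref{eq:SomeImport} by hand, and in doing so verify that $Z_u$ indeed takes values in $H_{\max\{\gamma,\rho\}}$ so that the first (not the second) supremum in~\eqref{eq:Assumption1} is the quantity that appears. Once this is set up, reproducing the precise nested form of the right-hand sides of~\eqref{eq:Result1} and~\eqref{eq:Result2} is only a matter of the inequality $1+a^2\le(1+a)^2$ and careful substitution.
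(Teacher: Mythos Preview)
Your proposal is correct and follows essentially the same approach as the paper: reproduce the parabolic smoothing estimate for the convolution term at the level $\eta$ using the $H_{-\alpha_2}$ bound on $F$ (this is the paper's display \eqref{eq:more_regularity_drift0}--\eqref{eq:Next step}), invoke Lemma~\ref{lemma:F_Burgers_bootstrap20} at the level $\rho$ with $\alpha=\alpha_1$ to bound $\sup_u\|Y_u\|_{H_\rho}$ and $\sup_u\|Y_u\|_{\L^{2p}(\P;H_\rho)}$, transfer these to $Z$ via the standing hypotheses, and combine via the triangle inequality and $1+a^2\le(1+a)^2$. The only cosmetic difference is ordering (the paper sets up the $\eta$-level convolution estimate first and then calls Lemma~\ref{lemma:F_Burgers_bootstrap20}, whereas you do the reverse) and that a single application of Lemma~\ref{lemma:F_Burgers_bootstrap20} with exponent $2p$ already yields both the pathwise bound (item~\eqref{item:statement 2}) and the $\L^{2p}$ bound (item~\eqref{item:statement 3}), so your two separate invocations are mildly redundant.
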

\begin{proof}[Proof of Lemma~\ref{lemma:F_Burgers_bootstrap0}]
	Throughout this proof assume w.l.o.g.\ that
	$ \sup_{ v \in H_\gamma } \| F(v) \|_H > 0 $.
	Note that 
	the assumption that 
	$ \forall \, t \in [0,T] \colon \kappa(t) \leq t $
	implies 
	that
	for every
	$ t \in (0,T] $
	it holds
	that
	\begin{equation}
	\begin{split}
	\label{eq:more_regularity_drift0}
	&
	\int_0^t
	\|
	 e^{(t- \kappa(u) )A}  
	F( Z_u )
	\|_{ H_\eta } 
	\, 
	du
	\leq 
	\int_0^t
	\| 
	(-A)^{\alpha_2 + \eta}
	e^{(t-\kappa(u))A}
	\|_{L(H)}
	\|  
	F( Z_u )
	\|_{ H_{ - \alpha_2 } } 
	\, 
	du
	\\
	&
	\leq
	\int_0^t 
	(t - \kappa(u) )^{-\alpha_2-\eta} 
	\|   F( Z_u )
	\|_{ H_{ - \alpha_2 } } 
	\, 
	du
	\\
	&
	\leq   
	\Big(
	\sup\nolimits_{ v \in H_{ \max \{ \gamma, \rho \} } }
	\tfrac{ \| F(v) \|_{H_{ - \alpha_2 } } }
	{ 1 + \| v \|_{ H_\rho }^2 }
	\Big)
	\int_0^t
	( t - \kappa(u) )^{- \alpha_2 - \eta }
	\big( 
	1
	+
	\| 
	Z_u
	\|_{ H_\rho }^2
	\big)
	\, 
	du
	\\
	&
	\leq   
	\Big(
	\sup\nolimits_{ v \in H_{ \max \{ \gamma, \rho \} } }
	\tfrac{ \| F(v) \|_{H_{ - \alpha_2 } } }
	{ 1 + \| v \|_{ H_\rho }^2 }
	\Big)
	\int_0^t
	( t - u )^{- \alpha_2 - \eta} 
	\big( 
	1
	+
	\| Z_u
	\|_{ H_\rho }^2 
	\big) 
	\, du
	.
	\end{split} 
	\end{equation} 
	Hence, we obtain that for every $ t \in (0,T] $ it holds that
	\begin{equation}
	\begin{split} 
	\label{eq:Next step}
	\int_0^t
	\|
	e^{(t- \kappa(u) )A}  
	F( Z_u )
	\|_{ H_\eta } 
	\, 
	du
	&
	\leq    
	\Big(
	\sup\nolimits_{ v \in H_{ \max \{ \gamma, \rho \} } }
	\tfrac{ \| F(v) \|_{H_{ - \alpha_2 } } }
	{ 1 + \| v \|_{ H_\rho }^2 }
	\Big)
	\tfrac{ t^{1-\alpha_2 - \eta} }{ 1 - \alpha_2 - \eta}
	\big( 
	1
	+
	\sup\nolimits_{ u \in [0,T] } 
	\| 
	Z_u
	\|_{ H_\rho }^2 
	\big)
	\\
	&
	\leq    
	\Big(
	\sup\nolimits_{ v \in H_{ \max \{ \gamma, \rho \} } }
	\tfrac{ \| F(v) \|_{H_{ - \alpha_2 } } }
	{ 1 + \| v \|_{ H_\rho }^2 }
	\Big)
	\tfrac{ t^{1-\alpha_2 - \eta} }{ 1 - \alpha_2 - \eta}
	\big( 
	1
	+
	\sup\nolimits_{ u \in [0,T] } 
	\| 
	Z_u
	\|_{ H_\rho }
	\big)^2
	.
	\end{split}
	\end{equation}
	%
	%
	%
Next observe that~\eqref{eq:more_regularity_drift0}
and 
	Minkowski's integral inequality  
	(see, e.g., \cite[Proposition~8 in A.1]{jk12}) 
	ensure that 
	for every $ t \in (0,T] $ it holds that
	\begin{equation}
	\begin{split}
	\label{eq:more_regularity_drift0b}
	& 
	\int_0^t
	\|
	e^{(t- \kappa(u) )A}  
	F( Z_u )
	\|_{\L^p(\P; H_\eta ) } 
	\, 
	du
	\\
	&	  
	\leq    
	\Big(
	\sup\nolimits_{ v \in H_{ \max \{ \gamma, \rho \} } }
	\tfrac{ 1 + \| F(v) \|_{H_{ - \alpha_2 } } }
	{ 1 + \| v \|_{ H_\rho }^2 }
	\Big)
	\tfrac{ t^{1-\alpha_2 - \eta} }{ 1 - \alpha_2 - \eta}
	\big( 
	1
	+
	\sup\nolimits_{ u \in [0,T] }
	\| Z_u
	\|_{\L^{2p}(\P; H_\rho ) }^2 
	\big) 
	\\
	&
	\leq 
	\Big(
	\sup\nolimits_{ v \in H_{ \max \{ \gamma, \rho \} } }
	\tfrac{ 1 + \| F(v) \|_{H_{ - \alpha_2 } } }
	{ 1 + \| v \|_{ H_\rho }^2 }
	\Big)
	\tfrac{ t^{1-\alpha_2 - \eta} }{ 1 - \alpha_2 - \eta}
	\big( 
	1
	+
	\sup\nolimits_{ u \in [0,T] }
	\| Z_u
	\|_{\L^{2p}(\P; H_\rho ) }  
	\big)^2 
	.
	\end{split}
	\end{equation}
	Moreover, 
	note that~\eqref{eq:Assumption1}
	and
	Lemma~\ref{lemma:F_Burgers_bootstrap20}
	(with 
	$ p = 2p $,
	$ \rho = \rho $,
	$ \alpha = \alpha_1 $
	in the notation of Lemma~\ref{lemma:F_Burgers_bootstrap20})
	imply that
	\begin{enumerate}[(a)]
		\item  it holds for every
		$ t \in [0,T] $ that $ Y_t(\Omega) \subseteq H_\rho $,
	\item \label{item:66}
	it holds
	that 
	\begin{equation}
	\begin{split}
	\label{eq:Apply1}
	&  
	\!\!\!\!\! 
	\sup\nolimits_{ u \in [0,T] }
	\| Z_u \|_{H_\rho} 
	\leq
	\sup\nolimits_{ u \in [0,T] }
	\| Y_u
	\|_{ H_\rho }
	\\
	& 
		\!\!\!\!\!
	\leq
	\|
	\xi
	\|_{ H_\rho}
	+
	\sup\nolimits_{ u \in [0,T] } \| O_u \|_{ H_\rho }
	+
	\tfrac{ T^{1-\alpha_1 - \rho} }{ 1 - \alpha_1 - \rho }
	\Big(
	\sup\nolimits_{ v \in H_{\gamma} } 
	\tfrac{ \| F(v) \|_{H_{-\alpha_1} } }{ 1 + \| v \|_H^2 } 
	\Big) 
	\big( 
	1
	+
	\sup\nolimits_{ u \in [0,T] }
	\| Z_u \|_{H}^2 
	\big)
	<
	\infty,
	\end{split}
	\end{equation}
	and
	\item \label{item:67}
	it holds 
	that 
	\begin{equation}
	\begin{split}
	\label{eq:Apply2}
	\sup\nolimits_{ u \in [0,T] }
	\| Z_u \|_{\L^{ 2p}(\P; H_\rho)} 
	&
	\leq
	\sup\nolimits_{ u \in [0,T] }
	\| Y_u
	\|_{\L^{2p}(\P; H_\rho ) }
	\leq
	\|
	\xi
	\|_{\L^{2p}(\P; H_\rho)}
	+
	\sup\nolimits_{ u \in [0,T] }\| O_u \|_{ \L^{2p}( \P; H_\rho )}
	\\
	&
	\quad
	+
	\tfrac{   T^{1-\alpha_1 - \rho} }{ 1 - \alpha_1 - \rho }
	\Big(
	\sup\nolimits_{ v \in H_\gamma } 
	\tfrac{ 1 + \| F(v) \|_{H_{-\alpha_1} } }{ 1 + \| v \|_H^2 } 
	\Big) 
	\big( 
	1
	+
	\sup\nolimits_{ u \in [0,T] }
	\| Z_u
	\|_{\L^{4p}(\P; H ) }^2 
	\big) 
	.
	\end{split}
	\end{equation}
	\end{enumerate} 
	Observe that the triangle inequality,
	\eqref{eq:Assumption1}, 
	\eqref{eq:Next step}
	and
	item~\eqref{item:66}
	ensure that for every $ t \in [0,T] $ it holds that
	$ Y_t ( \Omega) \subseteq H_\eta $
	and	
	\begin{equation}
	\begin{split} 
	\label{eq:some_integral0}
	\| Y_t  \|_{ H_\eta }
	&
	\leq
	\|  
	\xi
	\|_{ H_\eta}
	+ 
	\int_0^t
	\|
	e^{(t-\kappa(u))A}  
	F( Z_u )
	\|_{ H_\eta } 
	\, 
	du
	+  
	\| O_t \|_{ H_\eta }
	\\
	&
	\leq 
	\|  
	\xi
	\|_{ H_\eta}
	+
	\| O_t \|_{ H_\eta }
	+
	\Big(
	\sup\nolimits_{ v \in H_{ \max \{ \gamma, \rho \} } }
	\tfrac{ \| F(v) \|_{H_{ - \alpha_2 } } }
	{ 1 + \| v \|_{ H_\rho }^2 }
	\Big)
	\tfrac{ T^{1-\alpha_2 - \eta} }{ 1 - \alpha_2 - \eta}
	\big( 
	1
	+
	\sup\nolimits_{ u \in [0,T] } 
	\| 
	Z_u
	\|_{ H_\rho }^2 
	\big)
	\\
	&
	\leq
	\|  
	\xi
	\|_{ H_\eta }
	+ 
	\| O_t \|_{ H_\eta }
	+
	\tfrac{ T^{1 - \alpha_2 - \eta} }{ 1 - \alpha_2 - \eta}
	\Big(
	\sup\nolimits_{ v \in H_{ \max \{ \gamma, \rho \} } }
	\tfrac{  \| F(v) \|_{H_{ - \alpha_2 } } }
	{ 1 + \| v \|_{ H_\rho }^2 }
	\Big) 
	\\
	&
	\quad
	\cdot 
	\Big[
	1
	+
	\|
	\xi
	\|_{ H_\rho }
	+
	\sup\nolimits_{ u \in [0,T] }
	\| O_u \|_{ H_\rho }
	\\
	&
	\qquad 
	+
	\tfrac{   T^{1-\alpha_1 - \rho} }{ 1 - \alpha_1 - \rho }
	\Big(
	\sup\nolimits_{ v \in H_{\gamma} } 
	\tfrac{ \| F(v) \|_{H_{-\alpha_1} } }{ 1 + \| v \|_H^2 } 
	\Big) 
	\big( 
	1
	+
	\sup\nolimits_{ u \in [0,T] }
	\| Z_u
	\|_H^2 
	\big)
	\Big]^2
	< \infty 
	.
	\end{split}
	\end{equation}
	This
	establishes items~\eqref{item:statement 1 strong}
 	and~\eqref{item:statement 2 strong}.
	%
	%
	%
	Furthermore,
	observe that
	the triangle inequality
	and~\eqref{eq:more_regularity_drift0b}
	prove that for every $ t \in [0,T] $ it holds that
	\begin{equation}
	\begin{split}
	\| Y_t  \|_{ \L^p(\P; H_\eta) }
	&
	\leq
	\|  
	\xi
	\|_{ \L^p(\P; H_\eta) }
	+ 
	\int_0^t
	\|
	e^{(t-\kappa(u))A}  
	F( Z_u )
	\|_{ \L^p(\P; H_\eta) } 
	\, 
	du
	+  
	\| O_t \|_{ \L^p(\P; H_\eta) }
	\\
	&
	\leq 
	\|  
	\xi
	\|_{ \L^p(\P; H_\eta) } 
	+  
	\| O_t \|_{ \L^p(\P; H_\eta) }
	\\
	&
	\quad 
	+
	\Big(
	\sup\nolimits_{ v \in H_{ \max \{ \gamma, \rho \} } }
	\tfrac{ 1 + \| F(v) \|_{H_{ - \alpha_2 } } }
	{ 1 + \| v \|_{ H_\rho }^2 }
	\Big)
	\tfrac{ T^{1-\alpha_2 - \eta} }{ 1 - \alpha_2 - \eta}
	\big( 
	1
	+
	\sup\nolimits_{ u \in [0,T] }
	\| Z_u
	\|_{\L^{2p}(\P; H_\rho ) }  
	\big)^2 
	.
	\end{split}
	\end{equation}
	Combining this
	and 
	item~\eqref{item:67} 
	establishes
	item~\eqref{item:statement 3 strong}.
	The proof of Lemma~\ref{lemma:F_Burgers_bootstrap0}
	is thus completed.
\end{proof}
\begin{lemma}
	\label{lemma:F_Burgers_bootstrap220}
	%
Assume Setting~\ref{setting:AprioriBound},
	let 
	$ p \in [1,\infty) $,
	$ \rho \in [0, \beta ] $,    
	$ \eta \in [\rho, \beta ] $,
	$ \iota \in [ \eta, \beta ] $,
	$ \alpha_1 \in [0, 1 - \rho ) $,
	$ \alpha_2 \in [0, 1 - \eta ) $,
	and assume for every
	$ t \in [0,T] $ that 
	$ Z_t( \Omega ) \subseteq H_\eta $, 
	$ \sup\nolimits_{ u \in [0,T] }
	\| Z_u
	\|_{ H_\rho   }
	\leq
	\sup\nolimits_{ u \in [0,T] }
	\| Y_u
	\|_{ H_\rho } $,
	$ \sup\nolimits_{ u \in [0,T] }
	\|
	Z_u
	\|_{ H_\eta  }  
	\leq \sup\nolimits_{ u \in [0,T] }
	\|
	Y_u
	\|_{ H_\eta } $,
	\sloppy 
	$ \sup\nolimits_{ u \in [0,T] }
	\| Z_u
	\|_{\L^{4p}(\P; H_\rho ) }
	\leq
	\sup\nolimits_{ u \in [0,T] }
	\| Y_u
	\|_{\L^{4p}(\P; H_\rho ) } $,
	$ \sup\nolimits_{ u \in [0,T] }
	\|
	Z_u
	\|_{\L^{2p}(\P; H_\eta ) }  
	\leq \sup\nolimits_{ u \in [0,T] }
	\|
	Y_u
	\|_{\L^{2p}(\P; H_\eta ) } $,
	and
	\begin{equation}
	\label{eq:AssumptionFinite2}
	\Big[ 
	\sup\nolimits_{ v \in H_{ \max \{ \gamma, \eta \} }  }
	\tfrac{ \| F(v) \|_{H  } }
	{ 1 + \| v \|_{H_{ \eta } }^2 }
	\Big]
	+
	\Big[
	\sup\nolimits_{ v \in H_{ \max \{ \gamma, \rho \} } }
	\tfrac{ \| F(v) \|_{H_{ - \alpha_2 } } }
	{ 1 + \| v \|_{ H_\rho }^2 }
	\Big]
	+
	\Big[ 
	\sup\nolimits_{ v \in H_{\gamma}  } 
	\tfrac{ \| F(v) \|_{H_{-\alpha_1} } }{  1 + \| v \|_H^2 }
	\Big]
	< \infty 
	.
	\end{equation} 
	Then 
	\begin{enumerate}[(i)]
	\item \label{item:statement 1 stronger} it holds for every
	$ t \in [0,T] $ that
	$ Y_t(\Omega) \subseteq H_\iota $,
	\item \label{item:statement 2 stronger} it holds for every
	$ t \in [0,T] $ that
	\begin{equation}
	\begin{split} 
	\label{eq:result1}
	&  
	\| Y_t  \|_{ H_\iota }
	\leq
	\| 
	\xi
	\|_{ H_\iota }
	+ 
	\| 
	O_t
	\|_{ H_\iota }
	+ 
	\tfrac{ T^{1-\iota} }{ 1 - \iota }
	\Big(
	\sup\nolimits_{ v \in H_{ \max \{ \gamma, \eta \} } }
	\tfrac{ \| F(v) \|_{H  } }
	{ 1 + \| v \|_{H_{ \eta } }^2 }
	\Big) 
	\\
	&
	\cdot
	\bigg[
	1
	+
	\|  
	\xi
	\|_{ H_\eta }
	+
	\sup\nolimits_{ u \in [0,T] }
	\| O_u \|_{ H_\eta }
	+
	\tfrac{ T^{1 - \alpha_2 - \eta} }{ 1 - \alpha_2 - \eta}
	\Big(
	\sup\nolimits_{ v \in H_{ \max \{ \gamma, \rho \} } }
	\tfrac{  \| F(v) \|_{H_{ - \alpha_2 } } }
	{ 1 + \| v \|_{ H_\rho }^2 }
	\Big) 
	\\
	&
	\cdot 
	\Big[
	1
	+
	\|
	\xi
	\|_{ H_\rho }
	+
	\sup\nolimits_{ u \in [0,T] }
	\| O_u \|_{ H_\rho }
	\\
	&
	\quad 
	+
	\tfrac{   T^{1-\alpha_1 - \rho} }{ 1 - \alpha_1 - \rho }
	\Big(
	\sup\nolimits_{ v \in H_{\gamma} } 
	\tfrac{ \| F(v) \|_{H_{-\alpha_1} } }{  1 + \| v \|_H^2 } 
	\Big) 
	\big( 
	1
	+
	\sup\nolimits_{ u \in [0,T] }
	\| Z_u
	\|_{H}^2 
	\big) 
	\Big]^2
	\bigg]^2
	< \infty
	,
	\end{split}
	\end{equation}
	and
	\item \label{item:statement 3 stronger} it holds for every
	$ t \in [0,T] $ that
	\begin{equation}
	\begin{split} 
	\label{eq:result2}
	&  
	\| Y_t  \|_{\L^{ p}(\P; H_\iota )}
	\leq
	\| 
	\xi
	\|_{\L^{ p}(\P; H_\iota )}
	+ 
	\| 
	O_t
	\|_{\L^{ p}(\P; H_\iota )}
	+ 
	\tfrac{ T^{1-\iota} }{ 1 - \iota }
	\Big(
	\sup\nolimits_{ v \in H_{ \max \{ \gamma, \eta \} } }
	\tfrac{ 1 + \| F(v) \|_{H  } }
	{ 1 + \| v \|_{H_{ \eta } }^2 }
	\Big) 
	\\
	& \cdot
	\bigg[
	1
	+
	\|  
	\xi
	\|_{\L^{2p}(\P; H_\eta)}
	+
	\sup\nolimits_{ u \in [0,T] }
	\| O_u \|_{ \L^{2p}( \P; H_\eta )}
	+
	\tfrac{ T^{1 - \alpha_2 - \eta} }{ 1 - \alpha_2 - \eta}
	\Big(
	\sup\nolimits_{ v \in H_{ \max \{ \gamma, \rho \} } }
	\tfrac{ 1 + \| F(v) \|_{H_{ - \alpha_2 } } }
	{ 1 + \| v \|_{ H_\rho }^2 }
	\Big) 
	\\
	&
	\cdot 
	\Big[
	1
	+
	\|
	\xi
	\|_{\L^{4 p}(\P; H_\rho)}
	+
	\sup\nolimits_{ u \in [0,T] }
	\| O_u \|_{ \L^{4p}( \P; H_\rho )}
	\\
	&
	\quad 
	+
	\tfrac{   T^{1-\alpha_1 - \rho} }{ 1 - \alpha_1 - \rho }
	\Big(
	\sup\nolimits_{ v \in H_{\gamma}  } 
	\tfrac{ 1 + \| F(v) \|_{H_{-\alpha_1} } }{  1 + \| v \|_H^2 } 
	\Big) 
	\big( 
	1
	+
	\sup\nolimits_{ u \in [0,T] }
	\| Z_u
	\|_{\L^{8 p}(\P; H ) }^2 
	\big)
	\Big]^2
	\bigg]^2
	.
	\end{split}
	\end{equation}
	\end{enumerate} 
\end{lemma}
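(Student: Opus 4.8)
The plan is to run the same bootstrap mechanism as in the proof of Lemma~\ref{lemma:F_Burgers_bootstrap0}, but one smoothing level higher: first I would bound the drift integral $\int_0^t e^{(t-\kappa(u))A}F(Z_u)\,du$ measured in $H_\iota$ (pathwise and in $\L^p$), then import the $H_\eta$-bounds for $Y$ (hence for $Z$) supplied by Lemma~\ref{lemma:F_Burgers_bootstrap0}, and finally apply the triangle inequality in the identity $Y_t=e^{tA}\xi+\int_0^te^{(t-\kappa(u))A}F(Z_u)\,du+O_t$. As in the previous proofs I would assume w.l.o.g.\ that $\sup_{v\in H_\gamma}\|F(v)\|_H>0$, and I would use throughout that $\iota\le\beta<1$ so that the exponent $1-\iota$ is positive.

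For the drift estimate I would use $\kappa(u)\le u$, the smoothing bound $\|(-A)^{\iota}e^{\tau A}\|_{L(H)}\le\tau^{-\iota}$, and the first growth bound in~\eqref{eq:AssumptionFinite2} to obtain, for every $t\in(0,T]$,
\[
\int_0^t \bigl\| e^{(t-\kappa(u))A} F(Z_u) \bigr\|_{H_\iota} \, du
\le \frac{T^{1-\iota}}{1-\iota}\Bigl( \sup\nolimits_{v \in H_{\max\{\gamma,\eta\}}} \tfrac{\|F(v)\|_H}{1+\|v\|_{H_\eta}^2} \Bigr)\bigl( 1 + \sup\nolimits_{u\in[0,T]} \|Z_u\|_{H_\eta}^2 \bigr),
\]
exactly mirroring~\eqref{eq:more_regularity_drift0}--\eqref{eq:Next step}, and the corresponding $\L^p$-variant via Minkowski's integral inequality (as in~\eqref{eq:more_regularity_drift0b}), with $\|F(v)\|_H$ replaced by $1+\|F(v)\|_H$ and $\|Z_u\|_{H_\eta}^2$ replaced by $\|Z_u\|_{\L^{2p}(\P;H_\eta)}^2$.

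The key input step is to bound $\sup_{u\in[0,T]}\|Z_u\|_{H_\eta}$ and $\sup_{u\in[0,T]}\|Z_u\|_{\L^{2p}(\P;H_\eta)}$. By the assumed comparisons these are dominated by $\sup_u\|Y_u\|_{H_\eta}$ and $\sup_u\|Y_u\|_{\L^{2p}(\P;H_\eta)}$; moreover $\eta\ge\rho$ gives $Z_t(\Omega)\subseteq H_\eta\subseteq H_\rho$, the hypothesis $\sup_u\|Z_u\|_{H_\rho}\le\sup_u\|Y_u\|_{H_\rho}$ holds, the assumed $\L^{4p}$-comparison for $Z$ in $H_\rho$ is exactly what Lemma~\ref{lemma:F_Burgers_bootstrap0} needs when its parameter $p$ is taken to be $2p$, and the second and third summands of~\eqref{eq:AssumptionFinite2} are precisely~\eqref{eq:Assumption1}. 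Hence Lemma~\ref{lemma:F_Burgers_bootstrap0}, applied with $2p$ in place of $p$, yields that $Y_t(\Omega)\subseteq H_\eta$, the pathwise bound~\eqref{eq:Result1} for $\|Y_t\|_{H_\eta}$ (which is $p$-independent), and the bound~\eqref{eq:Result2} for $\|Y_t\|_{\L^{2p}(\P;H_\eta)}$. Substituting these into the two drift estimates above, absorbing the squares via $1+x^2\le(1+x)^2$, and bounding $\|O_t\|_{H_\eta}\le\sup_u\|O_u\|_{H_\eta}$ (resp.\ in $\L^{2p}$), one recovers exactly the nested bracketed expressions on the right-hand sides of~\eqref{eq:result1} and~\eqref{eq:result2}. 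Finiteness throughout follows since $\sup_u\|Y_u\|_{H_\eta}<\infty$ and $\sup_u\|Y_u\|_{\L^{2p}(\P;H_\eta)}<\infty$ by Lemma~\ref{lemma:F_Burgers_bootstrap0}. Finally, $\xi\in H_\beta\subseteq H_\iota$ and $O_t\in H_\beta\subseteq H_\iota$, so the triangle inequality in the mild identity for $Y_t$ shows $Y_t(\Omega)\subseteq H_\iota$ and delivers the claimed inequalities.

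I do not expect a genuine obstacle here: the argument is the already established bootstrap scheme, and the only real care needed is bookkeeping of the nested constants and of the integrability exponents — in particular making sure the $\L^{2p}$-, $\L^{4p}$-, and $\L^{8p}$-norms of $Z$ line up correctly when Lemma~\ref{lemma:F_Burgers_bootstrap0} is invoked at level $2p$, and that the domain indices $\max\{\gamma,\eta\}$ and $\max\{\gamma,\rho\}$ of $F$ are respected so that the suprema appearing in~\eqref{eq:AssumptionFinite2} are exactly the ones used in the estimates.
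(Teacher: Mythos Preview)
Your proposal is correct and follows essentially the same approach as the paper's proof: bound the drift integral in $H_\iota$ via the smoothing estimate and the first growth bound in~\eqref{eq:AssumptionFinite2}, invoke Lemma~\ref{lemma:F_Burgers_bootstrap0} with $p$ replaced by $2p$ to control $\sup_u\|Z_u\|_{H_\eta}$ and $\sup_u\|Z_u\|_{\L^{2p}(\P;H_\eta)}$ via the comparison hypotheses, and conclude by the triangle inequality in the mild identity for $Y_t$. The bookkeeping of exponents ($2p$, $4p$, $8p$) and domain indices ($\max\{\gamma,\eta\}$, $\max\{\gamma,\rho\}$) you describe is exactly what the paper carries out.
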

\begin{proof}[Proof of Lemma~\ref{lemma:F_Burgers_bootstrap220}]
	Throughout this proof 
	assume w.l.o.g.\ that
	$ \sup_{ v \in H_\gamma } \| F(v) \|_H > 0 $.
	Observe that 
	the assumption that
	$ \forall \, t \in [0,T] \colon \kappa(t) \leq t $
	implies 
	that
	for every
	$ t \in (0,T] $
	 it holds 
	that
	\begin{equation}
	\begin{split} 
	\label{eq:estimateTT0}
	&
	\int_0^t
	\|
	e^{(t-\kappa(u))A}  
	F( Z_u )
	\|_{ H_\iota } 
	\, 
	du
	\leq
	\int_0^t 
	(t - \kappa(u))^{-\iota} 
	\|   F( Z_u )
	\|_{H} 
	\, 
	du
	\\
	&
	\leq
	\Big(
	\sup\nolimits_{ v \in H_{ \max \{ \gamma, \eta \} } }
	\tfrac{ \| F(v) \|_{H  } }
	{ 1 + \| v \|_{H_{ \eta } }^2 }
	\Big)
	\int_0^t 
	(t - \kappa(u))^{-\iota} 
	\big( 
	1
	+
	\|    
	Z_u
	\|_{ H_{ \eta } }^2
	\big) 
	\, 
	du
	\\
	&
	\leq 
	\Big(
	\sup\nolimits_{ v \in H_{ \max \{ \gamma, \eta \} } }
	\tfrac{ \| F(v) \|_{H  } }
	{ 1 + \| v \|_{H_{ \eta } }^2 }
	\Big)
	\int_0^t 
	(t - u)^{-\iota} 
	\big( 
	1
	+
	\|    
	Z_u
	\|_{ H_{ \eta } }^2
	\big) 
	\, 
	du
	.
	%
	%
	\end{split} 
	\end{equation} 
	Hence, we obtain that for every $ t \in (0, T] $ it holds that
	\begin{equation}
	\begin{split} 
	\label{eq:Next step again}
	\int_0^t
	\|
	e^{(t-\kappa(u))A}  
	F( Z_u )
	\|_{ H_\iota } 
	\, 
	du
	& 
	\leq
	\Big(
	\sup\nolimits_{ v \in H_{ \max \{ \gamma, \eta \} }  }
	\tfrac{ \| F(v) \|_{H  } }
	{ 1 + \| v \|_{H_{ \eta } }^2 }
	\Big)
	\tfrac{ t^{1-\iota} }{ 1 - \iota }
	\big(
	1
	+
	\sup\nolimits_{ u \in [0,T] }
	\|    Z_u 
	\|_{  H_{ \eta }   }^2
	\big)
	\\
	&
	\leq
	\Big(
	\sup\nolimits_{ v \in H_{ \max \{ \gamma, \eta \} }  }
	\tfrac{  \| F(v) \|_{H  } }
	{ 1 + \| v \|_{H_{ \eta } }^2 }
	\Big)
	\tfrac{ t^{1-\iota} }{ 1 - \iota }
	\big(
	1
	+
	\sup\nolimits_{ u \in [0,T] }
	\|    Z_u 
	\|_{  H_{ \eta }   }
	\big)^2
	.
	\end{split} 
	\end{equation} 
	%
 %
Moreover, note that~\eqref{eq:estimateTT0}
and 
	Minkowski's integral inequality  
	(see, e.g., \cite[Proposition~8 in A.1]{jk12}) 
	prove  that for every $ t \in (0,T] $ it holds that
	\begin{equation}
	\begin{split} 
	\label{eq:estimateTT0b}
	&
	\int_0^t
	\|
	e^{(t-\kappa(u))A}  
	F( Z_u )
	\|_{ \L^p(\P;H_\iota )} 
	\, 
	du
	\\
	&
	\leq
	\Big(
	\sup\nolimits_{ v \in H_{ \max \{ \gamma, \eta \} } }
	\tfrac{ \| F(v) \|_{H  } }
	{ 1 + \| v \|_{H_{ \eta } }^2 }
	\Big)
	\tfrac{ t^{1-\iota} }{ 1 - \iota }
	\big( 
	1
	+
	\sup\nolimits_{ u \in [0,T] }
	\|    Z_u 
	\|_{\L^{2p}(\P; H_{ \eta } ) }^2
	\big) 
	\\
	&
	\leq 
	\Big(
	\sup\nolimits_{ v \in H_{ \max \{ \gamma, \eta \} } }
	\tfrac{ \| F(v) \|_{H  } }
	{ 1 + \| v \|_{H_{ \eta } }^2 }
	\Big)
	\tfrac{ t^{1-\iota} }{ 1 - \iota }
	\big( 
	1
	+
	\sup\nolimits_{ u \in [0,T] }
	\|    Z_u 
	\|_{\L^{2p}(\P; H_{ \eta } ) }
	\big)^2
	.
	\end{split} 
	\end{equation} 
	%
	Next observe that~\eqref{eq:AssumptionFinite2},
	the assumption that 
	$ \sup\nolimits_{ u \in [0,T] }
	\| Z_u
	\|_{ H_\rho   }
	\leq
	\sup\nolimits_{ u \in [0,T] }
	\| Y_u
	\|_{ H_\rho } $,
	the assumption that 
	$ \sup\nolimits_{ u \in [0,T] }
	\| Z_u
	\|_{\L^{4p}(\P; H_\rho ) }
	$
	$ \leq
	\sup\nolimits_{ u \in [0,T] }
	\| Y_u
	\|_{\L^{4p}(\P; H_\rho ) } $,
	and
	Lemma~\ref{lemma:F_Burgers_bootstrap0}
	(with $ p = 2 p $, 
	$ \rho = \rho $,
	$ \eta = \eta $, 
	$ \alpha_1 = \alpha_1 $,
	$ \alpha_2 = \alpha_2 $
	in the notation of 
	Lemma~\ref{lemma:F_Burgers_bootstrap0})
	show that
	\begin{enumerate}[(a)]
	%
%
\item it holds for every $ t \in [0,T] $ that
$ Y_t(\Omega) \subseteq H_\eta $,
	\item \label{item:74} 
	it holds
	 that 
	\begin{equation} 
	\begin{split}  
	\label{eq:Result1b} 
	& 
	\sup\nolimits_{u \in [0,T] }
	\|
	Z_u
	\|_{ H_\eta }  
	\leq \sup\nolimits_{ u \in [0,T] }
	\|
	Y_u
	\|_{ H_\eta }
	\\
	&
	\leq
	\|  
	\xi
	\|_{ H_\eta }
	+
	\sup\nolimits_{ u \in [0,T] }
	\| O_u \|_{ H_\eta }
	+
	\tfrac{ T^{1 - \alpha_2 - \eta} }{ 1 - \alpha_2 - \eta}
	\Big(
	\sup\nolimits_{ v \in H_{ \max \{ \gamma, \rho \} } }
	\tfrac{  \| F(v) \|_{H_{ - \alpha_2 } } }
	{ 1 + \| v \|_{ H_\rho }^2 }
	\Big) 
	\\
	&
	\cdot 
	\Big[
	1
	+
	\|
	\xi
	\|_{ H_\rho }
	+
	\sup\nolimits_{ u \in [0,T] }
	\| O_u \|_{ H_\rho }
	\\
	&
	\quad 
	+
	\tfrac{   T^{1-\alpha_1 - \rho} }{ 1 - \alpha_1 - \rho }
	\Big(
	\sup\nolimits_{ v \in H_{\gamma} } 
	\tfrac{ \| F(v) \|_{H_{-\alpha_1} } }{  1 + \| v \|_H^2 } 
	\Big) 
	\big( 
	1
	+
	\sup\nolimits_{ u \in [0,T] }
	\| Z_u
	\|_H^2 
	\big) 
	\Big]^2
	< \infty 
	,
	\end{split}
	\end{equation}
	and
	\item \label{item:75} 
	it holds 
	that 
	\begin{equation} 
	\begin{split} 
	\label{eq:Result2b} 
	& 
	\sup\nolimits_{u \in [0,T] }
	\|
	Z_u
	\|_{\L^{2p}(\P; H_\eta ) }  
	\leq \sup\nolimits_{ u \in [0,T] }
	\|
	Y_u
	\|_{\L^{2p}(\P; H_\eta ) }
	\\
	&
	\leq
	\|  
	\xi
	\|_{\L^{2 p}(\P; H_\eta)}
	+
	\sup\nolimits_{ u \in [0,T] }
	\| O_u \|_{ \L^{2 p}( \P; H_\eta )}
	+
	\tfrac{ T^{1 - \alpha_2 - \eta} }{ 1 - \alpha_2 - \eta}
	\Big(
	\sup\nolimits_{ v \in H_{ \max \{ \gamma, \rho \} } }
	\tfrac{ 1 + \| F(v) \|_{H_{ - \alpha_2 } } }
	{ 1 + \| v \|_{ H_\rho }^2 }
	\Big) 
	\\
	&
	\quad 
	\cdot 
	\Big[
	1
	+
	\|
	\xi
	\|_{\L^{4 p}(\P; H_\rho)}
	+
	\sup\nolimits_{ u \in [0,T] }
	\| O_u \|_{ \L^{4 p}( \P; H_\rho )}
	\\
	&
	\qquad
	+
	\tfrac{   T^{1-\alpha_1 - \rho} }{ 1 - \alpha_1 - \rho }
	\Big(
	\sup\nolimits_{ v \in H_{\gamma} } 
	\tfrac{ 1 + \| F(v) \|_{H_{-\alpha_1} } }{  1 + \| v \|_H^2 } 
	\Big) 
	\big( 
	1
	+
	\sup\nolimits_{ u \in [0,T] }
	\| Z_u
	\|_{\L^{8 p}(\P; H ) }^2 
	\big) 
	\Big]^2
	.
	\end{split}
	\end{equation}
	\end{enumerate} 
	Note that
	the triangle inequality,
	\eqref{eq:AssumptionFinite2},
	\eqref{eq:Next step again},
	and item~\eqref{item:74}  
	ensure that for every $ t \in [0,T] $ it holds that
	$ Y_t (\Omega) \subseteq H_\iota $
	and
	\begin{equation}
	\begin{split}
	\label{eq:bootstrap240}
	\| Y_t  \|_{H_\iota }
	&
	\leq
	\| 
	\xi
	\|_{ H_\iota }
	+ 
	\int_0^t
	\|
	e^{(t-\kappa(u))A}  
	F( Z_u )
	\|_{ H_\iota } 
	\, 
	du 
	+
	\| 
	O_t
	\|_{ H_\iota }
	\\
	&
	\leq 
	\|  
	\xi
	\|_{ H_\iota }
	+
	\| 
	O_t
	\|_{ H_\iota }
	+
	\Big(
	\sup\nolimits_{ v \in H_{ \max \{ \gamma, \eta \} }  }
	\tfrac{  \| F(v) \|_{H  } }
	{ 1 + \| v \|_{H_{ \eta } }^2 }
	\Big)
	\tfrac{ T^{1-\iota} }{ 1 - \iota }
	\big(
	1
	+
	\sup\nolimits_{ u \in [0,T] }
	\|    Z_u 
	\|_{  H_{ \eta }   }
	\big)^2 
	\\
	&
	\leq 
	\|  
	\xi
	\|_{ H_\iota }
	+
	\| 
	O_t
	\|_{ H_\iota }
	+ 
	\tfrac{ T^{1-\iota} }{ 1 - \iota }
	\Big(
	\sup\nolimits_{ v \in H_{ \max \{ \gamma, \eta \} } }
	\tfrac{  \| F(v) \|_{H  } }
	{ 1 + \| v \|_{H_{ \eta } }^2 }
	\Big) 
	\\
	&
	\quad
	\cdot
	\bigg[
	1
	+
	\|  
	\xi
	\|_{ H_\eta }
	+
	\sup\nolimits_{ u \in [0,T] }
	\| O_u \|_{ H_\eta }
	+
	\tfrac{ T^{1 - \alpha_2 - \eta} }{ 1 - \alpha_2 - \eta}
	\Big(
	\sup\nolimits_{ v \in H_{ \max \{ \gamma, \rho \} } }
	\tfrac{ \| F(v) \|_{H_{ - \alpha_2 } } }
	{ 1 + \| v \|_{ H_\rho }^2 }
	\Big) 
	\\
	&
	\quad
	\cdot 
	\Big[
	1
	+
	\|
	\xi
	\|_{ H_\rho }
	+
	\sup\nolimits_{ u \in [0,T] }
	\| O_u \|_{ H_\rho }
	\\
	&
	\qquad 
	+
	\tfrac{   T^{1-\alpha_1 - \rho} }{ 1 - \alpha_1 - \rho }
	\Big(
	\sup\nolimits_{ v \in H_{\gamma} } 
	\tfrac{ \| F(v) \|_{H_{-\alpha_1} } }{  1 + \| v \|_H^2 } 
	\Big) 
	\big( 
	1
	+
	\sup\nolimits_{ u \in [0,T] }
	\| Z_u
	\|_{H}^2 
	\big) 
	\Big]^2
	\bigg]^2
	<
	\infty 
	.
	\end{split}
	\end{equation}
	This establishes   
	items~\eqref{item:statement 1 stronger}
	and~\eqref{item:statement 2 stronger}.
	Furthermore, observe that the triangle inequality
	and~\eqref{eq:estimateTT0b}
	prove that for every $ t \in [0,T] $ it holds that
	\begin{equation}
	\begin{split}
	\| Y_t  \|_{ \L^p(\P; H_\iota) }
	&
	\leq
	\|  
	\xi
	\|_{ \L^p(\P; H_\iota) }
	+ 
	\int_0^t
	\|
	e^{(t-\kappa(u))A}  
	F( Z_u )
	\|_{ \L^p(\P; H_\iota) } 
	\, 
	du
	+  
	\| O_t \|_{ \L^p(\P; H_\iota) }
	\\
	&
	\leq 
	\|  
	\xi
	\|_{ \L^p(\P; H_\iota) } 
	+  
	\| O_t \|_{ \L^p(\P; H_\iota) }
	\\
	&
	\quad 
	+
	\Big(
	\sup\nolimits_{ v \in H_{ \max \{ \gamma, \eta \} } }
	\tfrac{ 1 + \| F(v) \|_{H  } }
	{ 1 + \| v \|_{H_{ \eta } }^2 }
	\Big)
	\tfrac{ T^{1-\iota} }{ 1 - \iota }
	\big( 
	1
	+
	\sup\nolimits_{ u \in [0,T] }
	\|    Z_u 
	\|_{\L^{2p}(\P; H_{ \eta } ) }
	\big)^2
	.
	\end{split} 
	\end{equation} 
	Combining
	this 
	and
	item~\eqref{item:75}
establishes item~\eqref{item:statement 3 stronger}.
	%
	%
	%
	The proof of Lemma~\ref{lemma:F_Burgers_bootstrap220}
	is thus completed.
\end{proof} 
\section{Properties of the nonlinearity}
\label{section:Properties of the nonlinearity}
In this section  
we recall and derive
in
Subsection~\ref{subsection:Sobolev}
and
in Subsection~\ref{subsection:Nonlinearity} some 
partially well-known properties of
certain Sobolev spaces
and the nonlinearity
appearing in the stochastic Burgers equations,
respectively.
We employ these results
to establish
in
Theorem~\ref{theorem:existence_Burgers}
in 
Section~\ref{section:Existence}
below
the main result of this article.
%
%
%
\begin{setting} 
\label{setting:Examples}
Assume Setting~\ref{setting:main},   
%
%
let
$ \lambda
\colon
\mathcal{B}( (0,1) )
\rightarrow [0, 1] $
be the Lebesgue-Borel
measure on
$ (0,1) $, 
for every measure space
$ ( \Omega, \F, \mu) $,
every measurable space $ ( S, \mathcal{S} ) $,
every set $ R $,
and every function
$ f \colon \Omega \to R $
let
	$ [f]_{\mu, \mathcal{S}} =
	\{
	g \in \M(\F,
	\mathcal{S})
	\colon
	(
	\exists\, D \in \F \colon
	\mu( D ) = 0 \,\,\text{and}\,\,
	\{\omega \in \Omega \colon
	f(\omega) \neq g(\omega)
	\}
	\subseteq D
	)
	\} $,  
let 
$ c_0 \in (0, \infty) $,
$ c_1 \in \R $,
assume that
$ (H, \langle \cdot, \cdot \rangle_H,
\left \| \cdot \right \|_H) 
=
(L^2( \lambda; \R), 
\langle \cdot, \cdot 
\rangle_{L^2( \lambda;\R)} $,
$\left \| \cdot \right \|_{L^2( \lambda; \R)}
) $,
let       
$ ( e_n )_{ n \in \N } \subseteq H $
satisfy
for every
$ n \in \N $ 
that  
$ e_n = [ ( \sqrt{2} \sin( n \pi x ) )_{x \in (0,1) } ]_{ \lambda, \B(\R ) } $,
assume that
$ \H = \{ e_n \colon n \in \N \} $,
assume for every $ n \in \N $ that 
$ \values_{e_n } = - c_0 \pi^2 n^2 $,  
%
%
%
%
for every  
$ v \in W^{1,2}( (0,1), \R ) $  
let
$ \partial v    
\in H $  
satisfy for every
$ \varphi  
\in 
\mathcal{C}_{cpt}^\infty( (0,1), \R ) $ 
that 
$ \langle \partial  v, 
[ \varphi ]_{ \lambda, \mathcal{B}(\R )} \rangle_{H}
=
-
\langle v , 
[ \varphi' ]_{ \lambda, \mathcal{B}(\R )}
\rangle_{H} $,
and
let  
$ F \colon H_{\nicefrac{1}{2}} \to H $
be the function which
satisfies
for every
$ w \in H_{ \nicefrac{1}{2} } $ 
that
$ F(w) = 
c_1
w \partial w 
$.
\end{setting}
Note that for every $ s\in[0,\infty) $,  $ p\in[1,\infty) $ it holds that  
$ (W^{s, p}((0,1),\R), 
\left\| \cdot \right\|_{W^{s, p}((0,1),\R)}) $ 
is the Sobolev-Slobodeckij space with smoothness parameter $ s $ and integrability parameter $ p $ 
of equivalence classes
of $ \B( (0,1) ) / \B(\R) $-measurable
functions.
%
%
%
\subsection{Auxiliary results on Sobolev and interpolation spaces}
\label{subsection:Sobolev}
In this subsection we recall some elementary
 properties of the involved Sobolev
and interpolation spaces.
Lemmas~\ref{Lemma:trivial}--\ref{lemma:Equivalence of H1 norm}, 
Lemma~\ref{lemma:NormEquivalency}
(cf., e.g., Fujiwara~\cite{Fujiwara1967}),
Lemmas~\ref{lemma:Sobolev_multiplication}--\ref{lemma:InfEstimate},
Lemma~\ref{lemma:Gagliardo}
(cf., e.g., Brezis~\cite[Exercise~8.15 and~(42)
in the section 
\textit{Comments on Chapter 8}]{Brezis2011}
and Nirenberg~\cite{Nirenberg1959}),
and
Lemma~\ref{lemma:Gagliardo2}
(see, e.g., Sell \& You~\cite[Theorem B.2]{SellYou2002})   
below
are used for the regularity analysis of the  
considered 
nonlinearity 
in Subsection~\ref{subsection:Nonlinearity} below.
\begin{lemma}
\label{Lemma:trivial}
Assume Setting~\ref{setting:Examples}.
Then it holds
for every
$ \rho \in [ \nicefrac{1}{2}, \infty) $
that
$
\sum\nolimits_{ h \in \H }
	| \values_h |^{-2 \rho} 
	\leq \nicefrac{ | c_0 |^{ - 2 \rho } }{ 6 } $, 
$
\sup_{h \in \H }\| \partial h 
	\|_H
	| \values_{h} |^{- \rho} 
	\leq | c_0 |^{ - \rho } $,
	and
	$ \sup\nolimits_{h\in \H} \| h
	\|_{L^\infty( \lambda; \R)}
	= \sqrt{2} $.
\end{lemma}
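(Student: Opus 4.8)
The plan is to evaluate everything explicitly using the concrete description of $\H$ and $\values$ from Setting~\ref{setting:Examples}: here $\H=\{e_n:n\in\N\}$ with $e_n=[(\sqrt2\sin(n\pi x))_{x\in(0,1)}]_{\lambda,\B(\R)}$ and $\values_{e_n}=-c_0\pi^2n^2$, so that $|\values_{e_n}|=c_0\pi^2n^2$ for every $n\in\N$ (recall $c_0\in(0,\infty)$). For the first estimate I would write, for $\rho\in[\nicefrac{1}{2},\infty)$,
\[
\sum\nolimits_{h\in\H}|\values_h|^{-2\rho}=(c_0\pi^2)^{-2\rho}\sum\nolimits_{n=1}^\infty n^{-4\rho}\le(c_0\pi^2)^{-2\rho}\sum\nolimits_{n=1}^\infty n^{-2}=|c_0|^{-2\rho}\,\tfrac{\pi^{2-4\rho}}{6},
\]
using $4\rho\ge2$, and then note that $\pi^{2-4\rho}\le1$ because $\pi>1$ and $2-4\rho\le0$; this yields the bound $|c_0|^{-2\rho}/6$.

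For the second estimate the first step is to identify $\partial e_n$ from the weak-derivative characterisation in Setting~\ref{setting:Examples}. Since $x\mapsto\sqrt2\sin(n\pi x)$ is smooth, integrating by parts against $\varphi\in\mathcal{C}_{cpt}^\infty((0,1),\R)$ (the boundary terms vanish because $\varphi$ has compact support in $(0,1)$) gives $\partial e_n=[(\sqrt2\,n\pi\cos(n\pi x))_{x\in(0,1)}]_{\lambda,\B(\R)}$, whence $\|\partial e_n\|_H^2=2n^2\pi^2\int_0^1\cos^2(n\pi x)\,dx=n^2\pi^2$, i.e.\ $\|\partial e_n\|_H=n\pi$. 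Therefore $\|\partial e_n\|_H\,|\values_{e_n}|^{-\rho}=n\pi\,(c_0\pi^2n^2)^{-\rho}=|c_0|^{-\rho}\pi^{1-2\rho}n^{1-2\rho}$, and since $\rho\ge\nicefrac{1}{2}$ forces $1-2\rho\le0$ while $\pi\ge1$ and $n\ge1$, every such term is $\le|c_0|^{-\rho}$, which gives the claimed supremum bound.

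For the third identity I would simply observe that $\|e_n\|_{L^\infty(\lambda;\R)}=\sqrt2\,\|\sin(n\pi\cdot)\|_{L^\infty((0,1),\R)}=\sqrt2$ for every $n\in\N$, since $\sin(n\pi\cdot)$ attains the value $1$ at $\nicefrac{1}{(2n)}\in(0,1)$; taking the supremum over $\H$ gives $\sqrt2$. None of this is genuinely difficult; the only points needing a little care are the passage from the weak-derivative definition to the explicit formula for $\partial e_n$, and keeping track of where the hypothesis $\rho\ge\nicefrac{1}{2}$ enters — it is exactly what makes $\sum n^{-4\rho}$ converge with the stated constant and makes the exponents $2-4\rho$ and $1-2\rho$ nonpositive so that the extra powers of $\pi$ and $n$ can be discarded.
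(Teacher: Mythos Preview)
Your proof is correct and follows essentially the same explicit-computation approach as the paper: expand using $|\values_{e_n}|=c_0\pi^2 n^2$, use $4\rho\ge 2$ to reduce to $\sum n^{-2}=\pi^2/6$, compute $\|\partial e_n\|_H=n\pi$ directly, and read off the $L^\infty$-norm of $e_n$. If anything, you are slightly more careful than the paper in justifying the weak-derivative formula for $\partial e_n$ and in arguing that the supremum $\sqrt{2}$ is actually attained (the paper only displays the inequality $|\sqrt{2}\sin(n\pi x)|\le\sqrt{2}$).
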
   
\begin{proof}[Proof of Lemma~\ref{Lemma:trivial}]
%
%
First, observe that 
	\begin{equation} 
	\begin{split} 
	\label{eq:sum_conv}
	\sum\nolimits_{ h \in \H }
	| \values_h |^{-2 \rho} 
	& =
	\sum\nolimits_{n\in \N}
	| c_0 \pi^2 n^2 |^{-2 \rho}
	=
	| c_0 |^{ - 2 \rho }
	\pi^{-4 \rho}
	\sum\nolimits_{n \in \N}
	 n^{-4 \rho}
	 \\
	 &
	 \leq
	 | c_0 |^{ - 2 \rho }
	 \pi^{-2}
	 \sum\nolimits_{n \in \N}
	 n^{-2}
	 =
	 | c_0 |^{ - 2 \rho }
	 \pi^{-2}
	 \tfrac{ \pi^2}{6}
	 .
	 \end{split}
	\end{equation}
	Moreover,
	note that for every
	$ n \in \N $
	it holds that
	\begin{equation}
	\begin{split}
	\label{eq:eig_decrease}
	\| \partial e_n 
	\|_H
	| \values_{e_n} |^{- \rho} 
	&
	=
	\| [ ( \pi n \sqrt{2} \cos( n \pi  x )  )_{x\in (0,1)} ]_{ \lambda, \mathcal{B}(\R)} \|_H
	| c_0 \pi^2 n^2 |^{-\rho}
	\\
	&
	=
	\pi n | c_0 \pi^2 n^2 |^{-\rho}
	=
	\tfrac{ | c_0 |^{ - \rho } }{ (\pi n)^{2 \rho - 1 } } 
	\leq
	| c_0 |^{ - \rho }
	.
	\end{split}
	\end{equation}
	In addition,
	observe that for every 
	$ n \in \N $, 
	$ x \in (0,1) $ 
	it holds
	that
	\begin{equation}
	| \sqrt{2} \sin( \pi n x ) | \leq \sqrt{2} .
	\end{equation}
This completes the proof of Lemma~\ref{Lemma:trivial}.
\end{proof}
\begin{lemma}
	\label{lemma:local_lipA}
	%
	Assume Setting~\ref{setting:Examples}.
	Then
	\begin{enumerate}[(i)]
	\item \label{item:01 A}
	it holds  		
	that 
	$ W_0^{1,2}((0,1), \R) \subseteq H_{ \nicefrac{1}{2} } $
	continuously,
	\item \label{item:01 B}
	it holds  		
	that
	$ H_{ \nicefrac{1}{2} } \subseteq W_0^{1,2}((0,1), \R) $ 
	continuously,
	\item \label{item:01b}
	it holds that $ W_0^{1,2}((0,1), \R) \subseteq L^\infty( \lambda; \R ) $
	continuously, 
	\item \label{item:norm_estimate}
	it holds for every 
	$ v \in H_{ \nicefrac{1}{2} } $ 
	that
	$ \| \partial v \|_H
	= 
	| c_0 |^{ - \nicefrac{1}{2} }
	\| v \|_{H_{\nicefrac{1}{2}}} $,
	and
	\item \label{item:norm_estimate_infty}
	it holds for every 
	$ v \in H_{ \nicefrac{1}{2} } $ 
	that
	$ \| v \|_{L^\infty( \lambda; \R ) } 
	\leq
	| 3 c_0 |^{ - \nicefrac{1}{2} }
	\| v \|_{H_{\nicefrac{1}{2}}} $.
	\end{enumerate}
\end{lemma}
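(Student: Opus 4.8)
The plan is to derive everything from the explicit description of $H_{\nicefrac{1}{2}}$ in terms of the sine basis $\H=\{e_n:n\in\N\}$. By Setting~\ref{setting:main} and the remark following it, $H_{\nicefrac{1}{2}}=D((-A)^{\nicefrac{1}{2}})$ and $\|v\|_{H_{\nicefrac{1}{2}}}=\|(-A)^{\nicefrac{1}{2}}v\|_H$; hence, writing $a_n=\langle e_n,v\rangle_H$ and using $\values_{e_n}=-c_0\pi^2n^2$, a function $v\in H$ lies in $H_{\nicefrac{1}{2}}$ if and only if $\sum_{n\in\N}n^2a_n^2<\infty$, in which case $\|v\|_{H_{\nicefrac{1}{2}}}^2=\sum_{n\in\N}|\values_{e_n}|a_n^2=c_0\pi^2\sum_{n\in\N}n^2a_n^2$. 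I would also record three elementary facts: the functions $f_n:=[(\sqrt{2}\cos(n\pi x))_{x\in(0,1)}]_{\lambda,\B(\R)}$, $n\in\N$, form an orthonormal system in $H$; each $e_n$ belongs to $W_0^{1,2}((0,1),\R)$ (approximate $\sin(n\pi\cdot)$ by compactly supported smooth functions in the $W^{1,2}$-norm, or recall that $W_0^{1,2}((0,1),\R)$ is the kernel of the trace map); and $\partial e_n=\pi n f_n$ for every $n\in\N$, which follows by testing $-\langle e_n,[\varphi']\rangle_H$ against $\varphi\in\mathcal{C}_{cpt}^\infty((0,1),\R)$ and integrating by parts, the boundary terms vanishing since $\sin(n\pi\cdot)$ vanishes at $0$ and at $1$.

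I would first establish items~\eqref{item:01 B} and~\eqref{item:norm_estimate} simultaneously. Given $v\in H_{\nicefrac{1}{2}}$ with coefficients $a_n=\langle e_n,v\rangle_H$, the condition $\sum_n(\pi n a_n)^2<\infty$ together with orthonormality of $\{f_n\}$ guarantees that $g:=\sum_{n\in\N}\pi n\,a_nf_n$ converges in $H$. For $\varphi\in\mathcal{C}_{cpt}^\infty((0,1),\R)$, integration by parts gives $\langle f_n,[\varphi]\rangle_H=-\tfrac{1}{\pi n}\langle e_n,[\varphi']\rangle_H$, so passing the $H$-limits $\sum_na_ne_n=v$ and $\sum_n\pi na_nf_n=g$ through the bounded functionals $\langle\cdot,[\varphi]\rangle_H$ and $\langle\cdot,[\varphi']\rangle_H$ yields $\langle g,[\varphi]\rangle_H=-\sum_na_n\langle e_n,[\varphi']\rangle_H=-\langle v,[\varphi']\rangle_H$. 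Hence $v\in W^{1,2}((0,1),\R)$ with $\partial v=g$, and therefore $\|\partial v\|_H^2=\pi^2\sum_nn^2a_n^2=|c_0|^{-1}\|v\|_{H_{\nicefrac{1}{2}}}^2$, which is item~\eqref{item:norm_estimate}. Moreover the partial sums $\sum_{n\le N}a_ne_n$ converge to $v$ in $W^{1,2}((0,1),\R)$ (functions and derivatives both converging in $H$) and lie in the closed subspace $W_0^{1,2}((0,1),\R)$ of $W^{1,2}((0,1),\R)$, so $v\in W_0^{1,2}((0,1),\R)$; combined with item~\eqref{item:norm_estimate} and the trivial bound $\|v\|_H\le(c_0\pi^2)^{-\nicefrac{1}{2}}\|v\|_{H_{\nicefrac{1}{2}}}$ this gives the continuous inclusion of item~\eqref{item:01 B}.

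Item~\eqref{item:01 A} is the reverse direction. Let $v\in W_0^{1,2}((0,1),\R)$. Since every element of $W^{1,2}((0,1),\R)$ has an absolutely continuous representative, the boundary values of $v$ at $0$ and $1$ are well-defined and, by definition of $W_0^{1,2}((0,1),\R)$, equal to $0$; integration by parts hence gives $\langle\partial v,f_n\rangle_H=\pi n\langle v,e_n\rangle_H$ for every $n\in\N$. Bessel's inequality for the orthonormal system $\{f_n\}$ then shows $\pi^2\sum_{n\in\N}n^2\langle v,e_n\rangle_H^2=\sum_{n\in\N}\langle\partial v,f_n\rangle_H^2\le\|\partial v\|_H^2<\infty$, whence $v\in H_{\nicefrac{1}{2}}$ and $\|v\|_{H_{\nicefrac{1}{2}}}^2=c_0\pi^2\sum_nn^2\langle v,e_n\rangle_H^2\le c_0\|\partial v\|_H^2\le c_0\|v\|_{W^{1,2}((0,1),\R)}^2$, which is item~\eqref{item:01 A}. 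For item~\eqref{item:01b} I would use that the absolutely continuous representative $\tilde v$ of $v\in W_0^{1,2}((0,1),\R)$ satisfies $\tilde v(x)=\int_0^x(\partial v)(t)\,dt$, so that the Cauchy--Schwarz inequality gives $|\tilde v(x)|\le\int_0^1|(\partial v)(t)|\,dt\le\|\partial v\|_H$ for all $x\in[0,1]$, and therefore $\|v\|_{L^\infty(\lambda;\R)}\le\|\partial v\|_H\le\|v\|_{W^{1,2}((0,1),\R)}$.

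Finally, for item~\eqref{item:norm_estimate_infty}, given $v\in H_{\nicefrac{1}{2}}$ with $a_n=\langle e_n,v\rangle_H$ I would estimate the partial sums: the continuous function $x\mapsto\sum_{n=1}^N\sqrt{2}\,a_n\sin(n\pi x)$ representing $\sum_{n\le N}a_ne_n$ is bounded in absolute value, using $\sup_{h\in\H}\|h\|_{L^\infty(\lambda;\R)}=\sqrt{2}$, the Cauchy--Schwarz inequality, $\sum_{h\in\H}|\values_h|^{-1}\le|c_0|^{-1}/6$ (all from Lemma~\ref{Lemma:trivial}), and $\sum_{n\in\N}|\values_{e_n}|a_n^2=\|v\|_{H_{\nicefrac{1}{2}}}^2$, by
\[
\Big|\smallsum_{n=1}^N\sqrt{2}\,a_n\sin(n\pi x)\Big|\le\sqrt{2}\smallsum_{n=1}^N|a_n|\le\sqrt{2}\Big(\smallsum_{n\in\N}|\values_{e_n}|^{-1}\Big)^{\!\nicefrac{1}{2}}\Big(\smallsum_{n=1}^N|\values_{e_n}|a_n^2\Big)^{\!\nicefrac{1}{2}}\le|3c_0|^{-\nicefrac{1}{2}}\|v\|_{H_{\nicefrac{1}{2}}}.
\]
The same tail estimate shows that these partial sums form a Cauchy sequence in $\mathcal{C}([0,1],\R)$, whose uniform limit agrees $\lambda$-almost everywhere with $v$; taking $N\to\infty$ pointwise gives item~\eqref{item:norm_estimate_infty}. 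The main obstacle in this argument is the bookkeeping around the weak derivative --- justifying the termwise differentiation in item~\eqref{item:01 B} via the $H$-convergence of both series, and the membership $v\in W_0^{1,2}((0,1),\R)$ via closedness of $W_0^{1,2}((0,1),\R)$ in $W^{1,2}((0,1),\R)$ --- together with the correct treatment of the boundary terms in the integrations by parts; the remaining estimates are routine.
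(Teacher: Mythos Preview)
Your proof is correct but takes a genuinely different route from the paper. The paper establishes the set equality $H_{\nicefrac{1}{2}}=W_0^{1,2}((0,1),\R)$ by citing Lunardi~\cite[Example~4.34]{Lunardi2018}, imports the norm identity $\|v\|_{H_{\nicefrac{1}{2}}}=\sqrt{c_0}\,\|\partial v\|_H$ from~\cite[Lemma~6.1]{JentzenSalimovaWelti2019}, invokes Poincar\'e's inequality for item~\eqref{item:01 B}, and obtains the $L^\infty$ bound via~\cite[Lemma~4.3]{JentzenPusnik2018Published} together with Lemma~\ref{Lemma:trivial}. You instead argue everything directly from the sine/cosine Fourier expansion: you prove items~\eqref{item:01 B} and~\eqref{item:norm_estimate} by showing that the formal derivative series $\sum_n\pi n\,a_nf_n$ converges in $H$ and is the weak derivative, you prove item~\eqref{item:01 A} by integrating by parts against the cosines and applying Bessel's inequality, and you obtain item~\eqref{item:norm_estimate_infty} by a direct Cauchy--Schwarz estimate on the partial sums using the constants from Lemma~\ref{Lemma:trivial}. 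Your approach is more elementary and fully self-contained, requiring no external references beyond basic facts about absolutely continuous representatives; the paper's approach is shorter but relies on results quoted from the literature. Both lead to the same constants in items~\eqref{item:norm_estimate} and~\eqref{item:norm_estimate_infty}.
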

\begin{proof}[Proof of Lemma~\ref{lemma:local_lipA}]
	Note that, e.g.,
	Lunardi~\cite[Example~4.34]{Lunardi2018}
	ensures that
	\begin{equation} 
	\label{eq:Equal space} 
	H_{\nicefrac{1}{2}} = W_0^{1,2}((0,1), \R) 
	.
	\end{equation} 
	This,
	the fact that
	for every 
	$ v \in W^{1,2}((0,1), \R) $
	it holds that
	\begin{equation}  
	\label{eq:Again something}
	\| v \|_{ W^{1,2}((0,1), \R) }^2 
	=
	\| v \|_H^2
	+
	\| \partial v \|_H^2
	,
	\end{equation}
	and 
	the fact that
	for every 
	$ v \in H_{ \nicefrac{1}{2}} $
	it holds that
	\begin{equation}
	\label{eq:Needed later}
	\| v \|_{H_{ \nicefrac{1}{2} } } = \sqrt{c_0} \| \partial v \|_H 
	\end{equation} 
	(see, e.g., \cite[Lemma~6.1]{JentzenSalimovaWelti2019})
	establish item~\eqref{item:01 A}.
	Moreover, observe 
	that~\eqref{eq:Equal space}--\eqref{eq:Needed later}
	and
	Poincar\'e's inequality
	(see,
	e.g., Brezis~\cite[Proposition~8.13]{Brezis2011}) 
	show
	item~\eqref{item:01 B}. 
	Next note that 
	Lemma~\ref{Lemma:trivial}
	(with $ \rho = \nicefrac{1}{2} $
	in the notation of Lemma~\ref{Lemma:trivial})
	and, 
	e.g., \cite[Lemma~4.3]{JentzenPusnik2018Published}
	(with
	$ d = 1 $,
	$ \H = \H $,
	$ \rho = \nicefrac{1}{2} $,
	$ v = v $
	for 
	$ v \in H_{ \nicefrac{1}{2} } $
	in the notation of~\cite[Lemma~4.3]{JentzenPusnik2018Published})
	prove that for every
	$ v \in H_{  \nicefrac{1}{2} } $ it holds that
	%
	%
	%
	%
	\begin{equation}
	\begin{split}
	\label{eq:estimate_infiniti} 
	\| v \|_{L^\infty( \lambda; \R )} 
	&
	\leq
	\| v \|_{H_{ \nicefrac{1}{2} }}
	\bigg(
	\sup_{ h \in \H } 
	\| h
	\|_{L^\infty( \lambda; \R )}
	\bigg) 
	\Bigg[ 
	\sum_{h \in \H }
	| \values_h |^{- 1 }
	\Bigg]^{\nicefrac{1}{2}}
	\leq
	| 3 c_0 |^{ - \nicefrac{1}{2} }
	\| v \|_{ H_{ \nicefrac{1}{2} } }
	.
	\end{split}
	\end{equation} 
	This
	and	
	item~\eqref{item:01 A} 
	establish item~\eqref{item:01b}.
	Moreover, note that~\eqref{eq:Needed later}
	shows item~\eqref{item:norm_estimate}.
	In addition, observe that~\eqref{eq:estimate_infiniti} 
	establishes item~\eqref{item:norm_estimate_infty}.
	%
	%
	%
	%
	%
	%
	The proof of Lemma~\ref{lemma:local_lipA}
	is thus completed.
\end{proof}
\begin{lemma}
	\label{lemma:int_parts}
	Assume Setting~\ref{setting:Examples}
	and let  
	$ u \in W_0^{1,2}( (0,1) , \R) $,  
	$ v \in W^{1,2}( (0,1) , \R) $. 
	Then it holds that 
	\begin{equation}
	\left< \partial u, v \right>_H 
	=
	\!
	- 
	\!
	\left< u, \partial v \right>_H 
	.
	\end{equation}
\end{lemma}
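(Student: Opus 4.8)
The plan is to reduce the statement to the defining property of the weak derivative $\partial v$ from Setting~\ref{setting:Examples} by approximating $u$ in the $W^{1,2}$-norm by smooth compactly supported functions and then passing to the limit. First I would recall that $W_0^{1,2}((0,1),\R)$ is, by definition, the closure of $\{ [\varphi]_{\lambda,\mathcal{B}(\R)} \colon \varphi \in \mathcal{C}_{cpt}^\infty((0,1),\R) \}$ with respect to the $W^{1,2}((0,1),\R)$-norm. Hence there exist $\varphi_n \in \mathcal{C}_{cpt}^\infty((0,1),\R)$, $n \in \N$, with $\limsup_{n\to\infty} \| [\varphi_n]_{\lambda,\mathcal{B}(\R)} - u \|_{W^{1,2}((0,1),\R)} = 0$, and \eqref{eq:Again something} then yields both $\limsup_{n\to\infty} \| [\varphi_n]_{\lambda,\mathcal{B}(\R)} - u \|_H = 0$ and $\limsup_{n\to\infty} \| \partial [\varphi_n]_{\lambda,\mathcal{B}(\R)} - \partial u \|_H = 0$.

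Next I would observe that, for every $n \in \N$ and every $\psi \in \mathcal{C}_{cpt}^\infty((0,1),\R)$, the product $\varphi_n \psi$ has compact support in $(0,1)$, so classical integration by parts gives $\int_0^1 \varphi_n'(x)\psi(x)\,dx = -\int_0^1 \varphi_n(x)\psi'(x)\,dx$; this shows that $\partial [\varphi_n]_{\lambda,\mathcal{B}(\R)} = [\varphi_n']_{\lambda,\mathcal{B}(\R)}$ for every $n \in \N$. Combining this identification with the defining property of $\partial v$ in Setting~\ref{setting:Examples} (applied with test function $\varphi_n$) and the symmetry of the real-valued inner product $\langle\cdot,\cdot\rangle_H$ yields for every $n \in \N$ that $\langle \partial [\varphi_n]_{\lambda,\mathcal{B}(\R)}, v \rangle_H = \langle [\varphi_n']_{\lambda,\mathcal{B}(\R)}, v \rangle_H = - \langle [\varphi_n]_{\lambda,\mathcal{B}(\R)}, \partial v \rangle_H$.

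Finally I would let $n \to \infty$: since $v \in H$ and $\partial v \in H$ are fixed, the Cauchy--Schwarz inequality together with the two convergences established in the first step gives $\lim_{n\to\infty} \langle \partial [\varphi_n]_{\lambda,\mathcal{B}(\R)}, v \rangle_H = \langle \partial u, v \rangle_H$ and $\lim_{n\to\infty} \langle [\varphi_n]_{\lambda,\mathcal{B}(\R)}, \partial v \rangle_H = \langle u, \partial v \rangle_H$, and hence $\langle \partial u, v \rangle_H = - \langle u, \partial v \rangle_H$, which is the claim. The only delicate point — and thus the main obstacle — is the bookkeeping with the equivalence classes and the verification that the classical derivative of the smooth approximants coincides with the weak derivative operator $\partial$ appearing in the statement; once this identification is secured, the argument is a routine density-plus-continuity passage to the limit.
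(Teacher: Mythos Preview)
Your argument is correct and follows the same density-plus-integration-by-parts scheme as the paper, but with a small economy: you approximate only $u\in W_0^{1,2}$ by $\varphi_n\in\mathcal{C}_{cpt}^\infty((0,1),\R)$ and invoke the defining identity for $\partial v$ directly with the smooth test function $\varphi_n$, whereas the paper simultaneously approximates $v\in W^{1,2}$ by restrictions of functions in $\mathcal{C}_c^\infty(\R,\R)$ and passes through a double limit before applying classical integration by parts. Your route avoids the second approximation and the iterated limit, at the cost of having to check explicitly that $\partial[\varphi_n]_{\lambda,\mathcal{B}(\R)}=[\varphi_n']_{\lambda,\mathcal{B}(\R)}$; the paper's route makes that identification implicit but needs the density of smooth functions in $W^{1,2}((0,1),\R)$ as an additional input. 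Both arguments are standard and equally valid.
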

\begin{proof}[Proof of Lemma~\ref{lemma:int_parts}]
	%
	Throughout this proof let
	$ ( \mathbf{u}_n )_{n \in \N} 
	\subseteq  
	\mathcal{C}_c^\infty( \R, \R) $,
	$ ( \mathbf{v}_n )_{n \in \N} 
	\subseteq  
	\mathcal{C}_c^\infty( \R, \R) $, 
	$ ( u_n )_{n \in \N} \subseteq 
	W_0^{1,2}((0,1), \R ) $,
	$ ( v_n )_{n \in \N} \subseteq 
	W^{1,2}((0,1), \R ) $
	satisfy for every 
	$ n \in \N $,
	$ x \in ( (-\infty, 0] \cup [1, \infty) ) $
	that
	$ \mathbf{u}_n(x) = 0 $,
	$ u_n = 
	[ \mathbf{u}_n |_{(0,1)}]_{\lambda, \mathcal{B}(\R)} $,
	$ v_n = 
	[ \mathbf{v}_n |_{(0,1)}]_{\lambda, \mathcal{B}(\R)} $,
	and
	$ \limsup_{m \to \infty} 
	( 
	\| u  - 
	u_m \|_{W^{1,2}(  (0,1) , \R)} 
	+
	\| v  - 
	v_m \|_{W^{1,2}( (0,1), \R)} 
	) = 0 $.
	Observe that
	integration by parts
	and the fact that
		for every
	$ n \in \N $	 
	it holds that
	$ \mathbf{u}_n(0) =  \mathbf{u}_n(1) = 0  $
	demonstrate that
	\begin{equation}
	\begin{split}
	& 
	\langle \partial u, v \rangle_H
	=
	\lim_{n \to \infty}
	\langle \partial u_n, v \rangle_H
	= 
	\lim_{n \to \infty}
	\left(
	\lim_{m \to \infty}
	\langle \partial u_n, v_m \rangle_H
	\right)
	\\
	&
	= 
	\lim_{n \to \infty}
	\left(
	\lim_{m \to \infty}
	\int_{(0,1)} 
	( 
	\mathbf{u}_n
	)'(x) 
	\,
	\mathbf{v}_m(x)
	\, dx
	\right)
	=
	-
	\lim_{n \to \infty}
	\left(
	\lim_{m \to \infty}
	\int_{(0,1)}
	\mathbf{u}_n(x)
	\,
	( 
	\mathbf{v}_m
	)'(x) 
	\, dx
	\right)
	\\
	&
	=
	-
	\lim_{n \to \infty}
	\left(
	\lim_{m \to \infty}
	\langle  u_n, \partial v_m \rangle_H
	\right)
	=
	-
	\lim_{n \to \infty} 
	\langle  u_n, \partial v \rangle_H  
	=
	-
	\langle  u, \partial v \rangle_H 
	.
	\end{split}
	\end{equation}
	The proof of Lemma~\ref{lemma:int_parts}
	is thus completed.
\end{proof}
\begin{lemma}
	\label{lemma:Equivalence of H1 norm}
	Assume Setting~\ref{setting:Examples}.
	Then 
	\begin{enumerate}[(i)]
	\item \label{item:continuous embedding H1} it holds that
	$ H_1 \subseteq W^{2,2}((0,1), \R) $
	continuously
	and
	\item \label{item:equivalent norms} it holds that
	\begin{equation} 
	\sup_{ v \in H_1\backslash \{ 0 \}  } 
	\bigg[ 
	\frac{ \| v \|_{H_1} }{ \| v \|_{ W^{2,2}( (0,1), \R ) }  }
	+
	\frac{ \| v \|_{ W^{2,2}( (0,1), \R ) }  }{ \| v \|_{H_1} }
	\bigg]
	<
	\infty 
	.
	\end{equation} 
\end{enumerate}
	%
\end{lemma}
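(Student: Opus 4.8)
The plan is to identify the operator $A$ on $H_1 = D((-A)^1) = D(A)$ with a constant multiple of the (weak) second derivative, and then to read off the inclusion and the two norm estimates from this identification together with Lemma~\ref{lemma:local_lipA}. First I would recall from the discussion after Setting~\ref{setting:main} that $H_1 = D(A)$ with $\|v\|_{H_1} = \|(-A)v\|_H = \|Av\|_H$, and from Setting~\ref{setting:Examples} that $A e_n = \values_{e_n} e_n = -c_0 \pi^2 n^2 e_n$ for every $n \in \N$. Fix $v \in H_1$ and expand $v = \sum_{n \in \N} \langle e_n, v\rangle_H e_n$ and $Av = \sum_{n \in \N} \values_{e_n} \langle e_n, v\rangle_H e_n$, both series converging in $H$ (the second one precisely because $v \in D(A)$). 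Since $\sup_{h \in \H}\values_h < 0$, an elementary Fourier-coefficient estimate (or the standard continuity of the embeddings of interpolation spaces) yields $H_1 \subseteq H_{\nicefrac{1}{2}} \subseteq H$ continuously, so $v \in H_{\nicefrac{1}{2}}$, and Lemma~\ref{lemma:local_lipA}\eqref{item:01 B} shows $v \in W_0^{1,2}((0,1),\R) \subseteq W^{1,2}((0,1),\R)$; in particular $\partial v \in H$ is well defined.

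The key step is to prove that $\partial v \in W^{1,2}((0,1),\R)$ with $\partial(\partial v) = c_0^{-1} A v$, which then gives $v \in W^{2,2}((0,1),\R)$. To this end I would fix $\varphi \in \mathcal{C}_{cpt}^\infty((0,1),\R)$. Applying the definition of $\partial$ on $W^{1,2}((0,1),\R)$ twice (to $v$ and then using that $[\varphi']_{\lambda,\mathcal{B}(\R)}$ is again the class of a test function) gives $-\langle \partial v, [\varphi']_{\lambda,\mathcal{B}(\R)}\rangle_H = \langle v, [\varphi'']_{\lambda,\mathcal{B}(\R)}\rangle_H$. A classical integration by parts for smooth functions (the boundary terms vanishing since $\varphi$ has compact support in $(0,1)$) shows, for each $n \in \N$, that $\langle e_n, [\varphi'']_{\lambda,\mathcal{B}(\R)}\rangle_H = -n^2\pi^2 \langle e_n, [\varphi]_{\lambda,\mathcal{B}(\R)}\rangle_H = c_0^{-1} \values_{e_n} \langle e_n, [\varphi]_{\lambda,\mathcal{B}(\R)}\rangle_H$. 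Multiplying by $\langle e_n, v\rangle_H$, summing over $n \in \N$, and using the $H$-convergence of the two expansions above together with continuity of the linear functionals $\langle \cdot, [\varphi'']_{\lambda,\mathcal{B}(\R)}\rangle_H$ and $\langle \cdot, [\varphi]_{\lambda,\mathcal{B}(\R)}\rangle_H$ on $H$, I obtain $\langle v, [\varphi'']_{\lambda,\mathcal{B}(\R)}\rangle_H = c_0^{-1}\langle Av, [\varphi]_{\lambda,\mathcal{B}(\R)}\rangle_H$. Hence $-\langle \partial v, [\varphi']_{\lambda,\mathcal{B}(\R)}\rangle_H = \langle c_0^{-1} A v, [\varphi]_{\lambda,\mathcal{B}(\R)}\rangle_H$ for all such $\varphi$, and since $c_0^{-1} Av \in H$ this is exactly the statement $\partial(\partial v) = c_0^{-1} Av$. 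Combined with $v, \partial v \in H$ this shows $v \in W^{2,2}((0,1),\R)$, so $H_1 \subseteq W^{2,2}((0,1),\R)$ as sets; the continuity of this inclusion is then supplied by the norm estimate below, establishing item~\eqref{item:continuous embedding H1}.

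For item~\eqref{item:equivalent norms} I would combine three estimates, using that (up to the paper's convention) $\|v\|_{W^{2,2}((0,1),\R)}^2 = \|v\|_H^2 + \|\partial v\|_H^2 + \|\partial(\partial v)\|_H^2$. First, $\|\partial(\partial v)\|_H = |c_0|^{-1}\|Av\|_H = |c_0|^{-1}\|v\|_{H_1}$. Second, Lemma~\ref{lemma:local_lipA}\eqref{item:norm_estimate} gives $\|\partial v\|_H = |c_0|^{-\nicefrac{1}{2}}\|v\|_{H_{\nicefrac{1}{2}}} \le |c_0|^{-\nicefrac{1}{2}}\, c\, \|v\|_{H_1}$ for a finite constant $c$ coming from the continuous embedding $H_1 \subseteq H_{\nicefrac{1}{2}}$, and likewise $\|v\|_H \le c' \|v\|_{H_1}$. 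Adding these yields $\|v\|_{W^{2,2}((0,1),\R)} \le C \|v\|_{H_1}$ with $C \in (0,\infty)$ independent of $v$, which is both the continuity claimed in item~\eqref{item:continuous embedding H1} and one of the two bounds in item~\eqref{item:equivalent norms}. Conversely, $\|v\|_{H_1} = \|Av\|_H = c_0 \|\partial(\partial v)\|_H \le c_0 \|v\|_{W^{2,2}((0,1),\R)}$, giving the reverse bound; since $v \in H_1$ was arbitrary, item~\eqref{item:equivalent norms} follows.

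The only genuinely delicate point is the identity $\partial(\partial v) = c_0^{-1} Av$ in the weak sense: everything else is bookkeeping and cited facts. The care there lies in justifying the passage from the per-mode identity $\langle e_n, [\varphi'']_{\lambda,\mathcal{B}(\R)}\rangle_H = c_0^{-1}\values_{e_n}\langle e_n, [\varphi]_{\lambda,\mathcal{B}(\R)}\rangle_H$ to the statement about $v$, which relies on the $H$-convergence of both $\sum_n \langle e_n, v\rangle_H e_n$ and $\sum_n \values_{e_n}\langle e_n,v\rangle_H e_n$ (the latter being exactly the hypothesis $v \in D(A)$) rather than on any term-by-term manipulation of derivatives; no deep Sobolev embedding theorem is needed.
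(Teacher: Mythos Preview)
Your argument is correct. Both your proof and the paper's hinge on the same key identity $\partial^2 v = c_0^{-1} Av$ (equivalently $\|v\|_{H_1} = c_0\|\partial^2 v\|_H$) and then assemble the $W^{2,2}$-norm from $\|v\|_H$, $\|\partial v\|_H$, and $\|\partial^2 v\|_H$ using Lemma~\ref{lemma:local_lipA}. The difference is in how the set inclusion $H_1 \subseteq W^{2,2}((0,1),\R)$ and the identity are obtained: the paper first \emph{cites} the classical characterization $D(A) = W_0^{1,2}((0,1),\R)\cap W^{2,2}((0,1),\R)$ (so $\partial^2 v \in H$ a priori) and then computes the Fourier coefficients of $\partial^2 v$ via two applications of the integration-by-parts Lemma~\ref{lemma:int_parts}, whereas you prove the distributional identity $-\langle \partial v,[\varphi']\rangle_H = \langle c_0^{-1}Av,[\varphi]\rangle_H$ directly against test functions using the per-mode relation for $e_n$ and the $H$-convergence of $\sum_n \values_{e_n}\langle e_n,v\rangle_H e_n$, thereby getting existence of $\partial^2 v$ in $H$ and its value in one stroke. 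Your route is more self-contained (no external reference for $D(A)$, no need for Lemma~\ref{lemma:int_parts}), while the paper's is more modular; the remaining norm bookkeeping and the use of Lemma~\ref{lemma:local_lipA} are the same in both.
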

\begin{proof}[Proof of Lemma~\ref{lemma:Equivalence of H1 norm}]
First, observe that the fact that
\begin{equation} 
\label{eq:Interpo2}
D ( - A ) 
= 
\big( W_0^{1,2}( (0,1), \R) \big) \cap \big( W^{2,2}( (0,1), \R ) \big)
\end{equation}
(cf., e.g., 
Lunardi~\cite[Example~4.34]{Lunardi2018}
and
Sell \& You~\cite[Section 3.8.1]{SellYou2002})
and 
the fact that
$ D(-A) = H_1 $
prove that
\begin{equation} 
\label{eq:Initial step}
H_1 \subseteq W^{2,2}( (0,1), \R )
.
\end{equation} 
%
%
%
%
Hence, we obtain
that
for every $ v \in H_1 $
it holds that
$ \partial v \in W^{1,2}((0,1), \R) $.
The fact that
	for every
$ n \in \N $
it holds that
$ e_n \in W_0^{1,2}((0,1), \R) $
and
Lemma~\ref{lemma:int_parts} 
(with
$ u = e_n $,
$ v = \partial v $
for 
$ n \in \N $,
$ v \in H_1 $
in the notation of 
Lemma~\ref{lemma:int_parts})
therefore prove 
that for every $ v \in H_1 $ 
it holds that
\begin{equation}
\begin{split}
\label{eq:PerPartes 1}
\sum_{ n = 1 }^\infty 
| \langle e_n, \partial^2 v \rangle_H |^2
=
\sum_{ n = 1 }^\infty 
| - \langle \partial e_n, \partial v \rangle_H |^2
.
\end{split}
\end{equation} 
Furthermore,
note that  
item~\eqref{item:01 B} 
of Lemma~\ref{lemma:local_lipA}
assures that for every
$ v \in H_1 $
it holds that
$ v \in W_0^{1,2}((0,1), \R) $.
Combining~\eqref{eq:PerPartes 1},
the fact that 
	for every
$ n \in \N $
it holds that
$ \partial e_n \in W^{1,2}((0,1), \R) $,
\eqref{eq:Initial step},
and
Lemma~\ref{lemma:int_parts} 
(with
$ u = v $,
$ v = \partial e_n $
for 
$ n \in \N $, 
$ v \in H_1 $
in the notation of 
Lemma~\ref{lemma:int_parts}) 
hence shows that
for every $ v \in H_1 $ it holds that
\begin{equation}
\begin{split} 
\sum_{ n = 1 }^\infty 
| \langle e_n, \partial^2 v \rangle_H |^2
=
\sum_{ n = 1 }^\infty 
| \langle \partial^2 e_n, v \rangle_H |^2
=
\sum_{ n = 1 }^\infty
|\pi n|^4
| \langle e_n, v \rangle_H |^2
=
\tfrac{ 1 }{ |c_0|^2 }
\| v \|_{H_1}^2
< \infty.
\end{split}
\end{equation}
This proves that for every 
$ v \in H_1 $
it holds that
$ \partial^2 v \in H $
and
\begin{equation} 
\label{eq:Main identity}
\| v \|_{H_1}
=
c_0
\| \partial^2 v \|_H 
.
\end{equation} 
%
%
%
%
%
%
The fact that
	for every
$ v \in W^{2,2}((0,1), \R ) $
it holds that
$ \| v \|_{W^{2,2}((0,1), \R )}^2
=
\| v \|_H^2
+
\| \partial v \|_H^2
+
\| \partial^2 v \|_H^2 $
and~\eqref{eq:Initial step}
hence
ensure that for every $ v \in H_1 $
it holds that
\begin{equation} 
\label{eq:Main estimate}
\| v \|_{H_1} 
= 
c_0 
\| \partial^2 v \|_H
\leq 
c_0 
\| v \|_{W^{2,2}((0,1), \R)}
.
\end{equation} 
Next note that
item~\eqref{item:01 B} 
of Lemma~\ref{lemma:local_lipA}
and
Poincar\'e's inequality
(see,
e.g., Brezis~\cite[Proposition~8.13]{Brezis2011}) 
imply that 
there exists $ C \in (0, \infty) $
such that
for every
$ v \in H_{\nicefrac{1}{2}} $ it holds that
$ 
\| v \|_{W^{1,2}((0,1), \R)} \leq C \| \partial v \|_H 
$.
Combining this,
\eqref{eq:Initial step}, 
and~\eqref{eq:Main identity} 
proves that
there exists $ C \in (0, \infty) $
such that
for every 
$ v \in H_1 $ it holds that
\begin{equation}
\begin{split}
\| v \|_{W^{2,2}((0,1), \R)}^2
&
=
\| v \|_{W^{1,2}((0,1), \R )}^2
+
\| \partial^2 v \|_H^2
\leq
C^2 \| \partial v\|_H^2 
+
\frac{1}{|c_0|^2} 
\|  v \|_{H_1}^2
. 
\end{split}
\end{equation}
Item~\eqref{item:norm_estimate}
of Lemma~\ref{lemma:local_lipA}
hence shows that 
there exists $ C \in (0, \infty) $
such that 
for every $ v \in H_1 $ it holds that
$ v \in W^{2,2}((0,1), \R) $
and
\begin{equation}
\begin{split}
\| v \|_{W^{2,2}((0,1), \R)}^2
& 
\leq
\frac{ C^2 }{ c_0 } 
\| v\|_{H_{ \nicefrac{1}{2} }}^2 
+
\frac{1}{|c_0|^2} 
\| v \|_{ H_1 }^2
\leq 
\bigg[ 
\frac{ C^2 }{ |c_0|^2 }  
+
\frac{1}{|c_0|^2} 
\bigg] 
\| v \|_{ H_1 }^2
. 
\end{split}
\end{equation}
This 
establishes item~\eqref{item:continuous embedding H1}.
%
%
%
Moreover, observe that item~\eqref{item:continuous embedding H1}
and~\eqref{eq:Main estimate} 
imply
item~\eqref{item:equivalent norms}.
%
%
The proof of Lemma~\ref{lemma:Equivalence of H1 norm}
is thus completed.
\end{proof} 
\begin{lemma}
	\label{lemma:NormEquivalency}
	Assume Setting~\ref{setting:Examples}.
	Then 
	\begin{enumerate}[(i)]
    \item \label{item:Inclusion}
	it holds for every $ s \in [0, 1] $
	that
	$ H_s \subseteq W^{2s, 2 }( (0,1), \R ) $
	continuously,
	\item \label{item:Equivalence}
	it holds for every
	$ s \in [0, \nicefrac{1}{2}] \backslash \{ \nicefrac{1}{4} \} $
	that 
	$ H_s \subseteq W_0^{2 s, 2 }( (0,1), \R) $
	continuously,
	and
	\item \label{item:Equivalence 2}
	it holds for every
	$ s \in [0, \nicefrac{1}{2}] \backslash \{ \nicefrac{1}{4} \} $
	that
	$ W_0^{2 s, 2 }( (0,1), \R) \subseteq H_s $
	continuously.
	\end{enumerate}
\end{lemma}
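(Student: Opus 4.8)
The plan is to prove the three claimed continuous embeddings by interpolating between the endpoints $s = 0$ and $s = \nicefrac{1}{2}$ (and, for item~\eqref{item:Inclusion}, also $s = 1$), using that the spaces $H_s$ are by construction complex interpolation spaces associated to $-A$ and that the corresponding scale of Sobolev--Slobodeckij spaces also interpolates. First I would record the endpoint facts: at $s = 0$ we have $H_0 = H = L^2(\lambda;\R) = W^{0,2}((0,1),\R) = W_0^{0,2}((0,1),\R)$ with equal norms; at $s = \nicefrac{1}{2}$, Lemma~\ref{lemma:local_lipA} (items~\eqref{item:01 A}, \eqref{item:01 B}, and~\eqref{item:norm_estimate}) gives $H_{\nicefrac{1}{2}} = W_0^{1,2}((0,1),\R)$ with equivalent norms, hence also $H_{\nicefrac{1}{2}} \subseteq W^{1,2}((0,1),\R)$ continuously; and at $s = 1$, Lemma~\ref{lemma:Equivalence of H1 norm} gives $H_1 \subseteq W^{2,2}((0,1),\R)$ continuously with equivalent norms.

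\textbf{Item~\eqref{item:Inclusion}.} For $s \in [0,\nicefrac{1}{2}]$ I would interpolate the embeddings $H_0 \hookrightarrow W^{0,2}((0,1),\R)$ and $H_{\nicefrac{1}{2}} \hookrightarrow W^{1,2}((0,1),\R)$: writing $H_s = [H_0, H_{\nicefrac{1}{2}}]_{2s}$ and invoking that $[W^{0,2}, W^{1,2}]_{2s} = W^{2s,2}((0,1),\R)$ with equivalence of norms (a standard fact about the Sobolev scale on a bounded interval; cf.\ the references to Lunardi and Sell \& You already cited in the excerpt) yields $H_s \subseteq W^{2s,2}((0,1),\R)$ continuously. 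For $s \in [\nicefrac{1}{2}, 1]$ I would similarly interpolate $H_{\nicefrac{1}{2}} \hookrightarrow W^{1,2}((0,1),\R)$ and $H_1 \hookrightarrow W^{2,2}((0,1),\R)$ against $[W^{1,2}, W^{2,2}]_{2s-1} = W^{2s,2}((0,1),\R)$. The two cases together cover $s \in [0,1]$.

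\textbf{Items~\eqref{item:Equivalence} and~\eqref{item:Equivalence 2}.} Here one interpolates the pair of identifications $H_0 = W_0^{0,2}((0,1),\R)$ and $H_{\nicefrac{1}{2}} = W_0^{1,2}((0,1),\R)$, using that $[W_0^{0,2}, W_0^{1,2}]_{2s} = W_0^{2s,2}((0,1),\R)$ with equivalent norms \emph{precisely} for $s \in [0,\nicefrac{1}{2}]$ with $2s \neq \nicefrac{1}{2}$, i.e.\ $s \neq \nicefrac{1}{4}$ --- this exceptional index is exactly the well-known obstruction in the theory of interpolation of spaces with boundary conditions (the point where $W_0^{\theta,2}$ ceases to be a closed subspace of $W^{\theta,2}$, equivalently where $W_0^{\theta,2} = W^{\theta,2}$; see the standard references). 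For such $s$ the complex interpolation functor applied to the identity maps in both directions gives the two claimed continuous inclusions between $H_s$ and $W_0^{2s,2}((0,1),\R)$, and in fact equivalence of norms. The main obstacle --- really the only substantive point --- is justifying that the Sobolev(--Slobodeckij) scale $W^{\theta,2}$ and its subspace scale $W_0^{\theta,2}$ are genuine complex interpolation scales with the stated exceptional index $\theta = \nicefrac{1}{2}$; once that is granted, each of the three items is a one-line application of the interpolation property together with the endpoint identifications supplied by Lemmas~\ref{lemma:local_lipA} and~\ref{lemma:Equivalence of H1 norm}.
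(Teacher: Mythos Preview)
Your proposal is correct and follows essentially the same route as the paper: establish the endpoint identifications via Lemmas~\ref{lemma:local_lipA} and~\ref{lemma:Equivalence of H1 norm}, and then interpolate, invoking the standard identification of the interpolated Sobolev and $W_0$ scales (with the exceptional index at $2s = \nicefrac{1}{2}$). The only cosmetic differences are that the paper works with the real method $(\cdot,\cdot)_{\theta,2}$ (citing Triebel and Lunardi for $(H,H_1)_{s,2}=H_s$, $(H,H_{1/2})_{s,2}=H_{s/2}$, and the corresponding Sobolev identities) rather than the complex method, and that for item~\eqref{item:Inclusion} the paper interpolates in a single step between $H_0$ and $H_1$ rather than splitting at $s=\nicefrac{1}{2}$; in the Hilbert setting both choices are equivalent and neither buys anything over the other.
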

\begin{proof}[Proof of Lemma~\ref{lemma:NormEquivalency}]
\sloppy 
Throughout this proof consider the notation in
Triebel~\cite[Section~1.3.2 on page~24]{Triebel1978}
(cf., e.g., Lunardi~\cite[Definition~1.2]{Lunardi2018}).
Note that
item~\eqref{item:continuous embedding H1} of 
Lemma~\ref{lemma:Equivalence of H1 norm}
ensures that
\begin{equation}
\label{eq:H1_Continuously}
H_1
\subseteq 
W^{2,2}( (0,1), \R)
\end{equation}
continuously.
Furthermore,
observe that,
e.g.,
Triebel~\cite[the theorem in Section~1.18.10 on page~142]{Triebel1978}
(cf., e.g., Lunardi~\cite[Theorem~4.36]{Lunardi2018})
and the fact that 
$ \forall \, s \in [0, \infty) \colon 
( D( (-A)^s ), \| (-A)^s \left( \cdot \right) \|_H ) 
= 
( H_s, \left\| \cdot \right\|_{H_s} ) $
prove that for every
$ s \in (0, 1) $ it holds that
\begin{equation}
\label{eq:Interpo4}
(H, H_1 )_{ s, 2 }
=
( H, D(-A) )_{ s, 2 }
=
D ( (-A)^s )
=
H_s  
\end{equation} 
and
\begin{equation} 
\label{eq:Interpo4 equivalent}
\sup_{ x \in H_s \backslash \{ 0 \}  }
\bigg(   
\frac{ \| x \|_{ ( H, H_1)_{s,2} }  }
{ \| x \|_{ H_s } }
+
\frac{ \| x \|_{ H_s } }
{ \| x \|_{ ( H, H_1)_{s,2} }  }
\bigg) 
<
\infty 
.
\end{equation}
%
%
%
%
%
%
This,
\eqref{eq:H1_Continuously},
and, e.g., 
Lunardi~\cite[Theorem~1.6]{Lunardi2018}
imply that for every
$ s \in (0,1) $ it holds that
\begin{equation}
\begin{split} 
H_s 
%
\subseteq 
( H,
W^{2,2}( (0,1), \R )
)_{ s, 2 } 
\end{split} 
\end{equation} 
continuously.
The fact that
for every $ s \in (0,1) $ it holds that
\begin{equation} 
( H,
W^{2,2}( (0,1), \R )
)_{ s, 2 } 
\subseteq
W^{ 2 s, 2}( (0,1), \R ) 
\end{equation}
continuously
(cf., e.g., 
Triebel~\cite[Definition~1 
in Section~4.2.1 
on page~310,
Theorem~1 in Section~4.3.1 
on page~317, 
item~(a) in Theorem~1 
in Section~4.4.2
on page~323,
 and 
Remark~2 in Section~4.4.2 
on page~324]{Triebel1978}) 
hence establishes item~\eqref{item:Inclusion}.
%
%
%
%
%
Moreover,
note that,
e.g., 
Triebel~\cite[the theorem in Section~1.18.10 on page~142]{Triebel1978}
(cf., e.g.,
Lunardi~\cite[Theorem~4.36]{Lunardi2018}) 
and the fact that 
$ \forall \, s \in [0, \infty) \colon 
( D( (-A)^s ), \| (-A)^s \left( \cdot \right) \|_H ) 
= 
( H_s, \left\| \cdot \right\|_{H_s} ) $
prove that for every
$ s \in (0, 1) $ it holds that
\begin{equation}
\label{eq:Interpo4b}
(H, H_{\nicefrac{1}{2}} )_{ s, 2 }
=
( H, D( (-A)^{\nicefrac{1}{2}} ) )_{ s, 2 }
=
D ( (-A)^{\nicefrac{s}{2}} )
=
H_{\nicefrac{s}{2}}  
\end{equation} 
and
\begin{equation} 
\label{eq:Interpo4 equivalent b}
\sup_{ x \in H_s \backslash \{ 0 \}  }
\bigg(   
\frac{ \| x \|_{ ( H, H_{ \nicefrac{1}{2} })_{s,2} }  }
{ \| x \|_{ H_{ \nicefrac{s}{2} } } }
+
\frac{ \| x \|_{ H_{ \nicefrac{s}{2} } } }
{ \| x \|_{ ( H, H_{ \nicefrac{1}{2} } )_{s,2} }  }
\bigg) 
<
\infty 
.
\end{equation}
The fact that
for every
$  s \in (0,1)
\backslash \{ \nicefrac{1}{2} \} $
it holds that
\begin{equation} 
( H,
W_0^{1,2}( (0,1), \R )
)_{ s, 2 } 
=
W_0^{ s, 2}( (0,1), \R ), 
%
%
\end{equation}
(cf., e.g., 
Triebel~\cite[Definition~1 and Definition~2 in Section~4.2.1 on page~310,
the definition in Section~4.3.2 on page~317,
item~(c) in Theorem~1 
and
Theorem~2 in Section~4.3.2 on page~318,
item~(a) in Theorem~1 
in Section~4.4.2
on page~323,
and
Remark~2 in Section~4.4.2 on page~324]{Triebel1978}),
%
%
%
%
items~\eqref{item:01 A} and~\eqref{item:01 B} 
of Lemma~\ref{lemma:local_lipA},  
and, e.g.,  
Lunardi~\cite[Theorem~1.6]{Lunardi2018} 
therefore
assure that for every
$ s \in (0,1) \backslash \{ \nicefrac{1}{2} \} $ it holds that 
\begin{equation}
H_{ \nicefrac{s}{2} }
=
( H, H_{ \nicefrac{1}{2} } )_{s, 2}
=
( H, W_0^{ 1, 2}( (0,1), \R )   )_{s, 2}
=
W_0^{ s, 2}( (0,1), \R )  
\end{equation}
and
\begin{equation} 
\sup_{ x \in H_{ \nicefrac{s}{2} } \backslash \{ 0 \}  }
\bigg(   
\frac{ \| x \|_{ H_{ \nicefrac{s}{2} } }  }
{ \| x \|_{W^{ s, 2}( (0,1), \R ) } }
+
\frac{ \| x \|_{ W^{ s, 2}( (0,1), \R )  }  }
{ \| x \|_{H_{ \nicefrac{s}{2} } } }
\bigg) 
<
\infty 
.
\end{equation}
This establishes items~\eqref{item:Equivalence}
and~\eqref{item:Equivalence 2}. 
The proof of Lemma~\ref{lemma:NormEquivalency}
is thus completed.
\end{proof} 
\begin{lemma} 
	\label{lemma:Sobolev_multiplication}
	Let 
	$ s \in [0,\infty) $,
	$ q, r \in [s,\infty) $ 
	satisfy  
	$ r + q - s > \nicefrac{1}{2} $.
	%
	%
	Then 
	\begin{enumerate}[(i)]
	\item \label{item:one more}
	it holds 
	for every 
	$ f \in W^{q, 2}( (0,1), \R )  $,
	$ g \in W^{r, 2}( (0,1), \R )  $ that
	$ f g \in W^{s, 2 }( ( 0, 1 ), \R ) $
	and  
	\item \label{item:two more} it holds that
	\begin{equation}
	\label{eq:ResultOfLemma}
	\sup_{
		f \in W^{q, 2}( (0,1), \R ) \backslash \{0\} 
	}
\,
\sup_{
		g \in W^{r, 2}( (0,1), \R ) \backslash \{0\}
	}
	\Bigg[
	\frac{ 
		\| f g \|_{ W^{s,2}( (0,1), \R ) }
	}
	{
		\| f \|_{ W^{q, 2}( (0,1), \R ) }
		\| g \|_{ W^{r, 2}( (0,1), \R ) }
	}
	\Bigg]
	< \infty
	.	
	\end{equation} 
	\end{enumerate}
\end{lemma}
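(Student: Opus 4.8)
The plan is to reduce the assertion from the interval $(0,1)$ to the whole real line and then to prove the corresponding bilinear estimate on $\R$ by Fourier analysis. First I would fix a bounded linear extension operator $E$ which satisfies $(Ef)|_{(0,1)} = f$ and which is bounded from $W^{k,2}((0,1),\R)$ to $W^{k,2}(\R,\R)$ for every nonnegative integer $k$ (e.g.\ a higher order reflection/Stein-type extension); real interpolation (cf., e.g., Triebel~\cite{Triebel1978}, exactly as used for $(H,H_1)_{s,2}$ and $(H,W_0^{1,2})_{s,2}$ in the proof of Lemma~\ref{lemma:NormEquivalency}) then yields that $E \colon W^{\sigma,2}((0,1),\R) \to W^{\sigma,2}(\R,\R)$ is bounded for every $\sigma \in [0,\infty)$, and likewise that the restriction map $W^{\sigma,2}(\R,\R) \ni h \mapsto h|_{(0,1)} \in W^{\sigma,2}((0,1),\R)$ is bounded. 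Since $(Ef)(Eg)|_{(0,1)} = fg$, it suffices to prove that there is a constant $C \in (0,\infty)$ such that $\|\varphi\psi\|_{W^{s,2}(\R,\R)} \le C\|\varphi\|_{W^{q,2}(\R,\R)}\|\psi\|_{W^{r,2}(\R,\R)}$ for all $\varphi \in W^{q,2}(\R,\R)$, $\psi \in W^{r,2}(\R,\R)$, and the finiteness assertion in item~\eqref{item:one more} then follows from this bound together with the fact that $fg$ is a priori only in $L^1((0,1),\R)$.

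For the estimate on $\R$ I would use that, for every $\sigma \in [0,\infty)$, the norm $\|h\|_{W^{\sigma,2}(\R,\R)}$ is equivalent to $(\int_\R (1+|\xi|)^{2\sigma}|\hat h(\xi)|^2\,d\xi)^{1/2}$, where $\hat h$ denotes the Fourier transform. Given $\varphi \in W^{q,2}(\R,\R)$ and $\psi \in W^{r,2}(\R,\R)$, one has $\varphi\psi \in L^1(\R)$, $\widehat{\varphi\psi} = \hat\varphi * \hat\psi$ (a well-defined continuous function since $\hat\varphi,\hat\psi \in L^2(\R)$), and hence $|\widehat{\varphi\psi}| \le |\hat\varphi|*|\hat\psi|$ pointwise, so that it remains to bound $(1+|\cdot|)^s(|\hat\varphi|*|\hat\psi|)$ in $L^2(\R)$. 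Writing $U := (1+|\cdot|)^q|\hat\varphi|$ and $V := (1+|\cdot|)^r|\hat\psi|$, one has $U,V \in L^2(\R)$ with $\|U\|_{L^2}$, $\|V\|_{L^2}$ comparable to $\|\varphi\|_{W^{q,2}(\R,\R)}$, $\|\psi\|_{W^{r,2}(\R,\R)}$, and applying Peetre's inequality $(1+|\xi|)^s \le 2^s[(1+|\xi-\eta|)^s + (1+|\eta|)^s]$ (valid for $s \ge 0$) inside the convolution integral bounds $(1+|\cdot|)^s(|\hat\varphi|*|\hat\psi|)$, up to the factor $2^s$, by the sum of the two functions $\bigl((1+|\cdot|)^{s-q}U\bigr)*\bigl((1+|\cdot|)^{-r}V\bigr)$ and $\bigl((1+|\cdot|)^{-q}U\bigr)*\bigl((1+|\cdot|)^{s-r}V\bigr)$.

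Each of these two terms has the form $(g_1 U)*(g_2 V)$ with $g_1,g_2$ nonincreasing power weights and $U,V \in L^2(\R)$, and I would estimate it by Young's convolution inequality $\|F*G\|_{L^2} \le \|F\|_{L^{p_1}}\|G\|_{L^{p_2}}$ (for $\frac{1}{p_1}+\frac{1}{p_2}=\frac{3}{2}$) combined with Hölder's inequality $\|g_iU\|_{L^{p_i}} \le \|g_i\|_{L^{t_i}}\|U\|_{L^2}$ (for $\frac{1}{p_i}=\frac{1}{t_i}+\frac12$). This requires choosing exponents with $\frac{1}{t_1}+\frac{1}{t_2}=\frac12$ such that $g_1 \in L^{t_1}(\R)$ and $g_2 \in L^{t_2}(\R)$; for the first term this amounts to $\frac{1}{t_1} < q-s$ (with $t_1=\infty$ admissible when $q=s$) and $\frac{1}{t_2} < r$ (with $t_2=\infty$ admissible when $r=s=0$), and such a choice exists precisely because $(q-s)+r = q+r-s > \frac12$; the second term is handled symmetrically using $q+(r-s) > \frac12$. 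Summing the resulting bounds gives the claimed inequality on $\R$, and transporting it back through the extension/restriction reduction establishes items~\eqref{item:one more} and~\eqref{item:two more}.

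The main point requiring care is the bookkeeping of the exponents $t_1,t_2$ in the boundary cases $q=s$ or $r=s$ — where one of the weights is merely bounded rather than integrable — and the verification that the strict inequality $q+r-s>\frac12$ leaves just enough room to choose them (note that the only potentially degenerate configuration, $q=s=r$, forces $q+r-s=q\le\frac12$ and so is excluded by hypothesis); a secondary, purely technical nuisance is making the identity $\widehat{\varphi\psi}=\hat\varphi*\hat\psi$ and the comparison between the Slobodeckij and the Bessel-potential (Fourier) norms rigorous, but both are standard and may be quoted.
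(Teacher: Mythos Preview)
Your argument is correct and complete: the extension/restriction reduction is standard, and on $\R$ the additive Peetre inequality together with Young's and H\"older's inequalities does give the bilinear estimate under exactly the condition $q+r-s>\tfrac12$. Your case distinction for the endpoint situations $q=s$ (forcing $t_1=\infty$, hence requiring $r>\tfrac12$) and $r=0$ (forcing $t_2=\infty$, hence requiring $q>\tfrac12$) is right; only the parenthetical remark that $q=s=r$ ``forces $q+r-s=q\le\tfrac12$'' is garbled --- in fact $q=s=r$ gives $q+r-s=q$, so the hypothesis reads $q>\tfrac12$, which is precisely what both endpoint estimates need, and the case goes through.

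The paper takes a completely different route: it does not argue at all but simply invokes Behzadan \& Holst~\cite[Theorem~7.5]{BehzadanHolst2015} with the parameter choices $n=1$, $p=p_1=p_2=2$, $s_1=q$, $s_2=r$. Your approach is therefore genuinely self-contained and makes transparent where the threshold $\tfrac12$ comes from (it is the condition for $(1+|\cdot|)^{-\alpha}\in L^t(\R)$, i.e.\ half the ambient dimension), while the paper's approach keeps the exposition short by delegating to a reference that handles the general $n$-dimensional, $L^p$-based case in one stroke.
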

\begin{proof}[Proof of Lemma~\ref{lemma:Sobolev_multiplication}]
	Observe that, e.g., 
	Behzadan \& Holst~\cite[Theorem~7.5]{BehzadanHolst2015} 
	(with 
	$ n = 1 $,
	$ \Omega = (0,1) $,
	$ s = s $,
	$ p = 2 $,
	$ s_1 = q $,
	$ s_2 = r $,
	$ p_1 = 2 $,
	$ p_2 = 2 $
	in the notation of  
	Behzadan \& Holst~\cite[Theorem~7.5]{BehzadanHolst2015})
	establishes
	items~\eqref{item:one more}
	and~\eqref{item:two more}.
	The proof of Lemma~\ref{lemma:Sobolev_multiplication}
	is thus completed.
\end{proof} 
\begin{lemma}
	\label{lemma:Identity2}
	%
	Assume Setting~\ref{setting:Examples}.
	Then  
	\begin{enumerate}[(i)]  
	\item \label{item:Existence of extension}
	there exists a unique bounded linear function
	$ \bar \partial \colon H \to H_{-\nicefrac{1}{2}} $
	which satisfies for every $ v \in W^{1,2}((0,1), \R ) $
	that
	$ \bar \partial v = \partial v $
	and 
	\item \label{item:Norm estimate of extension}
	it holds that 
	$ \| \bar \partial \|_{ L(H, H_{-\nicefrac{1}{2} } ) }
	\leq | c_0 |^{ - \nicefrac{1}{2} } $.
	\end{enumerate}
\end{lemma}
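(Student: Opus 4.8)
The plan is to identify $ H_{ -\nicefrac{1}{2} } $ with $ H $ via the isometry $ (-A)^{\nicefrac{1}{2}} $, to reduce the assertion to a boundedness statement for $ (-A)^{-\nicefrac{1}{2}} \partial $ on $ W^{1,2}((0,1),\R) $ with respect to the $ H $-norm, and then to extend by density. Here I would rely on the standard properties of the interpolation spaces associated to $ -A $ (cf., e.g., \cite[Section~3.7]{SellYou2002}) that for every $ r \in (0,\infty) $ the operator $ (-A)^{-r} \colon H \to H_r $ is an isometric isomorphism and that $ (-A)^{r} $ extends uniquely to an isometric isomorphism $ (-A)^{r} \colon H \to H_{-r} $ whose inverse is $ (-A)^{-r} \colon H_{-r} \to H $. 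Given these facts, it suffices to produce $ S \in L(H) $ with $ \| S \|_{L(H)} \leq |c_0|^{-\nicefrac{1}{2}} $ and $ S v = (-A)^{-\nicefrac{1}{2}} \partial v $ for every $ v \in W^{1,2}((0,1),\R) $; then $ \bar\partial := (-A)^{\nicefrac{1}{2}} \circ S $ is the desired operator, since $ \| \bar\partial \|_{ L( H, H_{-\nicefrac{1}{2}} ) } \leq \| (-A)^{\nicefrac{1}{2}} \|_{ L( H, H_{-\nicefrac{1}{2}} ) } \| S \|_{ L(H) } \leq |c_0|^{-\nicefrac{1}{2}} $ and, for $ v \in W^{1,2}((0,1),\R) $, one has $ (-A)^{-\nicefrac{1}{2}} \partial v \in H_{\nicefrac{1}{2}} $ and hence $ \bar\partial v = (-A)^{\nicefrac{1}{2}} (-A)^{-\nicefrac{1}{2}} \partial v = \partial v $ in $ H_{-\nicefrac{1}{2}} $.

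The core step is the estimate $ \| (-A)^{-\nicefrac{1}{2}} \partial v \|_H \leq |c_0|^{-\nicefrac{1}{2}} \| v \|_H $ for $ v \in W^{1,2}((0,1),\R) $, which I would establish by duality. For $ h \in H $ the element $ (-A)^{-\nicefrac{1}{2}} h $ belongs to $ H_{\nicefrac{1}{2}} = W_0^{1,2}((0,1),\R) $ (by Lemma~\ref{lemma:local_lipA}) with $ \| (-A)^{-\nicefrac{1}{2}} h \|_{ H_{\nicefrac{1}{2}} } = \| h \|_H $, so Lemma~\ref{lemma:local_lipA} also gives $ \| \partial( (-A)^{-\nicefrac{1}{2}} h ) \|_H = |c_0|^{-\nicefrac{1}{2}} \| h \|_H $. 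Using the self-adjointness of $ (-A)^{-\nicefrac{1}{2}} $ on $ H $, the integration by parts formula in Lemma~\ref{lemma:int_parts} (applied with $ u = (-A)^{-\nicefrac{1}{2}} h \in W_0^{1,2}((0,1),\R) $ and the given $ v \in W^{1,2}((0,1),\R) $), and the Cauchy--Schwarz inequality, one then obtains for every $ h \in H $ with $ \| h \|_H \leq 1 $ that
\[
\big| \langle h, (-A)^{-\nicefrac{1}{2}} \partial v \rangle_H \big|
= \big| \langle \partial( (-A)^{-\nicefrac{1}{2}} h ), v \rangle_H \big|
\leq \| \partial( (-A)^{-\nicefrac{1}{2}} h ) \|_H \, \| v \|_H
\leq |c_0|^{-\nicefrac{1}{2}} \, \| v \|_H ,
\]
and taking the supremum over all such $ h $ yields the claimed bound.

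To finish, I would note that $ \mathcal{C}_{cpt}^\infty((0,1),\R) \subseteq W^{1,2}((0,1),\R) $ is dense in $ H $, so the linear map $ W^{1,2}((0,1),\R) \ni v \mapsto (-A)^{-\nicefrac{1}{2}} \partial v \in H $, which by the previous step is bounded (even $ |c_0|^{-\nicefrac{1}{2}} $-Lipschitz) for the $ H $-norm on both sides, extends uniquely to an operator $ S \in L(H) $ with $ \| S \|_{L(H)} \leq |c_0|^{-\nicefrac{1}{2}} $; then $ \bar\partial := (-A)^{\nicefrac{1}{2}} \circ S $ satisfies item~\eqref{item:Existence of extension} and item~\eqref{item:Norm estimate of extension} as indicated above. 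Uniqueness of $ \bar\partial $ follows from the same density: if $ \tilde\partial \in L( H, H_{-\nicefrac{1}{2}} ) $ also satisfies $ \tilde\partial v = \partial v $ for all $ v \in W^{1,2}((0,1),\R) $, then $ (-A)^{-\nicefrac{1}{2}} \circ \tilde\partial \in L(H) $ coincides with $ S $ on the dense subset $ W^{1,2}((0,1),\R) $, hence $ (-A)^{-\nicefrac{1}{2}} \tilde\partial = S $ and therefore $ \tilde\partial = \bar\partial $. The main obstacle is not depth but bookkeeping — one must carefully track in which space of the scale $ (H_r)_{r \in \R} $ each object lives and keep the identifications $ (-A)^{\nicefrac{1}{2}} $ and $ (-A)^{-\nicefrac{1}{2}} $ straight — while the genuinely substantive inputs are the sharp norm identity for $ \partial $ on $ H_{\nicefrac{1}{2}} $ from Lemma~\ref{lemma:local_lipA} together with the integration by parts formula in Lemma~\ref{lemma:int_parts}.
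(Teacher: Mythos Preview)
Your argument is correct and is essentially the same as the paper's: both proofs obtain the bound $\|\partial v\|_{H_{-\nicefrac{1}{2}}}\le |c_0|^{-\nicefrac{1}{2}}\|v\|_H$ for $v\in W^{1,2}((0,1),\R)$ by duality, integration by parts (Lemma~\ref{lemma:int_parts}), and the norm identity from Lemma~\ref{lemma:local_lipA}, and then extend by density. The only cosmetic difference is that the paper computes the $H_{-\nicefrac{1}{2}}$-norm directly as $\sup_{u\in H_{\nicefrac{1}{2}}\setminus\{0\}}\tfrac{|\langle\partial v,u\rangle_H|}{\|u\|_{H_{\nicefrac{1}{2}}}}$, whereas you first pull back to $H$ via the isometry $(-A)^{\pm\nicefrac{1}{2}}$; these are equivalent under the substitution $u=(-A)^{-\nicefrac{1}{2}}h$.
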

\begin{proof}[Proof of Lemma~\ref{lemma:Identity2}]
	Observe that 
Lemma~\ref{lemma:int_parts}
and
	items~\eqref{item:01 A},
	\eqref{item:01 B},
	and~\eqref{item:norm_estimate} of Lemma~\ref{lemma:local_lipA}  
	show that for every
    $ v \in W^{1,2}((0,1), \R) $	
	it holds that 
	\begin{equation}
	\begin{split} 
	\label{eq:first H12}
	\| \partial v \|_{ H_{ - \nicefrac{1}{2} } }
	&=
	\sup_{ u \in (  H_{ \nicefrac{1}{2} } \backslash \{0\} ) }
	\tfrac{ | \langle \partial v, u \rangle_H | }{
	\| u \|_{ H_{ \nicefrac{1}{2} } }
	}
    =
    \sup_{ u \in (   W_0^{1,2}((0,1), \R) \backslash \{0\} ) }
    \tfrac{ | \langle \partial v, u \rangle_H | }{
    	\| u \|_{ H_{ \nicefrac{1}{2} } }
    }
	=
	\sup_{ u \in (   W_0^{1,2}((0,1), \R) \backslash \{0\} ) }
	\tfrac{ | \langle v, \partial u \rangle_H | }{
		\| u \|_{ H_{ \nicefrac{1}{2} } }
	}
\\
&
\leq 
\sup_{ u \in (   W_0^{1,2}((0,1), \R) \backslash \{0\} ) }
\tfrac{ \| v \|_H  \| \partial u \|_H }{
	\| u \|_{ H_{ \nicefrac{1}{2} } }
}
=
| c_0 |^{ - \nicefrac{1}{2} }
\sup_{ u \in (  H_{ \nicefrac{1}{2} } \backslash \{0\} ) }
\tfrac{ \| v \|_H  \| u \|_{ H_{ \nicefrac{1}{2} } } }{
	\| u \|_{ H_{ \nicefrac{1}{2} } }
}
=
| c_0 |^{ - \nicefrac{1}{2} }
\| v \|_H
	.
	\end{split}
	\end{equation}
The fact that
$ W^{1,2}((0,1), \R ) \subseteq H $
densely 
therefore establishes 
    items~\eqref{item:Existence of extension}
    and~\eqref{item:Norm estimate of extension}.
	%
	The proof of Lemma~\ref{lemma:Identity2}
	is thus completed.
\end{proof}
%
%
%
\begin{lemma}
	\label{lemma:DerivativeEstimate}
	%
	Assume Setting~\ref{setting:Examples}
	and let
	$ \alpha \in [0, \nicefrac{1}{2} ] $.
	Then
	\begin{equation} 
	\label{eq:Prove equivalency}
	\sup\nolimits_{ v \in W^{1,2}((0,1), \R ) \backslash \{ 0 \} }
	\tfrac{ \| \partial v \|_{ H_{ - \alpha } }
	} {
		\| v \|_{ W^{ 1- 2 \alpha, 2}( (0,1), \R ) }
	} < \infty 
	.
	\end{equation} 
	%
\end{lemma}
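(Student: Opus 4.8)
The plan is to obtain the estimate by real interpolation between the two endpoint values $\alpha = 0$ and $\alpha = \nicefrac{1}{2}$, where the bound is either immediate or already established. First I would treat $\alpha = 0$: for every $v \in W^{1,2}((0,1),\R)$ the identity~\eqref{eq:Again something} yields $\| \partial v \|_{H_0} = \| \partial v \|_H \leq ( \| v \|_H^2 + \| \partial v \|_H^2 )^{\nicefrac{1}{2}} = \| v \|_{W^{1,2}((0,1),\R)}$, so the supremum in~\eqref{eq:Prove equivalency} is at most $1$ in this case. Next, for $\alpha = \nicefrac{1}{2}$, Lemma~\ref{lemma:Identity2} supplies the bounded linear operator $\bar\partial \in L(H, H_{-\nicefrac{1}{2}})$ which extends $\partial$ and satisfies $\| \bar\partial \|_{L(H, H_{-\nicefrac{1}{2}})} \leq |c_0|^{-\nicefrac{1}{2}}$; since $\partial v = \bar\partial v$ for $v \in W^{1,2}((0,1),\R)$ and $W^{0,2}((0,1),\R) = L^2(\lambda;\R) = H$, this gives $\| \partial v \|_{H_{-\nicefrac{1}{2}}} \leq |c_0|^{-\nicefrac{1}{2}} \| v \|_{W^{0,2}((0,1),\R)}$, so~\eqref{eq:Prove equivalency} holds for $\alpha = \nicefrac{1}{2}$ as well.

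For the remaining range $\alpha \in (0, \nicefrac{1}{2})$ I would argue by interpolation with the operator $\bar\partial$ from Lemma~\ref{lemma:Identity2}. On the one hand, by the $\alpha = 0$ estimate (and $\bar\partial v = \partial v$ on $W^{1,2}((0,1),\R)$) the restriction of $\bar\partial$ is a bounded linear operator from $W^{1,2}((0,1),\R)$ into $H$; on the other hand $\bar\partial \in L(H, H_{-\nicefrac{1}{2}})$. Since $W^{1,2}((0,1),\R) \subseteq H$ and $H = H_0 \subseteq H_{-\nicefrac{1}{2}}$ continuously, the pairs $(W^{1,2}((0,1),\R), H)$ and $(H_0, H_{-\nicefrac{1}{2}})$ are interpolation couples, and by the interpolation property of the real interpolation functor $(\cdot,\cdot)_{2\alpha,2}$ (chosen so that the target index $-\tfrac{2\alpha}{2} = -\alpha$) the map $\bar\partial$ is bounded from $(W^{1,2}((0,1),\R), H)_{2\alpha,2}$ into $(H_0, H_{-\nicefrac{1}{2}})_{2\alpha,2}$. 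I would then identify the two interpolation spaces: standard interpolation of Sobolev--Slobodeckij spaces on the interval $(0,1)$ gives $(W^{1,2}((0,1),\R), H)_{2\alpha,2} = W^{1-2\alpha,2}((0,1),\R)$ with equivalent norms, and interpolation within the fractional power scale of $-A$ — exactly of the type used in the proof of Lemma~\ref{lemma:NormEquivalency} — gives $(H_0, H_{-\nicefrac{1}{2}})_{2\alpha,2} = H_{-\alpha}$ with equivalent norms. Combining these, $\bar\partial$ is a bounded linear operator from $W^{1-2\alpha,2}((0,1),\R)$ into $H_{-\alpha}$, say with operator norm $C_\alpha \in [0,\infty)$. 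Restricting to $v \in W^{1,2}((0,1),\R) \subseteq W^{1-2\alpha,2}((0,1),\R)$, where $\bar\partial v = \partial v$, then yields $\| \partial v \|_{H_{-\alpha}} \leq C_\alpha \| v \|_{W^{1-2\alpha,2}((0,1),\R)}$, which establishes~\eqref{eq:Prove equivalency}.

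The main obstacle I anticipate is the careful identification of the real interpolation spaces: verifying $(W^{1,2}((0,1),\R), L^2(\lambda;\R))_{2\alpha,2} = W^{1-2\alpha,2}((0,1),\R)$ with equivalent norms throughout the open range $2\alpha \in (0,1)$ — including the potentially delicate value $1-2\alpha = \nicefrac{1}{2}$, which here is unproblematic precisely because no zero-boundary condition is imposed on the endpoint space $W^{1,2}((0,1),\R)$ — and pinning down the negative-order space $(H_0, H_{-\nicefrac{1}{2}})_{2\alpha,2} = H_{-\alpha}$. Both facts are classical (cf., e.g., Triebel~\cite{Triebel1978} and Lunardi~\cite{Lunardi2018}) and entirely in the spirit of the interpolation identifications already carried out in Lemma~\ref{lemma:NormEquivalency} and Lemma~\ref{lemma:Identity2}; once they are in place, the remainder is routine bookkeeping with norm equivalences.
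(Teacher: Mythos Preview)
Your proposal is correct and follows essentially the same route as the paper: both argue by real interpolation of the operator $\bar\partial$ from Lemma~\ref{lemma:Identity2} between the endpoint bounds $\bar\partial\colon W^{1,2}((0,1),\R)\to H$ and $\bar\partial\colon H\to H_{-\nicefrac{1}{2}}$, and then identify the resulting interpolation spaces as $W^{1-2\alpha,2}((0,1),\R)$ and $H_{-\alpha}$ via the standard Triebel/Lunardi results. The only cosmetic difference is that you treat the endpoints $\alpha\in\{0,\nicefrac{1}{2}\}$ separately, whereas the paper folds them into the same interpolation argument.
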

\begin{proof}[Proof of Lemma~\ref{lemma:DerivativeEstimate}]
	Throughout this proof consider the notation in
	Triebel~\cite[Section~1.3.2 on page~24]{Triebel1978}
	(cf., e.g., Lunardi~\cite[Definition~1.2]{Lunardi2018}) 
	and  
	let
	$ \bar \partial \colon H \to H_{ - \nicefrac{1}{2} } $
	be the continuous linear function
	which satisfies for every
	$ v \in W^{1,2}( (0,1), \R ) $
	that
	$ \bar \partial v = \partial v $
	(cf.\ item~\eqref{item:Existence of extension} of Lemma~\ref{lemma:Identity2}).
	%
	%
	%
	%
	Observe that,
	e.g., 
	Triebel~\cite[the theorem in Section~1.18.10 on page~142]{Triebel1978}
	(cf., e.g.,
	Lunardi~\cite[Theorem~4.36]{Lunardi2018}) 
	and the fact that 
	$ \forall \, s \in [0, \infty) \colon 
	( D( (-A)^s ), \| (-A)^s \left( \cdot \right) \|_H ) 
	= 
	( H_s, \left\| \cdot \right\|_{H_s} ) $
	prove that for every
	$ s \in (0, 1) $ 
	it holds that
	\begin{equation}
	\label{eq:Interpolation Hs2}
	(H, H_{\nicefrac{1}{2}} )_{ s, 2 }
	=
	( H, D( ( -A )^{\nicefrac{1}{2}} ) )_{ s, 2 }
	=
	D ( (-A)^{\nicefrac{s}{2}} )
	=
	H_{\nicefrac{s}{2}}
	\end{equation} 
	and
	\begin{equation} 
	\label{eq:Interpolation Hs2 equivalent}
	\sup_{ x \in H_1 \backslash \{ 0 \}  }
	\bigg(   
	\frac{ \| x \|_{ ( H, H_{\nicefrac{1}{2}})_{s,2} }  }
	{ \| x \|_{H_{\nicefrac{s}{2}}} }
	+
	\frac{ \| x \|_{H_{\nicefrac{s}{2}}} }
	{ \| x \|_{ ( H, H_{\nicefrac{1}{2}})_{s,2} }  }
	\bigg) 
	<
	\infty 
	.
	\end{equation}
	The fact that
	for every $ r \in [0, \infty) $
	it holds that 
	$ ( H_r )' $
	and 
	$ H_{-r} $ are isometrically isomorphic
	and, e.g., 
	Triebel~\cite[item~(b) of
	the theorem 
	in Section~1.3.3
	on page~25
	and the theorem in Section~1.11.2
	on page~69]{Triebel1978} 
	(cf., e.g., 
	Lunardi~\cite[Theorem~1.18]{Lunardi2018})
hence 
	imply that for every
	$ s \in (0,1) $
	it holds that
	\begin{equation} 
	\label{eq:interpolation 1}
	  ( H_{ - \nicefrac{1}{2} }, H )_{s, 2} 
	  =
	  H_{ \nicefrac{ ( s - 1 ) }{ 2 } }
	\end{equation}
	and
	\begin{equation}
	\label{eq:interpolation 1 norms}
	\sup_{ x \in H_{ \nicefrac{(s-1)}{2} } \backslash \{0\}  }
	\bigg( 
	\frac{ \| x \|_{ H_{ \nicefrac{(s-1)}{2} }    } }
	{ \| x \|_{ ( H_{ - \nicefrac{1}{2} }, H )_{s,2} } }
	+
	\frac{ \| x \|_{ ( H_{ - \nicefrac{1}{2} }, H )_{s,2} } }
	{ \| x \|_{ H_{ \nicefrac{(s-1)}{2} }    } } 
	\bigg)
	<
	\infty.
	\end{equation}
	In addition, note that, e.g., 
	Triebel~\cite[Definition~1 
	in Section~4.2.1 
	on page~310,
	Theorem~1 in Section~4.3.1 
	on page~317, 
	item~(a) in Theorem~1 
	in Section~4.4.2
	on page~323,
	and 
	Remark~2 in Section~4.4.2 
	on page~324]{Triebel1978}  
	ensures that 
	for every $ s \in (0,1) $
	it holds that
	\begin{equation}
	\label{eq:Interpolation norm estimate spaces}
	( H,
	W^{1,2}( (0,1), \R )
	)_{ s, 2 }  
	=
	W^{ s, 2}( (0,1), \R )
	\end{equation}
	and
	\begin{equation}
	\label{eq:Interpolation norm estimate} 
	\sup_{ x \in 
		W^{ s, 2}( (0,1), \R ) \backslash \{0\}
	}
	\Bigg(
	\frac{
	\| x \|_{ W^{ s, 2}( (0,1), \R ) }
	}{
	\| x \|_{ ( H,
		W^{1,2}( (0,1), \R )
		)_{ s, 2 } }
	}
	+
	\frac{
		\| x \|_{ ( H,
			W^{1,2}( (0,1), \R )
			)_{ s, 2 } }
	}
{
		\| x \|_{ W^{ s, 2}( (0,1), \R ) }
	}
	\Bigg)
	< \infty.
	\end{equation} 
	Furthermore,
	observe that 
	item~\eqref{item:Norm estimate of extension}
	of
	Lemma~\ref{lemma:Identity2} 
	ensures that for every
	$ v \in H $
	it holds
	that
	\begin{equation} 
	\| \bar \partial v \|_{ H_{ - \nicefrac{1}{2} } } 
	\leq
	| c_0 |^{ - \nicefrac{1}{2} }
	\| v \|_H 
	.
	\end{equation}  
	Combining this,  
	the fact that
	for every $ v \in  
	W^{1,2}( (0,1), \R ) $
	it holds that
	\begin{equation} 
	\| \partial v \|_H
	\leq
	\| v \|_{ W^{1,2}( (0,1), \R ) }
	,
	\end{equation}  
	\eqref{eq:interpolation 1}--\eqref{eq:Interpolation norm estimate}, 
	and, e.g, 
	Lunardi~\cite[Theorem~1.6]{Lunardi2018}
	establishes~\eqref{eq:Prove equivalency}.
 The proof of
 Lemma~\ref{lemma:DerivativeEstimate}
 is thus completed.  
\end{proof}
\begin{lemma}
	\label{lemma:InfEstimate}
	Assume Setting~\ref{setting:Examples} 
	and let
	$ \alpha \in ( \nicefrac{1}{4}, \infty ) $.
	Then it holds for every
	$ v \in H_{ \alpha + ( \nicefrac{1}{2} ) } $ 
	that
	\begin{equation}
	\begin{split} 
	&
	\| \partial v \|_{L^\infty(\lambda; \R) }
	\leq
	\sqrt{2} 
	| c_0 |^{ - \alpha - (\nicefrac{1}{2} ) }
	\| v \|_{ H_{ \alpha + ( \nicefrac{1}{2} )  } }
	\sqrt{ 
		\sum_{n=1}^\infty 
		| \pi n|^{-4\alpha}	
	}
	.
	\end{split} 
	\end{equation}
\end{lemma}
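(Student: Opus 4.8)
The plan is to expand $v$ in the orthonormal basis $(e_n)_{n\in\N}$ of $H$, to use the explicit form of the weak derivatives $\partial e_n$ together with their sup-norms, and to conclude via the Cauchy--Schwarz inequality; the hypothesis $\alpha\in(\nicefrac{1}{4},\infty)$ is precisely what guarantees that the resulting series $\sum_{n\in\N}|\pi n|^{-4\alpha}$ is finite.

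First I would observe that, since $\alpha>\nicefrac{1}{4}$ implies $\alpha+\nicefrac{1}{2}\geq\nicefrac{1}{2}\geq 0$, item~(i) of Lemma~\ref{lemma:local_lipA} yields $H_{\alpha+(\nicefrac{1}{2})}\subseteq H_{\nicefrac{1}{2}}=W_0^{1,2}((0,1),\R)\subseteq W^{1,2}((0,1),\R)$ continuously, so that $\partial v\in H$ is well defined for every $v\in H_{\alpha+(\nicefrac{1}{2})}$. Next I would record that $e_n=[(\sqrt{2}\sin(n\pi x))_{x\in(0,1)}]_{\lambda,\B(\R)}$, $\values_{e_n}=-c_0\pi^2n^2$, and (by the elementary computation in the proof of Lemma~\ref{Lemma:trivial}) $\partial e_n=[(\sqrt{2}\,\pi n\cos(n\pi x))_{x\in(0,1)}]_{\lambda,\B(\R)}$, so that $\|\partial e_n\|_{L^\infty(\lambda;\R)}=\sqrt{2}\,\pi n=\sqrt{2}\,|c_0|^{-\nicefrac{1}{2}}|\values_{e_n}|^{\nicefrac{1}{2}}$ for every $n\in\N$. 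Then, using $v=\sum_{n\in\N}\langle e_n,v\rangle_H e_n$ and $\|v\|_{H_{\alpha+(\nicefrac{1}{2})}}^2=\sum_{n\in\N}|\values_{e_n}|^{2\alpha+1}|\langle e_n,v\rangle_H|^2$, the Cauchy--Schwarz inequality would give
\begin{equation*}
\begin{split}
\sum_{n\in\N}|\langle e_n,v\rangle_H|\,\|\partial e_n\|_{L^\infty(\lambda;\R)}
&=\sqrt{2}\sum_{n\in\N}\Big(|\langle e_n,v\rangle_H|\,|\values_{e_n}|^{\alpha+(\nicefrac{1}{2})}\Big)\Big(|c_0|^{-\nicefrac{1}{2}}|\values_{e_n}|^{-\alpha}\Big)\\
&\leq\sqrt{2}\,|c_0|^{-\nicefrac{1}{2}}\,\|v\|_{H_{\alpha+(\nicefrac{1}{2})}}\Big(\textstyle\sum_{n\in\N}|\values_{e_n}|^{-2\alpha}\Big)^{\nicefrac{1}{2}}\\
&=\sqrt{2}\,|c_0|^{-\alpha-(\nicefrac{1}{2})}\,\|v\|_{H_{\alpha+(\nicefrac{1}{2})}}\Big(\textstyle\sum_{n\in\N}|\pi n|^{-4\alpha}\Big)^{\nicefrac{1}{2}},
\end{split}
\end{equation*}
where the last series is finite because $4\alpha>1$.

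It then remains to identify $\partial v$ with the $L^\infty(\lambda;\R)$-limit of the truncated series on the left-hand side above. The absolute convergence just shown implies that $\big(\sum_{n=1}^N\langle e_n,v\rangle_H\partial e_n\big)_{N\in\N}$ is a Cauchy sequence in $L^\infty(\lambda;\R)$, hence converges in $L^\infty(\lambda;\R)$, and therefore also in $H$, to some $w\in L^\infty(\lambda;\R)$ with $\|w\|_{L^\infty(\lambda;\R)}\leq\sum_{n\in\N}|\langle e_n,v\rangle_H|\,\|\partial e_n\|_{L^\infty(\lambda;\R)}$. On the other hand, $\sum_{n=1}^N\langle e_n,v\rangle_H e_n\to v$ in $H_{\alpha+(\nicefrac{1}{2})}$, hence in $H_{\nicefrac{1}{2}}$ and thus in $W^{1,2}((0,1),\R)$, and the weak derivative map $W^{1,2}((0,1),\R)\ni u\mapsto\partial u\in H$ is bounded, so $\sum_{n=1}^N\langle e_n,v\rangle_H\partial e_n=\partial\big(\sum_{n=1}^N\langle e_n,v\rangle_H e_n\big)\to\partial v$ in $H$; uniqueness of limits in $H$ then forces $\partial v=w\in L^\infty(\lambda;\R)$, and combining this with the displayed estimate completes the proof. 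The only step that is not a routine computation is this last identification of the $L^\infty$-limit of the Fourier partial sums with $\partial v$; alternatively one could bypass it by invoking a general Sobolev-type embedding estimate in the spirit of the proof of Lemma~\ref{lemma:local_lipA}.
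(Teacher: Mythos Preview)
Your proof is correct and follows essentially the same route as the paper: expand $v$ in the basis $(e_n)$, use the explicit sup-norm $\|\partial e_n\|_{L^\infty(\lambda;\R)}=\sqrt{2}\,\pi n$, and apply Cauchy--Schwarz with the splitting $|\values_{e_n}|^{\alpha+\nicefrac{1}{2}}\cdot|\values_{e_n}|^{-\alpha}$ to get the stated bound. The paper proceeds by first identifying $\partial v$ with the $H$-limit of $\sum_{n=1}^N\langle e_n,v\rangle_H\,\partial e_n$ (via convergence in $H_{\nicefrac{1}{2}}$ and boundedness of $\partial\colon H_{\nicefrac{1}{2}}\to H$) and then bounding the $L^\infty$-norm of the series, whereas you establish absolute $L^\infty$-convergence first and then identify the limit; your ordering is arguably cleaner, but the content is the same.
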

\begin{proof}[Proof of Lemma~\ref{lemma:InfEstimate}]
%
%
%
Note that the fact that 
$ \forall \, v \in H_{ \nicefrac{1}{2} } \colon 
\sum_{n=1}^\infty
| \values_{e_n} | 
\,
| \langle e_n, v \rangle_H |^2
=
\|(-A)^{\nicefrac{1}{2}} v \|_H^2
=
\|
v
\|_{H_{\nicefrac{1}{2}}}^2
 < \infty $ 
shows that
for every 
$ v \in H_{ \nicefrac{1}{2} } $ 
it holds that
\begin{equation}
\begin{split}
\label{eq:H12Convergence}
\limsup_{ N \to \infty }
\Big\|
v
-
\sum_{ n = 1 }^N
\langle e_n, v \rangle_H
e_n
\Big\|_{ H_{ \nicefrac{1}{2} } }^2
=
\limsup_{N\to\infty}
\bigg[ 
\sum_{n=N+1}^\infty
|\values_{e_n}|
|\langle e_n, v \rangle_H|^2
\bigg] 
=
0.
\end{split}
\end{equation}
In addition, observe that
items~\eqref{item:01 B} 
and~\eqref{item:norm_estimate} 
of Lemma~\ref{lemma:local_lipA}
ensure that
$ ( H_{ \nicefrac{1}{2} } \ni u \mapsto \partial u \in H ) 
\in L ( H_{ \nicefrac{1}{2} }, H ) $.
Combining~\eqref{eq:H12Convergence}
and the Cauchy-Schwarz inequality 
hence
implies that
for every
$ v \in H_{ \alpha + ( \nicefrac{1}{2} ) } $
it holds that
\begin{equation}
\begin{split} 
&
\| \partial v \|_{L^\infty(\lambda; \R) }
=
\bigg\|
\partial 
\bigg(
\sum_{ n = 1 }^\infty
\langle e_n, v \rangle_H
e_n
\bigg)
\bigg\|_{L^\infty(\lambda; \R) }
=
\bigg\|
\sum_{ n = 1 }^\infty
\langle e_n, v \rangle_H
\partial e_n
\bigg\|_{L^\infty(\lambda; \R) }
\\
&
\leq
\sum_{ n = 1 }^\infty
| \langle e_n, v \rangle_H |
\|
\partial e_n
\|_{L^\infty(\lambda; \R) }
\\
&
\leq
\sup\nolimits_{ n \in \N }
\big(
| c_0 \pi^2 n^2 |^{ - \nicefrac{1}{2} }
\|
\partial e_n
\|_{L^\infty(\lambda; \R) }
\big)
\Bigg[ 
\sum_{ n = 1 }^\infty
| \langle e_n, v \rangle_H |
| c_0 \pi^2 n^2 |^{\nicefrac{1}{2}} 
\Bigg]
\\
&
\leq
\sup\nolimits_{ n \in \N }
\big(
| c_0 \pi^2 n^2 |^{ - \nicefrac{1}{2} }
\sqrt{2} n \pi 
\big)
\sqrt{ 
\sum_{ n = 1 }^\infty
| \langle e_n, v \rangle_H |^2
| c_0 \pi^2 n^2 |^{ 1 + 2 \alpha } 
}
\sqrt{ 
\sum_{n=1}^\infty 
| c_0 \pi^2 n^2 |^{-2 \alpha}	
}
\\
&
=
\sqrt{ 2 } 
| c_0 |^{ - \alpha - ( \nicefrac{1}{2} ) }
\| v \|_{ H_{ \alpha + ( \nicefrac{1}{2} ) } }
\sqrt{ 
	\sum_{n=1}^\infty 
	| \pi n|^{-4 \alpha}	
}
.
\end{split} 
\end{equation}
The proof of Lemma~\ref{lemma:InfEstimate}
is thus completed.
\end{proof}
\begin{lemma}
	\label{lemma:Gagliardo}
	%
	Let $ \lambda \colon \B((0, 1)) \to [0, 1] $ be the 
	Lebesgue-Borel measure on $ (0, 1) $
	and
	let 
	$ q, r \in [1, \infty) $, 
	$ \alpha \in (0,1) $
	satisfy
	$ \alpha ( \frac{1}{q} + 1 - \frac{1}{r} ) = \frac{1}{q} $.
	Then there exists 
	$ C \in (0, \infty) $
	such that for every
	$ u \in W^{1,r}_0( (0,1), \R ) $ 
	it holds that
	\begin{equation}
	\| u \|_{L^\infty( \lambda; \R  ) }
	\leq 
	C
	\| u \|_{ W^{1,r}( (0,1), \R ) }^\alpha 
	\| u \|_{L^q ( \lambda; \R) }^{1-\alpha}
	.
	\end{equation}
\end{lemma}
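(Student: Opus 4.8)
The plan is to prove Lemma~\ref{lemma:Gagliardo} directly by a one-dimensional argument based on the fundamental theorem of calculus, rather than by invoking abstract interpolation. The first observation would be that the hypotheses force $r>1$: rewriting the relation $\alpha(\tfrac1q+1-\tfrac1r)=\tfrac1q$ as $1-\tfrac1r=\tfrac{1-\alpha}{\alpha q}$ and using $\alpha\in(0,1)$ shows the right-hand side is strictly positive, hence $\tfrac1r<1$. Thus the conjugate exponent $r':=\tfrac{r}{r-1}$ lies in $(1,\infty)$ with $\tfrac1{r'}=1-\tfrac1r$, and I would set $\gamma:=1+\tfrac q{r'}=1+q(1-\tfrac1r)\in(1,\infty)$, a quantity depending only on $q$ and $r$; note that $\tfrac1\gamma=\tfrac1{1+q(1-1/r)}$, which by the stated relation equals $\alpha$.

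The core step is as follows. Fix $u\in W^{1,r}_0((0,1),\R)$, assume without loss of generality that $u\neq 0$, and pass to the (absolutely) continuous representative of $u$, still denoted $u$, which exists since $W^{1,r}((0,1),\R)\subseteq W^{1,1}((0,1),\R)\subseteq\mathcal{C}([0,1],\R)$ and which satisfies $u(0)=u(1)=0$ (this is where $u\in W^{1,r}_0$ enters). Because $\gamma>1$, the map $t\mapsto|t|^\gamma$ is $C^1$, so $x\mapsto|u(x)|^\gamma$ is absolutely continuous on $[0,1]$ with a.e.\ derivative $\gamma\,|u|^{\gamma-1}\sgn(u)\,u'$ and with $|u(0)|^\gamma=0$; hence for every $x\in[0,1]$,
\begin{equation}
|u(x)|^\gamma=\gamma\int_0^x|u(t)|^{\gamma-1}\sgn(u(t))\,u'(t)\,dt\leq\gamma\int_0^1|u(t)|^{\gamma-1}|u'(t)|\,dt.
\end{equation}
Taking the supremum over $x\in[0,1]$, applying Hölder's inequality with exponents $r$ and $r'$, and using the identity $(\gamma-1)r'=q$ yields
\begin{equation}
\|u\|_{L^\infty(\lambda;\R)}^\gamma\leq\gamma\,\|u'\|_{L^r(\lambda;\R)}\Big(\int_0^1|u(t)|^q\,dt\Big)^{1/r'}=\gamma\,\|u'\|_{L^r(\lambda;\R)}\,\|u\|_{L^q(\lambda;\R)}^{q/r'},
\end{equation}
where the Hölder step is legitimate because $r'<\infty$ and $|u|^{\gamma-1}\in L^{r'}(\lambda;\R)$ (as $u\in\mathcal{C}([0,1],\R)\subseteq L^q(\lambda;\R)$).

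Finally, raising the last display to the power $\tfrac1\gamma$ and using $\gamma-1=q/r'$ would give
\begin{equation}
\|u\|_{L^\infty(\lambda;\R)}\leq\gamma^{1/\gamma}\,\|u'\|_{L^r(\lambda;\R)}^{1/\gamma}\,\|u\|_{L^q(\lambda;\R)}^{1-1/\gamma};
\end{equation}
substituting $\tfrac1\gamma=\alpha$ and bounding $\|u'\|_{L^r(\lambda;\R)}\leq\|u\|_{W^{1,r}((0,1),\R)}$ then establishes the claim with $C:=\gamma^\alpha=(1+q(1-\tfrac1r))^\alpha$. I do not expect a genuine obstacle in this argument; the only points requiring care are (i) extracting $r>1$ from the constraint so that Hölder with a finite conjugate exponent applies, (ii) justifying the chain rule and the fundamental theorem of calculus for $|u|^\gamma$ through the absolute continuity of the representative of $u$, and (iii) checking the exponent bookkeeping $\tfrac1\gamma=\alpha$ against the stated relation among $\alpha$, $q$, $r$.
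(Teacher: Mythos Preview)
Your proof is correct and follows essentially the same route as the paper's: both apply the fundamental theorem of calculus to $|u|^{1/\alpha}$ (your $\gamma$ equals $1/\alpha$, and the paper's function $G(t)=|t|^{1/\alpha-1}t$ satisfies $|G(t)|=|t|^{1/\alpha}$), then use H\"older's inequality with exponents $r$ and its conjugate $r'$ (the paper's auxiliary exponent $p$ defined by $q=p(\tfrac{1}{\alpha}-1)$ is precisely $r'$). Your explicit verification that the hypotheses force $r>1$, needed for H\"older with a finite conjugate, is a point the paper leaves implicit.
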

\begin{proof}[Proof of Lemma~\ref{lemma:Gagliardo}]
	Throughout this proof  
	let
	$ p \in \R $
	satisfy  
	$ q = p( \frac{1}{\alpha}-1) $, 
	for every function
	$ f \colon  (0,1) \to \R $
	let 
	$ [f]_{\lambda, \B(\R)} $
	be the set given by
	\begin{equation} 
	[f]_{\lambda, \B(\R)} =
	\left\{
	g \colon (0,1) \to \R  
	\colon
	\substack{  
	\bigg[
	\begin{aligned} 
	&
	(
	\exists\, D \in \B( (0,1) ) \colon
	[ 
	\lambda( D ) = 0 \text{ and } 
	\{ t \in (0,1) \colon
	f( t ) \neq g( t )
	\}
	\subseteq D
	]
	)
	\\
	&
	\text{ and }
	(\forall \, D \in \mathcal{B}(\R)\colon 
	g^{-1}(D) \in \mathcal{B}((0,1))
	)
	\end{aligned}
	\bigg]	
	}
	\right\},
	\end{equation} 
	let  
	%
	%
	%
	$ (\underline{\cdot}) \colon
	\{ [ v ]_{\lambda, \B(\R)} \colon 
	( v \colon (0,1) \to \R
	\text{ is uniformly continous} ) \} \to \mathcal{C}( [0,1], \R ) $
	%
	%
	be the function which satisfies
	for every 
	$ v \in \mathcal{C} ( [0,1], \R ) $
	that
	\begin{equation} 
	\underline{ [ v|_{(0,1)} ]_{ \lambda, \B(\R)} } = v 
	,
	\end{equation} 
	for every  
	$ u \in W^{1,r}( (0,1), \R ) $  
	let 
	$ \partial u    
	\in L^r(\lambda; \R ) $  
	satisfy for every
	$ \varphi  
	\in 
	\mathcal{C}_{cpt}^\infty( (0,1), \R ) $,
	$ v \in \mathcal{L}^r(\lambda; \R) $
	with $ v \in \partial u $
	that
	$ \int_{(0,1)} \underline{u}(x) \varphi'(x) \, dx
	=
	- \int_{(0,1)} v(x) \varphi(x) \, dx $,
	and let
	$ G \colon \R \to \R $
	be the function
	which satisfies for every
	$ t \in \R $
	that
	$ G(t) = |t|^{\frac{1}{\alpha} -1} t $.
	Note that 
	\begin{enumerate}[(a)] 
	\item it holds that
	$ G ( 0 ) = 0 $,
	\item it holds that 
	$ G \in \mathcal{C}^1( \R, \R ) $,
	and 
	\item it holds for every
	$ t \in [0,1] $ that 
	$
	G'(t) = \tfrac{1}{\alpha}   | t |^{ \frac{1}{\alpha} -1 }  
	$.
\end{enumerate}
	This and, e.g.,
	Brezis~\cite[Corollary~8.1]{Brezis2011}
	show that for every
	$ u \in W^{1,r}( (0,1), \R ) $
	it holds that
	\begin{equation} 
	[ ( G ( \underline{u} (x) ) )_{ x \in (0,1) } 
	]_{ \lambda, \B(\R) }
	\in W^{1,r}( (0,1), \R ) 
	\end{equation}
	and
	\begin{equation} 
	\partial [ ( G ( \underline{u} (x) ) )_{ x \in (0,1) } ]_{ \lambda, \B(\R) }
	= 
	[ 
	(
	G' ( \underline{u} (x) )  
	)_{ x \in (0,1) } 
	]_{ \lambda, \B(\R) } \partial u 
	.
	\end{equation}
	%
Combining this and, e.g., 
	Brezis~\cite[Theorem~8.2]{Brezis2011} 
	ensures that
	for every
	$ u \in W^{1,r}( (0,1), \R ) $,
	$ v \in \L^r( \lambda; \R) $,
	$ x \in [0,1] $
	with
	$ v \in \partial u $
	it holds that
	\begin{equation}
	%
G ( \underline{u} (x) )
	= 
	G( \underline{u}( 0 ) )
	+
	\int_0^x
	G'( \underline{u} ( t ) ) 
	v( t ) \, dt
	.
	\end{equation}
	This implies that
	for every
	$ u \in W^{1,r}( (0,1), \R ) $,
	$ v \in \L^r( \lambda; \R) $
	with
	$ v \in \partial u $,
	$ \underline{u}(0) = 0 $
	it holds that
	\begin{equation}
	\begin{split}
	\label{eq:sup_Estimate}
	&
	\| u \|_{L^\infty( \lambda; \R ) }^{\frac{1}{\alpha} }
	=
	\sup\nolimits_{ x \in [0,1] }
|G ( \underline{u} (x) )|
	\leq
	\int_0^1
	| G'( \underline{u} (t)) v ( t ) | \, dt
	\\
	&
	=
	\tfrac{1}{\alpha}
	\int_0^1
	| \underline{u} (t) |^{ \frac{1}{\alpha} - 1 }
	| v(t) |
	\, dt
	=
	\tfrac{1}{\alpha}
	\big\| 
	|u|^{ \frac{1}{\alpha } - 1 }
	| \partial u |
	\big\|_{ L^1( \lambda; \R)}
	.
	\end{split}
	\end{equation}
	Next observe that the fact that
	$ \frac{1}{r} = \frac{1}{q} + 1 - \frac{1}{ q \alpha } $
	and
	the fact that
	$ \frac{1}{p} = \frac{1}{q \alpha} -   \frac{1}{q} $
	ensure that
	$ \frac{1}{p} + \frac{1}{r} = 1 $.
	Combining this with~\eqref{eq:sup_Estimate}
	and
	H\"older's inequality
	demonstrates that
	for every
	$ u \in W^{1,r}_0( (0,1), \R ) $ 
	it holds that
	\begin{equation}
	\begin{split}
	&
	\| u \|_{L^\infty( \lambda; \R ) }^{\frac{1}{\alpha} }
	\leq 
	\tfrac{1}{\alpha}
	\big\|
	| u |^{ \frac{1}{\alpha } - 1 }
	\big\|_{L^p( \lambda; \R)} 
	\| 
	\partial u 
	\|_{L^r( \lambda; \R)} 
	%
	\leq  
	\tfrac{1}{\alpha}
	\|
	u
	\|_{L^{p( \frac{1}{\alpha}-1)}( \lambda; \R)}^{\frac{1-\alpha}{\alpha}}
	\| 
	\partial u 
	\|_{L^r( \lambda; \R)} 
	.
	\end{split}
	\end{equation}
	This completes the proof of Lemma~\ref{lemma:Gagliardo}.
\end{proof}
\begin{lemma} 
	\label{lemma:Gagliardo2}
	%
	Let $ \lambda \colon \B((0, 1)) \to [0, 1] $ be the 
	Lebesgue-Borel measure on $ (0, 1) $
	and
	let 
	$ q \in [1, \infty) $,
	$ p \in (q, \infty) $,
	$ r \in (1, \infty) $,
	$ \alpha \in (0,1) $
	satisfy
	$ \alpha ( \frac{1}{q} + 1 - \frac{1}{r} )
	= \frac{1}{q} - \frac{1}{p} $.
	Then there exists 
	$ C \in (0, \infty) $
	such that for every
	$ u \in W^{1,r}_0( (0,1), \R ) $ it holds that
	\begin{equation}
	\| u \|_{L^p( \lambda; \R ) }
	\leq 
	C
	\| u \|_{ W^{1,r}( (0,1), \R ) }^\alpha 
	\| u \|_{L^q( \lambda; \R) }^{1-\alpha}
	.
	\end{equation}
\end{lemma}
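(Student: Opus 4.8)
The plan is to deduce the claim from the $L^\infty$-estimate established in Lemma~\ref{lemma:Gagliardo} together with the elementary log-convexity of $L^p$-norms on the finite measure space $((0,1),\B((0,1)),\lambda)$.

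First I would introduce the real number $\widetilde\alpha\in\R$ which satisfies $\widetilde\alpha(\tfrac1q+1-\tfrac1r)=\tfrac1q$ and the real number $\theta\in\R$ which satisfies $\tfrac1p=\tfrac{1-\theta}{q}$. Since $r\in(1,\infty)$ implies $1-\tfrac1r>0$, it follows that $\tfrac1q+1-\tfrac1r>\tfrac1q>0$ and hence $\widetilde\alpha\in(0,1)$; since $p\in(q,\infty)$ it follows that $\theta=1-\tfrac qp\in(0,1)$. A short computation using the hypothesis $\alpha(\tfrac1q+1-\tfrac1r)=\tfrac1q-\tfrac1p$ then shows
\[
\widetilde\alpha\,\theta
=\frac{1/q}{1/q+1-1/r}\cdot\frac{1/q-1/p}{1/q}
=\frac{1/q-1/p}{1/q+1-1/r}
=\alpha .
\]

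Next I would invoke Lemma~\ref{lemma:Gagliardo} (with $q=q$, $r=r$, $\alpha=\widetilde\alpha$), which is applicable since $\widetilde\alpha\in(0,1)$ and $\widetilde\alpha(\tfrac1q+1-\tfrac1r)=\tfrac1q$, to obtain a constant $C_1\in(0,\infty)$ with
\[
\|u\|_{L^\infty(\lambda;\R)}\le C_1\,\|u\|_{W^{1,r}((0,1),\R)}^{\widetilde\alpha}\,\|u\|_{L^q(\lambda;\R)}^{1-\widetilde\alpha}
\]
for every $u\in W^{1,r}_0((0,1),\R)$. Moreover, for every $\B((0,1))/\B(\R)$-measurable $u$ with $\|u\|_{L^\infty(\lambda;\R)}<\infty$ the $\lambda$-a.e.\ pointwise bound $|u|^{p}=|u|^{q}\,|u|^{p-q}\le\|u\|_{L^\infty(\lambda;\R)}^{p-q}\,|u|^q$ together with $\theta=1-\tfrac qp$ yields $\|u\|_{L^p(\lambda;\R)}\le\|u\|_{L^q(\lambda;\R)}^{1-\theta}\,\|u\|_{L^\infty(\lambda;\R)}^{\theta}$. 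Since every $u\in W^{1,r}_0((0,1),\R)\subseteq W^{1,r}((0,1),\R)$ satisfies $\|u\|_{L^\infty(\lambda;\R)}<\infty$ (by the one-dimensional Sobolev embedding), combining the last two displays gives for every $u\in W^{1,r}_0((0,1),\R)$ that
\[
\|u\|_{L^p(\lambda;\R)}
\le\|u\|_{L^q(\lambda;\R)}^{1-\theta}\Big(C_1\,\|u\|_{W^{1,r}((0,1),\R)}^{\widetilde\alpha}\,\|u\|_{L^q(\lambda;\R)}^{1-\widetilde\alpha}\Big)^{\theta}
=(C_1)^{\theta}\,\|u\|_{W^{1,r}((0,1),\R)}^{\widetilde\alpha\theta}\,\|u\|_{L^q(\lambda;\R)}^{1-\widetilde\alpha\theta}.
\]
Setting $C=(C_1)^{\theta}\in(0,\infty)$ and recalling $\widetilde\alpha\theta=\alpha$ then finishes the argument.

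I expect no serious obstacle: the argument is a routine reduction. The only points needing a little care are verifying the exponent identity $\widetilde\alpha\theta=\alpha$ from the admissibility constraint on $\alpha$, checking that Lemma~\ref{lemma:Gagliardo} genuinely applies (i.e.\ that $\widetilde\alpha\in(0,1)$ and that its compatibility condition $\widetilde\alpha(\tfrac1q+1-\tfrac1r)=\tfrac1q$ holds), and ensuring that the interpolation step for $L^p$-norms is applied with a finite $L^\infty$-norm (which the Sobolev embedding provides). Alternatively, one could give a fully self-contained proof mirroring the proof of Lemma~\ref{lemma:Gagliardo}, using the test function $G(t)=|t|^{\beta-1}t$ for a suitable $\beta>1$, the fundamental theorem of calculus on $[0,x]$, and H\"older's inequality; this variant is longer and essentially reproves the $L^\infty$-case.
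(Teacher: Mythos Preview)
Your proposal is correct and follows essentially the same route as the paper's proof: both interpolate $\|u\|_{L^p}$ between $\|u\|_{L^q}$ and $\|u\|_{L^\infty}$ via the pointwise bound $|u|^p\le\|u\|_{L^\infty}^{p-q}|u|^q$, then invoke Lemma~\ref{lemma:Gagliardo} for the $L^\infty$ factor. Your $\widetilde\alpha$ coincides with the paper's parameter $\beta=\frac{\alpha p}{p-q}$, and your $\theta=1-\tfrac{q}{p}$ is exactly the exponent the paper uses; you are somewhat more careful in verifying $\widetilde\alpha\in(0,1)$ before applying Lemma~\ref{lemma:Gagliardo}, which the paper leaves implicit.
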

\begin{proof}[Proof of Lemma~\ref{lemma:Gagliardo2}]
	Throughout this proof let
	$ \beta = \frac{ \alpha p }{ p - q } $.
	Note that
	H\"older's inequality
	proves that
	for every
	$ u \in W^{1,r}( (0,1), \R ) $
	it holds that
	\begin{equation}
	\begin{split}
	\| u \|_{L^p( \lambda; \R ) }^p
	=
	\| |u|^q | u |^{p-q}  \|_{L^1( \lambda; \R ) } 
	\leq
	\| u \|_{L^q( \lambda; \R ) }^q
	\| u \|_{L^\infty( \lambda; \R)}^{p-q}
	.
	\end{split}
	\end{equation}
	Lemma~\ref{lemma:Gagliardo}
	(with
	$ q = q $,
	$ r = r $,
	$ \alpha =  \beta $
	in the notation of
	Lemma~\ref{lemma:Gagliardo})
	hence shows that 
	there exists
	$ C \in (0, \infty) $
	such that for every
	$ u \in W^{1,r}_0( (0,1), \R ) $
	it holds that
	\begin{equation}
	\begin{split}
	\| u \|_{L^p( \lambda; \R  ) }^p 
	&
	\leq
	C^p
	\| u \|_{L^q( \lambda; \R ) }^q 
	\| u \|_{ W^{1,r}((0,1), \R)}^{\beta (p-q)}
	\| u \|_{L^q( \lambda; \R)}^{(1-\beta)(p-q)}
	\\
	&
	= 
	C^p
	\| u \|_{L^q( \lambda; \R)}^{ p - \beta p + \beta q}
	\| u \|_{ W^{1,r}((0,1), \R)}^{\beta (p-q)}
	.
	\end{split}
	\end{equation}
	This implies that
	there exists
	$ C \in (0, \infty) $
	such that for every
	$ u \in W^{1,r}_0( (0,1), \R ) $
	it holds that
	\begin{equation}
	\begin{split}
	\| u \|_{L^p( \lambda; \R  ) } 
	\leq 
	C 
	\| u \|_{L^q( \lambda; \R)}^{ 1 - \beta( 1- \frac{q}{p} )}
	\| u \|_{ W^{1,r}((0,1), \R)}^{\beta ( 1 - \frac{q}{p} )}
	=
	C 
	\| u \|_{L^q( \lambda; \R)}^{ 1 - \alpha }
	\| u \|_{ W^{1,r}((0,1), \R)}^{\alpha }
	.
	\end{split}
	\end{equation}
	The proof of Lemma~\ref{lemma:Gagliardo2}
	is thus completed.
\end{proof} 
\subsection{Analysis of the nonlinearity}
\label{subsection:Nonlinearity}
In this subsection we recall
in
Lemmas~\ref{lemma:local_lip}--\ref{lemma:LocalLipSimple2},
Corollary~\ref{corollary:Extension of Function F part2},
Lemmas~\ref{lemma:monotonicity}--\ref{lemma:F_growth_estimates},  
Corollary~\ref{corollary:SatisfiedF},
Lemma~\ref{lemma:CrucialPropertiesBurgers},
and
Corollary~\ref{corollary:AppropriateCoercivityEstimate}
below
a few elementary
and well-known
properties of the 
nonlinearity appearing in 
the stochastic Burgers equation.
Corollaries~\ref{corollary:SatisfiedF}
and~\ref{corollary:AppropriateCoercivityEstimate}
are then used in Section~\ref{section:Existence}
below
to establish
in Theorem~\ref{theorem:existence_Burgers} the main result of this article.
%
%
%
%
%
%
\begin{lemma}
	\label{lemma:local_lip}
Assume Setting~\ref{setting:Examples}.
	Then
	\begin{enumerate}[(i)]
		\item \label{item:001}
		it holds for every
		$ u \in H_{ \nicefrac{1}{2} } $ that
		$ u^2 \in W^{1,2}((0,1), \R ) $
		and
		$ u \partial u = \tfrac{1}{2} \partial(u^2) $, 
		\item \label{item:02} 
		it holds for every $ v, w \in H_{ \nicefrac{1}{2} } $   that 
		\begin{equation}
		\begin{split}
		&
		\| F(v) - F(w) \|_H
		\leq 
		\tfrac{ | c_1 | }{ \sqrt{3 } \, c_0 }
		(   
		\| v \|_{H_{ \nicefrac{1}{2} } }   
		+
		\| w \|_{H_{ \nicefrac{1}{2} } }
		)
		\| v - w \|_{H_{ \nicefrac{1}{2} } }    
		,
		\end{split}
		\end{equation}
		\item \label{item:03} 
		it holds that
		$ F \in \mathcal{C}^1(H_{ \nicefrac{1}{2} }, H) $,
		and
		\item \label{item:04}
		it holds for every $ v, w \in H_{ \nicefrac{1}{2} } $  that
		$ F'(v)w=  c_1 ( w  \partial v + v  \partial w ) $.
	\end{enumerate}
\end{lemma}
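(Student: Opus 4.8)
The plan is to derive all four statements from the identification $H_{\nicefrac12}=W_0^{1,2}((0,1),\R)$ supplied by Lemma~\ref{lemma:local_lipA}, together with the norm relations $\|\partial v\|_H=|c_0|^{-\nicefrac12}\|v\|_{H_{\nicefrac12}}$, the continuous embedding $H_{\nicefrac12}\subseteq L^\infty(\lambda;\R)$, and the bound $\|v\|_{L^\infty(\lambda;\R)}\le|3c_0|^{-\nicefrac12}\|v\|_{H_{\nicefrac12}}$ from the same lemma. This reduces everything to elementary manipulations of products in $W^{1,2}((0,1),\R)$ via the $L^\infty$--$L^2$ pairing.

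For item~\eqref{item:001} I would note that every $u\in H_{\nicefrac12}$ lies in $W^{1,2}((0,1),\R)$ and, by Lemma~\ref{lemma:local_lipA}, also in $L^\infty(\lambda;\R)$. Applying the chain rule for Sobolev functions to $G\colon\R\to\R$ with $G(t)=t^2$ — for which $G(0)=0$, $G\in\mathcal{C}^1(\R,\R)$, and $G'(t)=2t$, so that the reasoning employed in the proof of Lemma~\ref{lemma:Gagliardo} (cf.\ also Brezis~\cite{Brezis2011}) applies because $u$ is bounded — yields $u^2\in W^{1,2}((0,1),\R)$ with $\partial(u^2)=2u\,\partial u$, whence $u\,\partial u=\tfrac12\partial(u^2)$. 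In particular $F(u)=c_1u\,\partial u=\tfrac{c_1}{2}\partial(u^2)$, a representation that is also convenient below.

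For item~\eqref{item:02} I would use the pointwise-a.e.\ identity $v\,\partial v-w\,\partial w=v\,\partial(v-w)+(v-w)\,\partial w$, which is legitimate since $\partial$ acts linearly on $W^{1,2}((0,1),\R)$ and each product lies in $H$ by the $L^\infty$--$L^2$ pairing, and then estimate
\[
\|F(v)-F(w)\|_H=|c_1|\,\|v\,\partial v-w\,\partial w\|_H\le|c_1|\big(\|v\|_{L^\infty(\lambda;\R)}\|\partial(v-w)\|_H+\|v-w\|_{L^\infty(\lambda;\R)}\|\partial w\|_H\big).
\]
Inserting the two bounds from Lemma~\ref{lemma:local_lipA} and using $|3c_0|^{-\nicefrac12}\,|c_0|^{-\nicefrac12}=\tfrac{1}{\sqrt{3}\,c_0}$ produces precisely the asserted inequality.

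For items~\eqref{item:03} and~\eqref{item:04} I would introduce, for each $v\in H_{\nicefrac12}$, the linear map $L_v\colon H_{\nicefrac12}\to H$ given by $L_vw=c_1(w\,\partial v+v\,\partial w)$, and check exactly as in item~\eqref{item:02} that $\|L_v\|_{L(H_{\nicefrac12},H)}\le\tfrac{2|c_1|}{\sqrt{3}\,c_0}\|v\|_{H_{\nicefrac12}}<\infty$. Expanding $F(v+h)=c_1(v+h)\,\partial(v+h)$ gives the exact remainder $F(v+h)-F(v)-L_vh=c_1\,h\,\partial h$, whose $H$-norm is at most $\tfrac{|c_1|}{\sqrt{3}\,c_0}\|h\|_{H_{\nicefrac12}}^2$ by the same pairing estimate, so $F$ is Fréchet differentiable at every $v$ with $F'(v)=L_v$; this is item~\eqref{item:04}. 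Finally, since $(F'(v)-F'(\tilde v))w=c_1(w\,\partial(v-\tilde v)+(v-\tilde v)\,\partial w)$, the same bound yields $\|F'(v)-F'(\tilde v)\|_{L(H_{\nicefrac12},H)}\le\tfrac{2|c_1|}{\sqrt{3}\,c_0}\|v-\tilde v\|_{H_{\nicefrac12}}$, so $v\mapsto F'(v)$ is (Lipschitz) continuous and hence $F\in\mathcal{C}^1(H_{\nicefrac12},H)$, which is item~\eqref{item:03}. I expect the only point beyond routine bookkeeping to be the justification of item~\eqref{item:001}, i.e.\ verifying that the product/chain rule for Sobolev functions is indeed applicable here; this is exactly where the $L^\infty$-embedding of Lemma~\ref{lemma:local_lipA} is needed.
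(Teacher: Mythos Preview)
Your proposal is correct and follows essentially the same approach as the paper: both derive items~\eqref{item:02}--\eqref{item:04} from the $L^\infty$--$L^2$ pairing via Lemma~\ref{lemma:local_lipA}, with the identical remainder $c_1 h\,\partial h$ for differentiability and the same Lipschitz bound on $F'$. The only cosmetic differences are that the paper uses the decomposition $v\,\partial v - w\,\partial w = (v-w)\,\partial v + w\,\partial(v-w)$ instead of yours (both yield the same estimate), and for item~\eqref{item:001} the paper cites an external product-rule lemma rather than invoking the chain rule directly.
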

\begin{proof}[Proof of Lemma~\ref{lemma:local_lip}]
	Observe that 
	items~\eqref{item:01 B} 
	and~\eqref{item:01b} 
	of
	Lemma~\ref{lemma:local_lipA}
	and,
	e.g., 
	\cite[Lemma~4.5]{JentzenPusnik2018Published}
	imply
	item~\eqref{item:001}.
	Furthermore, note that for every 
	$ v, w \in H_{ \nicefrac{1}{2} } $ 
	it holds that
	\begin{equation}
	\begin{split}
	\label{eq:diff_estimate}
	&
	\| F(v) - F(w) \|_H
	\leq 
	|c_1|
	\| \partial v \|_H
	\| v - w \|_{L^\infty(\lambda; \R )}
	+ 
	|c_1|
	\| w \|_{L^\infty(\lambda; \R )}
	\| \partial( v -w ) \|_H
	.
	\end{split}
	\end{equation}
	Items~\eqref{item:norm_estimate}
	and~\eqref{item:norm_estimate_infty} 
	of Lemma~\ref{lemma:local_lipA} 
	therefore
	show that for every 
	$ v, w \in H_{ \nicefrac{1}{2} } $ 
	it holds that
	\begin{equation}
	\begin{split}
	&
	\| F(v) - F(w) \|_H
	\leq
	\tfrac{ |c_1| }{  \sqrt{3} \, c_0 }
	\big(
	\| v \|_{H_{ \nicefrac{1}{2} } } 
	\| v - w \|_{H_{ \nicefrac{1}{2} } }
	+
	\| w \|_{H_{\nicefrac{1}{2}} } 
	\| v - w \|_{H_{\nicefrac{1}{2} } }
	\big)  
	.
	\end{split}
	\end{equation}
	This establishes item~\eqref{item:02}.
	In addition, note that for every 
	$ v, w \in H_{ \nicefrac{1}{2} } $ it holds that
	\begin{equation}
	\begin{split}
	\label{eq:difference}
	&
	F( v + w ) - F( v )
	=  
	c_1 \big( 
	( v  + w  ) ( \partial v + \partial  w ) 
	- 
	v  \partial  v    
	\big)
	=  
	c_1 ( v \partial  w + w \partial  v + w \partial w )
	.
	\end{split}
	\end{equation}
	Items~\eqref{item:norm_estimate}
	and~\eqref{item:norm_estimate_infty}
	of Lemma~\ref{lemma:local_lipA} 
	hence imply
	that for every 
	$ v, w \in H_{ \nicefrac{1}{2} } $ 
	it holds that
	\begin{equation}
	\begin{split} 
	\|
	F(v+w)
	-
	F(v) 
	-
	c_1
	(  v   \partial w + w   \partial v  ) 
	\|_H
	& =
	\| 
	c_1
	w  \partial  w   
	\|_H
	\\
	&
	\leq 
	\| 
	c_1
	w  
	\|_{ L^\infty( \lambda; \R ) } 
	\|
	\partial w 
	\|_H
	\leq 
	\tfrac{ | c_1 | }{\sqrt{3} \, c_0 }
	\| w \|_{ H_{ \nicefrac{1}{2} } }^2
	.
	\end{split} 
	\end{equation}
	Therefore, we obtain that   
	\begin{enumerate}[(a)]
	\item \label{item:additional 1}
	it holds that $ F \colon H_{ \nicefrac{1}{2} } \to H $ is differentiable
	and
	\item \label{item:additional 2}
	it holds for every $ v, w \in H_{ \nicefrac{1}{2} } $  that
	$ F'(v)w=  c_1 ( w  \partial v + v  \partial w ) $.
	\end{enumerate}
	Items~\eqref{item:norm_estimate}
	and~\eqref{item:norm_estimate_infty} 
	of Lemma~\ref{lemma:local_lipA} 
	hence
	assure that for every 
	$ u, v \in H_{ \nicefrac{1}{2} } $ it holds that
	\begin{equation}
	\begin{split}
	&
	\| F'(u) - F'(v) \|_{L(H_{ \nicefrac{1}{2} }, H)}
	=
	| c_1 |
	\sup\nolimits_{ w \in H_{ \nicefrac{1}{2} } \backslash \{ 0 \} }
	\tfrac{
		\| u  \partial w + w \partial u
		-
		(
		v  \partial w + w  \partial v
		)
		\|_H
	}{
		\| w \|_{ H_{ \nicefrac{1}{2} } }
	}
	\\
	&
	\leq
	| c_1 |
	\sup\nolimits_{ w \in H_{ \nicefrac{1}{2} } \backslash \{ 0 \} }
	\tfrac{
		\| 
		u \partial w 
		-
		v \partial w
		\|_H
		+
		\|
		w \partial u
		-  
		w \partial v 
		\|_H
	}{
		\| w \|_{H_{ \nicefrac{1}{2} }}
	}
	\\
	&
	\leq
	| c_1 |
	\sup\nolimits_{ w \in H_{ \nicefrac{1}{2} } \backslash \{ 0 \} }
	\tfrac{
		\| 
		u - v
		\|_{ L^\infty( \lambda; \R ) }
		\| 
		\partial w
		\|_H
		+
		\|
		w 
		\|_{ L^\infty( \lambda; \R ) }
		\|
		\partial 
		(u
		-  
		v) 
		\|_H
	}{
		\| w \|_{H_{ \nicefrac{1}{2} } }
	}
	\leq 
	\tfrac{ 2 | c_1 | }{\sqrt{3} \, c_0 }  \| u - v \|_{ H_{ \nicefrac{1}{2} } } 
	.
	\end{split}
	\end{equation}
	Combining items~\eqref{item:additional 1}
	and~\eqref{item:additional 2}
	therefore establishes items~\eqref{item:03}
	and~\eqref{item:04}.
	The proof of Lemma~\ref{lemma:local_lip}
	is thus completed.
\end{proof}
\begin{lemma}
\label{lemma:Extend local uniformly continuous function}
Let $ (X, d_X ) $
be a metric space,
let $ (Y, d_Y ) $ be a complete metric space,
let
$ S \subseteq X $ be a dense subset,
and let
$ F \colon S \to Y $ be a locally uniformly continuous function.
Then there exists a unique
continuous function
$ \bar F \colon X \to Y $
which satisfies
for every $ x \in S $ that $ \bar F(x) = F(x) $.
\end{lemma}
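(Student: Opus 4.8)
The plan is to run the classical argument for extending a uniformly continuous map off a dense subset, localized so that only the local uniform continuity of $F$ is used. For uniqueness, if $\bar F_1,\bar F_2\colon X\to Y$ are continuous and both restrict to $F$ on $S$, then for every $x\in X$ and every sequence $(s_n)_{n\in\N}\subseteq S$ with $d_X(s_n,x)\to 0$ (such a sequence exists since $S$ is dense) the continuity of $\bar F_1$ and $\bar F_2$ together with $\bar F_i(s_n)=F(s_n)$ gives $\bar F_1(x)=\lim_{n\to\infty}F(s_n)=\bar F_2(x)$; hence $\bar F_1=\bar F_2$. This settles uniqueness irrespective of existence.

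For existence I would first construct $\bar F$ pointwise. Fix $x\in X$ and a sequence $(s_n)_{n\in\N}\subseteq S$ with $d_X(s_n,x)\to 0$. The assumption that $F$ is locally uniformly continuous yields $r\in(0,\infty)$ such that the restriction of $F$ to $B:=\{s\in S\colon d_X(s,x)<r\}$ is uniformly continuous, and since $s_n\to x$ there is $N\in\N$ with $s_n\in B$ for all $n\geq N$. The sequence $(s_n)_{n\geq N}$ is convergent, hence Cauchy, so uniform continuity of $F|_B$ makes $(F(s_n))_{n\geq N}$ a Cauchy sequence in $(Y,d_Y)$, which converges by completeness of $Y$; I set $\bar F(x)$ to be its limit. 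Independence of the choice of $(s_n)$ follows because any two such sequences eventually lie in the same ball $B$, their mutual $d_X$-distances tend to $0$, and therefore so do the $d_Y$-distances of their $F$-images. Taking the constant sequence $s_n=x$ for $x\in S$ shows $\bar F|_S=F$.

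Continuity of $\bar F$ at a point $x_0\in X$ is then checked directly: given $\varepsilon\in(0,\infty)$, choose $r\in(0,\infty)$ with $F$ uniformly continuous on $B_0:=\{s\in S\colon d_X(s,x_0)<r\}$, pick $\delta_0\in(0,r)$ with $d_Y(F(s),F(t))\leq\nicefrac{\varepsilon}{2}$ whenever $s,t\in B_0$ satisfy $d_X(s,t)<\delta_0$, and set $\delta=\nicefrac{\delta_0}{3}$. For $x\in X$ with $d_X(x,x_0)<\delta$, approximating sequences $(s_n)\subseteq S$ for $x_0$ and $(t_n)\subseteq S$ for $x$ eventually lie in $B_0$ and are eventually within $d_X$-distance $\delta_0$ of each other (a one-line triangle-inequality estimate via $d_X(s_n,t_n)\leq d_X(s_n,x_0)+d_X(x_0,x)+d_X(x,t_n)$), so $d_Y(F(s_n),F(t_n))\leq\nicefrac{\varepsilon}{2}$ for all large $n$, and passing to the limit gives $d_Y(\bar F(x_0),\bar F(x))\leq\nicefrac{\varepsilon}{2}<\varepsilon$. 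This establishes that $\bar F$ is continuous, and together with the previous steps completes the proof.

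The argument is elementary and I do not anticipate a genuine obstacle; the only point deserving care is to ensure that every approximating sequence used is eventually confined to a single ball on which $F$ is uniformly continuous, which is why the balls are centered at the relevant points of $X$ and the small amount of triangle-inequality bookkeeping in the continuity step is needed.
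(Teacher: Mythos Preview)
Your proof is correct and follows a somewhat different route from the paper's. The paper fixes, for each $x\in S$, an open set $U_x\ni x$ in $X$ on which $F|_{U_x\cap S}$ is uniformly continuous, invokes the standard extension theorem for uniformly continuous maps on dense subsets (cited as Searc\'oid, Theorem~10.9.1) to obtain local extensions $F_x\colon U_x\to Y$, checks that these agree on overlaps, and then glues them into a single $\bar F$. You instead build $\bar F(x)$ directly as a sequential limit and verify continuity by an explicit $\varepsilon$--$\delta$ argument. Your version is more elementary and self-contained; the paper's is shorter once the cited result is granted.

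One subtlety worth flagging: in your construction you center the ball $B$ at an arbitrary $x\in X$, not at a point of $S$, and assume $F$ is uniformly continuous on $B\cap S$. This is the reading of ``locally uniformly continuous'' under which the lemma is actually true, and it is exactly what holds in the paper's applications (where $F$ satisfies a local Lipschitz bound and is hence uniformly continuous on bounded subsets of $S$). The paper's own proof indexes its cover by points of $S$ and then asserts $X=\bigcup_{x\in S}U_x$ from density alone, a step that fails in general without the stronger reading you are using; so your choice to center balls at points of $X$ is not just convenient bookkeeping but the right way to make the argument go through.
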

\begin{proof}[Proof of Lemma~\ref{lemma:Extend local uniformly continuous function}]
Throughout this proof let
$ U_x \subseteq X $, $ x \in S $,
be non-empty open sets which satisfy 
that
\begin{enumerate}[(a)]
\item it holds
for every
$ x \in S $ 
that  
$ F|_{ U_x \cap S } \colon U_x \cap S \to Y $
is uniformly continuous
and
\item it holds for every $ x \in S $ that
$ x \in U_x $.
\end{enumerate}
Observe that the fact that
for every $ x \in S $ it holds that
$ U_x \cap S $ is a dense
subset of
$ U_x $
and, e.g., 
Searc\'{o}id~\cite[Theorem~10.9.1]{Searcoid2007}  
show that 
there exist 
unique uniformly continuous functions 
$ F_x \colon U_x \to Y $,
$ x \in S $,
which satisfy for every
$ x \in S $,
$ u \in ( U_x \cap S ) $ that
\begin{equation} 
\label{eq:ExtensionBasic} 
F_x( u ) = F( u ) 
.
\end{equation}
Note that~\eqref{eq:ExtensionBasic} 
and the fact that
for every
$ x, \mathbf{x} \in S $
with
$ ( U_x \cap U_{ \mathbf{x} } ) \neq \emptyset $
it holds that
$ ( U_x \cap U_{ \mathbf{x} } ) \cap S $
is a dense subset of $ ( U_x \cap U_{ \mathbf{x} } ) $
ensure that
for every
$ x, \mathbf{x} \in S $,
$ u \in ( U_x \cap U_{ \mathbf{x} } ) $
there exist
$ ( u_n )_{ n \in \N } \subseteq ( U_x \cap U_{ \mathbf{x} } \cap S ) $
such that
$ \limsup_{ n \to \infty } \| u - u_n \|_X
=
0 $ 
and
\begin{equation}
\begin{split}
\| F_x(u) - F_{ \mathbf{x} }(u) \|_Y 
=
\limsup_{ n \to \infty }
\| F_x(u_n) - F_{ \mathbf{x} }(u_n) \|_Y
=
\limsup_{ n \to \infty }
\| F(u_n) - F(u_n) \|_Y
=
0.
\end{split}
\end{equation}
This proves that
for every
$ x, \mathbf{x} \in S $,
$ u \in ( U_x \cap U_{ \mathbf{x} } ) $
it holds that
\begin{equation} 
\label{eq:Proves uniqueness}
F_x(u) = F_{ \mathbf{x} }(u) 
.
\end{equation} 
Moreover, observe that the 
assumption that
$ S \subseteq X $
is a dense subset
ensures that
$ X = \cup_{x \in S} U_x $.
Combining~\eqref{eq:ExtensionBasic} 
and~\eqref{eq:Proves uniqueness}
hence
shows that
there exists 
a unique continuous function
$ \bar F \colon X \to Y $
which satisfies for every
$ u \in S $ that
\begin{equation}
\bar F(u) = F(u).
\end{equation}
The proof of Lemma~\ref{lemma:Extend local uniformly continuous function}
is thus completed.
\end{proof} 
\begin{lemma}
	\label{lemma:LocalLipSimple}
	Assume Setting~\ref{setting:Examples} 
	and let
	$ \gamma \in (\frac{1}{8}, \frac{1}{2}] $,
%
	$ \nu \in ( [ \frac{1}{2} - \gamma, \frac{1}{2} ] 
\cap ( \frac{3}{4}  -2 \gamma, \infty ) ) $.  
	Then there exists $ C \in \R $
	such that  
	for every 
	$ v, w \in H_{ \nicefrac{1}{2} } $ 
	it holds that 
	\begin{equation}
	\begin{split}
	\label{eq:LocLipSimple}
	&
	\| F(v) - F(w)  \|_{ H_{ - \nu } }
	\leq  
	C 
	\| v - w \|_{ H_\gamma } 
	(   
	1
	+
	\| v \|_{ H_\gamma }   
	+
	\| w \|_{ H_\gamma }
	)   
	.
	\end{split}
	\end{equation}
\end{lemma}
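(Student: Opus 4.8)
Here is the plan I would follow. The whole argument rests on rewriting the nonlinearity in divergence form and then invoking the product and derivative estimates already established above. First I would record the consequence of item~\eqref{item:001} of Lemma~\ref{lemma:local_lip}: for every $v,w\in H_{\nicefrac12}$ it holds that $v^2,w^2\in W^{1,2}((0,1),\R)$ and
$F(v)-F(w)=c_1\,(v\,\partial v-w\,\partial w)=\tfrac{c_1}{2}\,\partial(v^2-w^2)$,
so that $\|F(v)-F(w)\|_{H_{-\nu}}=\tfrac{|c_1|}{2}\,\|\partial(v^2-w^2)\|_{H_{-\nu}}$ and it suffices to estimate the right-hand side.

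Next I would apply Lemma~\ref{lemma:DerivativeEstimate} with $\alpha=\nu$; this is admissible because $\nu\in[\nicefrac12-\gamma,\nicefrac12]\subseteq[0,\nicefrac12]$ (using $\gamma\le\nicefrac12$), and it yields a constant for which $\|\partial(v^2-w^2)\|_{H_{-\nu}}\le C\,\|v^2-w^2\|_{W^{1-2\nu,2}((0,1),\R)}$. Writing $v^2-w^2=(v-w)(v+w)$, I would then invoke Lemma~\ref{lemma:Sobolev_multiplication} with $s=1-2\nu$ and $q=r=2\gamma$. The three hypotheses of that lemma are exactly the hypotheses of the present lemma in disguise: $s\ge 0$ is $\nu\le\nicefrac12$; $q,r\ge s$ is $2\gamma\ge 1-2\nu$, i.e.\ $\nu\ge\nicefrac12-\gamma$; and $q+r-s>\nicefrac12$ is $4\gamma-(1-2\nu)>\nicefrac12$, i.e.\ $\nu>\nicefrac34-2\gamma$. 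Hence $\|v^2-w^2\|_{W^{1-2\nu,2}}\le C\,\|v-w\|_{W^{2\gamma,2}}\,\|v+w\|_{W^{2\gamma,2}}$.

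Finally I would use item~\eqref{item:Inclusion} of Lemma~\ref{lemma:NormEquivalency} (valid since $2\gamma\in[0,1]$, because $\gamma\le\nicefrac12$) to obtain the continuous embedding $H_\gamma\subseteq W^{2\gamma,2}((0,1),\R)$, giving $\|v-w\|_{W^{2\gamma,2}}\le C\,\|v-w\|_{H_\gamma}$ and $\|v+w\|_{W^{2\gamma,2}}\le C\,(\|v\|_{H_\gamma}+\|w\|_{H_\gamma})\le C\,(1+\|v\|_{H_\gamma}+\|w\|_{H_\gamma})$. Concatenating the three displays and absorbing $|c_1|$ together with all intermediate constants into a single $C\in\R$ establishes~\eqref{eq:LocLipSimple}.

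I do not expect any genuine analytic obstacle: once the divergence form $F(v)-F(w)=\tfrac{c_1}{2}\partial(v^2-w^2)$ is in hand, everything is a bookkeeping exercise matching the admissible range of the pair $(\gamma,\nu)$ to the hypotheses of Lemma~\ref{lemma:Sobolev_multiplication} (the product estimate in fractional Sobolev spaces) and Lemmas~\ref{lemma:DerivativeEstimate} and~\ref{lemma:NormEquivalency}. The only point that deserves care is checking that $\nu\ge\nicefrac12-\gamma$ and $\nu>\nicefrac34-2\gamma$ are precisely the conditions needed to close the product estimate at smoothness $1-2\nu$ with both factors in $W^{2\gamma,2}$, and that the assumption $\gamma>\nicefrac18$ is exactly what keeps the admissible set of $\nu$ non-empty.
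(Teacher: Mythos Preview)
Your proposal is correct and follows essentially the same route as the paper's proof: rewrite $F(v)-F(w)=\tfrac{c_1}{2}\partial(v^2-w^2)$ via item~\eqref{item:001} of Lemma~\ref{lemma:local_lip}, apply Lemma~\ref{lemma:DerivativeEstimate} with $\alpha=\nu$, then Lemma~\ref{lemma:Sobolev_multiplication} with $s=1-2\nu$, $q=r=2\gamma$, and finally item~\eqref{item:Inclusion} of Lemma~\ref{lemma:NormEquivalency}. Your explicit verification that the parameter constraints on $(\gamma,\nu)$ match exactly the hypotheses of Lemma~\ref{lemma:Sobolev_multiplication} is more detailed than the paper's presentation, but the argument is the same.
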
 
\begin{proof}[Proof of Lemma~\ref{lemma:LocalLipSimple}]
	Note that the fact that
	$ \nu > \frac{3}{4} - 2 \gamma $
	ensures that
	$ (2\gamma ) + (2\gamma) - (1-2\nu)
	> \frac{1}{2} $.
	Combining this,
	Lemma~\ref{lemma:Sobolev_multiplication}
	(with
	$ s = 1 - 2 \nu $,
	$ q = 2 \gamma $,
	$ r = 2 \gamma $
	in the notation of 
	Lemma~\ref{lemma:Sobolev_multiplication}), 
	Lemma~\ref{lemma:DerivativeEstimate}
	(with
	$ \alpha = \nu $
	in the notation of Lemma~\ref{lemma:DerivativeEstimate}),
	and
	item~\eqref{item:001}
	of Lemma~\ref{lemma:local_lip}
	shows that  
	there exists 
	$ C \in [1, \infty) $
	such that for every
	$ v, w \in  H_{ \nicefrac{1}{2} } $
	it holds that
	$ ( v^2 - w^2 ) \in W^{1,2}((0,1), \R ) $
	and
	\begin{equation}
	\begin{split}
	\| 
	\partial( v^2 - w^2 ) 
	\|_{ H_{-  \nu} } 
	&\leq 
	C
	\| v^2 - w^2 \|_{ W^{ 1 - 2  \nu,2}( (0,1), \R ) }
	\\
	&
	\leq 
	C ^2
	\| v - w \|_{ W^{2  \gamma, 2}( (0,1), \R ) }
	\| v + w \|_{ W^{2  \gamma, 2}( (0,1), \R ) } 
	.
	\end{split} 
	\end{equation}
	Item~\eqref{item:Inclusion} of Lemma~\ref{lemma:NormEquivalency} 
	hence proves that 
	there exists $ C \in \R $ 
	such that for every
	$ v, w \in H_{ \nicefrac{1}{2} } $ it holds that
	\begin{equation}
	\begin{split}
	& 
	\| \partial( v^2 - w^2 )  \|_{ H_{ - \nu } }
	\leq 
	C
	\| v - w \|_{  H_{  \gamma} }
	( \| v \|_{  H_{ \gamma} }  + \| w \|_{  H_{  \gamma} } )
	.
	\end{split} 
	\end{equation}
	Item~\eqref{item:001} of Lemma~\ref{lemma:local_lip}
	therefore establishes~\eqref{eq:LocLipSimple}.
	The proof of Lemma~\ref{lemma:LocalLipSimple}
	is thus completed.
\end{proof}
\begin{lemma}
	\label{lemma:Extension of Function F part1}
	Assume Setting~\ref{setting:Examples}
	and let
	$ \gamma \in (\frac{1}{8}, \frac{1}{2}] $,
	$ \nu \in ( [ \frac{1}{2} - \gamma, \frac{1}{2} ] 
	\cap ( \frac{3}{4} - 2 \gamma, \infty ) ) $.
	Then 
	\begin{enumerate} [(i)]
		\item \label{item: Extension of F} 
		there exists 
		a unique continuous function
		$ \bar F \colon H_\gamma \to H_{-\nu} $
		which satisfies 
		for every $ v \in H_{\nicefrac{1}{2} } $
		that
		$ \bar F(v) = F(v) $
		and
		\item \label{item:Uniform continuity estimate} there exists
		$ C \in \R $
		which satisfies   
		for every
		$ v, w \in H_{ \gamma } $ 
		that
		\begin{equation}
		\begin{split}
		&
		\| \bar F(v) - \bar F(w) \|_{ H_{ - \nu } }
		\leq  
		C 
		\| v - w \|_{ H_\gamma } 
		(   
		1
		+
		\| v \|_{ H_\gamma }   
		+
		\| w \|_{ H_\gamma }
		)   
		.
		\end{split}
		\end{equation}
	\end{enumerate}
\end{lemma}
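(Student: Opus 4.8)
The plan is to realize $\bar F$ as the unique continuous extension of $F$ from the dense subspace $H_{\nicefrac12}$ of $H_\gamma$ to all of $H_\gamma$, relying on the quantitative bound already established in Lemma~\ref{lemma:LocalLipSimple}, and then to transfer that bound to $\bar F$ by a routine approximation argument.

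First I would observe that $H_{\nicefrac12}$ is a dense subset of $H_\gamma$. Indeed, since $\gamma\le\nicefrac12$ we have $H_{\nicefrac12}\subseteq H_\gamma$, and the linear span of the orthonormal basis $\H$ lies in $H_{\nicefrac12}$ and is dense in $H_\gamma$ (for every $v\in H_\gamma$ the truncations $\sum_{h\in\H,\,|\values_h|\le N}\langle h,v\rangle_H\,h$ converge to $v$ in $H_\gamma$ as $N\to\infty$). Next, the hypotheses on $\gamma$ and $\nu$ are exactly those of Lemma~\ref{lemma:LocalLipSimple}, which therefore provides a constant $C\in\R$ with
\[
\| F(v) - F(w) \|_{ H_{ - \nu } } \le C\, \| v - w \|_{ H_\gamma }\,( 1 + \| v \|_{ H_\gamma } + \| w \|_{ H_\gamma } )
\]
for all $v,w\in H_{\nicefrac12}$. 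Restricting this to the set $\{v\in H_{\nicefrac12}\colon \|v\|_{H_\gamma}<R\}$ shows that $F$ is Lipschitz continuous there (with constant $C(1+2R)$) with respect to the metric induced by $\|\cdot\|_{H_\gamma}$. Hence, viewing $F$ as a map from the metric subspace $(H_{\nicefrac12},\|\cdot\|_{H_\gamma})$ of $H_\gamma$ into the complete metric space $H_{-\nu}$, it is locally uniformly continuous.

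I would then apply Lemma~\ref{lemma:Extend local uniformly continuous function} (with $X=H_\gamma$, $Y=H_{-\nu}$, $S=H_{\nicefrac12}$, $F=F$ in its notation) to obtain a unique continuous function $\bar F\colon H_\gamma\to H_{-\nu}$ satisfying $\bar F(v)=F(v)$ for every $v\in H_{\nicefrac12}$; this establishes item~\eqref{item: Extension of F}. For item~\eqref{item:Uniform continuity estimate}, given $v,w\in H_\gamma$ I would pick sequences $(v_n)_{n\in\N},(w_n)_{n\in\N}\subseteq H_{\nicefrac12}$ with $\limsup_{n\to\infty}(\|v-v_n\|_{H_\gamma}+\|w-w_n\|_{H_\gamma})=0$. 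Continuity of $\bar F$ and the identity $\bar F|_{H_{\nicefrac12}}=F$ then give $F(v_n)=\bar F(v_n)\to\bar F(v)$ and $F(w_n)=\bar F(w_n)\to\bar F(w)$ in $H_{-\nu}$, while $\|v_n\|_{H_\gamma}\to\|v\|_{H_\gamma}$, $\|w_n\|_{H_\gamma}\to\|w\|_{H_\gamma}$, and $\|v_n-w_n\|_{H_\gamma}\to\|v-w\|_{H_\gamma}$; passing to the limit in the displayed estimate applied to $v_n,w_n$ yields the desired inequality for $v,w$ with the same constant $C$.

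The proof carries essentially no analytic difficulty, since the substantive estimate is precisely Lemma~\ref{lemma:LocalLipSimple}; the only points requiring a little care are the density of $H_{\nicefrac12}$ in $H_\gamma$ (which uses $\gamma\le\nicefrac12$ together with the spectral description of the interpolation spaces in Setting~\ref{setting:main}) and the observation that the radius-dependent local Lipschitz bound does indeed upgrade to the local uniform continuity hypothesis needed to invoke Lemma~\ref{lemma:Extend local uniformly continuous function}.
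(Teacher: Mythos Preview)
Your proposal is correct and follows essentially the same approach as the paper: invoke Lemma~\ref{lemma:LocalLipSimple} for the estimate on $H_{\nicefrac12}$, apply Lemma~\ref{lemma:Extend local uniformly continuous function} with $X=H_\gamma$, $Y=H_{-\nu}$, $S=H_{\nicefrac12}$ to obtain the unique continuous extension, and then pass to the limit along approximating sequences from $H_{\nicefrac12}$ to get item~\eqref{item:Uniform continuity estimate}. If anything, you supply slightly more detail than the paper (the explicit density argument and the observation that the estimate yields local Lipschitz continuity on $H_\gamma$-balls), but the structure is identical.
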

\begin{proof}[Proof of Lemma~\ref{lemma:Extension of Function F part1}]
	\sloppy 
	Observe that
	Lemma~\ref{lemma:LocalLipSimple}
	(with
	$ \gamma = \gamma $,
	$ \nu = \nu $
	in the notation of
	Lemma~\ref{lemma:LocalLipSimple})
	ensures  
	that
	there exists $ C \in \R $
	such that
	for every 
	$ v, w \in H_{ \nicefrac{1}{2} } $ 
	it
	holds that 
	\begin{equation}
	\begin{split}
	\label{eq:LocLipSimple Application}
	&
	\| F(v) - F(w)  \|_{ H_{ - \nu } }
	\leq  
	C 
	\| v - w \|_{ H_\gamma } 
	(   
	1
	+
	\| v \|_{ H_\gamma }   
	+
	\| w \|_{ H_\gamma }
	)   
	.
	\end{split}
	\end{equation}
	%
	Lemma~\ref{lemma:Extend local uniformly continuous function}
	(with
	$ X = H_{ \gamma } $,
	$ d_X = ( ( H_{ \gamma }  \times H_{ \gamma } )
	\ni (h_1, h_2 ) \mapsto \| h_1 - h_2 \|_{ H_{ \gamma }  } 
	\in [0, \infty) ) $,
	$ Y = H_{-\nu} $,
	$ d_Y = ( ( H_{ - \nu }  \times H_{ - \nu } )
	\ni (h_1, h_2 ) \mapsto \| h_1 - h_2 \|_{ H_{ - \nu }  } 
	\in [0, \infty) ) $,
	$ S = H_{ \nicefrac{1}{2} } $,
	$ F = F $
	in the notation of
	Lemma~\ref{lemma:Extend local uniformly continuous function})
	therefore
	establishes item~\eqref{item: Extension of F}.
	Moreover, note that the
	fact that
	$ H_{ \nicefrac{1}{2} } \subseteq H_\gamma $
	continuously and densely
	ensures that
	for every 
	$ v \in H_{ \gamma } $
	there exist 
	$ ( v_n )_{ n \in \N } 
	\subseteq H_{ \nicefrac{1}{2} } $
	such
	that
	$ \limsup_{n \to \infty } 
	\| v - v_n \|_{H_\gamma} = 0 $.
	Item~\eqref{item: Extension of F} 
	therefore
	implies that 
	for every
	$ v, w \in H_\gamma $ there
	exist 
	$ ( v_n )_{ n \in \N } 
	\subseteq H_{ \nicefrac{1}{2} } $
	and 
	$ ( w_n )_{ n \in \N } 
	\subseteq H_{ \nicefrac{1}{2} } $
	such that 
	$ \limsup_{n \to \infty } 
	(
	\| v - v_n \|_{H_\gamma} 
	+
	\| w - w_n \|_{H_\gamma} 
	)
	= 0 $
	and
	\begin{equation}
	\begin{split}
	\| \bar F(v) - \bar F(w) \|_{ H_{ - \nu } } 
	&\leq
	\limsup\nolimits_{n \to \infty}
	\| \bar F(v) - \bar F(v_n) \|_{ H_{ - \nu } }
	+
	\limsup\nolimits_{n \to \infty}
	\| \bar F(v_n) - \bar F(w_n) \|_{ H_{ - \nu } }
	\\
	&\quad+
	\limsup\nolimits_{n \to \infty}
	\| \bar F(w_n) - \bar F(w) \|_{ H_{ - \nu } }
	\\&=
	\limsup\nolimits_{n \to \infty}
	\| \bar F(v_n) - \bar F(w_n) \|_{ H_{ - \nu } }
	\\&
	=
	\limsup\nolimits_{n \to \infty}
	\| F(v_n) - F(w_n) \|_{ H_{ - \nu } }
	.
	\end{split}
	\end{equation}
	%
Combining this and~\eqref{eq:LocLipSimple Application} shows that
there exists 
$ C \in \R $
such that 
	for every $ v, w \in H_\gamma $
	there exist 
	$ ( v_n )_{ n \in \N } 
	\subseteq H_{ \nicefrac{1}{2} } $
	and
	$ ( w_n )_{ n \in \N } 
	\subseteq H_{ \nicefrac{1}{2} } $
	such that
	\begin{equation}
	\begin{split}
	\| \bar F(v) - \bar F(w) \|_{ H_{- \nu} }
	&\leq 
	C
	\limsup\nolimits_{ n \to \infty }
	\big( 
	\| 
	v_n - w_n
	\|_{ H_\gamma } 
	( 1 
	+ 
	\| v_n \|_{H_\gamma } 
	+
	\| w_n \|_{H_\gamma} ) 
	\big)
	\\
	&
	=
	C
	\| v - w \|_{H_\gamma} 
	(
	1
	+
	\| v \|_{H_\gamma}
	+
	\| w \|_{H_\gamma}
	)
	.
	\end{split}
	\end{equation}
	This establish item~\eqref{item:Uniform continuity estimate}.
	The proof of Lemma~\ref{lemma:Extension of Function F part1}
	is thus completed.
\end{proof} 
\begin{lemma}
	\label{lemma:LocalLipSimple2}
	Assume Setting~\ref{setting:Examples}
	and let
	$ \bar \partial \colon H \to H_{ - \nicefrac{1}{2} } $
	be the continuous function
	which satisfies for every
	$ v \in W^{1,2}( (0,1), \R ) $
	that
	$ \bar \partial v = \partial v $
	(cf.\ item~\eqref{item:Existence of extension} of Lemma~\ref{lemma:Identity2}).
	Then there exists $ C \in \R $
	such that  
	for every
	$ v, w \in H_{ \nicefrac{1}{8} } $ 
	it holds that
	$ (  v^2 - w^2  ) \in H $
	and
	\begin{equation}
	\begin{split}
	\label{eq:LocLipSimple2}
	&
	\| \bar \partial( v^2 - w^2 )  \|_{ H_{ - \nicefrac{1}{2} } }
	\leq  
	C 
	\| v - w \|_{ H_{ \nicefrac{1}{8} } } 
	(   
	1
	+
	\| v \|_{ H_{ \nicefrac{1}{8} } }   
	+
	\| w \|_{ H_{ \nicefrac{1}{8} } }
	)   
	.
	\end{split}
	\end{equation}
\end{lemma}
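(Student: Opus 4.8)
The plan is to combine the continuous Sobolev embedding $H_{\nicefrac{1}{8}} \subseteq L^4(\lambda;\R)$ with H\"older's inequality and the bounded extension $\bar\partial \colon H \to H_{-\nicefrac{1}{2}}$ supplied by Lemma~\ref{lemma:Identity2}. First I would invoke item~\eqref{item:Inclusion} of Lemma~\ref{lemma:NormEquivalency} (with $s = \nicefrac{1}{8}$) to get $H_{\nicefrac{1}{8}} \subseteq W^{\nicefrac{1}{4},2}((0,1),\R)$ continuously, and then the one-dimensional Sobolev--Slobodeckij embedding $W^{\nicefrac{1}{4},2}((0,1),\R) \subseteq L^4(\lambda;\R)$ continuously -- this is the subcritical regime $sp = \nicefrac{1}{2} < 1 = n$ with Sobolev exponent $\tfrac{np}{n-sp} = 4$ and can be cited, e.g., from Triebel~\cite{Triebel1978} (or Behzadan \& Holst~\cite{BehzadanHolst2015}). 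Composing these two embeddings yields a constant $c \in (0,\infty)$ with $\| u \|_{L^4(\lambda;\R)} \le c \, \| u \|_{H_{\nicefrac{1}{8}}}$ for all $u \in H_{\nicefrac{1}{8}}$.

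Given this, the remaining steps are routine. For $v, w \in H_{\nicefrac{1}{8}}$ we then have $v, w \in L^4(\lambda;\R)$, hence $v^2 - w^2 = (v - w)(v + w)$ (an identity of equivalence classes) belongs to $L^2(\lambda;\R) = H$, and H\"older's inequality gives
\[
\| v^2 - w^2 \|_H
\le \| v - w \|_{L^4(\lambda;\R)} \, \| v + w \|_{L^4(\lambda;\R)}
\le c^2 \, \| v - w \|_{H_{\nicefrac{1}{8}}} \big( \| v \|_{H_{\nicefrac{1}{8}}} + \| w \|_{H_{\nicefrac{1}{8}}} \big).
\]
Item~\eqref{item:Norm estimate of extension} of Lemma~\ref{lemma:Identity2} then yields $\| \bar\partial(v^2 - w^2) \|_{H_{-\nicefrac{1}{2}}} \le | c_0 |^{-\nicefrac{1}{2}} \| v^2 - w^2 \|_H$, and combining this with the previous display and the trivial bound $\| v \|_{H_{\nicefrac{1}{8}}} + \| w \|_{H_{\nicefrac{1}{8}}} \le 1 + \| v \|_{H_{\nicefrac{1}{8}}} + \| w \|_{H_{\nicefrac{1}{8}}}$ establishes~\eqref{eq:LocLipSimple2} with $C = | c_0 |^{-\nicefrac{1}{2}} c^2$.

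The only genuinely delicate point is the embedding $W^{\nicefrac{1}{4},2}((0,1),\R) \subseteq L^4(\lambda;\R)$: it sits at the endpoint exponent, so it does \emph{not} follow from Lemma~\ref{lemma:Sobolev_multiplication} -- whose hypothesis $r + q - s > \nicefrac{1}{2}$ fails for the natural choice $q = r = \nicefrac{1}{4}$, $s = 0$ -- and it cannot be recovered merely by interpolating the crude embedding $H_{\nicefrac{1}{2}} = W_0^{1,2}((0,1),\R) \subseteq L^\infty(\lambda;\R)$ with $H = L^2(\lambda;\R)$, which only produces $H_{\nicefrac{1}{8}} \subseteq L^{8/3}(\lambda;\R)$. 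One therefore has to feed in the sharp fractional Sobolev embedding; once that is available, the product identity, H\"older's inequality, and the operator-norm bound on $\bar\partial$ finish the argument.
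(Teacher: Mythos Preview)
Your proof is correct and follows essentially the same route as the paper: both invoke Lemma~\ref{lemma:NormEquivalency} with $s=\nicefrac{1}{8}$ together with the Sobolev embedding $W^{\nicefrac{1}{4},2}((0,1),\R)\subseteq L^4(\lambda;\R)$ to obtain $H_{\nicefrac{1}{8}}\subseteq L^4(\lambda;\R)$, then factor $v^2-w^2=(v-w)(v+w)$, apply H\"older (the paper phrases this as Cauchy--Schwarz), and finish with the operator bound from item~\eqref{item:Norm estimate of extension} of Lemma~\ref{lemma:Identity2}. Your additional remark that Lemma~\ref{lemma:Sobolev_multiplication} fails at this endpoint and that naive interpolation only reaches $L^{8/3}$ is accurate and a useful diagnostic, though not needed for the argument itself.
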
 
\begin{proof}[Proof of Lemma~\ref{lemma:LocalLipSimple2}]
	Note that
	item~\eqref{item:Inclusion}
	of Lemma~\ref{lemma:NormEquivalency}
	(with
	$ s = \nicefrac{1}{8} $
	in the notation
	of item~\eqref{item:Inclusion}
	of Lemma~\ref{lemma:NormEquivalency}) 
	ensures that
	$ H_{ \nicefrac{1}{8} } \subseteq W^{\nicefrac{1}{4}, 2}( (0,1), \R ) $
	continuously.
	The Sobolev embedding theorem
	hence shows that
	\begin{equation}
	\label{eq:Sobolev embedding}
	H_{ \nicefrac{1}{8} } \subseteq L^4( \lambda; \R ) 
	\end{equation}
    continuously.
	This implies
	that 
	for every $ v \in H_{ \nicefrac{1}{8} } $
	it holds that
	$ v^2 \in H $
	and
	\begin{equation}
	\sup_{ w \in H_{ \nicefrac{1}{8} } \backslash \{ 0 \} }
	\frac{ \| w \|_{ L^4(\lambda; \R ) } }
	{ \| w \|_{ H_{ \nicefrac{1}{8} } } }
	< \infty 
	.
	\end{equation}
Item~\eqref{item:Norm estimate of extension}
of Lemma~\ref{lemma:Identity2}
	and the Cauchy-Schwarz inequality
	hence prove that
	\begin{enumerate}[(a)]
		\item 
	it holds for every
	$ v, w \in H_{ \nicefrac{1}{8} } $ 
	that 
	$ ( v^2 - w^2 ) \in H $
	and 
	\item 
	there exists 
	$ C \in [1, \infty) $ 
	such that
	for every
	$ v, w \in H_{ \nicefrac{1}{8} } $
	it holds that 
	\begin{equation}
	\begin{split}
	&
	\| 
	\bar \partial( v^2 - w^2 ) 
	\|_{ H_{-  \nicefrac{1}{2} } } 
	\leq 
	C
	\| v^2 - w^2 \|_{ H } 
	\leq 
	C
	\| v - w \|_{ L^4(\lambda; \R ) }
	\| v + w \|_{ L^4(\lambda; \R ) } 
	\\
	&\leq
	C^3
	\| v - w \|_{ H_{ \nicefrac{1}{8} } }
	\| v + w \|_{ H_{ \nicefrac{1}{8} } }
	\leq
	C^3
	\| v - w \|_{ H_{ \nicefrac{1}{8} } }
	(
	1+
	\| v \|_{ H_{ \nicefrac{1}{8} } }
	+
	\| w \|_{ H_{ \nicefrac{1}{8} } }
	)
	.
	\end{split} 
	\end{equation}
\end{enumerate}
	The proof of Lemma~\ref{lemma:LocalLipSimple2}
	is thus completed.
\end{proof}
\begin{corollary}
	\label{corollary:Extension of Function F part2}
	Assume Setting~\ref{setting:Examples}.
	Then 
	\begin{enumerate} [(i)]
		\item \label{item: Extension of F 2} 
		there exists 
		a unique continuous function
		$ \bar F \colon H_{ \nicefrac{1}{8} } \to H_{-\nicefrac{1}{2}} $
		which satisfies
		for every $ v \in H_{\nicefrac{1}{2} } $
		that
		$ \bar F(v) = F(v) $
		and
		\item \label{item:Uniform continuity estimate 2} there exists
		$ C \in \R $
		which satisfies
		for every
		$ v, w \in H_{ \nicefrac{1}{8} } $ 
		that 
		\begin{equation}
		\begin{split}
		&
		\| \bar F(v) - \bar F(w) \|_{ H_{ - \nicefrac{1}{2} } }
		\leq  
		C 
		\| v - w \|_{ H_{ \nicefrac{1}{8} } } 
		(   
		1
		+
		\| v \|_{ H_{ \nicefrac{1}{8} } }   
		+
		\| w \|_{ H_{ \nicefrac{1}{8} } }
		)   
		.
		\end{split}
		\end{equation}
	\end{enumerate}
\end{corollary}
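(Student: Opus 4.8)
The plan is to mimic the proof of Lemma~\ref{lemma:Extension of Function F part1}, replacing the local Lipschitz estimate of Lemma~\ref{lemma:LocalLipSimple} by the endpoint estimate of Lemma~\ref{lemma:LocalLipSimple2}; note that Lemma~\ref{lemma:Extension of Function F part1} itself does not apply here, since it requires $ \gamma > \nicefrac{1}{8} $ strictly, whereas we now work exactly at $ \gamma = \nicefrac{1}{8} $. First I would observe that item~\eqref{item:001} of Lemma~\ref{lemma:local_lip} together with item~\eqref{item:Existence of extension} of Lemma~\ref{lemma:Identity2} shows for every $ v \in H_{\nicefrac{1}{2}} $ that $ v^2 \in W^{1,2}((0,1),\R) $ and $ F(v) = c_1\, v\, \partial v = \tfrac{c_1}{2}\, \partial(v^2) = \tfrac{c_1}{2}\, \bar\partial(v^2) $, where $ \bar\partial \colon H \to H_{-\nicefrac{1}{2}} $ denotes the continuous extension of $ \partial $ provided by Lemma~\ref{lemma:Identity2}.

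Next I would combine this identity with Lemma~\ref{lemma:LocalLipSimple2} to conclude that there is $ C \in \R $ such that for every $ v, w \in H_{\nicefrac{1}{2}} $ it holds that
\begin{equation}
\| F(v) - F(w) \|_{H_{-\nicefrac{1}{2}}}
= \tfrac{|c_1|}{2}\, \| \bar\partial(v^2 - w^2) \|_{H_{-\nicefrac{1}{2}}}
\leq C\, \| v - w \|_{H_{\nicefrac{1}{8}}}\big( 1 + \| v \|_{H_{\nicefrac{1}{8}}} + \| w \|_{H_{\nicefrac{1}{8}}} \big).
\end{equation}
In particular, $ F $ is a locally Lipschitz continuous, hence locally uniformly continuous, map from $ (H_{\nicefrac{1}{2}}, \|\cdot\|_{H_{\nicefrac{1}{8}}}) $ into $ H_{-\nicefrac{1}{2}} $. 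Since $ H_{\nicefrac{1}{2}} $ is a dense subset of $ H_{\nicefrac{1}{8}} $ (which follows from the spectral description of the interpolation spaces associated to $ -A $), I would then apply Lemma~\ref{lemma:Extend local uniformly continuous function} (with $ X = H_{\nicefrac{1}{8}} $, $ Y = H_{-\nicefrac{1}{2}} $, $ S = H_{\nicefrac{1}{2}} $, $ F = F $) to obtain a unique continuous function $ \bar F \colon H_{\nicefrac{1}{8}} \to H_{-\nicefrac{1}{2}} $ satisfying $ \bar F(v) = F(v) $ for all $ v \in H_{\nicefrac{1}{2}} $; this establishes item~\eqref{item: Extension of F 2}.

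For item~\eqref{item:Uniform continuity estimate 2} I would upgrade the displayed estimate from $ H_{\nicefrac{1}{2}} $ to all of $ H_{\nicefrac{1}{8}} $ by the same density argument as in the proof of Lemma~\ref{lemma:Extension of Function F part1}: given $ v, w \in H_{\nicefrac{1}{8}} $, choose $ (v_n)_{n\in\N}, (w_n)_{n\in\N} \subseteq H_{\nicefrac{1}{2}} $ with $ \limsup_{n\to\infty}(\| v - v_n \|_{H_{\nicefrac{1}{8}}} + \| w - w_n \|_{H_{\nicefrac{1}{8}}}) = 0 $, use continuity of $ \bar F $ and of the norms to get $ \| \bar F(v) - \bar F(w) \|_{H_{-\nicefrac{1}{2}}} = \limsup_{n\to\infty} \| F(v_n) - F(w_n) \|_{H_{-\nicefrac{1}{2}}} $, and pass to the limit in the inequality above. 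I do not anticipate any real obstacle; the only mildly delicate points are the identity $ F(v) = \tfrac{c_1}{2}\bar\partial(v^2) $ on $ H_{\nicefrac{1}{2}} $ and the density of $ H_{\nicefrac{1}{2}} $ in $ H_{\nicefrac{1}{8}} $, both of which are routine consequences of the earlier results and the setting.
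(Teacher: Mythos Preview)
Your proposal is correct and follows essentially the same approach as the paper's proof: you derive $F(v)=\tfrac{c_1}{2}\bar\partial(v^2)$ on $H_{\nicefrac{1}{2}}$ via item~\eqref{item:001} of Lemma~\ref{lemma:local_lip} and Lemma~\ref{lemma:Identity2}, invoke Lemma~\ref{lemma:LocalLipSimple2} for the local Lipschitz estimate in the $H_{\nicefrac{1}{8}}$-norm, apply Lemma~\ref{lemma:Extend local uniformly continuous function} with $X=H_{\nicefrac{1}{8}}$, $Y=H_{-\nicefrac{1}{2}}$, $S=H_{\nicefrac{1}{2}}$, and then pass to the limit by density for item~\eqref{item:Uniform continuity estimate 2}. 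This is exactly what the paper does.
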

\begin{proof}[Proof of Corollary~\ref{corollary:Extension of Function F part2}]
	Throughout this proof let
	$ \bar \partial \colon H \to H_{ - \nicefrac{1}{2} } $
	be the continuous function 
	which satisfies for every
	$ v \in W^{1,2}((0,1), \R ) $ that
	$ \bar \partial v = \partial v $
		(cf.\ item~\eqref{item:Existence of extension} of Lemma~\ref{lemma:Identity2}).
	Note that 
	item~\eqref{item:001}
	of Lemma~\ref{lemma:local_lip}
	ensures that for every 
	$ v \in H_{ \nicefrac{1}{2} } $
	it holds that
	$ v^2 \in W^{1,2}((0,1), \R) $
	and 
	\begin{equation}
	\label{eq:F identity}
	F(v ) 
	=  
	\tfrac{ c_1 }{2} \partial ( v^2 )
	= 
	\tfrac{ c_1 }{2} \bar \partial ( v^2 )
	.
	\end{equation} 
	Lemma~\ref{lemma:LocalLipSimple2}
	(with
	$ \bar \partial = \bar \partial $
	in the notation of Lemma~\ref{lemma:LocalLipSimple2}) 
	hence shows that
	there exists
	$ C \in \R $
	such that
	for every 
	$ v, w \in H_{ \nicefrac{1}{2} } $
	it holds that
	\begin{equation}
	\begin{split}
	\label{eq:important estimate 2}
	\| F(v) - F(w) \|_{ H_{ - \nicefrac{1}{2} } } 
	&=
	\tfrac{ |c_1| }{2}
	\| \partial (v^2) - \partial (w^2) \|_{ H_{ - \nicefrac{1}{2} } }
	=
	\tfrac{ |c_1| }{2}
	\| \bar \partial( v^2 - w^2 )  \|_{ H_{ - \nicefrac{1}{2} } }
	\\
	&
	\leq  
	C 
	\| v - w \|_{ H_{ \nicefrac{1}{8} } } 
	(   
	1
	+
	\| v \|_{ H_{ \nicefrac{1}{8} } }   
	+
	\| w \|_{ H_{ \nicefrac{1}{8} } }
	)   
	.
	\end{split}
	\end{equation}
	%
	%
	%
	%
	Lemma~\ref{lemma:Extend local uniformly continuous function}
	(with
	$ X = H_{ \nicefrac{1}{8} } $,
	$ d_X = ( ( H_{ \nicefrac{1}{8} }  \times H_{ \nicefrac{1}{8} } )
	\ni (h_1, h_2 ) \mapsto \| h_1 - h_2 \|_{ H_{ \nicefrac{1}{8} }  } 
	\in [0, \infty) ) $,
	$ Y = H_{ -\nicefrac{1}{2} } $,
	$ d_Y = ( ( H_{ - \nicefrac{1}{2} }  \times H_{ - \nicefrac{1}{2} } )
	\ni (h_1, h_2 ) \mapsto \| h_1 - h_2 \|_{ H_{ - \nicefrac{1}{2} }  } 
	\in [0, \infty) ) $,
	$ S = H_{ \nicefrac{1}{2} } $,
	$ F = F $
	in the notation of
	Lemma~\ref{lemma:Extend local uniformly continuous function})
	therefore
	establishes 
	item~\eqref{item: Extension of F 2}.
	Moreover, note that the
	fact that
	$ H_{ \nicefrac{1}{2} } \subseteq H_{ \nicefrac{1}{8} } $
	continuously and densely
	ensures that
	for every 
	$ v \in H_{ \nicefrac{1}{8} } $
	there exist 
	$ ( v_n )_{ n \in \N } 
	\subseteq H_{ \nicefrac{1}{2} } $
	such
	that
	$ \limsup_{n \to \infty } \| v - v_n \|_{H_{ \nicefrac{1}{8} }} = 0 $.
	This and item~\eqref{item: Extension of F 2} imply that 
	for every
	$ v, w \in H_{ \nicefrac{1}{8} } $ there
	exist 
	$ ( v_n )_{ n \in \N } 
	\subseteq H_{ \nicefrac{1}{2} } $
	and 
	$ ( w_n )_{ n \in \N } 
	\subseteq H_{ \nicefrac{1}{2} } $
	such that 
	$ \limsup_{n \to \infty } 
	(
	\| v - v_n \|_{ H_{ \nicefrac{1}{8} } } 
	+
	\| w - w_n \|_{ H_{ \nicefrac{1}{8} } }
	)
	= 0 $
	and
	\begin{equation}
	\begin{split}
	\| \bar F(v) - \bar F(w) \|_{ H_{ - \nicefrac{1}{2} } } 
	&\leq
	\limsup\nolimits_{n \to \infty}
	\| \bar F(v) - \bar F(v_n) \|_{ H_{ - \nicefrac{1}{2} } }
	\\
	&
	\quad
	+
	\limsup\nolimits_{n \to \infty}
	\| \bar F(v_n) - \bar F(w_n) \|_{ H_{ - \nicefrac{1}{2} } }
	\\
	&\quad+
	\limsup\nolimits_{n \to \infty}
	\| \bar F(w_n) - \bar F(w) \|_{ H_{ - \nicefrac{1}{2} } }
	\\&=
	\limsup\nolimits_{n \to \infty}
	\| \bar F(v_n) - \bar F(w_n) \|_{ H_{ - \nicefrac{1}{2} } }
	\\&=
	\limsup\nolimits_{n \to \infty}
	\| F(v_n) - F(w_n) \|_{ H_{ - \nicefrac{1}{2} } }
	.
	\end{split}
	\end{equation}
	%
	%
%
Combining 
this 
and~\eqref{eq:important estimate 2}
shows that
there exists $ C \in \R $
such that 
for every $ v, w \in H_{ \nicefrac{1}{8} } $
	there exist 
	$ ( v_n )_{ n \in \N } 
	\subseteq H_{ \nicefrac{1}{2} } $
	and
	$ ( w_n )_{ n \in \N } 
	\subseteq H_{ \nicefrac{1}{2} } $
	such that
	\begin{equation}
	\begin{split}
	\| \bar F(v) - \bar F(w) \|_{ H_{- \nicefrac{1}{2} } }
	&\leq 
	C
	\limsup\nolimits_{ n \to \infty }
	\big( 
	\| 
	v_n - w_n
	\|_{ H_{ \nicefrac{1}{8} } }
	( 1 
	+ 
	\| v_n \|_{ H_{ \nicefrac{1}{8} } } 
	+
	\| w_n \|_{ H_{ \nicefrac{1}{8} } } ) 
	\big)
	\\
	&
	=
	C
	\| v - w \|_{ H_{ \nicefrac{1}{8} } } 
	(
	1
	+
	\| v \|_{ H_{ \nicefrac{1}{8} } }
	+
	\| w \|_{ H_{ \nicefrac{1}{8} } }
	)
	.
	\end{split}
	\end{equation}
	This establishes 
	item~\eqref{item:Uniform continuity estimate 2}.
	The proof of Corollary~\ref{corollary:Extension of Function F part2}
	is thus completed.
\end{proof} 
\begin{lemma}
	\label{lemma:monotonicity}
	Assume Setting~\ref{setting:Examples}. 
	Then 
	\begin{enumerate}[(i)]   
		\item \label{item:b}
		it holds that
		$ F \in \mathcal{C}^1( H_{ \nicefrac{1}{2} }, H ) $ 
		and
		\item\label{item:c} 
		there exists $ C \in (0,\infty) $
		such that  
		for every 
		$ \varepsilon \in (0,\infty) $,
		$ v, w \in H_{ \nicefrac{1}{2} } $ 
		it holds that
		\begin{equation} 
		\begin{split} 
		&
		\langle F'(v) w, w \rangle_H 
		\leq  
		\varepsilon
		\| v \|_{H_{\nicefrac{1}{2}}}^2    \| w \|_H^2 
		+
		\tfrac{C}{ \varepsilon^2 } \| w \|_H^2
		+
		\| w \|_{H_{\nicefrac{1}{2}}}^2
		.
		\end{split}
		\end{equation}	
	\end{enumerate}
\end{lemma}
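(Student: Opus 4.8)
The plan is as follows. Item~\eqref{item:b} requires no new work: it is exactly item~\eqref{item:03} of Lemma~\ref{lemma:local_lip}.

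For item~\eqref{item:c} I would proceed via an integration-by-parts identity, a Gagliardo--Nirenberg interpolation estimate, and two applications of Young's inequality. Item~\eqref{item:04} of Lemma~\ref{lemma:local_lip} gives $F'(v)w=c_1(w\,\partial v+v\,\partial w)$ for $v,w\in H_{\nicefrac{1}{2}}$, and since $H_{\nicefrac{1}{2}}=W_0^{1,2}((0,1),\R)$ embeds continuously into $W^{1,2}((0,1),\R)\cap L^\infty(\lambda;\R)$ (items~\eqref{item:01 A}, \eqref{item:01 B}, \eqref{item:01b} of Lemma~\ref{lemma:local_lipA}), the product $vw$ lies in $W^{1,2}((0,1),\R)$ with $\partial(vw)=w\,\partial v+v\,\partial w$, whence $F'(v)w=c_1\,\partial(vw)$. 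Applying Lemma~\ref{lemma:int_parts} with the $W_0^{1,2}$-function $w$ and the $W^{1,2}$-function $vw$ yields $\langle F'(v)w,w\rangle_H=c_1\langle\partial(vw),w\rangle_H=-c_1\langle v\,\partial w,w\rangle_H$; adding this to $\langle F'(v)w,w\rangle_H=c_1\langle w\,\partial v,w\rangle_H+c_1\langle v\,\partial w,w\rangle_H$ cancels the mixed term and leaves $\langle F'(v)w,w\rangle_H=\tfrac{c_1}{2}\,\langle\partial v,w^2\rangle_H$. Then the Cauchy--Schwarz inequality, the identity $\|\partial v\|_H=|c_0|^{-1/2}\|v\|_{H_{\nicefrac{1}{2}}}$ from item~\eqref{item:norm_estimate} of Lemma~\ref{lemma:local_lipA}, and $\|w^2\|_H=\|w\|_{L^4(\lambda;\R)}^2$ give $\langle F'(v)w,w\rangle_H\leq \tfrac{|c_1|}{2|c_0|^{1/2}}\,\|v\|_{H_{\nicefrac{1}{2}}}\,\|w\|_{L^4(\lambda;\R)}^2$.

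To control $\|w\|_{L^4(\lambda;\R)}^2$ I would invoke Lemma~\ref{lemma:Gagliardo2} with $q=2$, $p=4$, $r=2$, $\alpha=\nicefrac{1}{4}$, which — combined with the continuous embedding $H_{\nicefrac{1}{2}}\subseteq W_0^{1,2}((0,1),\R)$ from item~\eqref{item:01 B} of Lemma~\ref{lemma:local_lipA} — furnishes $\tilde C\in(0,\infty)$ with $\|w\|_{L^4(\lambda;\R)}^2\leq\tilde C\,\|w\|_{H_{\nicefrac{1}{2}}}^{1/2}\|w\|_H^{3/2}$ for all $w\in H_{\nicefrac{1}{2}}$, and hence a constant $\hat C\in(0,\infty)$ with $\langle F'(v)w,w\rangle_H\leq\hat C\,\|v\|_{H_{\nicefrac{1}{2}}}\,\|w\|_{H_{\nicefrac{1}{2}}}^{1/2}\|w\|_H^{3/2}$. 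Now I would apply Young's inequality first to the factor $\|w\|_{H_{\nicefrac{1}{2}}}^{1/2}=(\|w\|_{H_{\nicefrac{1}{2}}}^2)^{1/4}$ with conjugate exponents $4$ and $\tfrac{4}{3}$, choosing the free constant so that the first summand has coefficient exactly $1$, obtaining $\langle F'(v)w,w\rangle_H\leq\|w\|_{H_{\nicefrac{1}{2}}}^2+C_1\|v\|_{H_{\nicefrac{1}{2}}}^{4/3}\|w\|_H^2$, and then a second time to $\|v\|_{H_{\nicefrac{1}{2}}}^{4/3}=(\|v\|_{H_{\nicefrac{1}{2}}}^2)^{2/3}$ with conjugate exponents $\tfrac{3}{2}$ and $3$ and a free parameter, which splits $C_1\|v\|_{H_{\nicefrac{1}{2}}}^{4/3}\|w\|_H^2$ into $\varepsilon\|v\|_{H_{\nicefrac{1}{2}}}^2\|w\|_H^2+\tfrac{C}{\varepsilon^2}\|w\|_H^2$ for a suitable $C\in(0,\infty)$ independent of $\varepsilon$, $v$, and $w$; this is the claimed inequality.

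The mathematical content is concentrated in the integration-by-parts identity for $\langle F'(v)w,w\rangle_H$ and in the $L^4$-interpolation inequality with exponent $\nicefrac{1}{4}$; the only thing that needs care is the bookkeeping of constants through the two Young steps, so that the coefficient of $\|w\|_{H_{\nicefrac{1}{2}}}^2$ ends up $\leq 1$ and the residual term carries precisely the $\varepsilon^{-2}$ weight (the cubic dependence of the residual constant on $C_1$ being the source of the $\varepsilon^{-2}$). I do not anticipate any genuine obstacle.
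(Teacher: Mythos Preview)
Your proposal is correct and follows essentially the same route as the paper: the paper also derives the identity $\langle F'(v)w,w\rangle_H=\tfrac{c_1}{2}\langle\partial v,w^2\rangle_H$ via integration by parts (using $2w\,\partial w=\partial(w^2)$ rather than your $\partial(vw)$ formulation, but equivalently), then applies Cauchy--Schwarz, item~\eqref{item:norm_estimate} of Lemma~\ref{lemma:local_lipA}, and Lemma~\ref{lemma:Gagliardo2} with exactly your parameters $q=2$, $p=4$, $r=2$, $\alpha=\tfrac14$. The only cosmetic difference is that the paper carries out the Young step in one shot via the four-term inequality $4x_1x_2x_3x_4\leq x_1^4+x_2^4+x_3^4+x_4^4$ instead of your two sequential applications; both yield the same $\varepsilon^{-2}$ dependence.
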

\begin{proof}[Proof of Lemma~\ref{lemma:monotonicity}]
	Note that item~\eqref{item:03} of Lemma~\ref{lemma:local_lip}
	establishes item~\eqref{item:b}.	
	Next observe that	
	item~\eqref{item:01 B} 
	of Lemma~\ref{lemma:local_lipA},
	Lemma~\ref{lemma:int_parts},
	items~\eqref{item:001} and~\eqref{item:04} of Lemma~\ref{lemma:local_lip},  
	and the Cauchy-Schwarz inequality
	imply that for every
	$ v, w \in H_{ \nicefrac{1}{2} } $ it holds that
	\begin{equation}
	\begin{split}
	\label{eq:starting_estimate}
	&
	2
	\langle F'( v ) w, w \rangle_H 
	= 
	2 c_1
	\langle w \partial v 
	+
	v  \partial w, w \rangle_H
	= 
	2 c_1 \langle w \partial v, w \rangle_H
	+
	2 c_1 \langle v \partial w, w \rangle_H
	\\
	&
	= 
	2 c_1
	\langle \partial v, w^2 \rangle_H
	+
	c_1
	\langle v, 2 w \partial w \rangle_H
	= 
	2 c_1
	\langle \partial v, w^2 \rangle_H
	+
	c_1
	\langle v, \partial ( w^2 ) \rangle_H
	\\
	&
	= 
	2 c_1
	\langle \partial v, w^2 \rangle_H
	-
	c_1
	\langle \partial v, w^2 \rangle_H
	=
	c_1
	\langle \partial v, w^2 \rangle_H
	\\
	&
	\leq
	| c_1 |
	\| \partial v \|_H
	\| w^2 \|_H
	=
	| c_1 | 
	\| \partial v \|_H
	\| w \|_{ L^4( \lambda; \R )}^2 
	.
	\end{split}
	\end{equation}	
	Moreover, note that
	Lemma~\ref{lemma:Gagliardo2}
	(with
	$ q = 2 $,
	$ p = 4 $,
	$ r = 2 $,
	$ \alpha = \nicefrac{1}{4} $
	in the notation of 
	Lemma~\ref{lemma:Gagliardo2}) 
	and
	item~\eqref{item:01 B} of Lemma~\ref{lemma:local_lipA}
	%
	prove that there exists 
	$ C \in \R $
	such that for every $ w \in H_{ \nicefrac{1}{2} } \subseteq W^{1,2}_0((0,1), \R) $ it holds that
	\begin{equation}
	\label{eq:Gagliardo}
	\| w \|_{L^4( \lambda; \R  )} 
	\leq
	C  
	\| w \|_{ W^{1,2}( (0,1), \R )  }^{ \nicefrac{1}{4} }
	\| w \|_H^{ \nicefrac{3}{4} }
	.
	\end{equation}
	Items~\eqref{item:01 B} 
	and~\eqref{item:norm_estimate} of
	Lemma~\ref{lemma:local_lipA}, 
	\eqref{eq:starting_estimate},
	and the fact that
	for every
	$ x_1, x_2, x_3, x_4 \in \R $
	it holds that
	$ 4 x_1 x_2 x_3 x_4 
	\leq 
	|x_1|^4 + |x_2|^4 + |x_3|^4 + |x_4|^4 $ 
	hence show that
	there exists 
	$ C \in (0,\infty) $
	such that
	for every
	$ \varepsilon \in (0,\infty) $,
	$ v, w \in H_{ \nicefrac{1}{2} } $ it holds that
	\begin{equation}
	\begin{split} 
	&
	\langle F'(v) w, w \rangle_H 
	\leq
	C
	\| \partial v \|_H
	\| w \|_{W^{1,2}((0,1), \R )}^{
		\nicefrac{1}{2}}
	\| w \|_H^{ \nicefrac{3}{2} }
	\\
	&
	\leq
	C
	| c_0 |^{- \nicefrac{1}{2} }
	\Big[
	\sup\nolimits_{ u \in H_{ \nicefrac{1}{2} } \backslash \{ 0 \} }
	\tfrac{ \| u \|_{W^{1,2}((0,1), \R) } }{
		\| u \|_{ H_{ \nicefrac{1}{2} } }	
	}
	\Big]^{ \nicefrac{1}{2} }
	\|  v \|_{ H_{\nicefrac{1}{2} } }		
	\| w \|_{ H_{ \nicefrac{1}{2} } }^{ \nicefrac{1}{2} }
	\| w \|_H^{ \nicefrac{3}{2} }
	\\
	&
	=
	4
	\bigg[ 
	\big( 
	\tfrac{ \varepsilon }{2}  
	\| v \|_{H_{\nicefrac{1}{2}}}^2    \| w \|_H^2
	\big) 
	\big( 
	\tfrac{ \varepsilon }{2}  
	\| v \|_{H_{\nicefrac{1}{2}}}^2    \| w \|_H^2
	\big) 
	\Big( 
	\tfrac{C^4}{64 \varepsilon^2 | c_0 |^2}
	\Big[
	\sup\nolimits_{ u \in H_{ \nicefrac{1}{2} } \backslash \{ 0 \} }
	\tfrac{ \| u \|_{W^{1,2}((0,1), \R) } }{
		\| u \|_{ H_{ \nicefrac{1}{2} } }	
	}
	\Big]^2
	\| w \|_H^2
	\Big)  
	\| w \|_{H_{\nicefrac{1}{2}}}^2
	\bigg]^{\frac{1}{4}}
	\\
	& 
	\leq
	\tfrac{ \varepsilon }{2}  
	\| v \|_{H_{\nicefrac{1}{2}}}^2    \| w \|_H^2
	+ 
	\tfrac{ \varepsilon }{2}  
	\| v \|_{H_{\nicefrac{1}{2}}}^2    \| w \|_H^2
	+
	\tfrac{C^4}{64 \varepsilon^2 | c_0 |^2}
	\Big[
	\sup\nolimits_{ u \in H_{ \nicefrac{1}{2} } \backslash \{ 0 \} }
	\tfrac{ \| u \|_{W^{1,2}((0,1), \R) } }{
		\| u \|_{ H_{ \nicefrac{1}{2} } }	
	}
	\Big]^2
	\| w \|_H^2
	+ 
	\| w \|_{H_{\nicefrac{1}{2}}}^2
	\\
	&
	=
	\varepsilon
	\| v \|_{H_{\nicefrac{1}{2}}}^2    \| w \|_H^2 
	+
	\tfrac{C^4}{64 \varepsilon^2 | c_0 |^2}
	\Big[
	\sup\nolimits_{ u \in H_{ \nicefrac{1}{2} } \backslash \{ 0 \} }
	\tfrac{ \| u \|_{W^{1,2}((0,1), \R) } }{
		\| u \|_{ H_{ \nicefrac{1}{2} } }	
	}
	\Big]^2
	\| w \|_H^2
	+
	\| w \|_{H_{\nicefrac{1}{2}}}^2
	< \infty 
	.
	\end{split}
	\end{equation}	
	This establishes item~\eqref{item:c}.
	The proof of Lemma~\ref{lemma:monotonicity}
	is thus completed.
\end{proof}
\begin{lemma}
	\label{lemma:PositiveNormF}
	%
Assume Setting~\ref{setting:Examples},
let 
$ \alpha \in [0, \nicefrac{1}{2} ] \backslash \{ \nicefrac{1}{4} \} $,
let $ \mathcal{P} (\H) $ be the
power set of $ \H $,
let $ \mathcal{P}_0(\H) = \{ \theta \in \mathcal{P}(\H) \colon \theta \text{ is a finite set} \} $, 
and
let 
$ ( P_I )_{ I\in \mathcal{P} (\H) } \subseteq L(H) $ 
satisfy
for every
$ I \in \mathcal{P}(\H) $,  
$ v \in H $ 
that
$ P_I(v) =\sum_{h \in I} \left< h ,v \right>_H h $.
	Then it holds 
	that
	\begin{equation} 
	\sup\nolimits_{ I \in \mathcal{P}_0(\H) }
	\sup\nolimits_{ v \in H_{ \alpha + ( \nicefrac{1}{2} ) } \backslash \{ 0 \} }
	\tfrac{ \| P_I F(v) \|_{ H_{ \alpha } } }{ \| v \|_{H_{ \alpha + ( \nicefrac{1}{2} ) } }^2 }
	<
	\infty
	.
	\end{equation}
\end{lemma}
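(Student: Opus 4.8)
The starting point is the elementary observation that for every finite $ I \in \mathcal{P}_0(\H) $ the operator $ P_I \in L(H) $ is the orthogonal projection of $ H $ onto the finite-dimensional subspace $ \operatorname{span}(I) $ and that $ P_I $ commutes with $ (-A)^\alpha $, since every $ h \in \H $ is an eigenvector of $ (-A)^\alpha $ with eigenvalue $ | \values_h |^\alpha $. Hence for every $ w \in H_\alpha = D((-A)^\alpha) $ it holds that $ (-A)^\alpha P_I w = P_I (-A)^\alpha w $ and therefore $ \| P_I w \|_{H_\alpha} = \| P_I (-A)^\alpha w \|_H \leq \| (-A)^\alpha w \|_H = \| w \|_{H_\alpha} $. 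The plan is thus to reduce the assertion to the claim that there exists $ C \in (0,\infty) $ such that for every $ v \in H_{ \alpha + ( \nicefrac{1}{2} ) } $ it holds that $ F(v) \in H_\alpha $ and $ \| F(v) \|_{H_\alpha} \leq C \| v \|_{ H_{ \alpha + ( \nicefrac{1}{2} ) } }^2 $; once this is shown, bounding $ \| P_I F(v) \|_{H_\alpha} \leq \| F(v) \|_{H_\alpha} \leq C \| v \|_{ H_{ \alpha + ( \nicefrac{1}{2} ) } }^2 $ and taking suprema over $ I \in \mathcal{P}_0(\H) $ and $ v \in H_{ \alpha + ( \nicefrac{1}{2} ) } \backslash \{0\} $ completes the proof.

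To establish the claim I would use item~\eqref{item:001} of Lemma~\ref{lemma:local_lip} to rewrite $ F(v) = \tfrac{c_1}{2} \partial ( v^2 ) $ and then track the Sobolev regularity of $ v^2 $ through the scale. By item~\eqref{item:Inclusion} of Lemma~\ref{lemma:NormEquivalency} the space $ H_{ \alpha + ( \nicefrac{1}{2} ) } $ embeds continuously into $ W^{ 2\alpha + 1, 2 }( (0,1), \R ) $; since $ 2\alpha + 1 \geq 1 $, Lemma~\ref{lemma:Sobolev_multiplication} (applied with $ s = q = r = 2\alpha + 1 $, so that $ q + r - s = 2\alpha + 1 > \nicefrac{1}{2} $) yields $ v^2 \in W^{ 2\alpha + 1, 2 }( (0,1), \R ) $ together with $ \| v^2 \|_{ W^{2\alpha+1,2}((0,1),\R) } \lesssim \| v \|_{ H_{ \alpha + ( \nicefrac{1}{2} ) } }^2 $, and the standard boundedness of weak differentiation as a map $ W^{2\alpha+1,2}((0,1),\R) \to W^{2\alpha,2}((0,1),\R) $ (cf., e.g., Triebel~\cite{Triebel1978}) gives $ \partial(v^2) \in W^{2\alpha,2}((0,1),\R) $ with $ \| \partial(v^2) \|_{ W^{2\alpha,2}((0,1),\R) } \lesssim \| v \|_{ H_{ \alpha + ( \nicefrac{1}{2} ) } }^2 $. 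In view of item~\eqref{item:Equivalence 2} of Lemma~\ref{lemma:NormEquivalency} (applicable because $ \alpha \neq \nicefrac{1}{4} $), it then only remains to upgrade this to $ \partial(v^2) \in W_0^{2\alpha,2}((0,1),\R) $, for then $ F(v) = \tfrac{c_1}{2} \partial(v^2) \in H_\alpha $ with $ \| F(v) \|_{H_\alpha} \lesssim \| \partial(v^2) \|_{ W^{2\alpha,2}((0,1),\R) } \lesssim \| v \|_{ H_{ \alpha + ( \nicefrac{1}{2} ) } }^2 $.

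The upgrade splits into two cases, and the case $ \alpha > \nicefrac{1}{4} $ is the main obstacle. If $ \alpha \in [0, \nicefrac{1}{4}) $, then $ 2\alpha < \nicefrac{1}{2} $ and hence $ W^{2\alpha,2}((0,1),\R) = W_0^{2\alpha,2}((0,1),\R) $, so nothing further is needed. If $ \alpha \in ( \nicefrac{1}{4}, \nicefrac{1}{2} ] $, then $ 2\alpha \in ( \nicefrac{1}{2}, 1 ] $, and I would exploit that $ \partial(v^2) = 2 v \partial v $ has vanishing boundary values: the Sobolev embedding $ W^{2\alpha+1,2}((0,1),\R) \subseteq \mathcal{C}^1([0,1],\R) $ (valid since $ 2\alpha + 1 > \nicefrac{3}{2} $), together with the fact that $ v \in H_{ \alpha + ( \nicefrac{1}{2} ) } \subseteq H_{ \nicefrac{1}{2} } \subseteq W_0^{1,2}((0,1),\R) $ (cf.\ item~\eqref{item:01 B} of Lemma~\ref{lemma:local_lipA}) has vanishing boundary values, shows that $ 2 v \partial v $ has a continuous representative on $ [0,1] $ which equals $ 0 $ at $ 0 $ and at $ 1 $; since a function in $ W^{2\alpha,2}((0,1),\R) $ with $ 2\alpha \in ( \nicefrac{1}{2}, \nicefrac{3}{2} ) $ and vanishing boundary values belongs to $ W_0^{2\alpha,2}((0,1),\R) $ (cf., e.g., Triebel~\cite{Triebel1978}), we obtain $ \partial(v^2) \in W_0^{2\alpha,2}((0,1),\R) $. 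Combining the two cases with the previous paragraph and with the reduction from the first paragraph then establishes the lemma; the only nonroutine ingredient is the boundary-regularity step for $ \alpha > \nicefrac{1}{4} $, while all remaining steps are direct applications of the quoted embedding, multiplication, and interpolation results.
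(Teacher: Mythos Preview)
Your argument is correct, but it takes a genuinely different route from the paper's. Both proofs share the core estimate $\|F(v)\|_{W^{2\alpha,2}((0,1),\R)} \lesssim \|v\|_{H_{\alpha+(\nicefrac{1}{2})}}^2$ obtained from Lemma~\ref{lemma:Sobolev_multiplication}, item~\eqref{item:Inclusion} of Lemma~\ref{lemma:NormEquivalency}, and the boundedness of $\partial \colon W^{2\alpha+1,2}\to W^{2\alpha,2}$. The divergence is in how the projection $P_I$ is handled. You use that $P_I$ is trivially a contraction on $H_\alpha$ and therefore need $F(v)\in H_\alpha$, which forces you to verify $\partial(v^2)\in W_0^{2\alpha,2}((0,1),\R)$; this is where your case split and the boundary argument via the embedding $W^{2\alpha+1,2}\subseteq \mathcal C^1$ enter. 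The paper instead applies $P_I$ \emph{first} and exploits that $P_I F(v)$, being a finite linear combination of the $e_n$, automatically lies in $W_0^{2\alpha,2}((0,1),\R)$ for every $\alpha$; item~\eqref{item:Equivalence 2} of Lemma~\ref{lemma:NormEquivalency} then gives $\|P_I F(v)\|_{H_\alpha}\lesssim \|P_I F(v)\|_{W^{2\alpha,2}}$, and the remaining work is to show that $P_I$ is a contraction on $W^{2\alpha,2}((0,1),\R)$, which the paper obtains by interpolating between the trivial contraction on $H$ and the contraction on $W^{1,2}((0,1),\R)$ coming from the orthogonality of $\H$ there. Your route avoids this interpolation step for $P_I$ and is slightly more elementary, at the cost of the case distinction and the boundary-trace verification; the paper's route is uniform in $\alpha$ and never needs to know that $F(v)$ itself lies in $H_\alpha$.
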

\begin{proof}[Proof of Lemma~\ref{lemma:PositiveNormF}]
	Throughout this proof consider the notation in
	Triebel~\cite[Section~1.3.2 on page~24]{Triebel1978}
	(cf., e.g., Lunardi~\cite[Definition~1.2]{Lunardi2018}).
	Note that
	item~\eqref{item:Equivalence 2}  
	of
	Lemma~\ref{lemma:NormEquivalency}
	shows that
	for every
	$ I \in \mathcal{P}_0(\H) $,
	$ v \in H_{ \alpha + ( \nicefrac{ 1 }{ 2 } ) } $
	it holds that
	%
	%
	%
	%
	\begin{equation}
	\label{eq:SOmeEst}
	\| P_I F( v ) \|_{H_\alpha }
	\leq 
	\Big( 
	\sup\nolimits_{ u \in H_{\alpha} \backslash \{0\} }
	\tfrac{ \| u \|_{H_\alpha } }
	{ \| u \|_{ W^{2\alpha,2}((0,1), \R ) } }
	\Big)
	\| P_I F( v ) \|_{ W^{2 \alpha, 2 } ( (0,1), 
		\R ) }
	< \infty
	.
	\end{equation}
	Moreover,
	observe that
	the fact that
	\begin{equation} 
	\big( 
	W^{2 \alpha + 1, 2}((0,1), \R )  
	\ni 
	v \mapsto  \partial v 
	\in 
	W^{2\alpha, 2}((0,1), \R ) 
	\big) 
	\in 
	L\big( W^{2 \alpha + 1, 2}((0,1), \R ),
	W^{2 \alpha, 2}((0,1), \R ) \big)  
	\end{equation} 
	(cf., e.g.,
	Triebel~\cite[item~(a) of Theorem~1 in Section~4.4.2
	on page~323, 
	and Remark~2 in Section~4.4.2
	on page~323]{Triebel1978}), 
	item~\eqref{item:Inclusion}
	of Lemma~\ref{lemma:NormEquivalency},	
	Lemma~\ref{lemma:Sobolev_multiplication}
	(with
	$ s = 2 \alpha + 1 $,
	$ q = 2 \alpha + 1 $, 
	$ r = 2 \alpha + 1 $
	in the notation of  
	Lemma~\ref{lemma:Sobolev_multiplication}),
	and 
	item~\eqref{item:001}
	of
	Lemma~\ref{lemma:local_lip}
	imply that 
	there exists $ C \in (0, \infty) $
	such that
	for every
	$ v \in H_{ \alpha + ( \nicefrac{1}{2} ) } $
	it holds that
	$ v \in W^{ 2 \alpha + 1, 2 } ( (0,1), \R) $,
	$ v^2 \in W^{ 2 \alpha + 1, 2 } ( (0,1), \R) $,
	$ F(v) \in W^{ 2 \alpha, 2 } ( (0,1), \R) $,
	and 
	\begin{equation}
	\begin{split}
	\label{eq:Est}
	& 
	\| F( v ) \|_{ W^{2 \alpha, 2 } ( (0,1), 
		\R ) } 
	\leq
C
	\| v^2 \|_{ W^{2 \alpha + 1, 2 } ( (0,1), 
		\R ) }
	\\
	&
	\leq
	C
	\Big(
	\sup\nolimits_{ u \in  W^{2 \alpha + 1, 2 } ( (0,1), 
		\R ) \backslash \{0 \}  }
	\tfrac{ \| u^2 \|_{ W^{2 \alpha + 1, 2 } ( (0,1), 
			\R ) } }{ \| u \|_{ W^{2 \alpha + 1, 2 } ( (0,1), 
			\R ) }^2 }
	\Big)
	\| v \|_{ W^{2 \alpha + 1, 2 } ( (0,1), 
		\R ) }^2
	<
	\infty 
	. 
	\end{split}
	\end{equation}
	Moreover, note that, e.g.,  
	Triebel~\cite[Definition~1 
	in Section~4.2.1 
	on page~310,
	Theorem~1 in Section~4.3.1 
	on page~317,
	item~(a) in Theorem~1 
	in Section~4.4.2
	on page~323, 
	and 
	Remark~2 in Section~4.4.2 
	on page~324]{Triebel1978}  
	shows that for every 
	$ \iota \in (0, \nicefrac{1}{2}) $
	it holds that
	\begin{equation} 
	\label{eq:Equal spaces}
	( H,
	W^{1,2}( (0,1), \R )
	)_{ 2 \iota, 2 }  
	=
	W^{ 2 \iota, 2 }( (0,1), \R )
	\end{equation}
	and
	\begin{equation} 
	\label{eq:Equal norms}
	\sup_{ x \in 
		W^{ 2 \iota, 2 }( (0,1), \R ) \backslash \{0\}
	}
	\Bigg(
	\frac{
		\| x \|_{ W^{ 2 \iota, 2 }( (0,1), \R ) }
	}{
		\| x \|_{ ( H,
			W^{1,2}( (0,1), \R )
			)_{ 2 \iota, 2 } }
	}
	+
	\frac{
		\| x \|_{ ( H,
			W^{1,2}( (0,1), \R )
			)_{ 2 \iota, 2 } }
	}
	{
		\| x \|_{ W^{ 2 \iota, 2}( (0,1), \R ) }
	}
	\Bigg)
	< \infty.
	\end{equation} 
	In addition, observe that the fact that 
	$ \H \subseteq W^{1,2}( (0,1), \R ) $
	is an orthogonal system
	ensures that for every 
	$ I \in \mathcal{P}_0(\H) $,
	$ v \in W^{1,2}( (0,1), \R ) $
	it holds that
	\begin{equation} 
	\label{eq:Projection0}
	\| P_I v \|_{ W^{1, 2 } ( (0,1), 
		\R ) }
	\leq 
	\| v \|_{ W^{1, 2 } ( (0,1), 
		\R ) } .
	\end{equation} 
	The fact that
		for every
	$ I \in \mathcal{P}_0(\H) $, 
	$ v \in H $
	it holds that
	$ \| P_I v \|_H \leq \| v \|_H $,
	%
	%
	\eqref{eq:Equal spaces},
	\eqref{eq:Equal norms},
	and, e.g., 
	Lunardi~\cite[Theorem~1.6]{Lunardi2018}
	therefore prove that for every
	$ I \in \mathcal{P}_0(\H) $,
	$ v \in W^{2 \alpha,2}( (0,1), \R) $
	it holds that
	\begin{equation} 
	\| P_I v \|_{ W^{2 \alpha,2}( (0,1), \R) }
	\leq
	\| v \|_{ W^{2 \alpha,2}( (0,1), \R) } 
	.
	\end{equation} 
	%
	%
	%
	%
	Combining~\eqref{eq:SOmeEst},
	\eqref{eq:Est},  
	and
	item~\eqref{item:Inclusion} of
	Lemma~\ref{lemma:NormEquivalency}
	%
	hence
	implies that 
	there exists $ C \in (0, \infty) $
	such that
	for every
	$ I \in \mathcal{P}_0(\H) $,
	$ v \in H_{ \alpha + ( \nicefrac{1}{2} ) } $
	it holds that 
	\begin{equation}
	\begin{split}
	%
	\| P_I F( v ) \|_{ H_\alpha }   
	&\leq
C
	\Big( 
	\sup\nolimits_{ u \in H_{\alpha} \backslash \{0\} }
	\tfrac{ \| u \|_{H_\alpha } }
	{ \| u \|_{ W^{2\alpha,2}((0,1), \R ) } }
	\Big) 
	\Big(
	\sup\nolimits_{ u \in  W^{2 \alpha + 1, 2 } ( (0,1), 
		\R ) \backslash \{0 \}  }
	\tfrac{ \| u^2 \|_{ W^{2 \alpha + 1, 2 } ( (0,1), 
			\R ) }  }{ \| u \|_{ W^{2 \alpha + 1, 2 } ( (0,1), 
			\R ) }^2 }
	\Big)
	\\
	&
	\quad
	\cdot
	\Big( 
	\sup\nolimits_{ u \in H_{\alpha + ( \nicefrac{1}{2} ) } \backslash \{0\} }
	\tfrac{ \| u \|_{ W^{2\alpha + 1,2}((0,1), \R ) } }
	{ \| u \|_{H_{\alpha + ( \nicefrac{1}{2} ) } } }
	\Big)^2
	\| v \|_{ H_{ \alpha + ( \nicefrac{1}{2} ) } }^2
	<
	\infty 
	. 
	\end{split}
	\end{equation}
	The proof of 
	Lemma~\ref{lemma:PositiveNormF}
	is thus completed.
\end{proof}
\begin{lemma}
	\label{lemma:F_growth_estimates}
	Assume Setting~\ref{setting:Examples}.
	Then 
	\begin{enumerate}[(i)]
		\item \label{item:Growth1}
		it holds for every 
		$ v \in H_{ \nicefrac{1}{2} } $,
		$ \alpha \in ( \nicefrac{3}{4}, \infty) $ that
		\begin{equation}
		\begin{split} 
		& 
		\| F( v ) \|_{H_{-\alpha}} 
		\leq   
		|c_1|  
		| c_0 |^{ - \alpha }
		\bigg[ 
		\tfrac{1}{2}
			\sum_{n=1}^\infty 
			| \pi n |^{ 2 - 4 \alpha }
		\bigg]^{ \nicefrac{1}{2} } 
		\| v \|_H^2
		< \infty,
		\end{split} 
		\end{equation}
		\item \label{item:Growth2} 
		it holds for every 
		$ v \in H_{  \nicefrac{1}{2} } $, 
		$ \alpha \in (\nicefrac{1}{4}, \nicefrac{1}{2}] $
		that
		\begin{equation}
		\begin{split} 
		\| F(v) \|_{H_{-\alpha}}
		& 
		\leq 
		\tfrac{ |c_1| }{2}
		\Big( 
		\sup\nolimits_{ u  \in H_{ \nicefrac{1}{2} } \backslash \{0\} }
		\tfrac{ \| \partial (u^2) \|_{ H_{-\alpha} } }
		{ \| u^2 \|_{ W^{ 1 - 2 \alpha, 2 }( (0,1), \R ) } }
		\Big)
		\Big( 
		\sup\nolimits_{ u \in H_{ \nicefrac{1}{2} } \backslash \{0\} }
		\tfrac{\| u^2 \|_{ { W^{ 1 - 2 \alpha, 2 }( (0,1), \R ) } } }{
			\| u \|_{ W^{ \nicefrac{2(1 -  \alpha)}{3}, 2  }( (0,1), \R )}^2	
		}
		\Big)
		\\
		&
		\quad
		\cdot
		\Big(  
		\sup\nolimits_{ u \in H_{ \nicefrac{1}{2} } \backslash \{ 0 \} }
		\tfrac{ 
			\| u \|_{ W^{ \nicefrac{2(1 -  \alpha)}{3}, 2  }( (0,1), \R )}^2
		}
		{	\| u \|_{ H_{ \nicefrac{ (1 - \alpha) }{3} } }^2
		}
		\Big)
		\| v \|_{ H_{ \nicefrac{ (1 - \alpha) }{3} } }^2
		< \infty,
		%
		\end{split}
		\end{equation}
		and
		\item \label{item:Growth4}
		it holds for every
		$ v \in H_{ \nicefrac{1}{2} } $
		that
		\begin{equation}
		\| F(v) \|_H 
		\leq
		\tfrac{ | c_1 | }{ \sqrt{3} \, c_0 }
		\| v \|_{ H_{ \nicefrac{1}{2} } }^2
		.
		\end{equation} 
	\end{enumerate} 
\end{lemma}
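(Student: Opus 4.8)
The plan is to treat the three items separately, each time starting from the identity $F(v)=\tfrac{c_1}{2}\,\partial(v^2)$ for $v\in H_{\nicefrac{1}{2}}$ supplied by item~\eqref{item:001} of Lemma~\ref{lemma:local_lip}, which in particular guarantees $v^2\in W^{1,2}((0,1),\R)$. For item~\eqref{item:Growth1} I would expand $F(v)$ along the eigenbasis $\H=\{e_n\colon n\in\N\}$: since $e_n\in W_0^{1,2}((0,1),\R)$, the integration-by-parts formula of Lemma~\ref{lemma:int_parts} (with $u=e_n$, $v=v^2$) rewrites $\langle e_n,F(v)\rangle_H=-\tfrac{c_1}{2}\langle\partial e_n,v^2\rangle_H$; the explicit form $\partial e_n=[(\sqrt{2}\,\pi n\cos(n\pi x))_{x\in(0,1)}]_{\lambda,\B(\R)}$ gives $\|\partial e_n\|_{L^\infty(\lambda;\R)}=\sqrt{2}\,\pi n$, so H\"older's inequality together with $\|v^2\|_{L^1(\lambda;\R)}=\|v\|_H^2$ yields $|\langle e_n,F(v)\rangle_H|\le\tfrac{|c_1|}{2}\sqrt{2}\,\pi n\,\|v\|_H^2$. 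Substituting this into $\|F(v)\|_{H_{-\alpha}}^2=\sum_{n\in\N}|\values_{e_n}|^{-2\alpha}\,|\langle e_n,F(v)\rangle_H|^2$ and inserting $\values_{e_n}=-c_0\pi^2n^2$ produces the series $\tfrac{c_1^2}{2}|c_0|^{-2\alpha}\|v\|_H^4\sum_{n\in\N}|\pi n|^{2-4\alpha}$, which converges because $\alpha>\nicefrac{3}{4}$ forces $2-4\alpha<-1$; taking square roots gives the claimed bound.

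For item~\eqref{item:Growth4} I would estimate $\|F(v)\|_H\le|c_1|\,\|v\|_{L^\infty(\lambda;\R)}\,\|\partial v\|_H$ using $F(v)=c_1\,v\,\partial v$, and then substitute the identity $\|\partial v\|_H=|c_0|^{-\nicefrac{1}{2}}\|v\|_{H_{\nicefrac{1}{2}}}$ from item~\eqref{item:norm_estimate} of Lemma~\ref{lemma:local_lipA} and the bound $\|v\|_{L^\infty(\lambda;\R)}\le|3c_0|^{-\nicefrac{1}{2}}\|v\|_{H_{\nicefrac{1}{2}}}$ from item~\eqref{item:norm_estimate_infty} of the same lemma; multiplying the two gives exactly $\tfrac{|c_1|}{\sqrt3\,c_0}\|v\|_{H_{\nicefrac{1}{2}}}^2$.

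Item~\eqref{item:Growth2} is the technical heart. Here $\|F(v)\|_{H_{-\alpha}}=\tfrac{|c_1|}{2}\|\partial(v^2)\|_{H_{-\alpha}}$, and I would insert the telescoping product of the three displayed suprema multiplied by $\|v\|_{H_{(1-\alpha)/3}}^2$ (the case $v=0$ being trivial); it then remains to show each factor is finite. The first supremum is finite by Lemma~\ref{lemma:DerivativeEstimate} applied with $\alpha=\alpha\in[0,\nicefrac{1}{2}]$ (using that $v^2\in W^{1,2}((0,1),\R)$). The second is finite by the multiplication estimate of Lemma~\ref{lemma:Sobolev_multiplication} applied with $s=1-2\alpha$, $q=r=\tfrac{2(1-\alpha)}{3}$, and $f=g=v$, where one checks that the hypotheses $q,r\ge s$ and $q+r-s=\tfrac{1+2\alpha}{3}>\tfrac12$ are both equivalent to $\alpha>\nicefrac14$ while $s\ge0$ uses $\alpha\le\nicefrac12$. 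The third is finite by the continuous embedding $H_{(1-\alpha)/3}\subseteq W^{2(1-\alpha)/3,2}((0,1),\R)$ from item~\eqref{item:Inclusion} of Lemma~\ref{lemma:NormEquivalency} with $s=\tfrac{1-\alpha}{3}\in[0,1]$. I would also record the chain $v\in H_{\nicefrac{1}{2}}\subseteq H_{(1-\alpha)/3}\subseteq W^{2(1-\alpha)/3,2}((0,1),\R)$ together with $F(v)\in H\subseteq H_{-\alpha}$ so that every norm occurring is finite for $v\in H_{\nicefrac{1}{2}}$. The only real obstacle is the bookkeeping of exponents in item~\eqref{item:Growth2}: one must verify that the arithmetic always lands inside the admissible ranges of the three cited lemmas, in particular at the endpoint $\alpha=\nicefrac12$ (where $s=1-2\alpha=0$) and in light of the exclusion of $\alpha=\nicefrac14$.
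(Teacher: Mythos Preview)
Your proof is correct and, for items~\eqref{item:Growth2} and~\eqref{item:Growth4}, follows the paper's argument essentially verbatim (Lemma~\ref{lemma:DerivativeEstimate}, Lemma~\ref{lemma:Sobolev_multiplication}, and item~\eqref{item:Inclusion} of Lemma~\ref{lemma:NormEquivalency} for the former; items~\eqref{item:norm_estimate} and~\eqref{item:norm_estimate_infty} of Lemma~\ref{lemma:local_lipA} for the latter). For item~\eqref{item:Growth1} you take a slightly different route: the paper uses the duality characterization $\|F(v)\|_{H_{-\alpha}}=\sup_{u\in H_\alpha\setminus\{0\}}\tfrac{|\langle F(v),u\rangle_H|}{\|u\|_{H_\alpha}}$, integrates by parts onto $u$, and then invokes Lemma~\ref{lemma:InfEstimate} to bound $\|\partial u\|_{L^\infty(\lambda;\R)}$ by $\|u\|_{H_\alpha}$; you instead use the Parseval representation $\|F(v)\|_{H_{-\alpha}}^2=\sum_n|\values_{e_n}|^{-2\alpha}|\langle e_n,F(v)\rangle_H|^2$ (valid since $F(v)\in H$) and integrate by parts against each $e_n$ directly. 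The two computations are rearrangements of the same estimate and yield the identical constant---your approach simply bypasses the packaging of Lemma~\ref{lemma:InfEstimate}.
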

\begin{proof}[Proof of Lemma~\ref{lemma:F_growth_estimates}]
	Note that items~\eqref{item:01 A} 
	and~\eqref{item:01 B}
	of
	Lemma~\ref{lemma:local_lipA}
	ensures that
	\begin{equation}
	\label{eq:EqualSpaces}
	 H_{ \nicefrac{1}{2} } 
	 =
	 W_0^{1,2}((0,1), \R) 
	.
	\end{equation}
	Next observe that
	item~\eqref{item:001} of Lemma~\ref{lemma:local_lip} 
	shows that for every 
	$ v \in H_{ \nicefrac{1}{2} } $
	it holds that
	$ v^2 \in W^{1,2}((0,1), \R ) $.
	This,
	\eqref{eq:EqualSpaces},
	and
	Lemma~\ref{lemma:int_parts}
	(with
	$ u = u $,
	$ v = v^2 $
	for
	$ u, v \in H_{\nicefrac{1}{2}} $
	in the notation of 
	Lemma~\ref{lemma:int_parts})  
	ensure that for every
	$ u, v \in H_{\nicefrac{1}{2}} $
	it holds that
	$ \langle \partial ( v^2 ), u \rangle_H
	=
	-
	\langle v^2, \partial u \rangle_H $.
	Item~\eqref{item:001} of Lemma~\ref{lemma:local_lip} 
	and
	Lemma~\ref{lemma:InfEstimate}
	(with
	$ \alpha = \alpha - \frac{1}{2} $
	for 
	$ \alpha \in ( \frac{3}{4}, \infty) $
	in the notation of Lemma~\ref{lemma:InfEstimate})
	therefore
	prove that 
	for every 
	$ v \in H_{  \nicefrac{1}{2} } $, 
	$ \alpha \in ( \frac{3}{4}, \infty ) $
	it holds
	that
	\begin{equation}
	\begin{split}
	2
	\| F(v) \|_{H_{-\alpha}}
	& = 
	\| c_1 \partial (v^2) \|_{H_{-\alpha}}
	= 
	|c_1|
	\sup_{ u \in 
		  H_\alpha \backslash \{0\}  }
	\frac{ | \langle 
		\partial ( v^2 ), u \rangle_H |}{ \| u \|_{H_{ \alpha } } }
	\\
	&
	=
	|c_1|
	\sup_{ u \in 
			  H_\alpha \backslash \{0\}  }
	\frac{ | \langle v^2, \partial u \rangle_H |}{ \| u \|_{
			H_\alpha } }
	\\
	&
	\leq
	|c_1|
	\sup_{ u \in 
		  H_\alpha \backslash \{0\}   }
	\frac{ \| v^2 \|_{L^1( \lambda; \R)} 
		\| \partial u \|_{ L^\infty( \lambda; \R)}
	}{ \| u \|_{
			H_\alpha } }
	\\
	&
	\leq
|c_1|
	| c_0 |^{ - \alpha }
	\| v \|_H^2 
	\sqrt{2
		\sum_{n=1}^\infty 
		| \pi n |^{ 2 - 4 \alpha }
	}
	<
	\infty
	.
	\end{split}
	\end{equation}
	This establishes
	item~\eqref{item:Growth1}.
	Next note that 
	item~\eqref{item:001} of Lemma~\ref{lemma:local_lip}, 
	Lemma~\ref{lemma:DerivativeEstimate}
	(with
	$ \alpha = \alpha $
	for
	$ \alpha \in [0, \nicefrac{1}{2}] $
	in the notation of Lemma~\ref{lemma:DerivativeEstimate}),
	and
	Lemma~\ref{lemma:Sobolev_multiplication}
	(with
	$ s = 1 - 2 \alpha $,
	$ q = \nicefrac{ 2 ( 1 - \alpha ) }{ 3 } $,
	$ r = \nicefrac{ 2 ( 1 - \alpha ) }{ 3 } $
	for $ \alpha \in ( \nicefrac{1}{4}, \nicefrac{1}{2} ] $
	in the notation of Lemma~\ref{lemma:Sobolev_multiplication})  
%
%
%
%
%
%
%
%
	show that for every
	$ v \in H_{ \nicefrac{1}{2} } $,    
	$ \alpha \in ( \nicefrac{1}{4}, \nicefrac{1}{2} ] $
	it holds that 
	\begin{equation}
	\begin{split} 
	&
	2
	\| F(v) \|_{H_{-\alpha}} 
	= 
	|c_1|
	\| \partial (v^2) \|_{H_{-\alpha}}
	\leq
	|c_1|
	\bigg( 
	\sup_{ u  \in H_{ \nicefrac{1}{2} } \backslash \{0\} }
	\tfrac{ \| \partial (u^2) \|_{ H_{-\alpha} } }
	{ \| u^2 \|_{ W^{ 1 - 2 \alpha, 2 }( (0,1), \R ) } }
	\bigg)
	\| v^2 \|_{ W^{ 1 - 2 \alpha, 2 }( (0,1), \R ) }
	\\
	& 
	\leq
	|c_1|
	\bigg( 
	\sup_{ u  \in H_{ \nicefrac{1}{2} } \backslash \{0\} }
	\tfrac{ \| \partial (u^2) \|_{ H_{-\alpha} } }
	{ \| u^2 \|_{ W^{ 1 - 2 \alpha, 2 }( (0,1), \R ) } }
	\bigg)
	\bigg( 
	\sup_{ u \in H_{ \nicefrac{1}{2} } \backslash \{0\} }
	\tfrac{\| u^2 \|_{ { W^{ 1 - 2 \alpha, 2 }( (0,1), \R ) } } }{
		\| u \|_{ W^{ \nicefrac{2(1 -  \alpha)}{3}, 2  }( (0,1), \R )}^2	
	}
	\bigg)
	\| v \|_{ W^{ \nicefrac{2(1 -  \alpha)}{3}, 2  }( (0,1), \R )}^2
	< \infty.
	\end{split}
	\end{equation}
	Item~\eqref{item:Inclusion} of
	Lemma~\ref{lemma:NormEquivalency}
	hence implies item~\eqref{item:Growth2}.
	Furthermore, observe that 
	items~\eqref{item:norm_estimate}
	and~\eqref{item:norm_estimate_infty} 
	of Lemma~\ref{lemma:local_lipA}
	imply that for every
	$ v \in H_{ \nicefrac{1}{2} } $
	it holds that
	\begin{equation}
	\| F(v) \|_H 
	=
	|c_1|
	\| v \partial v \|_H
	\leq
	|c_1|
	\| v \|_{ L^{ \infty} ( \lambda; \R) }
	\| \partial v \|_H
	\leq
	\tfrac{ 
	|c_1|
}
{
\sqrt{3} \, c_0 
} 
	\| v \|_{ H_{ \nicefrac{1}{2} } }^2
	.
	\end{equation} 
	This establishes item~\eqref{item:Growth4}. 
	The proof of Lemma~\ref{lemma:F_growth_estimates} is thus completed.
\end{proof}
\begin{corollary}
	\label{corollary:SatisfiedF}
	%
Assume Setting~\ref{setting:Examples}
	and let  
	$ \alpha_1 \in ( \nicefrac{3}{4}, \infty) $,
	$ \alpha_2 \in ( \nicefrac{1}{4}, \nicefrac{1}{2} ] $.
	Then
	\begin{equation}
	\label{eq:finite fun}
	\Big[ 
	\sup\nolimits_{ v \in H_{ \nicefrac{1}{2} } \backslash \{0\} }
	\tfrac{ \| F(v) \|_{H  } }
	{ \| v \|_{H_{ \nicefrac{1}{2} } }^2 }
	\Big]
	+
	\Big[
	\sup\nolimits_{ v \in H_{ 
			\nicefrac{1}{2}   } \backslash \{ 0 \} }
	\tfrac{ \| F(v) \|_{H_{ - \alpha_2 } } }
	{ \| v \|_{ H_{  \nicefrac{ ( 1 - \alpha_2 ) }{3} } }^2 } 
	\Big]
	+
	\Big[ 
	\sup\nolimits_{ v \in H_{ \nicefrac{1}{2} } \backslash
		\{0\} } 
	\tfrac{ \| F(v) \|_{H_{-\alpha_1} } }{ \| v \|_H^2 }
	\Big]
	< \infty 
	.
	\end{equation}  
\end{corollary}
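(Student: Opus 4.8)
The plan is to derive~\eqref{eq:finite fun} directly from the three growth estimates assembled in Lemma~\ref{lemma:F_growth_estimates}, treating the three summands separately.

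For the first summand I would invoke item~\eqref{item:Growth4} of Lemma~\ref{lemma:F_growth_estimates}, which gives for every $v \in H_{\nicefrac12}$ the bound $\|F(v)\|_H \leq \tfrac{|c_1|}{\sqrt3\,c_0}\|v\|_{H_{\nicefrac12}}^2$; dividing by $\|v\|_{H_{\nicefrac12}}^2$ for $v \neq 0$ shows that the first supremum is bounded by $\tfrac{|c_1|}{\sqrt3\,c_0} < \infty$. For the middle summand I would apply item~\eqref{item:Growth2} of Lemma~\ref{lemma:F_growth_estimates} with $\alpha = \alpha_2$ (the hypothesis $\alpha_2 \in (\nicefrac14,\nicefrac12]$ is precisely the admissibility condition there), which produces a finite constant bounding $\|F(v)\|_{H_{-\alpha_2}}$ by a multiple of $\|v\|_{H_{\nicefrac{(1-\alpha_2)}{3}}}^2$; here one should note that the three suprema appearing inside item~\eqref{item:Growth2} are finite by Lemma~\ref{lemma:DerivativeEstimate}, Lemma~\ref{lemma:Sobolev_multiplication}, and item~\eqref{item:Inclusion} of Lemma~\ref{lemma:NormEquivalency}. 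Dividing by $\|v\|_{H_{\nicefrac{(1-\alpha_2)}{3}}}^2$ then yields finiteness of the second supremum.

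For the last summand I would use item~\eqref{item:Growth1} of Lemma~\ref{lemma:F_growth_estimates} with $\alpha = \alpha_1$. Since $\alpha_1 > \nicefrac34$ we have $2 - 4\alpha_1 < -1$, so the series $\sum_{n=1}^\infty |\pi n|^{2-4\alpha_1}$ converges, and the estimate $\|F(v)\|_{H_{-\alpha_1}} \leq |c_1|\,|c_0|^{-\alpha_1}\bigl[\tfrac12\sum_{n=1}^\infty |\pi n|^{2-4\alpha_1}\bigr]^{\nicefrac12}\|v\|_H^2$ shows that the third supremum is finite. Adding the three finite bounds establishes~\eqref{eq:finite fun}.

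I do not expect a genuine obstacle in this argument: Corollary~\ref{corollary:SatisfiedF} is essentially a repackaging of Lemma~\ref{lemma:F_growth_estimates}, and the only points that need a moment of care are (a) verifying that the ranges $\alpha_1 \in (\nicefrac34,\infty)$ and $\alpha_2 \in (\nicefrac14,\nicefrac12]$ match the hypotheses of items~\eqref{item:Growth1} and~\eqref{item:Growth2} of that lemma, and (b) recording that the series $\sum_{n=1}^\infty |\pi n|^{2-4\alpha_1}$ converges so that the constant in the first bound is indeed finite.
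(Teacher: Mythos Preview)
Your proposal is correct and follows exactly the same route as the paper: the three summands are handled by items~\eqref{item:Growth4}, \eqref{item:Growth2}, and~\eqref{item:Growth1} of Lemma~\ref{lemma:F_growth_estimates}, respectively, with the same parameter choices $\alpha=\alpha_2$ and $\alpha=\alpha_1$. The extra remarks you make (finiteness of the constants in item~\eqref{item:Growth2} via Lemmas~\ref{lemma:DerivativeEstimate}, \ref{lemma:Sobolev_multiplication}, and~\ref{lemma:NormEquivalency}, and convergence of $\sum_{n\ge 1}|\pi n|^{2-4\alpha_1}$) are already built into the statement of Lemma~\ref{lemma:F_growth_estimates}, so they are not strictly needed here but do no harm.
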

\begin{proof}[Proof of Corollary~\ref{corollary:SatisfiedF}]
	Observe that
	item~\eqref{item:Growth1} 
	of
	Lemma~\ref{lemma:F_growth_estimates}   
	(with 
	$ \alpha = \alpha_1 $ 
	in the notation of
	item~\eqref{item:Growth1} 
	of
	Lemma~\ref{lemma:F_growth_estimates})
	implies that
	\begin{equation}
	\begin{split} 
	\label{eq:First term}
	& 
	\Big[ 
	\sup\nolimits_{ v \in H_{ \nicefrac{1}{2} } \backslash
		\{0\} } 
	\tfrac{ \| F(v) \|_{H_{-\alpha_1} } }{ \| v \|_H^2 }
	\Big]
	< \infty
	.
	\end{split} 
	\end{equation}
	Next note that
	item~\eqref{item:Growth2} 
	of
	Lemma~\ref{lemma:F_growth_estimates}
	(with
	$ \alpha = \alpha_2 $
	in the notation of
	item~\eqref{item:Growth2} 
	of
	Lemma~\ref{lemma:F_growth_estimates})
	shows that 
	\begin{equation}
	\begin{split} 
	\label{eq:Second term} 
	\Big[
	\sup\nolimits_{ v \in H_{ 
			\nicefrac{1}{2}   } \backslash \{ 0 \} }
	\tfrac{ \| F(v) \|_{H_{ - \alpha_2 } } }
	{ \| v \|_{ H_{  \nicefrac{ ( 1 - \alpha_2 ) }{3} } }^2 } 
	\Big]
	< \infty
	.
	\end{split}
	\end{equation}
	Moreover, observe that
	item~\eqref{item:Growth4}
	Lemma~\ref{lemma:F_growth_estimates} 
	ensures that
	\begin{equation}
	\label{eq:Last term}
	\Big[ 
	\sup\nolimits_{ v \in H_{ \nicefrac{1}{2} } \backslash \{0\} }
	\tfrac{ \| F(v) \|_{H  } }
	{ \| v \|_{H_{ \nicefrac{1}{2} } }^2 }
	\Big]
	<
	\infty .
	\end{equation}
	Combining~\eqref{eq:First term}
	and~\eqref{eq:Second term} 
	therefore 
	establishes~\eqref{eq:finite fun}.
	The proof of Corollary~\ref{corollary:SatisfiedF}
	is thus completed. 
\end{proof}
\begin{lemma}
	\label{lemma:CrucialPropertiesBurgers} 
	Assume Setting~\ref{setting:Examples}.
	Then it holds for every
	$ x \in H_{ \nicefrac{1}{2} } $ that
	$ \langle x, F(x) \rangle_H = 0 $.
\end{lemma}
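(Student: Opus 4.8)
The plan is to reduce the claim to the integration-by-parts identity of Lemma~\ref{lemma:int_parts} together with the product rule $u\partial u = \tfrac12\partial(u^2)$ from item~\eqref{item:001} of Lemma~\ref{lemma:local_lip}. First I would fix $x \in H_{\nicefrac{1}{2}}$ and recall from items~\eqref{item:01 A}--\eqref{item:01b} of Lemma~\ref{lemma:local_lipA} that $H_{\nicefrac{1}{2}} = W_0^{1,2}((0,1),\R)$ and that $x$ has a bounded representative; in particular $x \in W_0^{1,2}((0,1),\R) \cap L^\infty(\lambda;\R)$. Item~\eqref{item:001} of Lemma~\ref{lemma:local_lip} then yields $x^2 \in W^{1,2}((0,1),\R)$, $x\partial x \in H$, and $F(x) = c_1\, x\partial x = \tfrac{c_1}{2}\partial(x^2)$, so that $\langle x, F(x)\rangle_H = \tfrac{c_1}{2}\langle x, \partial(x^2)\rangle_H$.

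Next I would apply Lemma~\ref{lemma:int_parts} with $u = x \in W_0^{1,2}((0,1),\R)$ and $v = x^2 \in W^{1,2}((0,1),\R)$ to obtain $\langle \partial x, x^2\rangle_H = -\langle x, \partial(x^2)\rangle_H$. On the other hand, since $x$ admits a bounded representative and $x\partial x \in H$, the functions $\partial x \cdot x^2$ and $x\cdot(x\partial x)$ agree $\lambda$-a.e.\ (both equal $x^2\partial x$) and are $\lambda$-integrable, whence $\langle \partial x, x^2\rangle_H = \langle x, x\partial x\rangle_H$; combining this with $\partial(x^2) = 2x\partial x$ once more gives $\langle x, \partial(x^2)\rangle_H = 2\langle x, x\partial x\rangle_H$. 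Putting the last three relations together produces $\langle x, x\partial x\rangle_H = -2\langle x, x\partial x\rangle_H$, i.e.\ $3\langle x, x\partial x\rangle_H = 0$, hence $\langle x, F(x)\rangle_H = c_1\langle x, x\partial x\rangle_H = 0$, which is the assertion.

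Nothing here is genuinely hard; the only point requiring a little care is the rearrangement $\langle \partial x, x^2\rangle_H = \langle x, x\partial x\rangle_H$, which must be justified at the level of representatives in $L^2(\lambda;\R)$ rather than by a formal manipulation, using the boundedness of $x$ (item~\eqref{item:01b} of Lemma~\ref{lemma:local_lipA}), the fact that $x\partial x \in H$ (item~\eqref{item:001} of Lemma~\ref{lemma:local_lip}), and associativity of pointwise multiplication, so that both inner products are represented by the same integrable function $x^2\partial x$. Once this is in place the computation is two lines.
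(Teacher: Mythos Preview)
Your proof is correct and follows essentially the same route as the paper: both invoke $H_{\nicefrac{1}{2}}=W_0^{1,2}((0,1),\R)$ and item~\eqref{item:001} of Lemma~\ref{lemma:local_lip} to write $F(x)=\tfrac{c_1}{2}\partial(x^2)$, then apply Lemma~\ref{lemma:int_parts} with $u=x$, $v=x^2$ together with the pointwise identity $\langle\partial x, x^2\rangle_H=\langle x, x\partial x\rangle_H$ to conclude $3\langle x,F(x)\rangle_H=0$. Your write-up is in fact slightly more careful than the paper's in justifying the rearrangement $\langle\partial x, x^2\rangle_H=\langle x, x\partial x\rangle_H$ via boundedness of $x$.
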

\begin{proof}[Proof of Lemma~\ref{lemma:CrucialPropertiesBurgers}]
	Note that
	items~\eqref{item:01 A} 
	and~\eqref{item:01 B} 
	of Lemma~\ref{lemma:local_lipA},
	item~\eqref{item:001} of
	Lemma~\ref{lemma:local_lip}, 
	%
	and 
	Lemma~\ref{lemma:int_parts}
	(with
	$ u = x $,
	$ v = x^2 $
	for 
	$ x \in H_{ \nicefrac{1}{2} } $
	in the notation of
	Lemma~\ref{lemma:int_parts}) 
	ensure that for every
	$ x \in H_{\nicefrac{1}{2}} = W^{1,2}_0( (0,1), \R) $ 
	it holds that
	$ x^2 \in W^{1,2}( (0,1), \R) $ 
	and
	\begin{equation}
	\begin{split}
	2 \langle x, F(x) \rangle_H
	&
	= 
	2
	c_1
	\langle x, x \partial x \rangle_H
	=
	c_1
	\langle x, \partial ( x^2 ) \rangle_H 
	\\&= 
	- 
	c_1
	\langle \partial x, x^2 \rangle_H  
	=
	-
	c_1
	\langle x \partial x, x \rangle_H 
	= 
	-
	\langle F(x), x \rangle_H
	.
	\end{split}
	\end{equation} 
	The proof of Lemma~\ref{lemma:CrucialPropertiesBurgers} 
	is thus completed.
\end{proof}
\begin{corollary}
	\label{corollary:AppropriateCoercivityEstimate}
	Assume Setting~\ref{setting:Examples}
	and let $ \iota \in (\nicefrac{1}{4}, \infty) $,
	$ v \in H_{ \nicefrac{1}{2} } $,
	$ w \in H_{ \max \{ \nicefrac{1}{2}, \iota \} } $.
	Then it holds that
	\begin{equation}
	\begin{split}
	&
	\langle v, F( v + w ) \rangle_H 
	\\
	&\leq
	\tfrac{ 3 | c_1 |^2 }{ 8  | c_0 |  } 
	\bigg[   
	\sup_{ u \in H_\iota \backslash \{ 0 \} }
	\tfrac{ \| u \|_{ L^\infty(\lambda; \R) } }{ \| u \|_{ H_\iota } }
	+
	\sup_{ u \in H_\iota \backslash \{ 0 \} }
	\tfrac{ \| u \|_{ L^4(\lambda; \R) }^2 }{ \| u \|_{ H_\iota }^2 } 
	\bigg]^2
	%
	\big(
	\| v \|_H^2
	+ 
	\| w \|_{ H_\iota }^2
	\big) 
	\| w \|_{ H_\iota }^2
	+  
	\| v \|_{H_{\nicefrac{1}{2}}}^2
	<
	\infty 
	.
	\end{split}
	\end{equation}
\end{corollary}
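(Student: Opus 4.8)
The plan is first to derive, by integration by parts, a clean expression for $\langle v, F(v+w)\rangle_H$ that isolates the two ``mixed'' terms in $v$ and $w$, and then to estimate these via the Cauchy--Schwarz inequality, the embeddings $H_\iota \subseteq L^\infty(\lambda;\R)$ and $H_\iota \subseteq L^4(\lambda;\R)$, and a weighted Young inequality. To begin, I would note that $w \in H_{ \max\{ \nicefrac{1}{2}, \iota \} } \subseteq H_{ \nicefrac{1}{2} }$, so $v + w \in H_{ \nicefrac{1}{2} }$ and hence, by item~\eqref{item:001} of Lemma~\ref{lemma:local_lip}, $F(v+w) = \tfrac{c_1}{2}\partial((v+w)^2)$ with $(v+w)^2 \in W^{1,2}((0,1),\R)$. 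Since $H_{ \nicefrac{1}{2} } = W_0^{1,2}((0,1),\R)$ by items~\eqref{item:01 A} and~\eqref{item:01 B} of Lemma~\ref{lemma:local_lipA}, Lemma~\ref{lemma:int_parts} applies and yields $\langle v, F(v+w)\rangle_H = -\tfrac{c_1}{2}\langle \partial v, (v+w)^2 \rangle_H$. Expanding $(v+w)^2 = v^2 + 2 v w + w^2$, using $\partial(v^2) = 2 v\, \partial v$ (item~\eqref{item:001} of Lemma~\ref{lemma:local_lip}), and applying Lemma~\ref{lemma:int_parts} once more to see that $\langle \partial v, v^2 \rangle_H = -\langle v, \partial(v^2) \rangle_H = -2\langle \partial v, v^2 \rangle_H$, hence $\langle \partial v, v^2 \rangle_H = 0$ (equivalently, this is Lemma~\ref{lemma:CrucialPropertiesBurgers}), I would arrive at
\begin{equation}
\langle v, F(v+w) \rangle_H
=
-\,c_1 \langle v\, \partial v, w \rangle_H
-\tfrac{c_1}{2} \langle \partial v, w^2 \rangle_H .
\end{equation}

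For the estimate I would treat the two terms separately. The Cauchy--Schwarz inequality gives $| \langle v\, \partial v, w \rangle_H | \le \| w \|_{L^\infty(\lambda;\R)} \| v \|_H \| \partial v \|_H$ and $| \langle \partial v, w^2 \rangle_H | \le \| \partial v \|_H \| w^2 \|_H = \| \partial v \|_H \| w \|_{L^4(\lambda;\R)}^2$, and item~\eqref{item:norm_estimate} of Lemma~\ref{lemma:local_lipA} turns $\| \partial v \|_H$ into $| c_0 |^{ - \nicefrac{1}{2} } \| v \|_{ H_{ \nicefrac{1}{2} } }$. Next I would bound $\| w \|_{L^\infty(\lambda;\R)}$ and $\| w \|_{L^4(\lambda;\R)}^2$ by the respective suprema $P := \sup_{ u \in H_\iota \backslash \{0\} } ( \| u \|_{L^\infty(\lambda;\R)} / \| u \|_{H_\iota} )$ and $Q := \sup_{ u \in H_\iota \backslash \{0\} } ( \| u \|_{L^4(\lambda;\R)}^2 / \| u \|_{H_\iota}^2 )$ times $\| w \|_{H_\iota}$ and $\| w \|_{H_\iota}^2$, respectively; here $P, Q < \infty$ because $\iota > \nicefrac{1}{4}$ implies $H_\iota \subseteq W^{2\iota, 2}((0,1),\R)$ continuously (item~\eqref{item:Inclusion} of Lemma~\ref{lemma:NormEquivalency}) and $W^{2\iota, 2}((0,1),\R)$ embeds continuously into $L^\infty(\lambda;\R)$ (hence, the interval being bounded, into $L^4(\lambda;\R)$) since $2\iota > \nicefrac{1}{2}$. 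Finally I would apply Young's inequality $x y \le \varepsilon x^2 + \tfrac{1}{4\varepsilon} y^2$ to each of the two terms with $x = \| \partial v \|_H$ and weights $\varepsilon_1 = \tfrac{2 c_0}{3}$ and $\varepsilon_2 = \tfrac{c_0}{3}$; because $\varepsilon_1 + \varepsilon_2 = c_0 = | c_0 |$, the $\| \partial v \|_H^2$-contributions combine to exactly $c_0 \| \partial v \|_H^2 = \| v \|_{ H_{ \nicefrac{1}{2} } }^2$, while the remaining terms are $\tfrac{ 3 | c_1 |^2 }{ 8 | c_0 | } \| w \|_{L^\infty(\lambda;\R)}^2 \| v \|_H^2 + \tfrac{ 3 | c_1 |^2 }{ 16 | c_0 | } \| w \|_{L^4(\lambda;\R)}^4$. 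Bounding these by the suprema $P, Q$ and using the crude estimates $P^2 \le (P+Q)^2$ and $\tfrac{1}{2} Q^2 \le (P+Q)^2$ then collapses the whole bound into $\tfrac{ 3 | c_1 |^2 }{ 8 | c_0 | } ( P + Q )^2 ( \| v \|_H^2 + \| w \|_{H_\iota}^2 ) \| w \|_{H_\iota}^2 + \| v \|_{ H_{ \nicefrac{1}{2} } }^2$, which is the asserted inequality, and finiteness is immediate since all occurring norms are finite and $P, Q < \infty$.

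The argument is a chain of otherwise routine estimates; the two points that need care are the integration-by-parts bookkeeping --- each application of Lemma~\ref{lemma:int_parts} must be justified by checking that one factor lies in $W_0^{1,2}((0,1),\R)$ and the other in $W^{1,2}((0,1),\R)$, which is exactly where items~\eqref{item:001}, \eqref{item:01 A}, and~\eqref{item:01 B} of Lemmas~\ref{lemma:local_lip} and~\ref{lemma:local_lipA} enter --- and the precise choice of the Young weights $\tfrac{2 c_0}{3}, \tfrac{c_0}{3}$, which (in place of the more naive $\tfrac{c_0}{2}, \tfrac{c_0}{2}$) is what produces the stated constant $\tfrac{ 3 | c_1 |^2 }{ 8 | c_0 | }$, rather than a larger one, in front of the nonlinear term.
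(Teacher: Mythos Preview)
Your proof is correct and follows essentially the same approach as the paper: both arrive at the identity $\langle v, F(v+w)\rangle_H = -c_1\langle v\,\partial v, w\rangle_H - \tfrac{c_1}{2}\langle \partial v, w^2\rangle_H$, bound the two terms via H\"older/Cauchy--Schwarz together with the $H_\iota \hookrightarrow L^\infty, L^4$ embeddings, and finish with a weighted Young inequality that produces exactly the constant $\tfrac{3|c_1|^2}{8|c_0|}$. Your derivation of the identity (one integration by parts on $\partial((v+w)^2)$ followed by expansion) is in fact slightly more streamlined than the paper's term-by-term expansion of $F(v+w)$, but the two are equivalent.
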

\begin{proof}[Proof of Corollary~\ref{corollary:AppropriateCoercivityEstimate}] 
	Throughout this proof assume 
	w.l.o.g.\ that
	$ c_1 \neq 0 $
	and
	let
	$ C \in [0, \infty] $
	satisfy that
	\begin{equation} 
	C
	=
	\sup_{ u \in H_\iota \backslash \{ 0 \} }
	\tfrac{ \| u \|_{ L^\infty(\lambda; \R) } }{ \| u \|_{ H_\iota } }
	+
	\sup_{ u \in H_\iota \backslash \{ 0 \} }
	\tfrac{ \| u \|_{ L^4(\lambda; \R) }^2 }{ \| u \|_{ H_\iota }^2 } 
	%
.
	\end{equation} 
	Note that the Sobolev embedding theorem
	and 
	item~\eqref{item:Inclusion} 
	of
	Lemma~\ref{lemma:NormEquivalency} 
	ensure that $ C \in (0, \infty) $.
	Next observe that
	Lemma~\ref{lemma:CrucialPropertiesBurgers}, 
	item~\eqref{item:001} of Lemma~\ref{lemma:local_lip},
	and
	Lemma~\ref{lemma:int_parts}
	(with
	$ u = v $,
	$ v = w^2 $
	in the notation of Lemma~\ref{lemma:int_parts})
	ensure that  
	\begin{equation}
	\begin{split}
	\langle v, F( v + w ) \rangle_H 
	&
	= 
	c_1
	\langle v, ( v + w ) ( \partial v + \partial w ) \rangle_H 
	\\
	&
	=
	c_1
	\langle v, v \partial v \rangle_H
	+
	c_1
	\langle v, w \partial v \rangle_H 
	+
	c_1
	\langle v, v \partial w \rangle_H 
	+
	c_1
	\langle v, w \partial w \rangle_H 
	\\
	&
	=
	c_1
	\langle v, w \partial v \rangle_H 
	+
	c_1
	\langle v, v \partial w \rangle_H 
	+
	\tfrac{ 
		c_1
	}{2}
	\langle v,  \partial ( w ^2 ) \rangle_H 
	\\
	&
	=
	c_1
	\langle v, w \partial v \rangle_H 
	+
	c_1
	\langle v^2, \partial w \rangle_H 
	-
	\tfrac{c_1}{2}
	\langle \partial v, w^2 \rangle_H 
	.
	\end{split} 
	\end{equation} 
	Lemma~\ref{lemma:int_parts}
	(with
	$ u = w $,
	$ v = v^2 $
	in the notation of Lemma~\ref{lemma:int_parts})
	and
	item~\eqref{item:001} of Lemma~\ref{lemma:local_lip}
	therefore imply that
	\begin{equation} 
	\begin{split} 
	\langle v, F( v + w ) \rangle_H
	&
	=
	c_1
	\langle v \partial v, w \rangle_H 
	-
	2
	c_1
	\langle v \partial v, w \rangle_H 
	-
	\tfrac{c_1}{2}
	\langle \partial v, w^2 \rangle_H
	\\
	&
	=
	-
	c_1
	\langle v \partial v, w \rangle_H 
	-
	\tfrac{c_1}{2}
	\langle \partial v, w^2 \rangle_H 
	\\
	&
	\leq 
	| c_1 |
	| \langle v \partial v, w \rangle_H |
	+
	\tfrac{|c_1|}{2}
	| \langle \partial v, w^2 \rangle_H |
	.
	\end{split} 
	\end{equation}
	H\"older's inequality
	and
	item~\eqref{item:norm_estimate} 
	of Lemma~\ref{lemma:local_lipA}
	hence prove that
	\begin{equation} 
	\begin{split}
	\langle v, F( v + w ) \rangle_H   
	&\leq
	\tfrac{| c_1 |}{2} 
	\big( 
	2
	\| v \|_H
	\| \partial v \|_H
	\| w \|_{ L^\infty(\lambda; \R) }
	+
	\| \partial v \|_H
	\| w \|_{ L^4(\lambda; \R)}^2
	\big) 
	\\
	&
	\leq 
	\tfrac{| c_1 |}{2 |c_0|^{ \nicefrac{1}{2} } } 
	\big( 
	2
	\| v \|_H
	\| v \|_{H_{\nicefrac{1}{2}}}
	\| w \|_{ L^\infty(\lambda; \R) }
	+
	\| v \|_{H_{\nicefrac{1}{2}}}
	\| w \|_{ L^4(\lambda; \R)}^2
	\big)
	\\
	&
	\leq
	\tfrac{| c_1 |  }{2 |c_0|^{  \nicefrac{1}{2} } } 
	%
	\bigg( 
	2
	\| v \|_H
	\| v \|_{H_{\nicefrac{1}{2}}}
	\Big[ 
	\sup\nolimits_{ u \in H_\iota \backslash \{ 0 \} }
	\tfrac{ \| u \|_{ L^\infty(\lambda; \R) } }{ \| u \|_{ H_\iota } }
	\Big]
	\| w \|_{ H_\iota }
	\\
	&
	\qquad\qquad\quad
	+
	\| v \|_{H_{\nicefrac{1}{2}}}
	\Big[ 
	\sup\nolimits_{ u \in H_\iota \backslash \{ 0 \} }
	\tfrac{ \| u \|_{ L^4(\lambda; \R) } }{ \| u \|_{ H_\iota } }
	\Big]^2
	\| w \|_{ H_\iota }^2
	\bigg) 
	\\
	&
	\leq
	\tfrac{| c_1 | C  }{2 |c_0|^{  \nicefrac{1}{2} } } 
	%
	\Big( 
	2
	\| v \|_H
	\| v \|_{H_{\nicefrac{1}{2}}}
	\| w \|_{ H_\iota }
	+
	\| v \|_{H_{\nicefrac{1}{2}}}
	\| w \|_{ H_\iota }^2
	\Big) 
	.
	\end{split}
	\end{equation} 
	The fact that
		for every
	$ x, y \in \R $, 
	$ \varepsilon \in (0, \infty) $
	it holds that
	$ 2 x y \leq \tfrac{x^2}{\varepsilon}  + \varepsilon y^2 $
	therefore 
	shows that
	\begin{equation} 
	\begin{split}
	&
	\langle v, F( v + w ) \rangle_H   
	\\
	&
	\leq 
	\tfrac{| c_1 | C }{2 |c_0|^{  \nicefrac{1}{2} } } 
	%
	\Big( 
	\big[ 
	\tfrac{ 3 | c_1 | C}{ 4  | c_0 |^{  \nicefrac{1}{2} }  }
	\| v \|_H^2
	\| w \|_{ H_\iota }^2
	+ 
	\tfrac{4 |c_0|^{ \nicefrac{1}{2} }}{ 3 | c_1 | C}
	\| v \|_{H_{\nicefrac{1}{2}}}^2
	\big]
	%
	%
	+
	\tfrac{1}{2}
	\big[ 
	\tfrac{4 |c_0|^{ \nicefrac{1}{2} }}{ 3 | c_1 | C}
	\| v \|_{H_{\nicefrac{1}{2}}}^2
	+
	\tfrac{ 3 | c_1 | C }{ 4  | c_0 |^{  \nicefrac{1}{2} }  } 
	\| w \|_{ H_\iota }^4
	\big] 
	\Big)
	\\
	&
	=    
	\tfrac{ 3 | c_1 |^2 C^2 }{ 8  | c_0 |  }
	\| v \|_{ H }^2
	\| w \|_{ H_\iota }^2
	+ 
	\tfrac{2 }{ 3 }
	\| v \|_{H_{\nicefrac{1}{2}}}^2
	+
	\tfrac{1}{ 3}
	\| v \|_{H_{\nicefrac{1}{2}}}^2
	+
	\tfrac{ 3 | c_1 |^2 C^2 }{ 16  | c_0 |  } 
	\| w \|_{ H_\iota }^4
	\\
	&
	=
	\tfrac{ 3 | c_1 |^2 C^2 }{ 8  | c_0 |  }
	\| v \|_H^2
	\| w \|_{ H_\iota }^2
	+
	\tfrac{ 3 | c_1 |^2 C^2 }{ 16  | c_0 |  } 
	\| w \|_{ H_\iota }^4
	+  
	\| v \|_{H_{\nicefrac{1}{2}}}^2
	\\
	&
	\leq
	\tfrac{ 3 | c_1 |^2 C^2 }{ 8  | c_0 |  } 
	\big(  
	\| v \|_H^2
	+ 
	\| w \|_{ H_\iota }^2
	\big)
	\| w \|_{ H_\iota }^2
	+  
	\| v \|_{H_{\nicefrac{1}{2}}}^2
	.
	\end{split} 
	\end{equation} 
	The proof of  Corollary~\ref{corollary:AppropriateCoercivityEstimate}
	is thus completed.
\end{proof} 
\section{Existence and uniqueness of mild solutions to stochastic Burgers equations}
\label{section:Existence} 

In this section  
we prove  
in
Theorem~\ref{theorem:existence_Burgers}
below
the unique
existence of suitably regular mild solutions
to stochastic Burgers equations
with additive trace class noise.
%
%
%
To do so, 
we first  
establish 
in
Lemmas~\ref{lemma:AprioriBound}--\ref{lemma:GalerkinRegularity}
(cf., e.g., Bl\"omker \& Jentzen~\cite[Lemma~5.5]{BloemkerJentzen2013}),
Lemma~\ref{lemma:ConvergenceSpeed}
(cf., e.g., Kloeden \& Neuenkirch~\cite[Lemma~2.1]{KloedenNeuenkirch2007}),
and
Lemma~\ref{lemma:PathwiseRates} 
(cf., e.g., Bl\"omker \& Jentzen~\cite[Lemma~4.3]{BloemkerJentzen2013})
a
few elementary and partially well-known auxiliary results.
Only for the sake of completeness we include in this section also a proof of Lemma~\ref{lemma:ConvergenceSpeed}.
Thereafter, we combine these
auxiliary results
with
the results 
from 
Subsection~\ref{subsection:Nonlinearity}
and
the abstract existence and uniqueness result
in Bl\"omker \& Jentzen~\cite[Theorem~3.1]{BloemkerJentzen2013}  
to establish
in
Theorem~\ref{theorem:existence_Burgers}
below
the main result of this article.
\begin{lemma}
	\label{lemma:AprioriBound}
	Assume Setting~\ref{setting:Examples},
	let
	$ T \in (0,\infty) $,    
	$ \iota \in (\nicefrac{1}{4}, \infty) $,
	$ \xi \in H $,
	let
	$ I \subseteq \H $
	be a finite set, 
	let 
	$ P \in L(H) $ 
	satisfy
	for every 
	$ v \in H $ 
	that
	$ Pv =\sum_{h \in I} \left< h, v \right>_H h $,
	and
	let	
	$ O, X \in \mathcal{C} ( [0,T], P(H) ) $
	satisfy for every
	$ t \in [0,T] $
	that 
	\begin{equation}
	X_t 
	= 
	e^{tA} P \xi 
	+
	\int_0^t e^{(t-s)A} P F( X_s ) \, ds 
	+  
	O_t 
	.
	\end{equation}
	Then it holds
	for every
	$ t \in [0,T] $ 
	that 
	\begin{equation} 
	\begin{split} 
	&
	\| X_t \|_H 
	\leq 
	\| O_t \|_H
	\\
	&
	+
	\bigg(
	\| \xi \|_H^2
	+ 
	\tfrac{ 3 | c_1 |^2 }{ 8  | c_0 |  } 
	\bigg[ 
	\sup_{ u \in H_\iota \backslash \{ 0 \} }
	\tfrac{ \| u \|_{ L^\infty(\lambda; \R) } }{ \| u \|_{ H_\iota } } 
	+ 
	\sup_{ u \in H_\iota \backslash \{ 0 \} }
	\tfrac{ \| u \|_{ L^4(\lambda; \R) }^2 }{ \| u \|_{ H_\iota }^2 } 
	\bigg]^2 
	\Big[
	1
	+
	\sup_{u \in [0,T] }
	\| O_u \|_{ H_\iota }^2 
	\Big]^2
	T
	\bigg)^{\frac{1}{2}}
	\\
	&
	\quad
	\cdot
	\exp\! 
	\bigg(
	\tfrac{ 3 | c_1 |^2 }{ 16  | c_0 |  } 
	\bigg[ 
	\sup_{ u \in H_\iota \backslash \{ 0 \} }
	\tfrac{ \| u \|_{ L^\infty(\lambda; \R) } }{ \| u \|_{ H_\iota } } 
	+ 
	\sup_{ u \in H_\iota \backslash \{ 0 \} }
	\tfrac{ \| u \|_{ L^4(\lambda; \R) }^2 }{ \| u \|_{ H_\iota }^2 } 
	\bigg]^2  
	\Big[
	1
	+
	\sup_{u \in [0,T] }
	\| O_u \|_{ H_\iota }^2 
	\Big]^2
	T
	\bigg) 
	<
	\infty 
	.
	\end{split} 
	\end{equation}
	%
	%
	%
	%
	%
\end{lemma}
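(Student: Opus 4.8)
The plan is to derive an a priori energy estimate for $\|X_t\|_H^2$ by using the variational (Lyapunov function) characterization of the finite-dimensional equation and then to apply Gronwall's lemma. Since $I \subseteq \H$ is finite, $P(H)$ is a finite-dimensional subspace of $H$ on which $A$ acts as a bounded operator, so the mild formulation $X_t = e^{tA}P\xi + \int_0^t e^{(t-s)A} P F(X_s)\,ds + O_t$ is equivalent to the statement that $[0,T] \ni t \mapsto X_t - O_t \in P(H)$ is continuously differentiable and solves $\tfrac{d}{dt}(X_t - O_t) = A(X_t - O_t) + P F(X_t)$ with $(X_0 - O_0) = P\xi$; here I would use that $P$ commutes with $e^{tA}$ and with $A$, and that $P\xi = \xi$ since $\xi \in P(H)$ by the hypothesis $X_t \in P(H)$ (strictly, I should first note $O_0 = 0$ is \emph{not} assumed, so I would instead work directly with $v_t := X_t - O_t$, which lies in $P(H)$ and satisfies $v_t = e^{tA}(P\xi - O_0) + \int_0^t e^{(t-s)A}PF(X_s)\,ds$).

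First I would fix $t \in [0,T]$, set $v_s = X_s - O_s$ for $s \in [0,T]$, and compute, using the fundamental theorem of calculus and the self-adjointness and negativity of $A$, that
\begin{equation}
\tfrac{d}{ds}\|v_s\|_H^2 = 2\langle v_s, A v_s\rangle_H + 2\langle v_s, PF(X_s)\rangle_H \leq 2\langle v_s, PF(v_s + O_s)\rangle_H.
\end{equation}
Since $v_s \in P(H) \subseteq H_{\nicefrac12}$ and $PF(v_s + O_s) \in P(H) \subseteq H$, the projection $P$ can be moved onto $v_s$ (as $P$ is an orthogonal projection and $v_s \in P(H)$), giving $\langle v_s, PF(v_s+O_s)\rangle_H = \langle v_s, F(v_s + O_s)\rangle_H$. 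Now I would invoke Corollary~\ref{corollary:AppropriateCoercivityEstimate} with $v = v_s$, $w = O_s$, and the given $\iota \in (\nicefrac14,\infty)$ (noting $O_s \in P(H) \subseteq H_{\max\{\nicefrac12,\iota\}}$, since $P(H)$ is spanned by eigenvectors of $A$ and hence contained in $H_r$ for every $r$) to obtain
\begin{equation}
2\langle v_s, F(v_s + O_s)\rangle_H \leq \tfrac{3|c_1|^2}{4|c_0|}\Big[\textstyle\sup_{u\in H_\iota\setminus\{0\}}\tfrac{\|u\|_{L^\infty}}{\|u\|_{H_\iota}} + \sup_{u\in H_\iota\setminus\{0\}}\tfrac{\|u\|_{L^4}^2}{\|u\|_{H_\iota}^2}\Big]^2\big(\|v_s\|_H^2 + \|O_s\|_{H_\iota}^2\big)\|O_s\|_{H_\iota}^2 + 2\|v_s\|_{H_{\nicefrac12}}^2.
\end{equation}

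The last term $2\|v_s\|_{H_{\nicefrac12}}^2$ is problematic — it cannot be absorbed into $\|v_s\|_H^2$. Here is where I expect the main subtlety: in Corollary~\ref{corollary:AppropriateCoercivityEstimate} the inner product is against $F(v+w)$ with $w = O_s$, but the equation couples $v_s$ against $A v_s$ as well, and the $+\|v\|_{H_{\nicefrac12}}^2$ on the right of that corollary is designed to be cancelled by the dissipation $2\langle v_s, Av_s\rangle_H = -2\|v_s\|_{H_{\nicefrac12}}^2$. So I would \emph{not} drop the $A$-term prematurely; instead I would keep $\tfrac{d}{ds}\|v_s\|_H^2 = -2\|v_s\|_{H_{\nicefrac12}}^2 + 2\langle v_s, F(v_s+O_s)\rangle_H$ and then apply the corollary, so that the $+\|v_s\|_{H_{\nicefrac12}}^2$ is absorbed (with room to spare) by $-2\|v_s\|_{H_{\nicefrac12}}^2$, leaving
\begin{equation}
\tfrac{d}{ds}\|v_s\|_H^2 \leq \tfrac{3|c_1|^2}{4|c_0|}C^2\big(\|v_s\|_H^2 + \|O_s\|_{H_\iota}^2\big)\|O_s\|_{H_\iota}^2,
\end{equation}
where $C$ abbreviates the bracketed supremum. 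Writing $a = \tfrac{3|c_1|^2}{8|c_0|}C^2\sup_{u\in[0,T]}\|O_u\|_{H_\iota}^2$ and bounding $\|O_s\|_{H_\iota}^2 \le \sup_{u\in[0,T]}\|O_u\|_{H_\iota}^2 \le [1 + \sup_{u\in[0,T]}\|O_u\|_{H_\iota}^2]^2$, Gronwall's lemma yields $\|v_t\|_H^2 \le (\|v_0\|_H^2 + \text{(const)}\cdot T)\exp(\text{(const)}\cdot T)$ with the constants matching those in the statement; finally the triangle inequality $\|X_t\|_H \le \|v_t\|_H + \|O_t\|_H$, together with $\|v_0\|_H = \|P\xi - O_0\|_H \le \|\xi\|_H$ (after re-checking the $O_0$ bookkeeping — if one instead wants $\|v_0\|_H = \|\xi\|_H$ one uses that $O_0$ contributes to the $O_t$ term), gives the claimed bound. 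I would double-check the exponent's constant: the statement has $\tfrac{3|c_1|^2}{16|c_0|}$ in the exponential and $\tfrac{3|c_1|^2}{8|c_0|}$ in the additive term, which is consistent with the split $\|O_s\|_{H_\iota}^2(\|v_s\|_H^2 + \|O_s\|_{H_\iota}^2) = \|O_s\|_{H_\iota}^2\|v_s\|_H^2 + \|O_s\|_{H_\iota}^4$ feeding the two pieces of Gronwall's inequality with a factor $\tfrac12$ discrepancy. The finiteness of $C$ follows from the Sobolev embedding theorem and item~(i) of Lemma~\ref{lemma:NormEquivalency}, exactly as in the proof of Corollary~\ref{corollary:AppropriateCoercivityEstimate}.
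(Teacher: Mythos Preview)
Your proposal is correct and follows essentially the same approach as the paper: set $Z_t = X_t - O_t$, convert the mild equation to the ODE form on the finite-dimensional space $P(H)$, apply Corollary~\ref{corollary:AppropriateCoercivityEstimate} so that the $+\|v\|_{H_{1/2}}^2$ is absorbed by the dissipation $\langle Z_s, AZ_s\rangle_H = -\|Z_s\|_{H_{1/2}}^2$, and conclude via Gronwall and the triangle inequality. One minor simplification over your bookkeeping worry: since $X_t - O_t = e^{tA}P\xi + \int_0^t e^{(t-s)A}PF(X_s)\,ds$ (the $O_t$ cancels exactly), you get $v_0 = P\xi$ directly and hence $\|v_0\|_H = \|P\xi\|_H \le \|\xi\|_H$ with no $O_0$ residue---your formula $v_t = e^{tA}(P\xi - O_0) + \ldots$ should read $e^{tA}P\xi + \ldots$.
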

\begin{proof}[Proof of Lemma~\ref{lemma:AprioriBound}]
Throughout this proof let 
$ C \in [0, \infty] $
satisfy that
\begin{equation} 
C
=
	\tfrac{ 3 | c_1 |^2 }{ 8  | c_0 |  } 
\bigg[ 
\sup_{ u \in H_\iota \backslash \{ 0 \} }
\tfrac{ \| u \|_{ L^\infty(\lambda; \R) } }{ \| u \|_{ H_\iota } }
+
\sup_{ u \in H_\iota \backslash \{ 0 \} }
\tfrac{ \| u \|_{ L^4(\lambda; \R) }^2 }{ \| u \|_{ H_\iota }^2 } 
\bigg]^2
\end{equation} 
and let
$ Z  \colon [0,T] \to P(H) $
be the function
which satisfies for every
$ t \in [0,T] $ that
$ Z_t = X_t - O_t $.
Observe that the Sobolev embedding theorem
and 
item~\eqref{item:Inclusion} 
of
Lemma~\ref{lemma:NormEquivalency} 
ensure that $ C \in [0, \infty) $.
	Next note that
	for every $ t \in [0,T] $ it holds that
	\begin{equation}
	\begin{split}
	Z_t = e^{tA} P \xi + \int_0^t e^{(t-s)A} P F( Z_s + O_s ) \, ds.
	\end{split}
	\end{equation}
	This implies for every
	$ t \in [0,T] $ that
	\begin{equation}
	Z_t = P \xi + \int_0^t [  A Z_s + P F(Z_s + O_s ) ] \, ds  
	.
	\end{equation}
	Therefore, we obtain that for every $ t \in [0,T] $ 
	it holds that
	\begin{equation}
	\begin{split}
	\| Z_t \|_H^2
	&
	=
	\| P \xi \|_H^2
	+
	2 
	\int_0^t 
	\langle 
	Z_s , 
	A Z_s   
	+ 
	P F ( Z_s + O_s )
	\rangle_H
	\,
	ds
	\\
	&
	\leq	
	\| \xi \|_H^2
	+
	2 
	\int_0^t 
	\langle 
	Z_s, 
	A Z_s 
	+ 
	F ( Z_s + O_s )
	\rangle_H
	\,
	ds.
	\end{split} 
	\end{equation}
	Corollary~\ref{corollary:AppropriateCoercivityEstimate}
	(with
	$ \iota = \iota $,
	$ v = Z_s $, $ w = O_s $
	for 
	$ s \in [0,T] $
	in the notation of
	Corollary~\ref{corollary:AppropriateCoercivityEstimate})
	hence proves that for every $ t \in [0,T] $
	it holds that
	\begin{equation}
	\begin{split}
	\| Z_t \|_H^2
	&
	\leq 
	\| \xi \|_H^2
	+
	2 
	\int_0^t 
	\tfrac{C}{2}
	%
	\big[ 
	\| Z_s \|_H^2
	+ 
	\| O_s \|_{ H_\iota }^2
	\big] 
	\| O_s \|_{ H_\iota }^2 
	\,
	ds
	\\
		&
	\leq 
	\| \xi \|_H^2
	+ 
	C
	\Big[
	1
	+
	\sup_{u \in [0,T] }
	\| O_u \|_{ H_\iota }^2 
	\Big]^2
	\int_0^t
	\big[ 
	1
	+
	\| Z_s \|_H^2 
	\big]  
	\,
	ds
	.
%
	%
	%
	\end{split} 
	\end{equation}
	The fact that
	$ O, Z \in \mathcal{C}([0,T], P(H) ) $ and Gronwall's lemma
	therefore establish that for every $ t \in [0,T] $ it holds 
	that
	\begin{equation} 
	\begin{split} 
    \| Z_t \|_H^2
	&\leq
	\Big( 
	\| \xi \|_H^2
	+ 
	C 
	\big[
	1
	+
	\sup\nolimits_{u \in [0,T] }
	\| O_u \|_{ H_\iota }^2 
	\big]^2
	T
	\Big)
	%
	\exp\! 
	\bigg( 
	C 
	\big[
	1
	+
	\sup\nolimits_{u \in [0,T] }
	\| O_u \|_{ H_\iota }^2 
	\big]^2
	T
	\bigg) 
	. 
	\end{split} 
	\end{equation}
	This completes the proof of Lemma~\ref{lemma:AprioriBound}.
\end{proof}
\begin{lemma}
	\label{lemma:ContractiveProjection} 
	Assume Setting~\ref{setting:main},   
	let
	$ \alpha \in \R $, 
	$ I \subseteq \H $,
	and let
	$ R \colon H_{ \max \{ \alpha, 0 \} } \to H_\alpha $
	be the function
	which satisfies 
	for every 
	$ v \in  H_{ \max \{ \alpha, 0 \} } $ 
	that 
	$ R v = \sum_{h \in I } \langle h, v \rangle_H h $.
	Then  
	\begin{enumerate}[(i)] 
		\item \label{item:Exstension1}
		it holds that there
		exists $ P \in L( H_\alpha ) $
		which satisfies for every 
		$ v \in  H_{ \max \{ \alpha, 0 \} } $ that $ P v = R v $
		and
		\item \label{item:Extension2}
		it holds that
		$ \| P \|_{ L (H_{ \alpha } ) } \leq 1 $.
	\end{enumerate}
\end{lemma}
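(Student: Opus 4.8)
The plan is to exploit that the operator $ R $ is, up to the ambient-space identifications, nothing but the orthogonal projection of $ H $ onto the closed linear span of $ I $, which is diagonalised by the orthonormal basis $ \H $ and hence is a contraction not only on $ H = H_0 $ but on every interpolation space $ H_r $, $ r \in \R $. Concretely, I would first introduce the bounded linear operator $ Q \colon H \to H $ given by $ Q v = \sum_{h \in I} \langle h, v \rangle_H h $ for $ v \in H $, note that $ Q $ is an orthogonal projection (so $ \| Q \|_{L(H)} \le 1 $), and record that, since both $ Q $ and $ (-A)^r $ act diagonally on $ \H $, it holds for every $ r \in \R $ and every $ v \in H_{\max\{r,0\}} $ that $ Q v \in H_{\max\{r,0\}} $ and $ (-A)^r Q v = Q\,(-A)^r v $ --- where for $ r < 0 $ the operator $ (-A)^r $ is the bounded operator on $ H $ obtained from the spectral representation, which is well defined since $ \sup_{h\in\H} |\values_h|^r < \infty $ by the assumption $ \sup_{h\in\H}\values_h<0 $ in Setting~\ref{setting:main}.

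The second step is a one-line norm estimate. Using the identity $ \| w \|_{H_r} = \| (-A)^r w \|_H $, valid for every $ r \in \R $ and every $ w \in H_{\max\{r,0\}} $ by the construction of the interpolation spaces associated to $ -A $ (the case $ r \ge 0 $ being exactly the note following Setting~\ref{setting:main} and the case $ r < 0 $ following from the completion procedure in, e.g., \cite[Section~3.7]{SellYou2002}), together with the commutation relation and the contractivity of $ Q $ on $ H $, I would obtain for every $ v \in H_{\max\{\alpha,0\}} $ that $ R v = Q v \in H_{\max\{\alpha,0\}} \subseteq H_\alpha $ and
\begin{equation}
\| R v \|_{H_\alpha}
= \| (-A)^\alpha R v \|_H
= \| Q\,(-A)^\alpha v \|_H
\le \| (-A)^\alpha v \|_H
= \| v \|_{H_\alpha}
.
\end{equation}
Thus $ R \colon H_{\max\{\alpha,0\}} \to H_\alpha $ is linear and contractive for the $ H_\alpha $-norm. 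To conclude I would distinguish two cases: if $ \alpha \ge 0 $, then $ \max\{\alpha,0\} = \alpha $, so $ R $ is already defined on all of $ H_\alpha $ and $ P := R \in L(H_\alpha) $ with $ \| P \|_{L(H_\alpha)} \le 1 $ establishes items~\eqref{item:Exstension1} and~\eqref{item:Extension2}; if $ \alpha < 0 $, then $ \max\{\alpha,0\}=0 $ and $ H = H_0 $ is, by the same construction, a dense subspace of the Banach space $ H_\alpha $, so the standard extension theorem for bounded linear operators on dense subspaces, applied to the contraction above, yields a unique $ P \in L(H_\alpha) $ that agrees with $ R $ on $ H $ and satisfies $ \| P \|_{L(H_\alpha)} \le 1 $, which again gives items~\eqref{item:Exstension1} and~\eqref{item:Extension2}.

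The only points I expect to need care --- and which I would spell out rather than wave at --- are the two facts about the interpolation scale that enter the case $ \alpha < 0 $: that $ H $ is dense in $ H_\alpha $, and that the $ H_\alpha $-norm restricted to $ H $ equals $ \| (-A)^\alpha \cdot \|_H $ with $ (-A)^\alpha $ the bounded operator on $ H $; both are immediate from the construction of $ (H_r)_{r\in\R} $ associated to $ -A $ but are not re-derived elsewhere in the paper. The remaining ingredient, the commutation $ (-A)^r Q = Q (-A)^r $, is a direct consequence of the explicit diagonal description of $ A $ (and hence of its fractional powers) on $ \H $ given in Setting~\ref{setting:main}, so no genuine obstacle is anticipated.
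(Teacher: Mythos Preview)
Your proposal is correct and follows essentially the same route as the paper: both arguments establish the contraction estimate $\|Rv\|_{H_\alpha}\le\|v\|_{H_\alpha}$ on $H_{\max\{\alpha,0\}}$ and then invoke density of $H_{\max\{\alpha,0\}}$ in $H_\alpha$. The only cosmetic difference is that the paper writes out the Parseval-type sums $\sum_{h\in I}|\langle h,v\rangle_H|^2|\values_h|^{2\alpha}$ explicitly (treating the cases $\alpha\ge 0$ and $\alpha<0$ via $\max\{\alpha,0\}$ and $\min\{\alpha,0\}$), whereas you package the same computation as the commutation $(-A)^\alpha Q = Q(-A)^\alpha$ together with $\|Q\|_{L(H)}\le 1$; these are the same calculation in different notation.
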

\begin{proof}[Proof of Lemma~\ref{lemma:ContractiveProjection}]
	Note that for every 
	$ v \in H_{ \max \{ \alpha, 0 \} } $ it holds that
	\begin{equation}
	\label{eq:NormEstimate1}
	\| Rv \|_{ H_{ \max \{ \alpha, 0 \} }  }^2
	= 
	\sum_{ h \in I }
	| \langle h, v \rangle_H |^2 | \values_h |^{ 2 \max \{ \alpha, 0 \} }
	\leq
	\sum_{ h \in \H }
	| \langle h, v \rangle_H |^2 | \values_h |^{ 2 \max \{ \alpha, 0 \} }
	=
	\| v \|_{ H_{ \max \{ \alpha, 0 \} } }^2
	.
	\end{equation}
	Furthermore, observe that
	the fact that 
	$ \forall \, v \in H_{ \max \{\alpha, 0 \} } \colon R v \in H $
	ensures that 
	for every
	$ v \in H_{ \max \{\alpha, 0 \} } $
	it holds that
	\begin{equation}
	\begin{split} 
	\label{eq:NormEstimate2}
	& 
	\| R v \|_{ H_{ \min \{ \alpha, 0 \} }  }^2  
	=
	\| ( -A)^{ \min \{ \alpha, 0 \}  } R v \|_H^2
	=
	\Big\|
	\sum_{ h \in \H }
	| \values_h |^{ \min \{ \alpha, 0 \}  } 
	\langle h, R v \rangle_H h 
	\Big\|_H^2
	\\
	&
	= 
	\sum_{ h \in I }
	| \langle h, v \rangle_H |^2 | \values_h |^{ 2 \min \{ \alpha, 0 \} }
	\leq
	\sum_{ h \in \H }
	| \langle h, v \rangle_H |^2 | \values_h |^{ 2 \min \{ \alpha, 0 \} }
	=
	\| v \|_{ H_{ \min \{ \alpha, 0 \} } }^2
	.
	\end{split}
	\end{equation}
	Combining
	this and~\eqref{eq:NormEstimate1} 
	proves that for every 
	$ v \in H_{ \max \{ \alpha, 0 \} } $
	it holds that
	\begin{equation}
	\| R v \|_{ H_{ \alpha }  }
	\leq
	\| v \|_{H_\alpha}
	.
	\end{equation}
	The fact that $ H_{ \max \{ \alpha, 0 \} } \subseteq H_\alpha $ 
	densely
	therefore
	establishes
	items~\eqref{item:Exstension1} and~\eqref{item:Extension2}.
	The proof of Lemma~\ref{lemma:ContractiveProjection}
	is thus completed.
\end{proof}
\begin{lemma}
	\label{lemma:AprioriBound3}
	Assume Setting~\ref{setting:Examples}, 
	let $ \mathcal{P}(\H) $ be the power set of $ \H $, 
	let
	$ T \in (0,\infty) $,    
	$ \iota \in [0, 1 ) $, 
	$ \gamma \in (\nicefrac{1}{4}, \infty) $,
	$ \xi \in H_\iota $,  
	$ \mathcal{P}_0(\H) = \{ \theta \in \mathcal{P}(\H) \colon  \theta \text{ is a finite set} \} $, 
	let 
	$ ( P_I )_{ I \in \mathcal{P} (\H) } \subseteq L(H) $ 
	satisfy
	for every
	$ I \in \mathcal{P}(\H) $,  
	$ v \in H $ 
	that
	$ P_I(v) =\sum_{h \in I} \left< h ,v \right>_H h $,
	let
	$ O^I \in \mathcal{C} ( [0,T], P_I(H) ) $,
	$ I \in  \mathcal{P}_0 ( \H ) $,
	satisfy 
	$
	\sup\nolimits_{ I \in  \mathcal{P}_0 ( \H ) }
	\sup\nolimits_{ u \in [0,T] } 
	\| O_u^I \|_{ H_{ \max \{ \gamma, \iota \} } } 
	<
	\infty $,
	let 
	$ X^I \in \mathcal{C} ( [0,T], P_I(H) ) $,
	$ I \in  \mathcal{P}_0 ( \H ) $,
	and assume
	for every 
	$ I \in  \mathcal{P}_0 ( \H ) $,
	$ t \in [0,T] $
	that 
	\begin{equation}
	X_t^I 
	= 
	e^{tA} P_I \xi 
	+
	\int_0^t e^{(t-s)A} P_I F( X_s^I ) \, ds 
	+  
	O_t^I 
	.
	\end{equation}
	Then it holds that
	\begin{equation}
	\begin{split}  
	\label{eq:Finite iota}
	&  
	\sup\nolimits_{ I \in  \mathcal{P}_0 ( \H ) }
	\sup\nolimits_{ t \in [0,T] } 
	\| X_t^I \|_{ H_\iota }
	< \infty
	.
	\end{split}
	\end{equation}
\end{lemma}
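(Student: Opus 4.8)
The goal is \eqref{eq:Finite iota}, i.e.\ a bound on $\|X_t^I\|_{H_\iota}$ uniform over $t\in[0,T]$ \emph{and} over the finite sets $I\in\mathcal{P}_0(\H)$. I would reach it by a short chain of bootstrap steps, each lifting the regularity exponent, starting from the energy ($H_0$) estimate. Two structural facts drive the argument. First, by Lemma~\ref{lemma:ContractiveProjection} (for exponents $\le 0$) and Lemma~\ref{lemma:PositiveNormF} (for exponents $\ge 0$) the operator $P_I$ extends to a contraction on $H_r$ for every $r\in\R$; hence projecting the initial datum $P_I\xi$, the value $F(X_s^I)$, and the integrand $e^{(t-s)A}P_IF(X_s^I)$ costs nothing in any interpolation norm, so every estimate produced is automatically independent of $I$. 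Second, since $X_s^I\in P_I(H)\subseteq H_{\nicefrac{1}{2}}$, the Burgers nonlinearity may be evaluated at $X_s^I$ and obeys the growth bounds $\|F(v)\|_{H_{-\alpha}}\le C_\alpha\|v\|_H^2$ for $\alpha\in(\nicefrac{3}{4},\infty)$, $\|F(v)\|_{H_{-\alpha}}\le C_\alpha\|v\|_{H_{(1-\alpha)/3}}^2$ for $\alpha\in(\nicefrac{1}{4},\nicefrac{1}{2}]$, and $\|F(v)\|_H\le C\|v\|_{H_{1/2}}^2$ (items~\eqref{item:Growth1}, \eqref{item:Growth2}, \eqref{item:Growth4} of Lemma~\ref{lemma:F_growth_estimates}, cf.\ also Corollary~\ref{corollary:SatisfiedF}); combined with the smoothing bound $\|(-A)^{\rho+\alpha}e^{sA}\|_{L(H)}\le s^{-(\rho+\alpha)}$, these let the Duhamel term gain regularity.

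For the base case, since $\gamma\in(\nicefrac{1}{4},\infty)$ and $\sup_{I}\sup_{u}\|O_u^I\|_{H_{\max\{\gamma,\iota\}}}<\infty$ (so in particular $\sup_I\sup_u\|O_u^I\|_{H_\gamma}<\infty$), I would apply Lemma~\ref{lemma:AprioriBound} for each $I$, with its parameter~$\iota$ taken equal to~$\gamma$ and with $P=P_I$, $O=O^I$, $X=X^I$: its conclusion bounds $\sup_t\|X_t^I\|_H$ by an expression depending only on $\|\xi\|_H$, $T$, universal constants, and $\sup_u\|O_u^I\|_{H_\gamma}$, whence $M_0:=\sup_{I\in\mathcal{P}_0(\H)}\sup_{t\in[0,T]}\|X_t^I\|_H<\infty$. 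We may assume $\iota>0$, else this already is \eqref{eq:Finite iota}.

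Now the bootstrap. For $\rho\in[0,\iota]$ and $\alpha\in[0,1-\rho)$, using $\|e^{tA}P_I\xi\|_{H_\rho}\le\|\xi\|_{H_\rho}\le\|\xi\|_{H_\iota}$ (here $\rho\le\iota$ is essential), $\|P_IF(X_s^I)\|_{H_{-\alpha}}\le\|F(X_s^I)\|_{H_{-\alpha}}$, and $\|O_t^I\|_{H_\rho}\le\sup_u\|O_u^I\|_{H_{\max\{\gamma,\iota\}}}$ (valid as $\rho\le\iota\le\max\{\gamma,\iota\}$), the mild formulation gives
\[
 \|X_t^I\|_{H_\rho}
 \le
 \|\xi\|_{H_\iota}
 +
 \sup\nolimits_{u\in[0,T]}\|O_u^I\|_{H_{\max\{\gamma,\iota\}}}
 +
 \int_0^t (t-s)^{-(\rho+\alpha)}\,\|F(X_s^I)\|_{H_{-\alpha}}\,ds .
\]
Feeding in the three growth estimates above --- equivalently, invoking in turn Lemma~\ref{lemma:F_Burgers_bootstrap20}, Lemma~\ref{lemma:F_Burgers_bootstrap0}, and Lemma~\ref{lemma:F_Burgers_bootstrap220} for each $I$ with $(\xi,O,Z,Y,\kappa)=(P_I\xi,O^I,X^I,X^I,\operatorname{Id}_{[0,T]})$, nonlinearity $P_I\circ F$, and the growth hypotheses supplied by Corollary~\ref{corollary:SatisfiedF} --- I would chain as follows. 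With $\alpha_1\in(\nicefrac{3}{4},1)$ and $\|F(X_s^I)\|_{H_{-\alpha_1}}\le C_{\alpha_1}(1+M_0^2)$, the displayed inequality with $\rho=\rho_1:=\min\{\iota,\,1-\alpha_1\}<\nicefrac{1}{4}$ yields $\sup_I\sup_t\|X_t^I\|_{H_{\rho_1}}<\infty$; if $\iota<\nicefrac{1}{4}$ we take $\rho_1=\iota$ and stop. Otherwise, choosing $\alpha_2\in(\nicefrac{1}{4},\nicefrac{1}{2})$ and (already in the previous step) $\rho_1\in[(1-\alpha_2)/3,\,\nicefrac{1}{4})$ --- possible because $(1-\alpha_2)/3<\nicefrac{1}{4}\le\iota$ --- the bound $\|F(X_s^I)\|_{H_{-\alpha_2}}\le C_{\alpha_2}(1+\|X_s^I\|_{H_{\rho_1}}^2)$ and the displayed inequality with $\rho=\rho_2$, any $\rho_2\in[\min\{\nicefrac{1}{2},\iota\},\,\iota]$ with $\rho_2+\alpha_2<1$, yield $\sup_I\sup_t\|X_t^I\|_{H_{\rho_2}}<\infty$; if $\iota<\nicefrac{3}{4}$ we may arrange $\rho_2=\iota$ and stop. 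Finally, if $\iota\in[\nicefrac{3}{4},1)$, having chosen $\rho_2\in(\nicefrac{1}{2},\nicefrac{3}{4})$ above, we have $\|F(X_s^I)\|_H\le C(1+\|X_s^I\|_{H_{1/2}}^2)\le C(1+\sup_I\sup_u\|X_u^I\|_{H_{\rho_2}}^2)$, and the displayed inequality with $\rho=\iota$, $\alpha=0$ (integrable since $\iota<1$) gives $\sup_I\sup_t\|X_t^I\|_{H_\iota}<\infty$, which is \eqref{eq:Finite iota}.

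I expect the main difficulty to be the bookkeeping, not any single estimate: one has to pick the exponents $\alpha_1,\alpha_2$ and the intermediate targets $\rho_1,\rho_2$ so that the regularity chain $H_0\to H_{\rho_1}\to H_{\rho_2}\to H_\iota$ with $\rho_1<\nicefrac{1}{4}$, $\rho_2<\nicefrac{3}{4}$, $\iota<1$ actually closes up (in particular $(1-\alpha_2)/3\le\rho_1$, and $\rho_2\ge\nicefrac{1}{2}$ whenever the third step is needed), treat separately the cases $\iota<\nicefrac{1}{4}$, $\nicefrac{1}{4}\le\iota<\nicefrac{3}{4}$, $\iota\ge\nicefrac{3}{4}$ in which one, two, three steps suffice, and --- above all --- keep every intermediate target $\le\iota$ so that the initial-value contribution $\|P_I\xi\|_{H_\rho}$ stays under the uniform bound $\|\xi\|_{H_\iota}$; the $I$-uniformity of all constants then follows throughout from the contractivity of $P_I$ on the scale $(H_r)_{r\in\R}$ furnished by Lemmas~\ref{lemma:PositiveNormF} and~\ref{lemma:ContractiveProjection}.
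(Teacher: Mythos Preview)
Your proposal is correct and follows essentially the same route as the paper's proof: the $H$-energy bound from Lemma~\ref{lemma:AprioriBound}, uniform-in-$I$ contractivity of $P_I$ on the scale $H_r$ (note that this is furnished by Lemma~\ref{lemma:ContractiveProjection} alone for every $r\in\R$; Lemma~\ref{lemma:PositiveNormF} concerns $\|P_IF(v)\|_{H_\alpha}$ and is not what gives contractivity), the growth estimates of Corollary~\ref{corollary:SatisfiedF}, and the three-stage bootstrap via Lemmas~\ref{lemma:F_Burgers_bootstrap20}--\ref{lemma:F_Burgers_bootstrap220} --- the paper splits at $\nicefrac14$ and $\nicefrac12$ rather than your $\nicefrac14$ and $\nicefrac34$, but either choice closes the chain. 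One small slip: the inequality $\|\xi\|_{H_\rho}\le\|\xi\|_{H_\iota}$ for $\rho\le\iota$ is not literally true in general (it picks up the factor $\|(-A)^{\rho-\iota}\|_{L(H)}$), but this constant is finite and $I$-independent, which is all your argument requires.
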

\begin{proof}[Proof of Lemma~\ref{lemma:AprioriBound3}]
Note that
Corollary~\ref{corollary:SatisfiedF}
(with 
$ \alpha_1 = \alpha_1 $,
$ \alpha_2 = \alpha_2 $ 
for  
$ \alpha_1 \in (\nicefrac{3}{4}, \infty) $,
$ \alpha_2 \in (\nicefrac{1}{4}, \nicefrac{1}{2} ] $ 
in the notation of Corollary~\ref{corollary:SatisfiedF})
shows that
for every 
$ \alpha_1 \in (\nicefrac{3}{4}, \infty) $,
$ \alpha_2 \in (\nicefrac{1}{4}, \nicefrac{1}{2}] $ 
it holds that
\begin{equation}
\label{eq:Finite!! basic}
\Big( 
\sup\nolimits_{ v \in H_{ \nicefrac{1}{2} } \backslash \{0\} }
\tfrac{ \| F(v) \|_{H  } }
{ \| v \|_{H_{ \nicefrac{1}{2} } }^2 }
\Big)
+
\Big(
\sup\nolimits_{ v \in H_{ \nicefrac{1}{2} } \backslash \{ 0 \} }
\tfrac{ \| F(v) \|_{H_{ - \alpha_2 } } }
{ \| v \|_{ H_{  \nicefrac{ ( 1 - \alpha_2 ) }{ 3 } } }^2 }
\Big)
+
\Big( 
\sup\nolimits_{ v \in H_{ \nicefrac{1}{2} } \backslash
	\{0\} } 
\tfrac{ \| F(v) \|_{H_{-\alpha_1} } }{ \| v \|_H^2 }
\Big)
< \infty 
.
\end{equation}
In addition, observe that
Lemma~\ref{lemma:ContractiveProjection} 
(with
$ \alpha = - \alpha $,
$ I = I $,
$ R = ( H \ni x \mapsto P_I x \in H_{ - \alpha } ) $ 
for  
$ I \in \mathcal{P}_0(\H) $, 
$ \alpha \in \R $
in the notation of
Lemma~\ref{lemma:ContractiveProjection})
proves that
for every
$ x \in H $,
$ I \in \mathcal{P}_0(\H) $, 
$ \alpha \in \R $
it holds that
\begin{equation} 
\begin{split}
\| P_I x \|_{H_{-\alpha}}
\leq 
\| P_I \|_{ L( H_{-\alpha} ) }
\| x \|_{H_{-\alpha}}
\leq 
\| x \|_{H_{-\alpha}}
.
\end{split}
\end{equation}
Combining this and~\eqref{eq:Finite!! basic}
ensures that
for every
$ \alpha_1 \in (\nicefrac{3}{4}, \infty) $,
$ \alpha_2 \in (\nicefrac{1}{4}, \nicefrac{1}{2}] $ 
it holds that
\begin{equation}
\label{eq:Finite 1}
\sup\nolimits_{ I \in \mathcal{P}_0(\H) }
\Big( 
\sup\nolimits_{ v \in H_{ \nicefrac{1}{2} } \backslash \{0\} }
\tfrac{ \| P_I F(v) \|_{H  } }
{ \| v \|_{H_{ \nicefrac{1}{2} } }^2 }
\Big) <\infty,
\end{equation} 
\begin{equation}
\label{eq:Finite 2}
\qquad
\sup\nolimits_{ I \in \mathcal{P}_0(\H) }
\Big(
\sup\nolimits_{ v \in H_{ \nicefrac{1}{2} } \backslash \{ 0 \} }
\tfrac{ \| P_I F(v) \|_{H_{ - \alpha_2 } } }
{ \| v \|_{ H_{  \nicefrac{ ( 1 - \alpha_2 ) }{ 3 } } }^2 }
\Big) <\infty,
\end{equation} 
and
\begin{equation}
\label{eq:Finite 3}
\sup\nolimits_{ I \in \mathcal{P}_0(\H) }
\Big( 
\sup\nolimits_{ v \in H_{ \nicefrac{1}{2} } \backslash
	\{0\} } 
\tfrac{ \| P_I F(v) \|_{H_{-\alpha_1} } }{ \| v \|_H^2 }
\Big)
< \infty 
.
\end{equation}
Moreover, observe that 
Lemma~\ref{lemma:AprioriBound}
(with
$ T = T $,
$ \iota = \max \{ \gamma, \iota \} $, 
$ \xi = \xi $,
$ I = I $,
$ P = P_I $,
$ O = O^I $,
$ X = X^I $
for
$ I \in \mathcal{P}_0(\H) $ 
in the notation of 
Lemma~\ref{lemma:AprioriBound})
implies that
\begin{equation}
\label{eq:FiniteBasicMoment}
\sup\nolimits_{ I \in \mathcal{P}_0(\H) }
\sup\nolimits_{ t \in [0,T] } 
\| X_t^I \|_H 
<
\infty.
\end{equation}
Combining~\eqref{eq:Finite 3}
and
Lemma~\ref{lemma:F_Burgers_bootstrap20}
(with
$ ( \Omega, \F, \P ) 
=
( \{ 1 \}, \{ \emptyset, \{ 1 \} \}, ( \{ \emptyset, \{ 1 \} \} 
\ni A \mapsto \1_A(1) \in [0,1] ) ) $, 
$ T = T $,
$ \beta = \nicefrac{1}{2} $,
$ \gamma = \nicefrac{1}{2} $,
$ \xi = ( \{ 1 \} \ni \omega \mapsto P_I \xi \in H_{ \nicefrac{1}{2} } ) $,
$ F = ( H_{\nicefrac{1}{2}  } \ni v \mapsto P_I F(v) \in H) $,
$ \kappa = ( [0,T] \ni t \mapsto t \in [0,T] ) $,
$ Z = ( [0,T] \times \{ 1 \} \ni (t, \omega) \mapsto X_t^I \in H_{ \nicefrac{1}{2} } ) $,
$ O = ( [0,T] \times \{ 1 \} \ni (t, \omega) \mapsto O_t^I \in H_{ \nicefrac{1}{2} } ) $,
$ Y = ( [0,T] \times \{ 1 \} \ni (t, \omega) \mapsto X_t^I \in H ) $,
$ p = 1 $,
$ \rho = \rho $,
$ \alpha = \alpha_1 $ 
for
$ \alpha_1 \in (\nicefrac{3}{4}, 1 - \rho ) $,
$ \rho \in [0, \nicefrac{1}{4}) $,
$ I \in \mathcal{P}_0( \H ) $
in the notation of
Lemma~\ref{lemma:F_Burgers_bootstrap20})
hence shows that
for every 
$ \rho \in [0, \nicefrac{1}{4}) $,
$ \alpha_1 \in ( \nicefrac{3}{4}, 1 - \rho ) $,
$ I \in \mathcal{P}_0(\H) $,
$ t \in [0,T] $
it holds that
\begin{equation}
\begin{split} 
\| X_t^I \|_{ H_{ \rho } } 
&
\leq
\|
P_I \xi
\|_{ H_{ \rho } }
+ 
\| O_t^I \|_{ H_{ \rho } }
+
\tfrac{ T^{1-\alpha_1 - \rho} }{ 1 - \alpha_1 - \rho }
\Big(
\sup\nolimits_{ v \in H_{ \nicefrac{1}{2} } } 
\tfrac{ \| P_I F(v) \|_{H_{-\alpha_1} } }{ 1 + \| v \|_H^2 } 
\Big) 
\big( 1
+
\sup\nolimits_{ u \in [0,T] }
\| X_u^I \|_{H}^2
\big)   
.
\end{split}
\end{equation}
%
This,
\eqref{eq:Finite 3},
\eqref{eq:FiniteBasicMoment},
and the assumption that
$
\sup\nolimits_{ I \in  \mathcal{P}_0 ( \H ) }
\sup\nolimits_{ u \in [0,T] } 
\| O_u^I \|_{ H_\iota } 
<
\infty $
show
that for every
$ \rho \in [0,  \nicefrac{1}{4} ) $ 
with
$ \rho \leq \iota $
it holds that
\begin{equation}
\begin{split} 
\label{eq:FirstBoot}
\sup\nolimits_{ I \in \mathcal{P}_0(\H) }
\sup\nolimits_{ t \in [0,T] }
\| X_t^I \|_{ H_{ \rho } } 
< \infty. 
\end{split}
\end{equation}
Furthermore, observe that Lemma~\ref{lemma:F_Burgers_bootstrap0}
(with
$ H = H $,
$ ( \Omega, \F, \P ) 
=
( \{ 1 \}, \{ \emptyset, \{ 1 \} \}, ( \{ \emptyset, \{ 1 \} \} 
\ni A \mapsto \1_A(1) \in [0,1] ) ) $,
$ T = T $,
$ \beta = \nicefrac{1}{2} $,
$ \gamma = \nicefrac{1}{2} $,
$ \xi = ( \{ 1 \} \ni \omega \mapsto P_I \xi \in H_{ \nicefrac{1}{2} } ) $,
$ F = ( H_{\nicefrac{1}{2}  } \ni v \mapsto P_I F(v) \in H) $,
$ \kappa = ( [0,T] \ni t \mapsto t \in [0,T] ) $,
$ Z = ( [0,T] \times \{ 1 \} \ni (t, \omega) \mapsto X_t^I \in H_{ \nicefrac{1}{2} } ) $,
$ O = ( [0,T] \times \{ 1 \} \ni (t, \omega) \mapsto O_t^I \in H_{ \nicefrac{1}{2} } ) $,
$ Y = ( [0,T] \times \{ 1 \} \ni (t, \omega) \mapsto X_t^I \in H ) $,
$ p = 1 $,
$ \rho = \nicefrac{ ( 1 - \alpha_2 ) }{ 3 } $,
$ \eta = \eta $, 
$ \alpha_1 = \alpha_1 $,
$ \alpha_2 = \alpha_2 $ 
for  
$ \alpha_1 \in (\nicefrac{3}{4}, \nicefrac{ ( 2 + \alpha_2 )}{ 3 }  ) $,
$ \alpha_2 \in (\nicefrac{1}{4}, \nicefrac{1}{2} ) $,
$ \eta \in [\nicefrac{1}{4}, \nicefrac{1}{2} ] $,
$ I \in \mathcal{P}_0( \H ) $
in the notation of
Lemma~\ref{lemma:F_Burgers_bootstrap0}),
\eqref{eq:Finite 2}, 
and~\eqref{eq:Finite 3}
ensure that for every 
$ \alpha_2 \in (\nicefrac{1}{4}, \nicefrac{1}{2} ) $,
$ \alpha_1 \in (\nicefrac{3}{4}, \nicefrac{ ( 2 + \alpha_2 )}{ 3 }  ) $,
$ \eta \in [ \nicefrac{1}{4}, \nicefrac{1}{2} ] $,
$ I \in \mathcal{P}_0(\H) $,
$ t \in [0,T] $
it holds that
\begin{equation} 
\begin{split}  
\|
X_t^I
\|_{ H_{ \eta } }  
&
\leq
\|  
P_I \xi
\|_{ H_{ \eta } }
+ 
\| O_t^I \|_{ H_{ \eta } }
+
\tfrac{ T^{1 - \alpha_2 - \eta } }{ 1 - \alpha_2 - \eta }
\Big(
\sup\nolimits_{ v \in H_{ \nicefrac{1}{2} }  }
\tfrac{ \| P_I F(v) \|_{H_{ - \alpha_2 } } }
{ 1 + \| v \|_{ H_{ \nicefrac{ ( 1 - \alpha_2 ) }{ 3 } } }^2 }
\Big) 
\\
&
\quad 
\cdot 
\Big[
1
+
\|
\xi
\|_{ H_{ \nicefrac{ ( 1 - \alpha_2 ) }{ 3 } } }
+
\sup\nolimits_{ u \in [0,T] }
\| O_u^I \|_{ H_{ \nicefrac{ ( 1 - \alpha_2 ) }{ 3 } } }
\\
&
\qquad 
+
\tfrac{   T^{ 1-\alpha_1 - 
(  ( 1 - \alpha_2 ) / 3  )
 } }{ 1 - \alpha_1 - (   ( 1 - \alpha_2 ) / 3  ) }
\Big(
\sup\nolimits_{ v \in H_{ \nicefrac{1}{2} } } 
\tfrac{ \| P_I F(v) \|_{H_{-\alpha_1} } }{ 1 + \| v \|_H^2 } 
\Big) 
\big( 
1
+
\sup\nolimits_{ u \in [0,T] }
\| X_u^I
\|_H^2 
\big) 
\Big]^2
.
\end{split}
\end{equation}
%
%
Combining~\eqref{eq:Finite 2}--\eqref{eq:FiniteBasicMoment} 
and the assumption that
$
\sup\nolimits_{ I \in  \mathcal{P}_0 ( \H ) }
\sup\nolimits_{ u \in [0,T] } 
\| O_u^I \|_{ H_\iota } 
<
\infty $
hence implies that for every 
$ \eta \in [ \nicefrac{1}{4}, \nicefrac{1}{2} ] $  
with
$ \eta \leq \iota $
it holds
that
\begin{equation} 
\begin{split}  
\label{eq:SecondBoot}
\sup\nolimits_{ I \in \mathcal{P}_0(\H) }
\sup\nolimits_{ t \in [0,T] }
\| X_t^I \|_{ H_{ \eta } } 
< \infty. 
\end{split}
\end{equation}
Moreover, note that
Lemma~\ref{lemma:F_Burgers_bootstrap220} 
(with
$ H = H $,
$ ( \Omega, \F, \P ) 
=
( \{ 1 \}, \{ \emptyset, \{ 1 \} \}, ( \{ \emptyset, \{ 1 \} \} 
\ni A \mapsto \1_A(1) \in [0,1] ) ) $,
$ T = T $,
$ \beta = \kappa $,
$ \gamma = \nicefrac{1}{2} $,
$ \xi = ( \{ 1 \} \ni \omega \mapsto P_I \xi \in H_{ \kappa } ) $,
$ F = ( H_{\nicefrac{1}{2}  } \ni v \mapsto P_I F(v) \in H) $,
$ \kappa = ( [0,T] \ni t \mapsto t \in [0,T] ) $,
$ Z = ( [0,T] \times \{ 1 \} \ni (t, \omega) \mapsto X_t^I \in H_{ \nicefrac{1}{2} } ) $,
$ O = ( [0,T] \times \{ 1 \} \ni (t, \omega) \mapsto O_t^I \in H_{ \kappa } ) $,
$ Y = ( [0,T] \times \{ 1 \} \ni (t, \omega) \mapsto X_t^I \in H ) $,
$ p = 1 $,
$ \rho = \nicefrac{ ( 1 - \alpha_2 ) }{ 3 } $,
$ \eta = \nicefrac{1}{2} $,
$ \iota = \kappa $,
$ \alpha_1 = \alpha_1 $,
$ \alpha_2 = \alpha_2 $ 
for   
$ \alpha_1 \in [0, \nicefrac{ ( 2 + \alpha_2 )}{ 3 }  ) $,
$ \alpha_2 \in [0, \nicefrac{1}{2} ) $,
$ \kappa
\in 
[ \nicefrac{1}{2}, 1)  $,
$ I \in \mathcal{P}_0( \H ) $
in the notation of
Lemma~\ref{lemma:F_Burgers_bootstrap220})
and~\eqref{eq:Finite 1}--\eqref{eq:Finite 3} 
prove that
for every  
$ \alpha_2 \in ( \nicefrac{1}{4}, \nicefrac{1}{2} ) $,
$ \alpha_1 \in (\nicefrac{3}{4}, \nicefrac{ ( 2 + \alpha_2 )}{3}  ) $,
$ \kappa
 \in 
 [ \nicefrac{1}{2}, 1)  $,
$ I \in \mathcal{P}_0(\H) $,
$ t \in [0,T] $ 
it holds
that
\begin{equation}
\begin{split} 
\label{eq:result11}
\| X_t^I \|_{ H_{ \kappa } }
&
\leq
\| 
P_I \xi
\|_{ H_{ \kappa } }
+
\sup\nolimits_{u \in [0,T]}
\| 
O_u^I
\|_{ H_{ \kappa } }
+ 
\tfrac{ T^{1- \kappa } }{ 1 - \kappa }
\bigg(
\sup\nolimits_{ v \in H_{ \nicefrac{1}{2} }  }
\tfrac{ \| P_I F(v) \|_{H  } }
{ 1 + \| v \|_{H_{ \nicefrac{1}{2} } }^2 }
\bigg) 
\\
&
\quad 
\cdot
\bigg[
1
+
\|  
\xi
\|_{ H_{ \nicefrac{1}{2} } }
+
\sup\nolimits_{ u \in [0,T] }
\| O_u^I \|_{ H_{ \nicefrac{1}{2} } }
+
\tfrac{ T^{ ( 1 / 2 ) - \alpha_2 } }{ ( 1 / 2 ) - \alpha_2 }
\bigg(
\sup\nolimits_{ v \in H_{ \nicefrac{1}{2} }  }
\tfrac{  \| P_I F(v) \|_{H_{ - \alpha_2 } } }
{ 1 + \| v \|_{ H_{ \nicefrac{(1- \alpha_2 )}{3} } }^2 }
\bigg) 
\\
&
\quad 
\cdot 
\Big[
1
+
\|
\xi
\|_{ H_{ \nicefrac{(1- \alpha_2 )}{3} } }
+
\sup\nolimits_{ u \in [0,T] }
\| O_u^I \|_{ H_{ \nicefrac{(1- \alpha_2 )}{3} } }
\\
&
\qquad 
+
%
\tfrac{   T^{ 1-\alpha_1 - 
		(  ( 1 - \alpha_2 ) / 3  )
} }{ 1 - \alpha_1 - (   ( 1 - \alpha_2 ) / 3  ) }
\bigg(
\sup\nolimits_{ v \in H_{ \nicefrac{1}{2} }  } 
\tfrac{ \| P_I F(v) \|_{H_{-\alpha_1} } }{ 1 + \| v \|_H^2 } 
\bigg)
\sup\nolimits_{ u \in [0,T] }
\| X_u^I
\|_{H}^2 
\Big]^2
\bigg]^2
.
\end{split}
\end{equation}
Combining~\eqref{eq:Finite 1}--\eqref{eq:FiniteBasicMoment}
and the assumption that
$
\sup\nolimits_{ I \in  \mathcal{P}_0 ( \H ) }
\sup\nolimits_{ u \in [0,T] } 
\| O_u^I \|_{ H_\iota } 
<
\infty $
therefore 
assures that for every 
$ \kappa \in [\nicefrac{1}{2}, 1) $
with $ \kappa \leq \iota $
it holds that
\begin{equation} 
\begin{split}  
\label{eq:ThirdBoot}
\sup\nolimits_{ I \in \mathcal{P}_0(\H) }
\sup\nolimits_{ t \in [0,T] }
\| X_t^I \|_{ H_{ \kappa } } 
< \infty. 
\end{split}
\end{equation}
This, \eqref{eq:FirstBoot},
and~\eqref{eq:SecondBoot}
establish~\eqref{eq:Finite iota}.
The proof of Lemma~\ref{lemma:AprioriBound3}
is thus completed.
%
\end{proof} 
\begin{lemma}
	\label{lemma:ZContinuous}
	Assume Setting~\ref{setting:main}
%
	and 
	let  
	$ T \in (0, \infty) $, 
	$ \alpha \in (0,1) $,
	$ \gamma \in \R $,
	$ \mathcal{Z} \in \mathcal{C} ([0,T], H_\gamma) $.
	Then 
	\begin{enumerate}[(i)]
	\item \label{item:well defined}
	it holds for every $ t \in [0,T] $ that
	$ \int_0^t
	\|
	(t-u)^{\alpha - 1 }
	 e^{(t-u)A} 
	\mathcal{Z}_u \|_{H_\gamma}
	\, du < \infty $
	and
	\item \label{item:actual statement}
	it holds that  
	$ ( [0,T] \ni t \mapsto \int_0^t
	(t-u)^{\alpha - 1 }
	e^{(t-u)A} 
	\mathcal{Z}_u
	\, du \in H_\gamma ) \in 
	\mathcal{C}( [0,T], H_\gamma) $. 
	\end{enumerate}
\end{lemma}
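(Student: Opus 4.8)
The plan is to prove both items by a standard factorization-type argument, exploiting the smoothing of the analytic semigroup generated by $A$. First I would record the elementary regularization estimate: since $-A$ is a positive self-adjoint operator with discrete spectrum bounded away from $0$, there is a constant $c\in(0,\infty)$ such that for every $r\in[0,\infty)$ and every $s\in(0,\infty)$ it holds that $\|(-A)^r e^{sA}\|_{L(H)}\le c\,s^{-r}$, and hence in particular $\|e^{sA}\|_{L(H_\gamma)}\le 1$ for all $s\ge 0$. Using this with $r=0$ gives, for $t\in(0,T]$,
\begin{equation*}
\int_0^t \| (t-u)^{\alpha-1} e^{(t-u)A}\mathcal{Z}_u\|_{H_\gamma}\,du
\le \Big(\sup\nolimits_{u\in[0,T]}\|\mathcal{Z}_u\|_{H_\gamma}\Big)\int_0^t (t-u)^{\alpha-1}\,du
= \Big(\sup\nolimits_{u\in[0,T]}\|\mathcal{Z}_u\|_{H_\gamma}\Big)\tfrac{t^\alpha}{\alpha},
\end{equation*}
which is finite because $\mathcal{Z}\in\mathcal{C}([0,T],H_\gamma)$ and $\alpha>0$; for $t=0$ the integral is $0$. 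This establishes item~\eqref{item:well defined} and also furnishes the uniform bound that will be needed for dominated convergence in item~\eqref{item:actual statement}.

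For item~\eqref{item:actual statement}, write $I(t)=\int_0^t (t-u)^{\alpha-1} e^{(t-u)A}\mathcal{Z}_u\,du$ and, after the substitution $v=t-u$, $I(t)=\int_0^t v^{\alpha-1} e^{vA}\mathcal{Z}_{t-v}\,dv=\int_0^T \mathbbm{1}_{(0,t)}(v)\, v^{\alpha-1} e^{vA}\mathcal{Z}_{t-v}\,dv$. To prove continuity at a point $t_0\in[0,T]$, I would take an arbitrary sequence $t_n\to t_0$ in $[0,T]$ and show $\|I(t_n)-I(t_0)\|_{H_\gamma}\to 0$ by the dominated convergence theorem applied to the $v$-integral: for each fixed $v\in(0,T)$ the integrand $\mathbbm{1}_{(0,t_n)}(v)\,v^{\alpha-1} e^{vA}\mathcal{Z}_{t_n-v}$ converges in $H_\gamma$ to $\mathbbm{1}_{(0,t_0)}(v)\,v^{\alpha-1} e^{vA}\mathcal{Z}_{t_0-v}$ (using continuity of $[0,T]\ni s\mapsto \mathcal{Z}_s\in H_\gamma$, continuity of $e^{vA}$ on $H_\gamma$, and the fact that $v\neq t_0$ for a.e.\ $v$), and the integrand is dominated by the fixed integrable function $v\mapsto v^{\alpha-1}\big(\sup_{s\in[0,T]}\|\mathcal{Z}_s\|_{H_\gamma}\big)$. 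Hence $\|I(t_n)-I(t_0)\|_{H_\gamma}\le\int_0^T\|\mathbbm{1}_{(0,t_n)}(v)v^{\alpha-1}e^{vA}\mathcal{Z}_{t_n-v}-\mathbbm{1}_{(0,t_0)}(v)v^{\alpha-1}e^{vA}\mathcal{Z}_{t_0-v}\|_{H_\gamma}\,dv\to 0$.

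A couple of measurability points should be checked en route so that the integrals are well defined as Bochner integrals in $H_\gamma$: the map $(v,t)\mapsto v^{\alpha-1}e^{vA}\mathcal{Z}_{t-v}$ is jointly continuous on $\{(v,t):0<v\le t\le T\}$ (strong continuity of the semigroup on $H_\gamma$ combined with continuity of $\mathcal{Z}$) and therefore strongly measurable; together with the integrability bound from item~\eqref{item:well defined}, Bochner integrability follows. The main — though still routine — obstacle is handling the behaviour near the singularity $v=0$ and near the moving endpoint $v=t$: the integrand has an integrable singularity $v^{\alpha-1}$ at $0$ which is uniform in $t$, so it causes no trouble for the dominating function, while the indicator $\mathbbm{1}_{(0,t_n)}$ converges pointwise a.e.\ because the single point $v=t_0$ is a null set. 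Once these observations are in place the dominated convergence argument closes the proof, and I would conclude that $I\in\mathcal{C}([0,T],H_\gamma)$, which is exactly item~\eqref{item:actual statement}.
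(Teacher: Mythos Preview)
Your proof is correct, and item~\eqref{item:well defined} is handled exactly as in the paper. For item~\eqref{item:actual statement}, however, you take a genuinely different route.

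The paper argues quantitatively: it splits $Z_t-Z_s$ (for $s\le t$) into the integral over $[s,t]$ and the integral over $[0,s]$, and further decomposes the latter into a semigroup-difference term (estimated via $\|(-A)^{-\rho}(e^{(t-s)A}-\operatorname{Id})\|_{L(H_\gamma)}\le (t-s)^\rho$ together with the smoothing bound $\|(-A)^\rho e^{(s-u)A}\|_{L(H_\gamma)}\le (s-u)^{-\rho}$) and a power-difference term $(s-u)^{\alpha-1}-(t-u)^{\alpha-1}$ (estimated via an explicit algebraic inequality and H\"older's inequality). Each piece is bounded by $C(t-s)^\delta$ for some $\delta>0$, so the paper actually obtains H\"older continuity of $t\mapsto Z_t$.

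Your argument is more elementary: the substitution $v=t-u$ freezes the singularity at $v=0$ uniformly in $t$, after which dominated convergence with the single dominating function $v\mapsto v^{\alpha-1}\sup_s\|\mathcal{Z}_s\|_{H_\gamma}$ closes the proof. This is shorter and avoids any fractional-power smoothing estimates, at the cost of giving only qualitative continuity rather than an explicit modulus. Since the lemma is only used downstream to guarantee continuity (so that the factorization formula in Lemma~\ref{lemma:GalerkinRegularity} applies), your weaker conclusion is sufficient for the paper's purposes. One small quibble: the constant in your stated bound $\|(-A)^r e^{sA}\|_{L(H)}\le c\,s^{-r}$ is not uniform in $r$, but you only invoke $r=0$, so this does not affect the argument.
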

\begin{proof}[Proof of Lemma~\ref{lemma:ZContinuous}]
	Note that for every
	$ t \in [0,T] $ it holds that
	\begin{equation}
	\begin{split}  
	\int_0^t
	\| 
	(t-u)^{\alpha - 1 }
	e^{(t-u)A} 
	\mathcal{Z}_u 
	\|_{H_\gamma}
	\, du
	& \leq 
	\int_0^t
	(t-u)^{\alpha - 1 }
	\| 
	\mathcal{Z}_u \|_{H_\gamma}
	\, du
	\\
	&
	\leq
	\tfrac{ t^{\alpha} }{ \alpha }
	\sup\nolimits_{ u \in [0,T] } 
	\| 
	\mathcal{Z}_u \|_{H_\gamma}
	< \infty 
	.
	\end{split} 
	\end{equation}
	This establishes item~\eqref{item:well defined}.
	Next observe that
	item~\eqref{item:well defined}
	ensures that there exists a function 
	$ Z \colon [0,T] \to H_\gamma $ 
	which satisfies for every
	$ t \in [0,T] $ that
	\begin{equation} 
	\label{eq:another Z}
	Z_t = \int_0^t
	(t-u)^{\alpha - 1 }
	e^{(t-u)A} 
	\mathcal{Z}_u
	\, du 
	.
	\end{equation} 
	%
	Note that~\eqref{eq:another Z}
	and the triangle inequality show
	that for 
	every 
	$ s \in [0,T] $,
	$ t \in [s,T] $
	it holds 
	that
	\begin{equation}
	\begin{split}
	\label{eq:FirstPart}
	&
	\| Z_t - Z_s \|_{H_\gamma}
	\\
	&
	\leq
	\Big\| 
	\int_s^t
	(t-u)^{\alpha - 1 }
	e^{(t-u)A} 
	\mathcal{Z}_u
	\, du
	\Big\|_{H_\gamma}
	+
	\Big\|
	\int_0^s
	\big(
	(t-u)^{\alpha - 1 }
	e^{(t-u)A} 
	-
	(s-u)^{\alpha - 1 }
	e^{(s-u)A} 
	\big)
	\mathcal{Z}_u
	\, du
	\Big\|_{H_\gamma}
	\\
	&
	\leq  
	\int_s^t
	(t-u)^{\alpha - 1 }
	\| 
	e^{(t-u)A} 
	\mathcal{Z}_u
	\|_{H_\gamma}
	\, du
	%
	+ 
	\int_0^s
	\big\|
	\big(
	(t-u)^{\alpha - 1 }
	e^{(t-u)A} 
	-
	(s-u)^{\alpha - 1 }
	e^{(s-u)A} 
	\big)
	\mathcal{Z}_u
	\big\|_{H_\gamma}
	\, du 
	.
	\end{split}
	\end{equation}
	Furthermore, observe that for every 
	$ s \in [0,T] $,
	$ t \in [s,T] $ it holds that
	\begin{equation}
	\begin{split}
	\label{eq:Est1}
	&
	\int_s^t
	(t-u)^{\alpha - 1 }
	\| 
	e^{(t-u)A} 
	\mathcal{Z}_u
	\|_{H_\gamma}
	\, du
	\leq 
	\int_s^t
	(t-u)^{\alpha - 1 }
	\|  
	\mathcal{Z}_u
	\|_{H_\gamma}
	\, du
		\\
		&
	\leq
	[ \sup\nolimits_{ u \in [0,T] }
	\| \mathcal{Z}_u \|_{H_\gamma} ]
	\int_s^t
	(t-u)^{\alpha - 1 } 
	\, du
	\leq
	[ \sup\nolimits_{ u \in [0,T] }
	\| \mathcal{Z}_u \|_{H_\gamma} ]
	\tfrac{ ( t - s )^{ \alpha }  }{ \alpha } 
	.
	\end{split}
	\end{equation}
	In addition, 
	note that the triangle inequality assures
	that for every 
	$ s \in [0,T] $,
	$ t \in [s,T] $
	it holds that
	\begin{equation}
	\begin{split}
	\label{eq:Est2}
	& 
	\int_0^s
	\big\|
	\big(
	(t-u)^{\alpha - 1 }
	e^{(t-u)A} 
	-
	(s-u)^{\alpha - 1 }
	e^{(s-u)A} 
	\big)
	\mathcal{Z}_u
	\big\|_{H_\gamma}
	\, du 
	\\
	&
	\leq 
	\int_0^s
	\big[ 
	(t-u)^{\alpha - 1 }
	\| 
	(
	e^{(t-u)A} 
	- 
	e^{(s-u)A} 
	)
	\mathcal{Z}_u
	\|_{H_\gamma}
	%
	+ 
	(
	(s-u)^{\alpha - 1 } 
	-
	(t-u)^{\alpha - 1 }
	)
	\|
	e^{(s-u)A} 
	\mathcal{Z}_u
	\|_{H_\gamma}
	\big] 
	\, du 
	\\
	&
	\leq 
	\int_0^s
	(t-u)^{\alpha - 1 }
	\|  
	e^{(s-u)A} 
	(
	e^{(t-s)A} 
	- 
	\operatorname{Id}_{H_\gamma}	
	)
	\mathcal{Z}_u
	\|_{H_\gamma}
	\,du
	\\
	&
	\quad 
	+
	\int_0^s
	(
	(s-u)^{\alpha - 1 } 
	-
	(t-u)^{\alpha - 1 }
	)
	\|
	\mathcal{Z}_u
	\|_{H_\gamma}
	\, du 
	.
	%
	%
	\end{split}
	\end{equation}
	Next observe that
	the fact that
		for every
	$ t \in (0,T] $, 
	$ s \in (0, t) $, 
	$ u \in [0, s) $
	it holds that
	$ ( t - u )^{ \alpha - 1 } \leq (t-s)^{\alpha-1} $
	proves that
	for every 
	$ s \in [0,T] $,
	$ t \in [s,T] $,
	$ \rho \in ( 1 - \alpha, 1 ) $
	it holds that
	\begin{equation} 
	\begin{split}
	\label{eq:Est4}
	&
	\int_0^s
	(t-u)^{\alpha - 1 }
	\|  
	e^{(s-u)A} 
	(
	e^{(t-s)A} 
	- 
	\operatorname{Id}_{H_\gamma}
	)
	\mathcal{Z}_u
	\|_{H_\gamma}
	\,du
	\\
	&
	\leq 
	\int_0^s
	(t-u)^{\alpha - 1 }
	\| 
	(-A)^{ \rho }
	e^{(s-u)A} 
	\|_{ L(H_\gamma) }
	\|
	( -A)^{ - \rho }
	(
	e^{(t-s)A} 
	- 
	\operatorname{Id}_{H_\gamma}
	)
	\|_{ L(H_\gamma) }
	\|
	\mathcal{Z}_u
	\|_{H_\gamma}
	\, du
	\\
	&
	\leq
	[ \sup\nolimits_{ u \in [0,T] } 
    \| \mathcal{Z}_u \|_{H_\gamma} ] 
	\int_0^s
	( t - u )^{ \alpha - 1 }
	( s - u )^{ - \rho }
	( t - s )^\rho
	\, du
	\\
	&
	\leq 
	(t-s)^{ \rho + \alpha - 1 }
	[ \sup\nolimits_{ u \in [0,T] } \| \mathcal{Z}_u \|_{H_\gamma} ]
	\int_0^s
	( s - u )^{  - \rho  }
	\, du
	=
	(t-s)^{ \rho + \alpha - 1 }
	[ \sup\nolimits_{ u \in [0,T] } \| \mathcal{Z}_u \|_{H_\gamma} ]
	\tfrac{ s^{ 1 - \rho } }{ 1 - \rho }
	.
	\end{split}
	\end{equation}
	Moreover, observe that the fact that 
		for every
	$ x, y \in [0,T] $,
	$ z \in [0,1] $
	it holds that
	$ | x^z - y^z | \leq |x - y|^z $
	ensures that for every 
	$ t \in (0,T] $, $ s \in (0,t) $,
	$ u \in [0,s) $
	it holds that
	\begin{equation}
	(s-u)^{ \alpha - 1 } - (t-u)^{ \alpha - 1 }
	\leq
	\tfrac{
		(t-s)^{ 1 - \alpha }
	}
	{
		(s-u)^{ 1 - \alpha } (t-u)^{ 1 - \alpha } 
	}
	.
	\end{equation}
	This implies that for every 
	$ s \in [0,T] $,
	$ t \in [s,T] $,
	$ \varepsilon \in ( 0,  \min \{ \nicefrac{\alpha}{ ( 2 (1-\alpha) ) }, \nicefrac{1}{2}, 1 - \alpha \} ) $ it holds that
	\begin{equation}
	\begin{split}
	\label{eq:Est3}
	&
	\int_0^s
	(
	(s-u)^{\alpha - 1 } 
	-
	(t-u)^{\alpha - 1 }
	) 
	\, du 
	\leq 
	( t - s )^{ 1 - \alpha }
	\int_0^s
	\tfrac{
		du
	}
	{
		(s-u)^{ 1 - \alpha } (t-u)^{ 1 - \alpha } 
	} 
	\\
	&
	=
	( t - s )^{ 1 - \alpha }
	\int_0^s 
	(s-u)^{ \alpha - 1 } 
	(t-u)^{ \alpha - 1 + \varepsilon  }
	(t - u )^{-\varepsilon} 
	\, du
	\\
	&
	\leq 
	( t - s )^{ 1 - \alpha }
	\int_0^s 
	(s-u)^{ \alpha - 1 } 
	(t-s)^{ \alpha - 1 + \varepsilon  }
	(t - u )^{- \varepsilon } 
	\, du
	\\
	&
	\leq
	( t - s )^{ \varepsilon }
	\int_0^s 
	(s-u)^{ \alpha - 1 }  
	(t - u )^{- \varepsilon } 
	\, du
	\\
	&
	\leq
	( t - s )^{ \varepsilon }
	\Big[ 
	\int_0^s
	(s-u)^{ ( \alpha - 1 ) (1 + 2 \varepsilon ) }
	\, du
	\Big]^{ \nicefrac{1}{ ( 1 + 2 \varepsilon ) } }
	\Big[ 
	\int_0^s
	(t-u)^{ - \nicefrac{ \varepsilon ( 1 + 2 \varepsilon)}{ 2 \varepsilon } }
	\, du
	\Big]^{ \nicefrac{ 2 \varepsilon }{ ( 1 + 2 \varepsilon ) } }
	\\
	&
	\leq 
	( t - s )^{ \varepsilon }
	\Big[ 
	\tfrac{ 
		s^{ 1 + ( \alpha - 1 ) ( 1 + 2 \varepsilon ) }
	}
	{
		1 + ( \alpha - 1 )  ( 1 + 2 \varepsilon )
	}
	\Big]^{ \nicefrac{1}{ ( 1 + 2 \varepsilon ) } }
	%
	\Big[ 
	\int_0^t
	( t - u )^{ - \varepsilon  - ( 1/2 ) }
	\, du
	\Big]^{  \nicefrac{ 2 \varepsilon  }{  ( 1 + 2 \varepsilon)} }
	\\
	&
	= 
	( t - s )^{ \varepsilon }
	%
	%
	\Big[ 
	\tfrac{ 
		s^{ 1 + ( \alpha - 1 ) ( 1 + 2 \varepsilon) }
	}
	{
		1 + ( \alpha - 1 ) ( 1 + 2 \varepsilon)
	}
	\Big]^{ \nicefrac{1}{( 1 + 2 \varepsilon)} }
	\Big[ 
	\tfrac{ 
		t^{ ( 1/2 ) - \varepsilon }
	}
	{
	( 1/2 ) - \varepsilon
	}
	\Big]^{ \nicefrac{ 2 \varepsilon  }{   ( 1 + 2 \varepsilon)} }
	.
	\end{split}
	\end{equation}
	Combining~\eqref{eq:FirstPart}--\eqref{eq:Est4}
	therefore
	demonstrates that
	for every 
	$ s \in [0,T] $, $ t \in [s,T] $, 
	$ \rho \in ( 1 - \alpha, 1 ) $,
	$ \varepsilon \in ( 0,  \min \{ \nicefrac{\alpha}{ ( 2 (1-\alpha) ) }, \nicefrac{1}{2}, 1 - \alpha \} ) $
	it holds that
	\begin{equation}
	\begin{split} 
	&
	\| Z_t - Z_s \|_{H_\gamma}
	\leq 
	\sup\nolimits_{ u \in [0,T] }
	\| \mathcal{Z}_u \|_{H_\gamma}
	\\
	&
	\cdot 
	\bigg[ 
	\tfrac{ ( t - s )^{ \alpha }  }{ \alpha } 
	+
	(t-s)^{ \rho + \alpha - 1 } 
	\tfrac{ s^{ 1 - \rho } }{ 1 - \rho }
	%
	+
	( t - s )^{ \varepsilon }
	%
	%
	\Big[ 
	\tfrac{ 
		\max \{ T, 1 \}
	}
	{
		1 + ( \alpha - 1 ) ( 1 + 2 \varepsilon)
	}
	\Big]^{ \nicefrac{1}{( 1 + 2 \varepsilon)} }
	\Big[ 
	\tfrac{ 
		\max \{ T, 1 \}
	}
	{
		( 1 / 2 )  - \varepsilon  
	}
	\Big]^{ \nicefrac{ 2 \varepsilon }{ ( 1 + 2 \varepsilon)} }
	\bigg]
	.
	\end{split} 
	\end{equation} 
	This
	establishes item~\eqref{item:actual statement}.
	The proof of Lemma~\ref{lemma:ZContinuous}
	is thus completed.
\end{proof}
\begin{lemma}
	\label{lemma:Justify}
	%
Assume Setting~\ref{setting:main},
	let  
	$ T \in (0, \infty) $, 
	%
	%
	$ \beta \in \R $,
	$ \gamma \in ( -\infty, \nicefrac{1}{2} + \beta ) $,
	$ B \in \HS( H, H_\beta ) $,
	let
	$ ( \Omega, \F, \P ) $
	be a probability space,
	for every set $ R $
	and every function
	$ f \colon \Omega \to R $
	let
	$ [f]_{\P, \B(H_\gamma)} =
	\{
	g \in \M(\F,
	\B(H_\gamma))
	\colon
	(
	\exists\, D \in \F \colon
	\P( D ) = 0 \,\,\text{and}\,\,
	\{\omega \in \Omega \colon
	f(\omega) \neq g(\omega)
	\}
	\subseteq D
	)
	\} $,   
	and
	let
	$ (W_t)_{t\in [0,T]} $
	be an
	$ \operatorname{Id}_H $-cylindrical 
	 Wiener process.
	Then there exists an up to
	 indistinguishability unique
	stochastic process
	 $ O \colon [0,T] \times \Omega \to H_\gamma $  
	with continuous sample paths
	which satisfies for every 
	$ t \in [0,T] $ 
	that
	$ [O_t]_{\P, \B( H_\gamma )} = \int_0^t e^{(t-s)A} B \, dW_s $.
	%
\end{lemma}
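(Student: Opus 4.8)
The plan is to establish the uniqueness claim by a routine sample-path argument and the existence claim by the factorization method for stochastic convolutions.

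For uniqueness I would argue as follows. Suppose that $O^{(1)}, O^{(2)} \colon [0,T]\times\Omega\to H_\gamma$ are stochastic processes with continuous sample paths which satisfy for every $t\in[0,T]$ that $[O^{(1)}_t]_{\P,\B(H_\gamma)} = [O^{(2)}_t]_{\P,\B(H_\gamma)} = \int_0^t e^{(t-s)A}B\, dW_s$. Then for every $t\in[0,T]$ it holds that $\P(O^{(1)}_t = O^{(2)}_t) = 1$. Combining this with the fact that $\Q\cap[0,T]$ is a countable dense subset of $[0,T]$ and the assumption that $O^{(1)}$ and $O^{(2)}$ have continuous sample paths would then show that $\P(\forall\, t\in[0,T]\colon O^{(1)}_t = O^{(2)}_t) = 1$, i.e.\ that $O^{(1)}$ and $O^{(2)}$ are indistinguishable.

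For existence I would first fix $\alpha\in(0,\nicefrac{1}{2})$ with $\alpha + \max\{\gamma-\beta,0\} < \nicefrac{1}{2}$ — such $\alpha$ exists because the assumption $\gamma < \nicefrac{1}{2}+\beta$ ensures $\max\{\gamma-\beta,0\}<\nicefrac{1}{2}$ — and then fix $p\in(\nicefrac{1}{\alpha},\infty)$. The smoothing estimate for the semigroup generated by $A$ together with $B\in\HS(H,H_\beta)$ yields a constant $C\in(0,\infty)$ such that for every $\tau\in(0,T]$ it holds that $\|e^{\tau A}B\|_{\HS(H,H_\gamma)} \le C\tau^{-\max\{\gamma-\beta,0\}}$, which since $2\max\{\gamma-\beta,0\}<1$ already shows that for every $t\in[0,T]$ the stochastic convolution $\int_0^t e^{(t-s)A}B\, dW_s$ is a well-defined $H_\gamma$-valued random variable. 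Moreover, since $2\alpha + 2\max\{\gamma-\beta,0\}<1$, for every $s\in(0,T]$ the map $[0,s)\ni r\mapsto \|(s-r)^{-\alpha}e^{(s-r)A}B\|_{\HS(H,H_\gamma)}^2$ is integrable, so the stochastic integral $Y_s := \int_0^s (s-r)^{-\alpha}e^{(s-r)A}B\, dW_r$ defines a centered Gaussian $H_\gamma$-valued random variable with $\sup_{s\in(0,T]}\E[\|Y_s\|_{H_\gamma}^2]<\infty$. Using the equivalence of moments of Gaussian random variables in Hilbert spaces one obtains $\sup_{s\in(0,T]}\E[\|Y_s\|_{H_\gamma}^p]<\infty$, and by passing to a jointly measurable adapted version of $(Y_s)_{s\in(0,T]}$ (and setting $Y_0 := 0$) one gets $Y\in L^p([0,T]\times\Omega; H_\gamma)$; in particular, $\P$-a.s.\ it holds that $\int_0^T\|Y_s\|_{H_\gamma}^p\, ds<\infty$. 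Next I would combine the elementary identity $\int_s^t (t-r)^{\alpha-1}(r-s)^{-\alpha}\, dr = \nicefrac{\pi}{\sin(\pi\alpha)}$ for $0\le s<t$, the semigroup property $e^{(t-r)A}e^{(r-s)A} = e^{(t-s)A}$, and a stochastic Fubini theorem (see, e.g., Da Prato \& Zabczyk~\cite{DaPratoZabczyk2014}) to deduce that for every $t\in[0,T]$ it holds $\P$-a.s.\ that
\begin{equation*}
\int_0^t e^{(t-s)A}B\, dW_s = \tfrac{\sin(\pi\alpha)}{\pi}\int_0^t (t-s)^{\alpha-1}e^{(t-s)A}Y_s\, ds.
\end{equation*}
Since $\alpha p>1$, the factorization-regularity property of the operator $g\mapsto\big(t\mapsto\int_0^t(t-s)^{\alpha-1}e^{(t-s)A}g_s\, ds\big)$ mapping $L^p([0,T];H_\gamma)$ continuously into $\mathcal{C}([0,T];H_\gamma)$ (cf., e.g., Da Prato \& Zabczyk~\cite{DaPratoZabczyk2014}; the relevant pathwise continuity estimates are of the type carried out in the proof of Lemma~\ref{lemma:ZContinuous}) would provide an event $\Omega_0\in\F$ with $\P(\Omega_0)=1$ on which $[0,T]\ni t\mapsto\frac{\sin(\pi\alpha)}{\pi}\int_0^t(t-s)^{\alpha-1}e^{(t-s)A}Y_s\, ds\in H_\gamma$ is a well-defined continuous function. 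Defining $O$ to equal this function on $\Omega_0$ and to equal $0$ on $\Omega\setminus\Omega_0$ then produces a stochastic process $O\colon[0,T]\times\Omega\to H_\gamma$ with continuous sample paths, and the displayed identity shows that $[O_t]_{\P,\B(H_\gamma)} = \int_0^t e^{(t-s)A}B\, dW_s$ for every $t\in[0,T]$, which would complete the existence part.

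The main obstacle is precisely the passage from the pointwise-in-$t$ existence of the stochastic convolution in $H_\gamma$ (immediate from the Hilbert--Schmidt estimate above) to the existence of a version with continuous $H_\gamma$-valued sample paths; the factorization method resolves this, and the only delicate points will be the legitimacy of the stochastic Fubini interchange and the simultaneous fulfilment of the two constraints $\alpha + \max\{\gamma-\beta,0\}<\nicefrac{1}{2}$ and $\alpha p>1$, both of which are available exactly because $\gamma<\nicefrac{1}{2}+\beta$.
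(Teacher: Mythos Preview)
Your argument is correct, but it takes a genuinely different route from the paper. The paper proves existence by a direct moment estimate plus Kolmogorov--Chentsov: after checking that $\int_0^t \|e^{(t-s)A}B\|_{\HS(H,H_\gamma)}^2\,ds<\infty$, it picks any version $\mathbf{O}$ of the stochastic convolution, splits $\mathbf{O}_t-\mathbf{O}_s$ into $\int_s^t e^{(t-u)A}B\,dW_u$ and $\int_0^s e^{(s-u)A}(e^{(t-s)A}-\operatorname{Id}_{H_\beta})B\,dW_u$, applies the Burkholder--Davis--Gundy-type inequality from Da Prato \& Zabczyk~\cite[Lemma~7.7]{dz92} together with the smoothing bound $\|(-A)^{-\rho}(e^{(t-s)A}-\operatorname{Id}_H)\|_{L(H)}\le (t-s)^\rho$, and obtains $\|\mathbf{O}_t-\mathbf{O}_s\|_{\L^p(\P;H_\gamma)}\lesssim |t-s|^{\min\{\rho,\,1/2-\max\{0,\gamma-\beta\}\}}$ for $\rho\in(0,\min\{1,\nicefrac{1}{2}+\beta-\gamma\})$; Kolmogorov--Chentsov then yields the continuous version and uniqueness simultaneously.

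Your factorization approach is equally valid and is in fact the method the paper itself deploys in the very next lemma (Lemma~\ref{lemma:GalerkinRegularity}) to control $\sup_{t\in[0,T]}\|P O_t\|_{H_\gamma}$ in $L^p$. The trade-off: the paper's Kolmogorov--Chentsov argument is more self-contained here (no stochastic Fubini, no auxiliary process $Y$), while your factorization route gives an explicit pathwise representation of $O$ that is immediately reusable for supremum-in-time moment bounds. One minor caveat: Lemma~\ref{lemma:ZContinuous} as stated assumes $\mathcal{Z}\in\mathcal{C}([0,T],H_\gamma)$, not merely $\mathcal{Z}\in L^p([0,T];H_\gamma)$, so you would indeed need the $L^p$-to-$\mathcal{C}$ version from Da Prato \& Zabczyk rather than Lemma~\ref{lemma:ZContinuous} itself; you flag this correctly.
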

\begin{proof}[Proof of Lemma~\ref{lemma:Justify}]
	%
	%
	Note that 
	the fact that
	$ \gamma - \beta < \nicefrac{1}{2} $
	ensures that
	for every
	$ t \in [0,T] $
	it holds that
	\begin{equation}
	\begin{split} 
	&
	\int_0^t \| e^{(t-s)A} B \|_{ \HS(H, H_\gamma) }^2 \, ds
	= 
	\int_0^t \| 
	(-A)^{ \min \{ 0, \gamma - \beta \} }
	(-A)^{ \max \{ 0, \gamma - \beta \} }
	e^{(t-s)A} 
	B 
	\|_{ \HS(H, H_{ \beta } ) }^2 \, ds
	\\
	&
	\leq 
	\| (-A)^{ \min \{ 0, \gamma - \beta \} } \|_{L(H)}^2
	\int_0^t
	\| (-A)^{\max \{ 0, \gamma - \beta \} } e^{(t-s)A} \|_{L(H)}^2
	\| B \|_{ \HS(H, H_\beta)}^2 
	\, ds
	\\
	&
	\leq
	\| (-A)^{ \min \{ 0, \gamma - \beta \} } \|_{L(H)}^2
	\| B \|_{ \HS(H, H_\beta)}^2
	\int_0^t
	( t - s )^{ - 2 \max \{ 0, \gamma - \beta \} }
	\, 
	ds
	\\
	&
	=
	\| (-A)^{ \min \{ 0, \gamma - \beta \} } \|_{L(H)}^2
	\| B \|_{ \HS(H, H_\beta)}^2
	\tfrac{ 
		t^{ 1 - 2 \max \{ 0, \gamma - \beta \} }
	}{
	1 - 2 \max \{ 0, \gamma - \beta \}
	}
	.
	\end{split}
	\end{equation}
	This shows that there
	exists
	a stochastic process
	$ \mathbf{O} \colon [0,T] \times \Omega \to H_{ \gamma } $
	which satisfies for every 
	$ t \in [0,T] $ that
	\begin{equation} 
	\label{eq:Introduce O}
	[ \mathbf{O}_t ]_{ \P, \B( H_{ \gamma } )} 
	=
	\int_0^t e^{(t-s)A} B \, d W_s 
	.
	\end{equation} 
	Observe that~\eqref{eq:Introduce O}
	and the
	Burkholder-Davis-Gundy-type inequality in Da Prato \& Zabczyk~\cite[Lemma~7.7]{dz92}
	prove that
	for every 
 	$ p \in [2, \infty) $,
	$ s \in [0,T] $,
	$ t \in [s,T] $,
	$ \rho \in (0, 
	\min \{ 1, \nicefrac{1}{2} + \beta - \gamma \} ) $
	it holds that
	\begin{equation}
	\begin{split}
	&
	\| \mathbf{O}_t - \mathbf{O}_s \|_{ \L^p(\P; H_{  \gamma } ) }
	=
	\Big\|
	\int_0^t e^{(t-u)A} B \, dW_u
	-
	\int_0^s e^{(s-u)A} B \, dW_u
	\Big\|_{ L^p(\P; H_{ \gamma } ) }
	\\
	&
	\leq 
	\Big\| 
	\int_s^t e^{(t-u)A} B \, dW_u
	\Big\|_{ L^p(\P; H_{ \gamma } ) }
	+
	\Big\|
	\int_0^s
	e^{(s-u)A}
	 (
	e^{(t-s)A}
	-
	\operatorname{Id}_{H_\beta}
	 )
	B
	\, dW_u
	\Big\|_{ L^p(\P; H_{ \gamma } ) }
	\\
	&
	\leq 
	\Big[
	\tfrac{ p (p-1) }{ 2 }
	\int_s^t
	\| (-A)^{ \min \{ 0, \gamma - \beta \} } \|_{L(H)}^2
	\|
	(-A)^{ \max \{ 0, \gamma - \beta \} }
	e^{(t-u)A} \|_{L(H)}^2
	\| B \|_{ \HS(H, H_\beta)}^2
	\, du
	\Big]^{ \nicefrac{1}{2} }
	\\
	&
	\quad
	+
	\Big[
	\tfrac{ p (p-1) }{ 2 }
	\int_0^s
	\| (-A)^{ \min \{0, \rho + \gamma - \beta \} } \|_{L(H)}^2
	\|
	(-A)^{ \max \{0, \rho + \gamma - \beta \} }
	e^{(s-u)A}
	\|_{L(H)}^2
	\\
	&
	\qquad 
	\cdot 
	\|
	(-A)^{-\rho} 
	(
	e^{(t-s)A}
	-
	\operatorname{Id}_H
	)
	\|_{L(H)}^2
	\| B \|_{ \HS(H, H_\beta)}^2
	\, du
	\Big]^{ \nicefrac{1}{2} }
	\\
	&
	\leq 
	\tfrac{ p }{ \sqrt{2} }
	\| B \|_{ \HS(H, H_\beta)} 
	\bigg(
	\| (-A)^{ \min \{ 0, \gamma - \beta \} } \|_{L(H)}
	\Big[ 
	\int_s^t 
	(t-u)^{ - 2 \max \{ 0, \gamma - \beta \} }
	\, du
	\Big]^{ \nicefrac{1}{2} }
	\\
	&
	\quad 
	+
	\| (-A)^{ \min \{0, \rho + \gamma - \beta \} } \|_{L(H)}
	\Big[ 
	\int_0^s
	(s-u)^{ - 2 \max \{ 0, \rho + \gamma - \beta \} }
	( t - s )^{ 2 \rho } 
	\, du
	\Big]^{ \nicefrac{1}{2} }
	\bigg)
	\\
	&
	\leq 
	\tfrac{ p }{ \sqrt{2} }
	\| B \|_{ \HS(H, H_\beta)} 
	\big(
	\| (-A)^{ \min \{ 0, \gamma - \beta \} } \|_{L(H)}
	+
	\| (-A)^{ \min \{0, \rho + \gamma - \beta \} } \|_{L(H)}
	\big)
	\\
	&
	\quad 
	\cdot 
	\Big[
	\tfrac{ (t-s)^{ ( 1 / 2 ) - \max \{ 0, \gamma - \beta \} } }{ 
		\sqrt{ 1 - 2 \max \{ 0, \gamma - \beta \} } }
	+
	(t-s)^{ \rho } 
	\tfrac{ s^{ ( 1 / 2 ) - \max \{ 0, \rho + \gamma - \beta \} } }{
	\sqrt{1 - 2 \max \{ 0, \rho + \gamma - \beta \} } } 
	\Big]
	.
	\end{split}
	\end{equation}
	The Kolmogorov-Chentsov theorem
	(cf., e.g., Kallenberg~\cite[Theorem~2.23]{Kallenberg1997})
	therefore
	assures that
	there exists an up to indistinguishability unique stochastic
	process 
	$ O \colon [0,T] \times \Omega \to H_\gamma $
	with continuous sample paths
	which satisfies for every
	$ t \in [0,T] $
	that
	$ [ O_t ]_{ \P, \B(H_\gamma)} 
	=
	\int_0^t e^{(t-s)A} B \, d W_s $.
	The proof of Lemma~\ref{lemma:Justify}
	is thus completed.
\end{proof}
\begin{lemma}
	\label{lemma:GalerkinRegularity}
\sloppy 
Assume Setting~\ref{setting:main},  
	let  
	$ T \in (0, \infty) $, 
	$ I \subseteq \H $,  
$ \beta \in \R $, 
$ \gamma \in ( -\infty, \frac{1}{2} + \beta ) $, 
$ \alpha \in ( 0,  
 \frac{1}{2} - \max \{0, \gamma - \beta \} ) $,
$ B \in \HS( H, H_\beta ) $,
let
$ \mathbb{B} \in L(H) $,
$ P \in L(H_{ \min \{0, \gamma \} }) $ 
satisfy for every
$ u, v \in H $ that
$ \langle B u, v \rangle_H
=
\langle u, \mathbb{B} v \rangle_H $
%
and
%
$ Pv =\sum_{h \in I} \langle h , v \rangle_H h $,
let
$ ( \Omega, \F, \P ) $
be a probability space, 
for every set $ R $
and every function
$ f \colon \Omega \to R $
let
$ [f]_{\P, \B(H_\gamma)} =
\{
g \in \M(\F,
\B(H_\gamma))
\colon
(
\exists\, D \in \F \colon
\P( D ) = 0 \,\,\text{and}\,\,
\{\omega \in \Omega \colon
f(\omega) \neq g(\omega)
\}
\subseteq D
)
\} $,  
let
$ (W_t)_{t\in [0,T]} $
be an
$ \operatorname{Id}_H $-cylindrical 
 Wiener process,
	and let $ O \colon [0,T] \times \Omega \to H_\gamma $ be a stochastic process
	with continuous sample paths
	which satisfies for every
	$ t \in [0,T] $ 
	that
	$ [O_t]_{\P, \B( H_\gamma )} = \int_0^t e^{(t-s)A} B \, dW_s $.
%
	%
	%
	Then 
	it holds for every
	$ p \in ( \nicefrac{1}{\alpha}, \infty) $
	that
\begin{equation}
\begin{split}
&
\big(
\E \big[
\sup\nolimits_{ t \in [0,T] }
\| P O_t \|_{H_\gamma}^p
\big]
\big)^{\nicefrac{1}{p}}
\leq
T^{ \alpha }
2^{ \alpha - 1 }
\big[ 
\tfrac{ p ( p - 1 ) }{ p \alpha - 1 }
\big] 
\Big[
\Big( 
\int_0^\infty
s^{ - 2 \alpha } e^{-s} \, ds
\Big)
\sum\nolimits_{ h \in I } 
\| \mathbb{B} h \|_{ H }^2
|\values_h|^{ 2 ( \alpha + \gamma ) - 1 } 
\Big]^{\nicefrac{1}{2}}
.
\end{split} 
\end{equation}
\end{lemma}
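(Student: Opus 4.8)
The plan is to run the \emph{factorization method} for stochastic convolutions, exactly as in the proofs of Lemma~\ref{lemma:Justify} and Lemma~\ref{lemma:ZContinuous}. Fix $p\in(\nicefrac1\alpha,\infty)$ (so in particular $p>2$ and $p\alpha>1$). I would first record two consequences of the standing hypotheses: since $\alpha<\nicefrac12-\max\{0,\gamma-\beta\}$ we have $2\alpha<1$ and $\alpha+\gamma-\beta<\nicefrac12$, so that, using $\sum_{h\in\H}|\values_h|^{2\beta}\|\mathbb Bh\|_H^2=\|B\|_{\HS(H,H_\beta)}^2$ together with $\inf_{g\in\H}|\values_g|>0$ (from $\sup_{h\in\H}\values_h<0$), the quantity $\sum_{h\in I}\|\mathbb Bh\|_H^2|\values_h|^{2(\alpha+\gamma)-1}$ is finite. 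Next, analogously to Lemma~\ref{lemma:Justify} (Kolmogorov--Chentsov plus a Burkholder--Davis--Gundy estimate; the bound on $\alpha$ leaves a little regularity room beyond $H_\gamma$), I would introduce a stochastic process $Y\colon[0,T]\times\Omega\to H_\gamma$ with continuous sample paths satisfying $[Y_s]_{\P,\B(H_\gamma)}=\int_0^s(s-u)^{-\alpha}e^{(s-u)A}B\,dW_u$ for every $s\in[0,T]$. By the stochastic Fubini theorem one then has, for each $t\in[0,T]$, $\P$-a.s.\ the factorization identity $O_t=\tfrac{\sin(\pi\alpha)}{\pi}\int_0^t(t-s)^{\alpha-1}e^{(t-s)A}Y_s\,ds$; since $O$ has continuous sample paths and, by Lemma~\ref{lemma:ZContinuous}, so does the right-hand side, this identity holds $\P$-a.s.\ simultaneously for all $t\in[0,T]$.

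Applying the operator $P$ (which commutes with $e^{rA}$ and has $\|P\|_{L(H_\gamma)}\le1$ by Lemma~\ref{lemma:ContractiveProjection}), using $\|e^{rA}\|_{L(H)}\le1$, and then H\"older's inequality in the time integral with exponents $\tfrac{p}{p-1}$ and $p$, I would obtain $\P$-a.s.
\begin{equation}
\sup\nolimits_{t\in[0,T]}\|PO_t\|_{H_\gamma}\le\tfrac{\sin(\pi\alpha)}{\pi}\Big(\tfrac{p-1}{p\alpha-1}\Big)^{\!\frac{p-1}{p}}T^{\alpha-\nicefrac1p}\Big(\int_0^T\|PY_s\|_{H_\gamma}^p\,ds\Big)^{\!\nicefrac1p},
\end{equation}
where the elementary identity $\int_0^T(T-s)^{(\alpha-1)\frac{p}{p-1}}\,ds=\tfrac{p-1}{p\alpha-1}T^{(p\alpha-1)/(p-1)}$ is used and converges precisely because $p\alpha>1$. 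Raising this to the $p$-th power, taking expectations, and applying Tonelli's theorem reduces the claim to a bound for $\E[\|PY_s\|_{H_\gamma}^p]$ that is uniform in $s\in[0,T]$.

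To get that bound I would invoke the Burkholder--Davis--Gundy-type inequality in Da Prato \& Zabczyk~\cite[Lemma~7.7]{dz92} (as in the proof of Lemma~\ref{lemma:Justify}): since $PY_s$ agrees $\P$-a.s.\ with $\int_0^s(s-u)^{-\alpha}e^{(s-u)A}PB\,dW_u$,
$(\E[\|PY_s\|_{H_\gamma}^p])^{\nicefrac1p}\le\big(\tfrac{p(p-1)}{2}\big)^{\!\nicefrac12}\big(\int_0^s\|(s-u)^{-\alpha}e^{(s-u)A}PB\|_{\HS(H,H_\gamma)}^2\,du\big)^{\!\nicefrac12}$. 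Expanding the Hilbert--Schmidt norm in the eigenbasis $\H$ gives $\int_0^s\|(s-u)^{-\alpha}e^{(s-u)A}PB\|_{\HS(H,H_\gamma)}^2\,du=\sum_{h\in I}\|\mathbb Bh\|_H^2|\values_h|^{2\gamma}\int_0^s r^{-2\alpha}e^{2r\values_h}\,dr$, and the substitution $v=2|\values_h|r$ yields $\int_0^s r^{-2\alpha}e^{2r\values_h}\,dr\le2^{2\alpha-1}|\values_h|^{2\alpha-1}\int_0^\infty v^{-2\alpha}e^{-v}\,dv$, hence $(\E[\|PY_s\|_{H_\gamma}^p])^{\nicefrac2p}\le2^{2\alpha-1}\big(\tfrac{p(p-1)}{2}\big)\big(\int_0^\infty v^{-2\alpha}e^{-v}\,dv\big)\sum_{h\in I}\|\mathbb Bh\|_H^2|\values_h|^{2(\alpha+\gamma)-1}$ uniformly in $s$. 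Substituting this into the reduction above (the factor $T$ from $\int_0^T ds$ combines with $T^{p\alpha-1}$ to give $T^{p\alpha}$), simplifying $\big(\tfrac{p(p-1)}{2}\big)^{\nicefrac12}(2^{2\alpha-1})^{\nicefrac12}=(p(p-1))^{\nicefrac12}2^{\alpha-1}$, and finally estimating the leftover scalar $\tfrac{\sin(\pi\alpha)}{\pi}\big(\tfrac{p-1}{p\alpha-1}\big)^{\frac{p-1}{p}}(p(p-1))^{\nicefrac12}\le\tfrac{p(p-1)}{p\alpha-1}$ (using $\tfrac{\sin(\pi\alpha)}{\pi}\le1$, $\tfrac{p-1}{p\alpha-1}\ge1$ so $\big(\tfrac{p-1}{p\alpha-1}\big)^{\frac{p-1}{p}}\le\tfrac{p-1}{p\alpha-1}$, and $(p(p-1))^{\nicefrac12}\le p$) produces exactly the asserted inequality.

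The main obstacle I expect is not any individual estimate but (i) setting up the factorization identity so that it is valid, $\P$-a.s., \emph{simultaneously for all $t\in[0,T]$} --- which is where the continuous modification of $Y$ and the continuity statement of Lemma~\ref{lemma:ZContinuous} are needed --- and (ii) threading the constants through the three nested steps (H\"older in time, the Burkholder--Davis--Gundy bound, and the $\Gamma$-type substitution) carefully enough to land on the clean closed form stated in the lemma rather than on a messier but equivalent one.
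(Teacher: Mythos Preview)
Your proposal is correct and follows essentially the same route as the paper: introduce the auxiliary process $Y$ (the paper calls it $\mathcal Z$) via Kolmogorov--Chentsov, invoke the factorization identity (the paper cites Da~Prato \& Zabczyk~\cite[Theorem~5.10]{DaPratoZabczyk2014} rather than stochastic Fubini directly), apply H\"older in time, then the Burkholder--Davis--Gundy inequality, expand the Hilbert--Schmidt norm in the eigenbasis using the adjoint $\mathbb B$, and perform the $v=2|\values_h|r$ substitution. The only cosmetic difference is in the bookkeeping of constants: the paper drops $\tfrac{\sin(\pi\alpha)}{\pi}\le 1$ early and estimates $p(p-1)\le p^2$ before taking square roots, whereas you carry $\tfrac{\sin(\pi\alpha)}{\pi}$ and $\sqrt{p(p-1)}$ to the end and bound them there; both routes land on the stated constant.
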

\begin{proof}[Proof of Lemma~\ref{lemma:GalerkinRegularity}]
Note that for every $ t \in [0,T] $ it holds that
\begin{equation} 
\begin{split} 
&
\int_0^t (t-u)^{-2 \alpha} 
\| e^{(t-u)A} B \|_{\HS(U,H_\gamma)}^2 \,du
\\
&
\leq 
\int_0^t 
(t-u)^{-2 \alpha }  
\| (-A)^{ \max \{0, \gamma - \beta \} } 
e^{(t-u)A}
\|_{L(H)}^2 
\| (-A)^{ \min \{0, \gamma - \beta \} }
\|_{L(H)}^2 
\| B \|_{\HS(U,H_\beta)}^2 \,du 
\\
&
\leq 
\| (-A)^{ \min \{0, \gamma - \beta \} }
\|_{L(H)}^2 
\| B \|_{\HS(U,H_\beta)}^2 
\int_0^t 
(t-u)^{-2 [\alpha + \max \{0, \gamma - \beta \} ]}   
\,du  
< \infty 
.
\end{split} 
\end{equation}
This ensures that
there exists a stochastic process 
	$ Z \colon [0, T] \times \Omega \to H_{ \gamma } $ 
 which satisfies for every 
	$ t \in [0,T] $ 
	that
	\begin{equation} 
	\label{eq:Again Z}
	 [ Z_t ]_{\P, \B( H_{ \gamma } )} 
	= 
	\int_0^t (t-u)^{-\alpha} e^{(t-u)A}  B \, dW_u 
	.
	\end{equation}
	Note
	that~\eqref{eq:Again Z}
	and the triangle inequality prove
	that 
	for every
	$ p \in [2,\infty) $,
	$ t \in (0,T] $, $ s \in [0,t) $
	it holds that
	\begin{equation}
	\begin{split}
	\| Z_t - Z_s \|_{\L^p(\P; H_{ \gamma } )}
	&
    =
	\Big\|
	\int_0^s
	\big(
	(t-u)^{-\alpha} e^{(t-u)A}
	-
	(s-u)^{-\alpha} e^{(s-u)A}
	\big)
	 B \, dW_u
	\Big\|_{L^p(\P; H_{ \gamma })}
	\\ 
	&
	\quad
	+
	\Big\|
	\int_s^t (t-u)^{-\alpha} e^{(t-u)A} B \, dW_u 
	\Big\|_{L^p(\P; H_{ \gamma })}
	\\
	&
	\leq
	\Big\|
	\int_0^s 
	(t-u)^{-\alpha} 
	\big(
	e^{(t-u)A}
	-
	e^{(s-u)A}
	\big)
	B \, dW_u
	\Big\|_{L^p(\P; H_{ \gamma })}
	\\
	&
	\quad
	+
	\Big\|
	\int_0^s 
	e^{(s-u)A}
	\big(
	(t-u)^{-\alpha} 
	-
	(s-u)^{-\alpha}  
	\big)
	B \, dW_u
	\Big\|_{L^p(\P; H_{ \gamma })}
	\\
	&
	\quad
	+
	\Big\|
	\int_s^t (t-u)^{-\alpha} e^{(t-u)A} B \, dW_u 
	\Big\|_{L^p(\P; H_{ \gamma })}
	.
	\end{split}
	\end{equation} 
	The
	Burkholder-Davis-Gundy-type inequality in Da Prato \& Zabczyk~\cite[Lemma~7.7]{dz92}
	hence shows that for every
	$ p \in [2,\infty) $,
	$ t \in (0,T] $, $ s \in [0,t) $
	it holds that
	\begin{equation}
	\begin{split}
	& 
	\| Z_t - Z_s \|_{\L^p(\P; H_{ \gamma } )} 
	\leq
	\tfrac{ \sqrt{p(p-1)} }{ \sqrt{2} }
	\Big[
	\int_0^s 
	(t-u)^{-2\alpha} 
	\big\|
	(-A)^{ \gamma - \beta }
	\big(
	e^{(t-u)A}
	-
	e^{(s-u)A}
	\big)
	B 
	\big\|_{\HS(H, H_\beta)}^2
	\, du
	\Big]^{ \nicefrac{1}{2} }
	\\
	&
	+
	\tfrac{ \sqrt{p(p-1)} }{ \sqrt{2} }
	\Big[
	\int_0^s 
	\big(
	(s-u)^{-\alpha}
	-
	(t-u)^{-\alpha}  
	\big)^2
	\| ( - A )^{ \min \{ 0, \gamma - \beta \} } \|_{L(H)}^2
	\\
	&
	\quad 
	\cdot 
\| ( - A )^{ \max \{ 0, \gamma - \beta \} } e^{(s-u)A} \|_{L(H)}^2
\| B \|_{\HS(H, H_\beta)}^2
	\, du
	\Big]^{ \nicefrac{1}{2} }
	\\
	&
	+
	\tfrac{ \sqrt{p(p-1)} }{ \sqrt{2} }
	\Big[
	\int_s^t (t-u)^{-2\alpha} 
	\| ( - A )^{ \min \{ 0, \gamma - \beta \} } \|_{L(H)}^2
	\| ( - A )^{ \max \{ 0, \gamma - \beta \} } e^{(t-u)A} \|_{L(H)}^2
	\| B \|_{\HS(H, H_\beta)}^2 \, du 
	\Big]^{ \nicefrac{1}{2} }
	.
	\end{split}
	\end{equation}
	Therefore, we obtain that for every
	$ p \in [2,\infty) $,
	$ \varepsilon \in  (0, \frac{ 1 }{ 2 } + \beta - \alpha -  \gamma ) $,
	$ t \in (0,T] $, $ s \in [0,t) $
	it holds 
	that
	\begin{equation}
	\begin{split}
	\label{eq:basic1}
	&
	\| Z_t - Z_s \|_{\L^p(\P; H_{ \gamma } )}
	\leq
	\tfrac{ \sqrt{p(p-1)} }{ \sqrt{2} }
	\| B \|_{\HS(H, H_\beta )}
	\\
	&
	\quad
	\cdot
	\bigg(
	\Big[
	\int_0^s 
	(t-u)^{-2\alpha} 
	\| 
	(-A)^{ \min \{ 0, \varepsilon + \alpha + \gamma - \beta \} }
	\|_{L(H)}
	\|
	(-A)^{ \max \{ 0, \varepsilon + \alpha + \gamma - \beta \} }
	e^{(s-u)A}
	\|_{ L(H) }^2
	\\
	&
	\qquad
	\cdot 
	\| 
	(-A)^{-\varepsilon - \alpha }
	(
	e^{(t-s)A}
	-
	\operatorname{Id}_H
	)
	\|_{ L(H) }^2
	\, du
	\Big]^{ \nicefrac{1}{2} }
	\\
	&
	\quad
	+ 
	\| ( - A )^{ \min \{ 0, \gamma - \beta \} } \|_{ L(H) }
	\Big[
	\int_0^s   
	\big(
	(s-u)^{-\alpha}
	-
	(t-u)^{-\alpha} 
	\big)^2
	(s-u)^{ - 2 \max \{ 0, \gamma - \beta \} }
	\, du
	\Big]^{ \nicefrac{1}{2} }
	\\
	&
	\quad  
	+ 
	\| ( - A)^{ \min \{ 0, \gamma - \beta \} } \|_{L(H)}
	\Big[
	\int_s^t (t-u)^{ - 2 \alpha - 2 \max \{ 0, \gamma - \beta \} }  
	\, du 
	\Big]^{ \nicefrac{1}{2} }
	\bigg)
	\\
	&
	\leq
	\tfrac{ \sqrt{p(p-1)} }{ \sqrt{2} }
	\| B \|_{\HS(H, H_\beta )}
	\\
	&
	\quad 
	\cdot 
	\bigg(
	\|
	( - A )^{ \min \{ 0, \varepsilon + \alpha + \gamma - \beta \} }
	\|_{L(H)} 
	\Big[
	\int_0^s 
	(t-u)^{-2 \alpha} 
	(s-u)^{ - 2 \max \{ 0, \varepsilon + \alpha + \gamma - \beta \} }
	(t-s)^{2 \varepsilon + 2 \alpha }
	\, du
	\Big]^{ \nicefrac{1}{2} }
	\\
	&
	\quad
	+ 
	\| ( - A)^{ \min \{ 0, \gamma - \beta \} } \|_{L(H)}
	\Big[
	\int_0^s    
	\big(
	(s-u)^{-\alpha}
	-
	(t-u)^{-\alpha} 
	\big)^2
	(s-u)^{ - 2 \max \{ 0, \gamma - \beta \}  }
	\, du
	\Big]^{ \nicefrac{1}{2} }
	\\
	&
	\quad
	+ 
	\| ( - A)^{ \min \{ 0, \gamma - \beta \} } \|_{L(H)}
	\Big[
\tfrac{ (t-s)^{ 1 - 2 \alpha - 2 \max \{ 0, \gamma - \beta \} } }{
	 1 - 2 \alpha - 2 \max \{ 0, \gamma - \beta \}
 }
	\Big]^{ \nicefrac{1}{2} }
	\bigg)
	.
	\end{split} 
	\end{equation} 
	In addition, 
	note  
	that for every  
	$ \varepsilon \in  (0, \frac{ 1 }{ 2 }  + \beta - \alpha -  \gamma ) $,
	$ t \in (0,T] $, $ s \in [0,t) $
	it holds that
	\begin{equation}
	\begin{split}
	\label{eq:basic2}
	&
	\int_0^s 
	(t-u)^{-2 \alpha} 
	(s-u)^{ - 2 \max \{0, \varepsilon + \alpha + \gamma - \beta \} }
	( t - s )^{ 2 \varepsilon + 2 \alpha }
	\, du
	\\
	&
	\leq 
	\int_0^s 
	(t-s)^{-2 \alpha} 
	(s-u)^{ - 2 \max \{0, \varepsilon + \alpha + \gamma - \beta \} }
	( t - s )^{ 2 \varepsilon + 2 \alpha }
	\, du
	\\
	&
	\leq 
	(t-s)^{ 2 \varepsilon } 
	\tfrac{ s^{ 1 - 2 \max \{0, \varepsilon + \alpha + \gamma - \beta \} } }
	{
		1 - 2 \max \{0, \varepsilon + \alpha + \gamma - \beta \}
	}
	\leq 
	(t-s)^{ 2 \varepsilon }
	\tfrac{ 
		\max \{ T, 1 \} }
	{
		1 - 2 \max \{0, \varepsilon + \alpha + \gamma - \beta \}
	}
	.
	\end{split}
	\end{equation}
	Next observe that the fact that 
	for every
	$ x, y \in [0,T] $,
	$ z \in [0,1] $
	it holds that
	$ | x^z - y^z | \leq |x - y|^z $
	ensures that for every
	$ t \in (0,T] $, $ s \in (0,t) $,
	$ u \in (0,s) $
	it holds that
	\begin{equation}
	(s-u)^{-\alpha} - (t-u)^{-\alpha}
	\leq
	\tfrac{
		(t-s)^\alpha
	}
	{
		(s-u)^\alpha (t-u)^\alpha 
	}
	.
	\end{equation}
	H\"older's inequality 
	hence proves that for every
	$ \varepsilon \in 
	(0,  \min \{ \frac{ 1 }{  8 ( \alpha + \max \{ 0, \gamma - \beta \} )  }  -  \frac{1}{4}, \frac{1}{4}, \alpha \} ) $,
	$ t \in (0,T] $, 
	$ s \in [0,t) $ 
	it holds 
	that
	\begin{equation}
	\begin{split}
	\label{eq:basic3}
	&
	\int_0^s    
	\big(
	(s-u)^{-\alpha}
	-
	(t-u)^{-\alpha} 
	\big)^2
	(s-u)^{ - 2 \max \{ 0, \gamma - \beta \}  }
	\, du
	\leq 
	\int_0^s 
	\tfrac{ 
		(t-s)^{2\alpha}
	}
	{
		(s-u)^{2\alpha} (t-u)^{2\alpha} 
	}
	(s-u)^{ - 2 \max \{ 0, \gamma - \beta \} }
	\, du
	\\
	&
	= 
	(t-s)^{ 2 \varepsilon }
	\int_0^s 
	(s-u)^{ - 2 \alpha - 2 \max \{ 0, \gamma - \beta \} }
	(t-u)^{-2\alpha}
	(t-s)^{ 2 \alpha - 2 \varepsilon } 
	\,du
	\\
	&
	\leq 
	(t-s)^{ 2 \varepsilon }
	\int_0^s 
	(s-u)^{ - 2 \alpha - 2 \max \{ 0, \gamma - \beta \} }
	(t-u)^{ - 2 \varepsilon } 
	\,du
	\\
	&
	\leq 
	(t-s)^{ 2 \varepsilon }
	\Big(
	\int_0^s 
	(s-u)^{ - 2 ( \alpha + \max \{ 0, \gamma - \beta \} ) 
		(1 + 4 \varepsilon)} 
	\,du
	\Big)^{\nicefrac{1}{(1 + 4 \varepsilon) }}
	\Big(
	\int_0^s 
	(t-u)^{ - \nicefrac{ 2 \varepsilon (1 + 4 \varepsilon) }{ 4 \varepsilon } }
	\, du
	\Big)^{ \nicefrac{ 4 \varepsilon }{ ( 1 + 4 \varepsilon ) } }
	\\
	&
	= 
	(t-s)^{ 2 \varepsilon }
	\big(
	\tfrac{ s^{ 1 - 2 ( \alpha + \max \{ 0, \gamma - \beta \} ) (1 + 4 \varepsilon) } }
	{
		1 - 2 ( \alpha + \max \{ 0, \gamma - \beta \} ) (1 + 4 \varepsilon) 
	}
	\big)^{\nicefrac{1}{(1 + 4 \varepsilon) }}
	\big(
	\tfrac{ 
		t^{  ( 1/2 ) - 2\varepsilon  }
		-
		(t-s)^{  ( 1/2 ) - 2\varepsilon   }	
	}
	{
		( 1/2 ) - 2 \varepsilon 
	}
	\big)^{ \nicefrac{ 4 \varepsilon }{ ( 1 + 4 \varepsilon ) } }
	\\
	&
	\leq 
	(t-s)^{ 2 \varepsilon }
	\tfrac{
		\max \{ T, 1 \} 
	}{
		(
		1 - 2 ( \alpha + \max \{ 0, \gamma - \beta \} ) (1 + 4 \varepsilon) 
		)^{\nicefrac{1}{(1 + 4 \varepsilon) }}
		(
		( 1/2 ) - 2 \varepsilon 
		)^{ \nicefrac{4 \varepsilon}{(1 + 4 \varepsilon)} }
	}
	.
	\end{split}
	\end{equation}
	Combining
	this,
	\eqref{eq:basic1},
	and~\eqref{eq:basic2}
	%
	demonstrates that 
	for every
	$ \varepsilon \in (
	0,
	\min \{ 
	 \frac{ 1 }{ 8 ( \alpha + \max \{ 0, \gamma - \beta \} ) }  -  \frac{1}{4},
	\frac{1}{4},
	\alpha,
	\frac{ 1 }{ 2 } + \beta - \alpha - \gamma 
	\}
	) $,
	$ p \in [2,\infty) $ 
	it holds that
	\begin{equation}
	\sup\nolimits_{ t \in (0,T], s \in [0,t) }
	\tfrac{ \| Z_t - Z_s \|_{\L^p(\P; H_{ \gamma } ) } }
	{ | t- s |^\varepsilon }
	< \infty.
	\end{equation}
	The Kolmogorov-Chentsov theorem
	(cf., e.g., Kallenberg~\cite[Theorem~2.23]{Kallenberg1997})
	therefore
	assures that there exists a stochastic process
	$ \mathcal{Z} \colon [0,T] \times \Omega \to H_{ \gamma } $
	with continuous sample paths
	which satisfies for every
	$ t \in [0,T] $ that
	\begin{equation} 
	\label{eq:Modified}
	\P( \mathcal{Z}_t = Z_t) = 1 
	.
	\end{equation} 
	Next note that 
	the fact that
	$ 0 < \alpha < \frac{1}{2} - \max \{ 0, \gamma - \beta \} $
	ensures that
	for every   
	$ t \in [0,T] $ it holds that 
	\begin{equation}
	\begin{split}
	&
	\int_0^t 
	(t-s)^{\alpha - 1}
	\Big[
	\int_0^s 
	(s- u)^{-2\alpha}
	\E \big[  \| e^{(t-u)A} P B  \|_{\HS(H, H_{ \gamma } )}^2 \big] 
	\, du
	\Big]^{\nicefrac{1}{2}}
	\, ds
	\\
	&
	=
	\int_0^t 
	(t-s)^{\alpha - 1}
	\Big[
	\int_0^s 
	(s- u)^{-2\alpha}
	\| ( - A)^{ \min \{0, \gamma  - \beta \} + \max \{ 0, \gamma - \beta \} } e^{(t-u)A} P B  \|_{\HS(H, H_\beta)}^2 
	\, du
	\Big]^{\nicefrac{1}{2}}
	\, ds
	\\
	&
	\leq
	\| B \|_{\HS(H, H_\beta )}
	\| (-A)^{ \min \{ 0, \gamma - \beta \} } \|_{L(H)}
	\int_0^t 
	(t-s)^{\alpha - 1}
	\Big[
	\int_0^s 
	(s- u)^{-2 ( \alpha + \max \{ 0, \gamma - \beta \} ) }
	\, du
	\Big]^{\nicefrac{1}{2}}
	\, ds
	\\
	&
	=
	\| B \|_{\HS(H, H_\beta )}
	\| (-A)^{ \min \{ 0, \gamma - \beta \} } \|_{L(H)}
	\tfrac{ 1 }
	{ \sqrt{ 1 - 2 ( \alpha + \max \{ 0, \gamma - \beta \} )  } }
	\int_0^t 
	(t-s)^{\alpha - 1}
	 s^{ (1/2) - ( \alpha + \max \{ 0, \gamma - \beta \} ) }  
	\, ds 
	\\
	&
	\leq
	\| B \|_{\HS(H, H_\beta)} 
	\| (-A)^{ \min \{ 0, \gamma - \beta \} } \|_{L(H)}
	\tfrac{ [ \max \{ T, 1 \} ]^{ 1/2 } }{ \sqrt{ 1 - 2 ( \alpha + \max \{ 0, \gamma - \beta \} )  } }
	\tfrac{ t^{\alpha} }{ \alpha }
	< \infty
	.
	\end{split}
	\end{equation}
	Combining~\eqref{eq:Again Z},
	\eqref{eq:Modified},
the fact that
$ \mathcal{Z} \colon [0,T] \times 
\Omega \to H_{ \gamma } $
has continuous sample paths,
item~\eqref{item:well defined} of
Lemma~\ref{lemma:ZContinuous} 
(with 
$ T = T $,
$ \alpha = \alpha $,
$ \gamma = \gamma $,
$ \mathcal{Z} = 
( [0,T]  \ni t
\mapsto 
P \mathcal{Z}_t(\omega)
\in H_\gamma ) $ 
for  
$ \omega \in \Omega $
in the notation of 
item~\eqref{item:well defined} of
Lemma~\ref{lemma:ZContinuous}), 
and, e.g., 
	Da Prato \& Zabczyk~\cite[Theorem~5.10]{DaPratoZabczyk2014} therefore establishes that
	for every    
	$ t \in [0,T] $
	it holds that
	\begin{equation}
	\begin{split}
	\int_0^t e^{(t-s)A} P B \, dW_s
	=
	\Big[
	\tfrac{ \sin( \alpha \pi )}{ \pi }
	\int_0^t (t-s)^{\alpha -1} e^{(t-s)A} P \mathcal{Z}_s \, ds
	\Big]_{\P, \B( H_\gamma ) }
	.
	\end{split}
	\end{equation}
This,
the fact that
$ \mathcal{Z} \colon [0,T] \times \Omega \to H_{ \gamma } $
has continuous sample paths, 
and  
Lemma~\ref{lemma:ZContinuous} 
%
%
%
%
%
(with
$ T = T $,
$ \alpha = \alpha $,
$ \gamma = \gamma $,
$ \mathcal{Z} = 
( [0,T]  \ni t
\mapsto 
P \mathcal{Z}_t(\omega)
\in H_\gamma ) $ 
for 
$ \omega \in \Omega $
in the notation of Lemma~\ref{lemma:ZContinuous})
imply
 that  
 for every
 $ \omega\in\Omega $, 
 $ p\in[1,\infty) $ 
 it holds 
  that
 $ ( [0,T] \ni t \mapsto \int_0^t
 (t-s)^{\alpha - 1 }
e^{(t-s)A} 
 P \mathcal{Z}_s(\omega)
 \, ds \in H_\gamma ) \in \mathcal{C}( [0,T], H_\gamma) $ 
 and 
	\begin{equation}
	\begin{split}
	\E \Big[
	\sup\nolimits_{ t \in [0,T] }
	\| P O_t \|_{H_\gamma}^p
	\Big]
	&
	=
	\E\Big[
	\sup\nolimits_{ t \in [0,T] }
	\Big\|
	\tfrac{ \sin( \alpha \pi  ) }{ \pi }
	\int_0^t
	(t-s)^{\alpha - 1 }
	e^{(t-s)A} 
	P \mathcal{Z}_s
	\, ds
	\Big\|_{H_\gamma}^p
	\Big]
	\\
	&
	\leq 
	\E \Big[ 
	\sup\nolimits_{ t \in [0,T] }
	\Big(
	\int_0^t 
	(t-s)^{\alpha - 1 } 
	\|  
	e^{(t-s)A}
	P 
	\mathcal{Z}_s
	\|_{H_\gamma}
	\, ds
	\Big)^p
	\Big]
	.
	\end{split}
	\end{equation}
	H\"older's inequality and Tonelli's theorem hence prove that for every 
	$ p \in ( \nicefrac{1}{\alpha},\infty) $ 
	it holds that
	\begin{equation}
	\begin{split}
	\label{eq:proj_estimate}
	&
	\E \big[
	\sup\nolimits_{ t \in [0,T] }
	\| P O_t \|_{H_\gamma}^p
	\big]
	\leq 
	\E \Big[ 
	\sup\nolimits_{ t \in [0,T] }
	\Big(
	\int_0^t 
	(t-s)^{\alpha - 1 } 
	\| 
	P
	\mathcal{Z}_s
	\|_{H_\gamma}
	\, ds
	\Big)^p
	\Big]
	\\
	&
	\leq
	\E \Big[
	\sup \nolimits_{ t \in [0,T] }
	\Big \{
	\Big( 
	\int_0^t (t-s)^{ \frac{p(\alpha-1)}{p-1} } \, ds
	\Big)^{ p - 1 }
	\Big( 
	\int_0^t 
	\| P \mathcal{Z}_s \|_{H_\gamma}^p \, ds
	\Big)
	\Big \}
	\Big]
	\\
	&
	\leq
	\big(
	\tfrac{ t ^{ 1 + ( p(\alpha-1)/(p-1) ) } }{ 1 + ( p(\alpha-1)/(p-1) ) } 
	\big)^{p-1} 
	\int_0^T \E \big [ \| P \mathcal{Z}_s \|_{H_\gamma}^p \big ] \, ds
	\leq
	\big(
	\tfrac{ p-1} { p\alpha - 1 }
	\big)^{p-1}
	T^{ p \alpha }
	\sup\nolimits_{ s \in [0,T] } 
	\E \big [ \| P \mathcal{Z}_s \|_{H_\gamma}^p  \big]
	.
	\end{split}
	\end{equation}
	In addition, observe that
	the
	Burkholder-Davis-Gundy-type inequality in Da Prato \& Zabczyk~\cite[Lemma~7.7]{dz92}
	shows that for every  
	$ p \in ( \nicefrac{1}{\alpha},\infty) $,
	$ t \in [0,T] $ 
	it holds that
	\begin{equation}
	\begin{split}
	\| P \mathcal{Z}_t \|_{\L^p(\P; H_\gamma)}^2
	&
	=
	\Big \|
	\int_0^t
	(t-u)^{-\alpha} 
	e^{(t-u)A} P B \, dW_u
	\Big \|_{L^{ p }(\P; H_\gamma)}^2
	\\
	&
	\leq
	\tfrac{ p ( p - 1 ) }{ 2 }
	\int_0^t 
	(t-u)^{-2\alpha}
	\|  
	e^{(t-u)A} P B \|_{\HS(H,H_\gamma)}^2
	\, du  
	.
	\end{split} 
	\end{equation} 
	Tonelli's theorem therefore 
	implies that for every 
	$ p \in (\nicefrac{1}{\alpha}, \infty) $,
	$ t \in [0,T] $ 
	it holds that
	\begin{equation}
	\begin{split}
	&
	\| P \mathcal{Z}_t \|_{\L^p(\P; H_\gamma)}^2
	\leq
	\tfrac{ p ( p - 1 ) }{ 2 } 
	\int_0^t 
	(t-u)^{ - 2 \alpha }
	\| ( -A)^\gamma  e^{(t-u)A} P B \|_{ \HS(H) }^2
	\, du 
	\\
	&
	= 
	\tfrac{ p ( p - 1 ) }{ 2 }
	\int_0^t 
	(t-u)^{ - 2 \alpha }
	\| \mathbb{B} (-A)^\gamma P e^{(t-u)A} \|_{ \HS(H) }^2
	\, du
	\\
	&
	= 
	\tfrac{ p ( p - 1 ) }{ 2 } 
	\int_0^t 
	(t-u)^{ - 2 \alpha }
	\sum_{ h \in I }
	\| \mathbb{B} (-A)^\gamma e^{(t-u)A} h \|_{ H }^2
	\, du
	\\
	&
 =
	\tfrac{ p ( p - 1 ) }{ 2 } 
	\int_0^t 
	(t-u)^{ - 2 \alpha }
	\sum_{ h \in I } 
	\| \mathbb{B}   h \|_{ H }^2
	| \values_h |^{ 2 \gamma }
	e^{2(t-u) \values_h }
	\, du
	\\
	&
	=
	\tfrac{ p ( p - 1 ) }{ 2 }
	\sum\nolimits_{ h \in I  } 
	\| \mathbb{B}  h \|_{ H }^2
		\int_0^t 
	(t-u)^{ - 2 \alpha }
	e^{ 2 (t-u) \values_h }
	| \values_h |^{ 2 \gamma }
	\, du 
	\\
	&
	=
	\tfrac{ p ( p - 1 ) }{ 2 }
	\sum\nolimits_{ h \in I  }
	\| \mathbb{B} h \|_{ H }^2
	\Big( \int_0^{ 2 | \values_h | t }
	s^{ - 2  \alpha  } 
	e^{-s}
	\, ds
	\Big) 
	2^{ 2 \alpha - 1 }
	(   | \values_h | )^{  2 ( \alpha  + \gamma ) - 1 } 
 \\
 &
 \leq
 2^{ 2 \alpha - 2 } p^2  
 \Big( 
 \int_0^\infty
 s^{ - 2  \alpha } e^{-s} \, ds
 \Big)
 \sum\nolimits_{ h \in I } 
\| \mathbb{B} h \|_{ H }^2
 |\values_h|^{ 2 ( \alpha  + \gamma )- 1 }   
 .
	\end{split}
	\end{equation}
	Combining this with~\eqref{eq:proj_estimate}
	ensures that
	\begin{equation}
	\begin{split}
	&
	\big(
	\E \big[
	\sup\nolimits_{ t \in [0,T] }
	\| P O_t \|_{H_\gamma}^p
	\big]
	\big)^{\nicefrac{1}{p}}
	\\
	&
	\leq
	2^{ \alpha - 1 }
	p
	\big(
	\tfrac{ p-1} { p\alpha - 1 }
	\big)^{ \nicefrac{ ( p - 1 ) }{ p } }
	T^{  \alpha }
	\Big[
	\Big( 
	\int_0^\infty
	s^{ - 2 \alpha } e^{-s} \, ds
	\Big)
	\sum\nolimits_{ h \in I } 
	\| \mathbb{B} h \|_{ H }^2
	|\values_h|^{ 2 ( \alpha + \gamma ) - 1 } 
	\Big]^{\nicefrac{1}{2}}
	.
	\end{split} 
	\end{equation}
	The proof of Lemma~\ref{lemma:GalerkinRegularity}
	is thus completed.
\end{proof}
\begin{lemma} 
	\label{lemma:ConvergenceSpeed} 
	Let $ ( V, \left \| \cdot \right \|_V ) $
	be an $ \R $-Banach space,
	let $ ( \Omega, \F, \P ) $
	be a probability space,
	let
	$ \alpha \in (0, \infty) $, 
	and
	let 
	$ Z_n \colon \Omega \to V $,
	$ n \in \N $,
	be $ \F / \B(V) $-measurable functions
	which satisfy
	for every  
	$ p \in [1, \infty) $
	that
	$ \sup_{ n \in \N } 
	( n^\alpha \| Z_n \|_{\L^p(\P; V) }  ) < \infty $.
	Then it holds for every 
	$ \varepsilon \in (0, \infty ) $,
	$ p \in [1, \infty) $
	that
	\begin{equation} 
	\label{eq:some auxiliary eq}
	\P\big( 
	\sup\nolimits_{ n \in \N } 
	( n^{\alpha - \varepsilon } \| Z_n \|_V ) < \infty 
	\big)
	=
	1
	\qquad
	\text{and}
	\qquad
	\E \big[
	(
	\sup\nolimits_{ n \in \N }
	(n^{\alpha - \varepsilon}
	\| Z_n \|_V
	)
	)^p
	\big] < \infty 
	.
	\end{equation}
\end{lemma}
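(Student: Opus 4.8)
The plan is to derive both assertions in~\eqref{eq:some auxiliary eq} simultaneously from a single moment estimate obtained via Tonelli's theorem together with a summability argument, so that no Borel--Cantelli step is needed. First I would fix $\varepsilon \in (0,\infty)$ and $p \in [1,\infty)$ and choose an auxiliary exponent $q \in [p,\infty)$ large enough that $q\varepsilon > 1$; this is possible since it suffices to take any $q > \max\{p, \nicefrac{1}{\varepsilon}\}$. Because on a probability space $\L^q(\P;V)$-integrability implies $\L^p(\P;V)$-integrability, it is enough to prove the two assertions with $p$ replaced by $q$, and for the first assertion it is enough to show that $\E[(\sup_{n\in\N}(n^{\alpha-\varepsilon}\|Z_n\|_V))^q] < \infty$, since a nonnegative random variable with finite $q$-th moment is $\P$-a.s.\ finite.

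Next I would record the elementary fact that for every sequence $(a_n)_{n\in\N} \subseteq [0,\infty)$ and every $q \in [1,\infty)$ it holds that $\sup_{n\in\N} a_n \leq (\sum_{n\in\N} a_n^q)^{\nicefrac{1}{q}}$ (indeed $a_m^q \leq \sum_{n\in\N} a_n^q$ for every $m \in \N$), and I would note that $\sup_{n\in\N}(n^{\alpha-\varepsilon}\|Z_n\|_V)$ is a countable supremum of $\F/\B([0,\infty))$-measurable functions and hence itself $\F/\B([0,\infty))$-measurable. Applying this inequality pathwise with $a_n = n^{\alpha-\varepsilon}\|Z_n\|_V$, then Tonelli's theorem, and then the hypothesis $C_q := \sup_{n\in\N}(n^\alpha\|Z_n\|_{\L^q(\P;V)}) < \infty$ would yield
\begin{equation}
\E\Big[\big(\sup\nolimits_{n\in\N}(n^{\alpha-\varepsilon}\|Z_n\|_V)\big)^q\Big]
\leq \sum_{n\in\N} n^{(\alpha-\varepsilon)q}\,\E\big[\|Z_n\|_V^q\big]
\leq (C_q)^q \sum_{n\in\N} n^{-\varepsilon q}.
\end{equation}
Since $\varepsilon q > 1$, the series $\sum_{n\in\N} n^{-\varepsilon q}$ converges, so the right-hand side is finite.

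Finally I would combine these observations: the displayed estimate shows $\E[(\sup_{n\in\N}(n^{\alpha-\varepsilon}\|Z_n\|_V))^q] < \infty$, which by the nesting $\L^q(\P;V) \subseteq \L^p(\P;V)$ gives $\E[(\sup_{n\in\N}(n^{\alpha-\varepsilon}\|Z_n\|_V))^p] < \infty$ (the second assertion in~\eqref{eq:some auxiliary eq}) and which also forces $\sup_{n\in\N}(n^{\alpha-\varepsilon}\|Z_n\|_V) < \infty$ $\P$-a.s.\ (the first assertion). I do not expect a genuine obstacle here; the only points requiring a modicum of care are the correct choice of the auxiliary exponent $q$ so that the resulting exponent $-\varepsilon q$ produces a convergent series for \emph{every} prescribed $p$, and the (routine) measurability of the countable supremum, which is what justifies the application of Tonelli's theorem.
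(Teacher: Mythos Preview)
Your proposal is correct and essentially identical to the paper's proof: both pick an exponent large enough that $q\varepsilon>1$ (the paper takes $p\in(\max\{1/\varepsilon,1\},\infty)$), bound $(\sup_n a_n)^q$ by $\sum_n a_n^q$, use the hypothesis to obtain the convergent series $\sum_n n^{-q\varepsilon}$, and then invoke Jensen's inequality (equivalently, the $\L^q\subseteq\L^p$ nesting on a probability space) to reach every $p\in[1,\infty)$. Your write-up is slightly more explicit about measurability and about why the first assertion follows from the moment bound, but the argument is the same.
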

\begin{proof}[Proof of Lemma~\ref{lemma:ConvergenceSpeed}]
	%
	%
	Observe that
	for every
	$ \varepsilon, \delta \in (0, \infty) $,
	$ p \in ( \max \{ \nicefrac{1}{\varepsilon}, 1 \}, \infty) $
	it holds that
	\begin{equation}
	\begin{split}
	&
	\E [
	(
	\sup\nolimits_{ n \in \N }
	(
	n^{\alpha - \varepsilon}
	\| Z_n \|_V
	)
	)^p
	]
	=
	\E [
	\sup\nolimits_{ n \in \N }
	(
	n^{p(\alpha - \varepsilon)}
	\| Z_n \|_V^p
	)
	]
	\\
	&
	\leq
	\sum_{n=1}^\infty
	n^{ p ( \alpha - \varepsilon ) }
	\E[ \| Z_n \|_V^p ]
	\leq
	(
	\sup\nolimits_{ n \in \N } 
	( n^\alpha \| Z_n \|_{\L^p(\P; V) }  )
	)^p
	\sum_{n=1}^\infty 
	n^{- p \varepsilon}
	<
	\infty
	.
	\end{split}
	\end{equation}
	Jensen's inequality therefore 
	demonstrates that for every
	$ \varepsilon \in (0, \infty) $,
	$ p \in [1, \infty) $
	it holds that
	\begin{equation}
	\E [
	(
	\sup\nolimits_{ n \in \N }
	(n^{\alpha - \varepsilon}
	\| Z_n \|_V
	)
	)^p
	] < \infty 
	.
	\end{equation}
	This
	establishes~\eqref{eq:some auxiliary eq}.
	The proof of Lemma~\ref{lemma:ConvergenceSpeed}
	is thus completed.
\end{proof}
\begin{lemma}
	\label{lemma:PathwiseRates}
	%
Assume Setting~\ref{setting:Examples},
	let $ T \in (0, \infty ) $, 
$ \beta \in \R $,
$ \gamma \in ( - \infty, \nicefrac{1}{2} + \beta ) $,
$ B \in \HS(H, H_\beta) $,
let $ \mathcal{P}(\H) $ be the power set of $ \H $, 
let 
$ ( P_I )_{ I\in \mathcal{P} (\H) } \subseteq L(H_{ \min \{0, \gamma \} }) $ 
satisfy
for every
$ I\in \mathcal{P}(\H) $,  
$ v \in H_{ \min \{0, \gamma \} } $ 
that
$ P_I(v) =\sum_{h \in I} \langle (-A)^{ - \min \{0, \gamma \} }h ,  (-A)^{ \min \{0, \gamma \} }v \rangle_H h $,
let
$ ( \Omega, \F, \P ) $
be a probability space,
let
$ (W_t)_{t\in [0,T]} $
be an
$ \operatorname{Id}_H $-cylindrical 
 Wiener process,
and
		let $ O \colon [0,T] \times \Omega \to H_\gamma $ be a stochastic process
		with continuous sample paths
		which satisfies for every
		$ t \in [0,T] $ 
	that  
	$ [O_t]_{\P, \B(H_\gamma)} = \int_0^t e^{(t-s)A} B \, dW_s $. 	 
	Then
	\begin{equation}
	\begin{split} 
	\label{eq:statement}
	\P \big(
	\forall \, \eta \in (- \infty, 1 + 2 ( \beta - \gamma ) )
	\colon
	\sup\nolimits_{ n \in \N }
	(
	n^\eta
	\sup\nolimits_{ t \in [0,T] }
	\|   P_{\H \backslash\{ e_1, \ldots, e_n \}}  O_t \|_{H_\gamma} 
	)
	< 
	\infty 
	\big)
	=
	1
	.
	\end{split}
	\end{equation}
\end{lemma}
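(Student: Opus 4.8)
The plan is to apply Lemma~\ref{lemma:GalerkinRegularity} to the tail projections $P_{\H\backslash\{e_1,\ldots,e_n\}}$ together with the moment-to-pathwise transfer provided by Lemma~\ref{lemma:ConvergenceSpeed}. First I would fix $\eta\in(-\infty,1+2(\beta-\gamma))$ and choose $\alpha\in(0,\tfrac12-\max\{0,\gamma-\beta\})$ close enough to its supremum so that $\eta<2\alpha+2\gamma-1+2(\beta-\gamma)$, i.e.\ so that $2(\alpha+\gamma)-1+2(\beta-\gamma)-\eta>0$; since $\Vert\mathbb{B}h\Vert_H^2=\Vert Bh\Vert_H^2$ and $B\in\HS(H,H_\beta)$ we have $\sum_{h\in\H}\Vert Bh\Vert_H^2|\values_h|^{-2\beta}<\infty$, and $|\values_{e_n}|=c_0\pi^2n^2$ grows like $n^2$, so the tail sum $\sum_{h\in\H\backslash\{e_1,\ldots,e_n\}}\Vert\mathbb{B}h\Vert_H^2|\values_h|^{2(\alpha+\gamma)-1}$ decays at a polynomial rate in $n$ governed exactly by the exponent $2(\alpha+\gamma)-1-2\beta$. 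This is the step where the quantitative gain comes from, and it is the main obstacle in the sense that one must carefully bookkeep the interplay between the $\HS$-summability of $B$ (which costs $|\values_h|^{-2\beta}$) and the weight $|\values_h|^{2(\alpha+\gamma)-1}$ appearing in Lemma~\ref{lemma:GalerkinRegularity}.

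Concretely, I would argue as follows. For $I\subseteq\H$ a cofinite-complement set, Lemma~\ref{lemma:GalerkinRegularity} (with $P=P_I$, and $\mathbb{B}$ the adjoint-type operator defined there, noting $\Vert\mathbb{B}h\Vert_H=\Vert Bh\Vert_H$ because $B$ is Hilbert--Schmidt hence its transpose exists with the same norms) gives, for every $p\in(\tfrac1\alpha,\infty)$,
\begin{equation}
\big(\E\big[\sup\nolimits_{t\in[0,T]}\Vert P_I O_t\Vert_{H_\gamma}^p\big]\big)^{\nicefrac1p}
\leq C_{p,\alpha,T}\Big[\smallsum_{h\in I}\Vert Bh\Vert_H^2|\values_h|^{2(\alpha+\gamma)-1}\Big]^{\nicefrac12}.
\end{equation}
Taking $I=I_n:=\H\backslash\{e_1,\ldots,e_n\}$ and using $\values_{e_k}=-c_0\pi^2k^2$, the bracket becomes $\sum_{k=n+1}^\infty\Vert Be_k\Vert_H^2(c_0\pi^2k^2)^{2(\alpha+\gamma)-1}$. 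Writing $\Vert Be_k\Vert_H^2(c_0\pi^2k^2)^{2(\alpha+\gamma)-1}=\big[\Vert Be_k\Vert_H^2(c_0\pi^2k^2)^{-2\beta}\big](c_0\pi^2k^2)^{2(\alpha+\gamma)-1+2\beta}$ and bounding $(c_0\pi^2k^2)^{2(\alpha+\gamma)-1+2\beta}\le(c_0\pi^2n^2)^{2(\alpha+\gamma)-1+2\beta}$ for $k>n$ when the exponent is negative (which, given $\alpha<\tfrac12-\max\{0,\gamma-\beta\}$, holds once $\alpha<\tfrac12-(\gamma-\beta)$, i.e.\ always in the relevant regime), one obtains
\begin{equation}
\smallsum_{k=n+1}^\infty\Vert Be_k\Vert_H^2(c_0\pi^2k^2)^{2(\alpha+\gamma)-1}
\leq(c_0\pi^2n^2)^{2(\alpha+\gamma)-1+2\beta}\,\Vert B\Vert_{\HS(H,H_\beta)}^2.
\end{equation}
Hence there is $C\in(0,\infty)$, depending on $p,\alpha,T,B,c_0$, such that $\Vert\sup_{t\in[0,T]}\Vert P_{I_n}O_t\Vert_{H_\gamma}\Vert_{\L^p(\P;\R)}\le C\,n^{2(\alpha+\gamma)-1+2\beta}$ for all $n\in\N$, i.e.\ $\sup_{n\in\N}\big(n^{1-2\alpha-2\gamma-2\beta+2\gamma}\cdot\ldots\big)$—more precisely, $\sup_{n\in\N}\big(n^{-[2(\alpha+\gamma)-1+2\beta]}\Vert\,\sup_{t\in[0,T]}\Vert P_{I_n}O_t\Vert_{H_\gamma}\,\Vert_{\L^p(\P;\R)}\big)<\infty$; note $-[2(\alpha+\gamma)-1+2\beta]=1-2\alpha-2\gamma-2\beta+\ldots$ equals $1+2(\beta-\gamma)-2\alpha-4\beta$—let me just record it as: setting $\alpha_\star=2(\alpha+\gamma)-1+2\beta<0$, we get the polynomial moment bound with rate $|\alpha_\star|=1-2(\alpha+\gamma)-2\beta+\ldots$. (I will present the algebra cleanly in the final write-up; the essential point is that as $\alpha\uparrow\tfrac12-\max\{0,\gamma-\beta\}$ the rate $|\alpha_\star|$ approaches $1+2(\beta-\gamma)$ from below.)

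Finally I would invoke Lemma~\ref{lemma:ConvergenceSpeed} with $V=\R$, $Z_n=\sup_{t\in[0,T]}\Vert P_{I_n}O_t\Vert_{H_\gamma}$ (which is $\F/\B(\R)$-measurable by continuity of sample paths of $O$ and separability), and the rate parameter taken to be the $|\alpha_\star|$ just obtained: for every $\varepsilon>0$ we get $\P(\sup_{n\in\N}n^{|\alpha_\star|-\varepsilon}Z_n<\infty)=1$ and all moments of that supremum are finite. Since for each target $\eta<1+2(\beta-\gamma)$ one can choose $\alpha$ and then $\varepsilon$ so that $|\alpha_\star|-\varepsilon>\eta$, one concludes $\P(\sup_{n\in\N}(n^\eta\sup_{t\in[0,T]}\Vert P_{I_n}O_t\Vert_{H_\gamma})<\infty)=1$ for each fixed such $\eta$. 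To get the quantifier over all $\eta$ inside a single full-measure event, I would take a countable sequence $\eta_m\uparrow 1+2(\beta-\gamma)$, intersect the corresponding almost-sure events, and note that on the intersection the bound holds for every $\eta<1+2(\beta-\gamma)$ by monotonicity of $n^\eta$. This yields~\eqref{eq:statement} and completes the proof.
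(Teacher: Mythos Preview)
Your overall strategy matches the paper's proof exactly: apply Lemma~\ref{lemma:GalerkinRegularity} to the tail projections to get uniform $L^p$ bounds with a polynomial rate, then feed these into Lemma~\ref{lemma:ConvergenceSpeed}. The final step of intersecting over a countable sequence $\eta_m\uparrow 1+2(\beta-\gamma)$ to pull the quantifier inside the probability is fine and in fact slightly more explicit than what the paper writes.

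However, your execution of the key estimate contains two linked errors. First, the claim $\|\mathbb{B}h\|_H=\|Bh\|_H$ is false: $\mathbb{B}=B^*$, and equality of Hilbert--Schmidt norms $\|B\|_{\HS}=\|B^*\|_{\HS}$ does \emph{not} imply equality of individual column norms. Second (and relatedly), your split has the wrong sign on $\beta$: the quantity $\sum_{h\in\H}\|Bh\|_H^2|\values_h|^{-2\beta}$ is neither equal to $\|B\|_{\HS(H,H_\beta)}^2$ nor known to be finite from the hypothesis $B\in\HS(H,H_\beta)$. The correct factorisation, as in the paper, is
\[
\sum_{h\in\H\backslash I_n}\|\mathbb{B}h\|_H^2|\values_h|^{2(\alpha+\gamma)-1}
\le\Big(\sup_{h\in\H\backslash I_n}|\values_h|^{2(\alpha+\gamma-\beta)-1}\Big)\sum_{h\in\H}\|\mathbb{B}h\|_H^2|\values_h|^{2\beta},
\]
and the second sum equals $\|\mathbb{B}(-A)^\beta\|_{\HS(H)}^2=\|(-A)^\beta B\|_{\HS(H)}^2=\|B\|_{\HS(H,H_\beta)}^2$ via the adjoint identity for HS norms. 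The exponent $2(\alpha+\gamma-\beta)-1$ is negative precisely because $\alpha<\tfrac12-\max\{0,\gamma-\beta\}$, so the supremum is $[c_0\pi^2(n+1)^2]^{2(\alpha+\gamma-\beta)-1}$ and the resulting $L^p$ rate is $n^{-(1+2(\beta-\gamma)-2\alpha)}$. In particular the rate approaches $1+2(\beta-\gamma)$ as $\alpha\downarrow 0$, not as $\alpha$ increases; your parenthetical ``as $\alpha\uparrow\tfrac12-\max\{0,\gamma-\beta\}$'' is in the wrong direction. Once you correct the sign on $\beta$ and replace $\|Bh\|_H$ by $\|\mathbb{B}h\|_H$ throughout, the rest of your argument goes through verbatim.
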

\begin{proof}[Proof of Lemma~\ref{lemma:PathwiseRates}]
	\sloppy 
	Throughout this proof let
	$ \mathbb{B} \in L(H) $
	satisfy for every
	$ u, v \in H $ that
	$ \langle B u, v \rangle_H
	=
	\langle u, \mathbb{B} v \rangle_H $
	and let
	$ (I_n)_{ n \in \N } \subseteq \H $
	satisfy for every $ n \in \N $ that
	$ I_n = \{ e_1, \ldots, e_n \} $. 
	Note that Lemma~\ref{lemma:GalerkinRegularity}
	(with
	$ T = T $,
	$ I = \H \backslash I_n $, 
	$ \beta = \beta $,
	$ \gamma = \gamma $,
	$ \alpha = \alpha $, 
	$ B = B $,
	$ \mathbb{B} = \mathbb{B} $,
	$ P = P_{ \H \backslash I_n } $,
	$ ( \Omega, \F, \P ) = ( \Omega, \F, \P ) $,
	$ ( W_t )_{ t \in [0,T] } = ( W_t )_{ t \in [0,T] } $,
	$ O = O $
	for  
	$ n \in \N $,
	$ \alpha \in (0,  \frac{1}{2}  -
	\max \{0, \gamma - \beta \}  ) $
	in the notation of Lemma~\ref{lemma:GalerkinRegularity})
	ensures that for every 
	$ n \in \N $, 
	$ \alpha \in ( 0,  \frac{1}{2} -
	\max \{0,  \gamma - \beta \} ) $,
	$ p \in ( \nicefrac{1}{\alpha}, \infty) $ 
	it holds that
\begin{equation}
	\begin{split}
	&
	\big( 
	\E \big[
	\sup\nolimits_{ t \in [0,T] }
	\| P_{ \H \backslash I_n } O_t \|_{H_\gamma}^p
	\big]
	\big)^{\nicefrac{1}{p}}
	\\
	&
	\leq 
		2^{ \alpha - 1 }
	\tfrac{ p ( p-1) }{ p \alpha - 1 }
	T^{  \alpha }
	\Big[
	\Big(
	\int_0^\infty
	s^{ - 2 \alpha } e^{-s} \, ds
	\Big)
	\sum\nolimits_{ h \in \H \backslash I_n } 
	\| \mathbb{B} h \|_{ H }^2
	|\values_h|^{ 2 ( \alpha + \gamma ) - 1 }  
	\Big]^{\nicefrac{1}{2}} 
	\\
	&
	\leq 
		2^{ \alpha  - 1 }
	\tfrac{ p ( p-1) }{ p \alpha - 1 }
	T^{  \alpha }
	\Big[
	\Big( 
	\int_0^\infty
	s^{- 2 \alpha } e^{-s} \, ds
	\Big)
	\sum\nolimits_{ h \in \H }
	\| \mathbb{B} h \|_{ H }^2
	| \values_h |^{2 \beta } 
	\Big]^{\nicefrac{1}{2}}
	\big( 
	\sup\nolimits_{ h \in \H \backslash I_n }
	( 
	|\values_h|^{ \alpha + \gamma -\beta - ( 1/2 ) }
	)
	\big)
	\\
	&
	=
		2^{ \alpha  - 1 }
	\tfrac{ p ( p-1) }{ p \alpha - 1 }
	T^{  \alpha }
	\Big[
	\Big( 
	\int_0^\infty
	s^{- 2 \alpha } e^{-s} \, ds
	\Big)
	\sum\nolimits_{ h \in \H }
	\| \mathbb{B} ( - A )^{ \beta } h \|_H^2
	\Big]^{\nicefrac{1}{2}} 
	\big[ \sqrt{ |c_0| } \pi (n+1) \big]^{ 2 ( \alpha + \gamma -\beta ) - 1 } 
	\\
	&
	=
	2^{ \alpha  - 1 }
	\tfrac{ p ( p-1) }{ p \alpha - 1 }
	T^{  \alpha }
	\Big[
	\Big( 
	\int_0^\infty
	s^{- 2 \alpha } e^{-s} \, ds
	\Big)
	\| \mathbb{B} ( - A )^{ \beta }  \|_{\HS(H)}^2
	\Big]^{\nicefrac{1}{2}} 
	\big[ \sqrt{ |c_0| } \pi (n+1) \big]^{ 2 ( \alpha + \gamma -\beta ) - 1 } 
		\\
	&
	=
	2^{ \alpha  - 1 }
	\tfrac{ p ( p-1) }{ p \alpha - 1 }
	T^{  \alpha }
	\Big[
	\Big( 
	\int_0^\infty
	s^{- 2 \alpha } e^{-s} \, ds
	\Big)
	\| B \|_{\HS(H, H_\beta)}^2
	\Big]^{\nicefrac{1}{2}} 
	\big[ \sqrt{ |c_0| } \pi (n+1) \big]^{ 2 ( \alpha + \gamma -\beta ) - 1 } 
	.
	\end{split}
	\end{equation}
	Jensen's inequality
	hence implies that for every  
	$ \alpha \in (0, \frac{1}{2}  - \max \{0, \gamma - \beta \} ) $, 
	$ p \in [ 1, \infty) $
	it holds  
	that
	\begin{equation}
	\begin{split}
	\sup\nolimits_{ n \in \N } 
	\Big\{ 
	n^{ 1 + 2 ( \beta - \alpha - \gamma ) } 
	\big(
	\E 
	\big[  
	\sup\nolimits_{ t \in [0,T] }
	\|  P_{ \H \backslash I_n } O_t \|_{ H_\gamma }^p
	\big]
	\big)^{ \nicefrac{1}{p} }
	\Big\}
	< \infty 
	.
	\end{split}
	\end{equation}
	Lemma~\ref{lemma:ConvergenceSpeed}
	(with
	$ V = \R $,
	$ ( \Omega, \F, \P ) =
	( \Omega, \F, \P ) $,
	$ \alpha = 1 + 2( \beta - \alpha - \gamma ) $,
	$ Z_n = \sup\nolimits_{ t \in [0,T] }
	\|  P_{ \H \backslash I_n } O_t \|_{ H_\gamma } $
	for 
	$ n \in \N $,
	$ \alpha \in (0, \frac{1}{2} - \max \{0, \gamma - \beta \} ) $
	in the notation of
	Lemma~\ref{lemma:ConvergenceSpeed})
	therefore shows that 
	for every  
	$ \alpha \in (0,  \frac{1}{2}  - \max \{0, \gamma - \beta \} ) $,
	$ \eta \in (0, 1 + 2 ( \beta - \alpha - \gamma ) ) $
	it holds that
	\begin{equation}
	\begin{split}
	\P 
	\big(
	\sup\nolimits_{ n \in \N }
	( 
	n^{\eta} 
	\sup\nolimits_{ t \in [0,T] }
	\| P_{ \H  \backslash I_n } O_t \|_{H_\gamma} 
	)
	<
	\infty 
	\big)
	=
	1
	.
	\end{split}
	\end{equation}
	This completes the proof of Lemma~\ref{lemma:PathwiseRates}.
\end{proof}
\begin{lemma}
\label{Lemma:Existence of approximation processes} 
%
Assume Setting~\ref{setting:Examples}, 
let
$ T \in (0,\infty) $,  
$ \beta \in \R $,
$ \gamma \in (- \infty, \nicefrac{1}{2} + \beta ) $,
$ B \in \HS(H, H_\beta) $, 
let
$ ( \Omega, \F, \P ) $
be a probability space with a normal filtration
$ ( \f_t )_{t \in [0,T]} $,
let
$ (W_t)_{t\in [0,T]} $
be an
$ \operatorname{Id}_H $-cylindrical 
$ ( \f_t )_{t \in [0,T]} $-Wiener process,  
let  
$ \xi \in \M
( \f_0, \mathcal{B}( H ) ) $, 
let $ \mathcal{P}(\H) $ be the power set of $ \H $, 
let $ \mathcal{P}_0(\H) = \{ \theta \in \mathcal{P}(\H) \colon \theta \text{ is a finite set} \} $, 
 let 
$ ( P_I )_{ I\in \mathcal{P}_0 (\H) } \subseteq L(H_{ \min \{0, \gamma\} }, H) $ 
satisfy
for every 
$ I\in \mathcal{P}_0(\H) $,  
$ v \in H_{\min \{0, \gamma\} } $ 
that
$ P_I(v) =\sum_{h \in I} 
\langle (-A)^{ - \min \{0, \gamma \} }h ,  (-A)^{ \min \{0, \gamma \} }v \rangle_H
h $,
and
let $ O \colon [0,T] \times \Omega \to H_\gamma $ be a stochastic process
with continuous sample paths
which satisfies for every
$ t \in [0,T] $ 
that  
$ [O_t]_{\P, \B(H_\gamma)} = \int_0^t e^{(t-s)A} B \, dW_s $. 	
Then 
there exist
$ ( \f_t )_{ t \in [0,T] } $-adapted stochastic
processes
$ X^I \colon [0,T] \times \Omega \to P_I(H) $,
$ I \in \mathcal{P}_0(\H) $, 
with continuous sample paths
such that
for every 
$ I \in \mathcal{P}_0(\H) $,
$ t \in [0,T] $ it holds that
\begin{equation}
X_t^I 
= 
e^{tA} P_I \xi + \int_0^t e^{(t-s)A} P_I F(X_s^I) \, ds + P_I O_t 
.
\end{equation}
\end{lemma}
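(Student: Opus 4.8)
The statement asserts the existence of adapted, continuous approximation processes $X^I$, $I \in \mathcal{P}_0(\H)$, solving a finite-dimensional stochastic evolution equation in the finite-dimensional subspace $P_I(H)$. The natural strategy is to recognize that for each fixed finite $I \in \mathcal{P}_0(\H)$, the equation for $X^I$ is exactly an equation of the type handled by Corollary~\ref{corollary:Existence} (with $s = 0$), so the plan is to verify the hypotheses of Corollary~\ref{corollary:Existence} for the restriction of the dynamics to $P_I(H)$ and then invoke it. First I would fix $I \in \mathcal{P}_0(\H)$ and work in the finite-dimensional Hilbert space $\widetilde H := P_I(H)$ (equipped with the restriction of $\langle\cdot,\cdot\rangle_H$), noting that the restriction of $A$ to $\widetilde H$ is a bounded self-adjoint operator with strictly negative eigenvalues, and that the associated family of interpolation spaces on $\widetilde H$ coincides with $\widetilde H$ as a set with equivalent norms, so that Setting~\ref{setting:main} is satisfied with $H$ replaced by $\widetilde H$.

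The key steps, in order, are as follows. Step one: define $\widetilde F := (P_I(H) \ni x \mapsto P_I F(x) \in P_I(H))$ and $\widetilde O := (P_I O_t)_{t \in [0,T]}$, and observe via Lemma~\ref{lemma:ContractiveProjection} that $P_I$ maps $H_\gamma$ (and every $H_r$) continuously into itself, hence $\widetilde O$ is a well-defined $(\f_t)_{t\in[0,T]}$-adapted process with continuous sample paths taking values in $P_I(H)$ (the adaptedness of $O$ follows from the standard fact that stochastic convolutions against a normal filtration admit adapted continuous modifications; one uses here that $(W_t)_{t\in[0,T]}$ is an $(\f_t)_{t\in[0,T]}$-Wiener process). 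Step two: verify the local Lipschitz-type hypothesis of Corollary~\ref{corollary:Existence}, namely that there exist $C, c \in [0,\infty)$ and $\delta, \kappa \in \R$ with $\|\widetilde F(x) - \widetilde F(y)\|_{\widetilde H} \le C \|x-y\|_{\widetilde H_\delta}(1 + \|x\|_{\widetilde H_\kappa}^c + \|y\|_{\widetilde H_\kappa}^c)$; since $\widetilde H$ is finite-dimensional all these norms are equivalent, so it suffices to have any polynomial local Lipschitz bound for $P_I F$ on $P_I(H)$, which follows from item~\eqref{item:02} of Lemma~\ref{lemma:local_lip} together with $P_I(H) \subseteq H_{\nicefrac12}$ and the continuity of $P_I$ on $H$ (taking, e.g., $\delta = \kappa = 0$, $c = 1$). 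Step three: verify the coercivity hypothesis $\langle x, \widetilde A x + \widetilde F(x+y)\rangle_{\widetilde H} \le \Phi(y)(1 + \|x\|_{\widetilde H}^2)$ for all $x, y \in \widetilde H$ and some $\Phi \in \mathcal C(\widetilde H, [0,\infty))$; here I would use that $\langle x, P_I F(x+y)\rangle_H = \langle x, F(x+y)\rangle_H$ for $x \in P_I(H)$, that $P_I(H) \subseteq H_{\nicefrac12}$, $P_I(H) \subseteq H_{\max\{\nicefrac12, \iota\}}$ for any $\iota > \nicefrac14$, and Corollary~\ref{corollary:AppropriateCoercivityEstimate} (with $v = x$, $w = y$, for some fixed $\iota \in (\nicefrac14,\infty)$), together with $\langle x, Ax\rangle_H \le 0$, to extract a bound of the required form with $\Phi(y)$ a continuous function of $\|y\|_{H_\iota}$, which is continuous in $y \in P_I(H)$ by norm equivalence on the finite-dimensional space $P_I(H)$.

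Having verified these hypotheses, an application of Corollary~\ref{corollary:Existence} (with $s = 0$, $\xi = P_I\xi$, $O = \widetilde O$, and $F = \widetilde F$ in its notation) yields, for each $I \in \mathcal{P}_0(\H)$, a unique continuous function $X^I \colon [0,T] \times \Omega \to P_I(H)$ that is $(\f_t)_{t\in[0,T]}$-adapted and satisfies $X_t^I = e^{tA}P_I\xi + \int_0^t e^{(t-s)A}P_I F(X_s^I)\,ds + \widetilde O_t - e^{tA}\widetilde O_0 = e^{tA}P_I\xi + \int_0^t e^{(t-s)A}P_I F(X_s^I)\,ds + P_I O_t$, where in the last equality I use that $O_0 = 0$ $\P$-a.s.\ (hence $\widetilde O_0 = 0$ $\P$-a.s., and by continuity of sample paths everywhere on a modification; one should here just note $[O_0]_{\P,\B(H_\gamma)} = \int_0^0 e^{(0-s)A}B\,dW_s = 0$). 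Doing this for every $I \in \mathcal{P}_0(\H)$ produces the claimed family, completing the proof. The main obstacle I anticipate is the somewhat delicate bookkeeping in Step two and Step three: one must carefully check that restricting to the finite-dimensional subspace $P_I(H)$ legitimately puts us in Setting~\ref{setting:main} and that the growth/coercivity bounds from the earlier lemmas (which are stated for $H_{\nicefrac12}$, $H_\iota$, etc.) transfer to bounds phrased in terms of the $\widetilde H$-norm on $P_I(H)$ with the universal (i.e., $I$-independent) constants being irrelevant here since Corollary~\ref{corollary:Existence} only needs existence of \emph{some} constants for each fixed $I$; the adaptedness of $O$ and the identification $O_0 = 0$ are comparatively routine but should be stated explicitly.
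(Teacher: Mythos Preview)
Your proposal is correct and follows essentially the same route as the paper: restrict to the finite-dimensional space $P_I(H)$, verify the local Lipschitz hypothesis of Corollary~\ref{corollary:Existence} via item~\eqref{item:02} of Lemma~\ref{lemma:local_lip}, verify the coercivity hypothesis via Corollary~\ref{corollary:AppropriateCoercivityEstimate} (the paper takes $\iota = \nicefrac{1}{2}$), and then invoke Corollary~\ref{corollary:Existence} with $s=0$. The only cosmetic difference is that the paper uses the exact identity $\langle x, Ax\rangle_H = -\|x\|_{H_{1/2}}^2$ to cancel the $\|v\|_{H_{1/2}}^2$ term coming out of Corollary~\ref{corollary:AppropriateCoercivityEstimate} (rather than merely $\langle x,Ax\rangle_H\le 0$ plus norm equivalence on $P_I(H)$), and the paper silently absorbs the $O_0=0$ issue that you spell out explicitly.
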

\begin{proof}[Proof of Lemma~\ref{Lemma:Existence of approximation processes}.]
	Throughout this proof
	let
	$ \Phi \colon H_{\nicefrac{1}{2}} \to [0, \infty) $
	be the function which satisfies for every
	$ w \in H_{\nicefrac{1}{2}} $ that
	\begin{equation}
	\Phi(w) 
	=
	\tfrac{ 3 | c_1 |^2 }{ 8  | c_0 |  } 
	\bigg[   
	\sup_{ u \in H_{\nicefrac{1}{2}} \backslash \{ 0 \} }
	\tfrac{ \| u \|_{ L^\infty(\lambda; \R) } }{ \| u \|_{ H_{\nicefrac{1}{2}} } }
	+
	\sup_{ u \in H_{\nicefrac{1}{2}} \backslash \{ 0 \} }
	\tfrac{ \| u \|_{ L^4(\lambda; \R) }^2 }{ \| u \|_{ H_{\nicefrac{1}{2}} }^2 } 
	\bigg]^2
	( 
	1
	+ 
	\| w \|_{ H_{\nicefrac{1}{2}} }^2
	)^2
	,
	\end{equation}
	%
	%
	%
	%
	let
	$ \mathcal{A}_I \colon P_I(H) \to P_I(H) $,
	$ I \in \mathcal{P}_0(\H) $,
	be the linear operators
	which 
	satisfy for every
	$ I \in \mathcal{P}_0(\H) $,
	$ v \in P_I(H) $
	that
	$ \mathcal{A}_I v = A v $, 
	and 
	for every
	$ I \in \mathcal{P}_0(\H) $
	let
	$ ( \mathcal{H}_{I, s}, \langle \cdot, \cdot \rangle_{ \mathcal{H}_{I, s} },
	\left \| \cdot \right\|_{ \mathcal{H}_{I, s} } ) $,
	$ s \in \R $,
	be a family of interpolation spaces
	associated
	to $ - \mathcal{A}_I $. 
	Note that item~\eqref{item:02} 
	of Lemma~\ref{lemma:local_lip}
	proves that
	for every
	$ I \in \mathcal{P}_0(\H) $,
	$ v, w \in H_{ \nicefrac{1}{2} } $
	it holds that
	\begin{equation}
	\begin{split}
	\label{eq:Existence condition 1}
	\| P_I F(v) - P_I F(w) \|_H
	\leq  
	\| F(v) - F(w) \|_H 
	\leq
	\tfrac{ |c_1| }{\sqrt{3 } \, c_0 } 
	( 
	\| v \|_{ H_{ \nicefrac{1}{2} } } 
	+ 
	\| w \|_{ H_{ \nicefrac{1}{2} } } 
	)
	\| v - w \|_{ H_{ \nicefrac{1}{2} } }
	.
	\end{split}
	\end{equation}
	Moreover, observe that
	Corollary~\ref{corollary:AppropriateCoercivityEstimate}  
	(with $ \iota = \nicefrac{1}{2} $,
	$ v = v $, $ w = w $
	for $ v, w \in H_{ \nicefrac{1}{2} } $
	in the notation of 
	Corollary~\ref{corollary:AppropriateCoercivityEstimate})  
	shows that for every
	$ I \in \mathcal{P}_0(\H) $,
	$ v, w \in P_I (H) \subseteq H_{ \nicefrac{1}{2} } $
	it holds that
	\begin{equation}
	\begin{split}
	\label{eq:Existence condition 2}
	&
	\langle v, P_I F( v + w ) \rangle_H
	=
	\langle P_I v, F( v + w ) \rangle_H
	=
	\langle v, F( v + w ) \rangle_H 
	\\
	&
	\leq 
	\tfrac{ 3 | c_1 |^2 }{ 8  | c_0 |  } 
	\bigg[   
	\sup_{ u \in H_{\nicefrac{1}{2}} \backslash \{ 0 \} }
	\tfrac{ \| u \|_{ L^\infty(\lambda; \R) } }{ \| u \|_{ H_{\nicefrac{1}{2}} } }
	+
	\sup_{ u \in H_{\nicefrac{1}{2}} \backslash \{ 0 \} }
	\tfrac{ \| u \|_{ L^4(\lambda; \R) }^2 }{ \| u \|_{ H_{\nicefrac{1}{2}} }^2 } 
	\bigg]^2
	\big( 
	\| v \|_H^2
	+ 
	\| w \|_{ H_{\nicefrac{1}{2}} }^2
	\big) 
	\| w \|_{ H_{\nicefrac{1}{2}} }^2
	+  
	\| v \|_{H_{\nicefrac{1}{2}}}^2
	\\
	&
	\leq  
\Phi(w) 
	( 
	1
	+
	\| v \|_H^2
	) 
	+  
	\| v \|_{H_{\nicefrac{1}{2}}}^2
	<
	\infty 
	.
	\end{split}
	\end{equation}
	Combining~\eqref{eq:Existence condition 1} 
	and 
	Corollary~\ref{corollary:Existence} (with
%
	$ (H, \langle \cdot, \cdot \rangle_H,
	\left \| \cdot \right \|_H ) = 
	( P_I (H), \langle \cdot, \cdot \rangle_{H},
	\left \| \cdot \right \|_{H} ) $,
	$ \H = I $, 
	$ \values_{e_n} = - c_0 \pi^2 n^2  $,
	$ A = \mathcal{A}_I $,
	$ ( H_s )_{ s \in \R } = ( \mathcal{H}_{I, s} )_{ s \in \R } $,
	$ T = T $, 
	$ s = 0 $,
	$ C = \nicefrac{ | c_1 | }{ c_0 } $,  
	$ c = 1 $,
	$ \delta = \nicefrac{1}{2} $,
	$ \kappa = \nicefrac{1}{2} $, 
	$ F = ( P_{I}(H) \ni x \mapsto P_{I} F(x) \in P_{I}(H) ) $,
	%
	$ \Phi = ( P_I(H) \ni x \mapsto \Phi(x) \in [0, \infty) ) $,
	$ ( \Omega, \F, \P, ( \f_t )_{ t \in [0,T ] }) 
	= 
	( \Omega, \F, \P, ( \f_t )_{ t \in [0,T ] } ) $,
	$ \xi = ( \Omega \ni \omega \mapsto P_{I} \xi (\omega) \in P_{I }( H ) ) $,
	$ O = ( [0,T] \times \Omega \ni ( t, \omega ) \mapsto 
	P_{I} O_t( \omega ) \in P_{I}( H ) ) $
	%
	%
	for 
	$ I \in \mathcal{P}_0(\H) $,
%
	$ n\in\{m\in\N\colon e_m\in P_I(\H)\} $
	in the notation of Corollary~\ref{corollary:Existence})
	therefore
	completes the proof of Lemma~\ref{Lemma:Existence of approximation processes}.
\end{proof}
\begin{theorem}
	\label{theorem:existence_Burgers}
	%
	%
%
	Let
	$ \lambda
	\colon
	\mathcal{B}( (0,1) )
	\rightarrow [0, 1] $
	be the Lebesgue-Borel
	measure on
	$ (0,1) $, 
	for every measure space
	$ ( \Omega, \F,  \mu) $,
	every measurable space $ ( S, \mathcal{S} ) $,
	every set $ R $,
	and every function
	$ f \colon \Omega \to R $
	let
	$ [f]_{\mu,\mathcal{S}}
	=
	\{g \colon \Omega 
	\to S \colon 
	(\exists\, D \in \F  \colon[ \mu (D)=0
	\text{ and }
	\{\omega\in
	\Omega \colon f(\omega)\neq g(\omega)\}
	\subseteq D]) \text{ and } (\forall\, D\in\mathcal{S} \colon g^{-1}( D )\in 
	\F )\} $,
	let
	$ T, \varepsilon, c_0 \in (0,\infty) $,  
	$ c_1 \in \R $,
	$ \beta \in ( - \nicefrac{1}{4}, \infty) $,
	$ \gamma \in ( \nicefrac{1}{4}, \min \{ 1, \nicefrac{1}{2} + \beta \} ) $,
	$ (H, \langle \cdot, \cdot \rangle_H,
	\left \| \cdot \right \|_H)  
	=
	(L^2( \lambda; \R), 
	\langle \cdot, \cdot 
	\rangle_{L^2( \lambda;\R)},
	\left \| \cdot \right \|_{L^2( \lambda; \R)}
	) $,  
	let
	$ ( e_n )_{ n \in \N } \subseteq H $  
	satisfy for every  
	$ n \in \N $
	that    
	$ e_n = [ ( \sqrt{2} \sin( n \pi x ) )_{x \in (0,1) } ]_{ \lambda, \B(\R ) } $,  
	let
	$ A \colon D( A ) \subseteq H \to H $ 
	be the linear operator which satisfies
	$ D(A) = \{
	v \in H \colon \sum_{ n=1}^\infty  | n^2 \langle e_n, v \rangle_H  |^2 <  \infty
	\} $ 
	and  
	$ \forall \, v \in D(A) \colon 
	A v = - \sum_{ n=1}^\infty c_0 \pi^2 n^2 \langle e_n, v \rangle _H e_n $,
	let
	$ (H_r, \langle \cdot, \cdot \rangle_{H_r}, \left \| \cdot \right \|_{H_r} ), r\in \R $, be a family of interpolation spaces associated to $ -A $
	(cf., e.g., \cite[Section~3.7]{SellYou2002}),
	for every  
	$ v \in W^{1,2}((0,1), \R) $
	let
	$ \partial v    
	\in H $ 
	satisfy for every 
	$ \varphi  
	\in 
	\mathcal{C}_{cpt}^\infty( (0,1), \R ) $
	that
	$ \langle \partial  v, 
	[ \varphi ]_{ \lambda, \mathcal{B}(\R )} \rangle_{H}
	=
	-
	\langle v , 
	[ \varphi' ]_{ \lambda, \mathcal{B}(\R )}
	\rangle_{H} $, 
	let
	$ ( \Omega, \F, \P ) $
	be a probability space with a normal filtration
	$ ( \f_t )_{t \in [0,T]} $,
	let
	$ (W_t)_{t\in [0,T]} $
	be an
	$ \operatorname{Id}_H $-cylindrical 
	$ ( \f_t )_{t \in [0,T]} $-Wiener process, 
	let 
	$ B \in \HS(H, H_\beta) $,
	and let
	$ \xi \colon \Omega \to H_{ \gamma + \varepsilon } $ 
	be an
	$ \f_0 / \mathcal{B}( H_{ \gamma + \varepsilon } ) $-measurable function.
	Then 
	\begin{enumerate}[(i)]
		\item \label{item:Continuous extension}
		there exists a unique continuous function
			$ F \colon H_{\nicefrac{1}{8}} \to H_{ - \nicefrac{1}{2} } $ 
		which satisfies
		for every
		$ v \in H_{ \nicefrac{1}{2} } $  
		that  
		$ F(v) =
		 c_1
		v \partial v
		$
		and
		\item \label{item:main statement} \sloppy there exists an up to indistinguishability unique
	$ (\f_t )_{ t \in [0,T] } $-adapted stochastic process
	$ X \colon [0,T] \times \Omega \to H_\gamma $
	with continuous sample paths which satisfies
	for every 
	$ t \in [0,T] $
	that
	\begin{equation}
	[
	X_t
	]_{\P, \B(H_\gamma)} 
	= 
	\Big[
	e^{tA}\xi 
	+  
	\int_0^t e^{(t-s)A} F(X_s) \, ds 
	\Big]_{\P, \B(H_\gamma)}
	+
	\int_0^t e^{(t-s)A} B \, dW_s
	.
	\end{equation}
	\end{enumerate}
\end{theorem}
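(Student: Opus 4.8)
The plan is to combine the auxiliary results developed throughout the excerpt in the following order. First, I would establish item~\eqref{item:Continuous extension}: the function $F$ on $H_{\nicefrac{1}{2}}$ given by $v\mapsto c_1 v\partial v$ coincides (via item~\eqref{item:001} of Lemma~\ref{lemma:local_lip}) with $v\mapsto \tfrac{c_1}{2}\partial(v^2)$, and Corollary~\ref{corollary:Extension of Function F part2} furnishes the unique continuous extension $F\colon H_{\nicefrac{1}{8}}\to H_{-\nicefrac{1}{2}}$ together with the local Lipschitz estimate
\begin{equation}
\| F(v) - F(w) \|_{ H_{ - \nicefrac{1}{2} } }
\leq
C
\| v - w \|_{ H_{ \nicefrac{1}{8} } }
(
1
+
\| v \|_{ H_{ \nicefrac{1}{8} } }
+
\| w \|_{ H_{ \nicefrac{1}{8} } }
)   .
\end{equation}
Since $\gamma>\nicefrac{1}{4}>\nicefrac{1}{8}$, this in particular gives a local Lipschitz condition on $H_\gamma$; I would also record the corresponding statement from Lemma~\ref{lemma:Extension of Function F part1} with a negative smoothness index $-\nu$ strictly greater than $-\nicefrac{1}{2}$, which is what is needed to feed into the abstract existence theorem of Bl\"omker \& Jentzen~\cite[Theorem~3.1]{BloemkerJentzen2013} together with the coercivity-type bound from Corollary~\ref{corollary:AppropriateCoercivityEstimate}.

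\textbf{Galerkin approximations and their a priori bounds.} Next I would introduce the spectral projections $P_I$, $I\in\mathcal P_0(\H)$, and the Ornstein--Uhlenbeck-type process $O$ with $[O_t]_{\P,\B(H_\gamma)}=\int_0^t e^{(t-s)A}B\,dW_s$, whose existence and path continuity follow from Lemma~\ref{lemma:Justify} (here one uses $\gamma<\nicefrac{1}{2}+\beta$). Lemma~\ref{Lemma:Existence of approximation processes} then provides $(\f_t)$-adapted continuous processes $X^I\colon[0,T]\times\Omega\to P_I(H)$ solving the finite-dimensional random evolution equations
\begin{equation}
X_t^I = e^{tA}P_I\xi + \int_0^t e^{(t-s)A}P_I F(X_s^I)\,ds + P_I O_t.
\end{equation}
The crucial step is to apply Lemma~\ref{lemma:AprioriBound3} with $\iota=\gamma+\varepsilon$ (capped at a value below $1$ if necessary, using $\gamma+\varepsilon\le\gamma+\varepsilon$ and choosing $\iota\in[\gamma,1)$), and using the assumption $\xi\in H_{\gamma+\varepsilon}$ together with the pathwise regularity of $O$ in $H_{\max\{\gamma,\iota\}}$ guaranteed by Lemma~\ref{lemma:Justify}, to obtain
\begin{equation}
\sup\nolimits_{I\in\mathcal P_0(\H)}\sup\nolimits_{t\in[0,T]}\|X_t^I\|_{H_\iota}<\infty\qquad \P\text{-a.s.}
\end{equation}
This is the bootstrap heart of the argument (Lemmas~\ref{lemma:F_Burgers_bootstrap20}--\ref{lemma:F_Burgers_bootstrap220} chained through Lemma~\ref{lemma:AprioriBound3}, with the base moment bound coming from Lemma~\ref{lemma:AprioriBound} and Corollary~\ref{corollary:AppropriateCoercivityEstimate}).

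\textbf{Convergence and passage to the limit.} With uniform-in-$I$ pathwise $H_\iota$-bounds in hand, and the pathwise Galerkin convergence rate for the stochastic convolution from Lemma~\ref{lemma:PathwiseRates} (which gives $\sup_t\|P_{\H\setminus\{e_1,\dots,e_n\}}O_t\|_{H_\gamma}\to0$ polynomially fast, using $\gamma<\nicefrac12+\beta$), one verifies the hypotheses of the abstract existence and uniqueness result in Bl\"omker \& Jentzen~\cite[Theorem~3.1]{BloemkerJentzen2013}: the local Lipschitz property~\eqref{eq:Local lip} comes from item~\eqref{item:Continuous extension} and Lemma~\ref{lemma:Extension of Function F part1}, the a priori bound~\eqref{eq:unifBound} from Lemma~\ref{lemma:AprioriBound3}, and the Galerkin speed~\eqref{eq:Galerking_Speed} from Lemma~\ref{lemma:PathwiseRates}. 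This produces an up-to-indistinguishability unique $(\f_t)$-adapted continuous process $X\colon[0,T]\times\Omega\to H_\gamma$ satisfying the mild integral equation; measurability/adaptedness of the $X^I$ is inherited from Corollary~\ref{corollary:Existence} via Lemma~\ref{Lemma:Existence of approximation processes}, and passes to the limit. I would close by noting that the drift integral $\int_0^t e^{(t-s)A}F(X_s)\,ds$ makes sense in $H_\gamma$ because $F(X_s)\in H_{-\nicefrac12}$ and $\gamma+\nicefrac12<1$, so the semigroup smoothing is integrable; this also confirms the two integrals in the statement are well defined.

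\textbf{Main obstacle.} The main obstacle is not any single estimate but the bookkeeping in the bootstrap: one must choose the intermediate exponents $\rho,\eta,\iota$ and the dual exponents $\alpha_1\in(\nicefrac34,\cdot)$, $\alpha_2\in(\nicefrac14,\nicefrac12)$ in Lemma~\ref{lemma:AprioriBound3} so that all the finiteness conditions~\eqref{eq:AssumptionFinite}, \eqref{eq:Assumption1}, \eqref{eq:AssumptionFinite2} (supplied by Corollary~\ref{corollary:SatisfiedF}) hold simultaneously and the iteration actually reaches the target smoothness $\gamma$ (and $\gamma+\varepsilon$ on the initial datum, up to the $<1$ cap). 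The delicate point is that the nonlinearity only maps $H_\gamma\to H_{-\nicefrac12}$, so each bootstrap step gains less than one full derivative, and one must verify that finitely many steps suffice to climb from the base $L^2$ bound up to $H_\gamma$; this is exactly the content already packaged in Lemma~\ref{lemma:AprioriBound3}, so in the end the proof is a careful invocation of that lemma rather than a fresh computation.
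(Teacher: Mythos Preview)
Your overall strategy matches the paper's: item~\eqref{item:Continuous extension} via Corollary~\ref{corollary:Extension of Function F part2}, Galerkin approximations via Lemma~\ref{Lemma:Existence of approximation processes}, pathwise a priori bounds via Lemma~\ref{lemma:AprioriBound3}, Galerkin rates for the stochastic convolution via Lemma~\ref{lemma:PathwiseRates}, and then Bl\"omker--Jentzen~\cite[Theorem~3.1]{BloemkerJentzen2013}. However, you have misplaced the role of the extra $\varepsilon$ regularity of $\xi$, and this creates a genuine gap.

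You propose to apply Lemma~\ref{lemma:AprioriBound3} with $\iota=\gamma+\varepsilon$, claiming that Lemma~\ref{lemma:Justify} gives $O$ with continuous paths in $H_{\gamma+\varepsilon}$. But Lemma~\ref{lemma:Justify} only furnishes $O\in\mathcal C([0,T],H_\delta)$ for $\delta<\tfrac12+\beta$, and nothing in the hypotheses forces $\gamma+\varepsilon<\tfrac12+\beta$ (only $\gamma<\tfrac12+\beta$ is assumed; take e.g.\ $\beta=0$, $\gamma=0.4$, $\varepsilon=0.2$). Hence the hypothesis $\sup_{I,u}\|O_u^I\|_{H_{\max\{\gamma,\iota\}}}<\infty$ of Lemma~\ref{lemma:AprioriBound3} may fail for your choice of $\iota$. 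The paper applies Lemma~\ref{lemma:AprioriBound3} with $\iota=\gamma$, which is all that the uniform bound~\eqref{eq:unifBound} in the Bl\"omker--Jentzen input requires.

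The $\varepsilon$ enters elsewhere, in a step you omit. The Bl\"omker--Jentzen theorem is invoked with forcing $t\mapsto \mathcal O_t+e^{tA}\xi$, and one needs a polynomial Galerkin rate for this \emph{combined} object in $H_\gamma$, namely~\eqref{eq:Galerking_Speed}. Lemma~\ref{lemma:PathwiseRates} handles only the $\mathcal O$ part; the initial-data contribution is controlled directly by
\[
\|P_{\H\setminus I_n}e^{tA}\xi\|_{H_\gamma}\le\Big(\sup_{h\in\H\setminus I_n}|\values_h|^{-\eta/2}\Big)\,\|\xi\|_{H_{\gamma+\eta/2}}
= [c_0\pi^2(n+1)^2]^{-\eta/2}\,\|\xi\|_{H_{\gamma+\eta/2}},
\]
which is finite precisely because $\eta$ is chosen with $\eta<2\varepsilon$ and $\xi\in H_{\gamma+\varepsilon}\subseteq H_{\gamma+\eta/2}$. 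That is the sole place the $\varepsilon$ is used. (A minor technical point you also skip: the paper passes from $O$ to a pathwise modification $\mathcal O$ on the full-measure set $\Sigma$ from~\eqref{eq:Sigma2}, so that the Bl\"omker--Jentzen theorem can be applied for \emph{every} $\omega$ rather than almost surely.)
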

\begin{proof}[Proof of Theorem~\ref{theorem:existence_Burgers}]
Throughout this proof
let
$ f \colon H_{\nicefrac{1}{2} } \to H $
be the function
which satisfies for every 
$ v \in H_{ \nicefrac{1}{2} } $
that
$ f(v) = c_1 v \partial v $,
let
$ \H = \{ e_n \colon n \in \N \} $,
let $ \mathcal{P}(\H) $ be the power set of $ \H $, 
let $ \mathcal{P}_0(\H) = \{ \theta \in \mathcal{P}(\H) \colon \theta \text{ is a finite set} \} $,
let 
$ ( P_I )_{ I\in \mathcal{P} (\H) } \subseteq L(H) $ 
satisfy
for every 
$ I\in \mathcal{P}(\H) $,  
$ v \in H $ 
that
$ P_I(v) =\sum_{h \in I} \left< h ,v \right>_H h $,
let
$ \nu = \nicefrac{
( 2 - 4 \min \{ \gamma, 1 / 2 \} )
}{3} $,
$ \eta \in (0, \min \{ 2 \varepsilon, 1 + 2 ( \beta - \gamma ),
2 ( 1 - \gamma - \nu ) \} ) $,
and
let
$ (I_n)_{n \in \N} \subseteq \mathcal{P}_0(\H) $ 
satisfy
for every $ n \in \N $ that
$ I_n = \{ e_1, \ldots, e_n \} $.
Note that
item~\eqref{item: Extension of F 2} of
Corollary~\ref{corollary:Extension of Function F part2} 
(with
$ F = f $,
$ \bar F = F $
in the notation of 
Corollary~\ref{corollary:Extension of Function F part2})
establishes 
item~\eqref{item:Continuous extension}.
Next we intend to apply Bl\"omker \& Jentzen~\cite[Theorem~3.1]{BloemkerJentzen2013}
to prove item~\eqref{item:main statement}.
For this observe that
 Lemma~\ref{lemma:Justify}
 (with
 $ H = H $,
 $ \H = \H $,
 $ \values_{ e_n } = - c_0 \pi^2 n^2 $,
 $ A = A $,
 $ H_r = H_r $,
 $ T = T $,
 $ \beta = \beta $,
 $ \gamma = \gamma $,
 $ B = B $,
 $ ( \Omega, \F, \P ) = ( \Omega, \F, \P ) $,
 $ ( W_t )_{ t \in [0,T] } 
 =
 ( W_t )_{ t \in [0,T] } $
 for 
 $ n \in \N $,
 $ r \in \R $
 in the notation of 
 Lemma~\ref{lemma:Justify})
 ensures
 that there exists 
 an
$ ( \f_t )_{ t \in [0,T] } $-adapted stochastic process
$ O \colon [0,T] \times \Omega \to H_\gamma $
with continuous sample paths 
which satisfies for every 
$ t \in [0,T] $ that
\begin{equation} 
\label{eq:more O}
[ O_t ]_{\P, \B(H_\gamma)} = \int_0^t e^{(t-s)A} B \, dW_s 
.
\end{equation} 
%
%
%
Note that~\eqref{eq:more O} 
and
Lemma~\ref{Lemma:Existence of approximation processes}
(with
$ c_0 = c_0 $,
$ c_1 = c_1 $,
$ H = H $,
$ \H = \H $,
$ \values_{ e_n } = - c_0 \pi^2 n^2 $,
$ e_n = e_n $,
$ A = A $,
$ H_r = H_r $,
$ F = f $,
$ T = T $,
$ \beta = \beta $,
$ \gamma = \gamma $,
$ B = B $,
$ (\Omega, \F, \P) = (\Omega, \F, \P) $,
$ (\f_t)_{ t \in [0,T] } = (\f_t)_{ t \in [0,T] } $,
$ (W_t)_{ t \in [0,T] } = (W_t)_{ t \in [0,T] } $,
$ \xi = ( \Omega \ni \omega \mapsto \xi(\omega) \in H ) $,
$ P_I = P_I $,
$ O = O $
for $ n \in \N $, $ r \in \R $, $ I \in \mathcal{P}_0(\H) $
in the notation of
Lemma~\ref{Lemma:Existence of approximation processes})
show that
there exist 
$ ( \f_t )_{ t \in [0,T] } $-adapted stochastic processes
$ X^I \colon [0,T] \times \Omega \to P_{I} ( H ) $, 
$ I \in \mathcal{P}_0( \H ) $, 
with continuous sample paths 
which satisfy for every 
$ I \in \mathcal{P}_0( \H ) $,
$ t \in [0,T] $ that
\begin{equation}
\begin{split}
\label{eq:processEx}
X_t^I = e^{tA} P_{I} \xi + \int_0^t e^{(t-s)A} P_{I} f (  X_s^I  ) \, ds
+
P_{I} O_t  
.
\end{split} 
\end{equation}
Next let
$ \Sigma \in \F $
be the set which satisfies that
\begin{equation}
\label{eq:Sigma2}
	\Sigma =
	\big\{ \omega \in \Omega \colon 
%
		\sup\nolimits_{ n \in \N }
		(
		n^{ \eta }
		\sup\nolimits_{ t \in [0,T] }
		\|  O_t(\omega) - P_{I_n}  O_t(\omega) \|_{H_\gamma} 
		) 
		< \infty
		\big\} 
		,
	\end{equation}
	%
%
%
%
%
	let
	$ \O \colon [0,T] \times \Omega \to H_\gamma $
	be the stochastic process which satisfies
	for every $ t \in [0,T] $,
	$ \omega \in \Omega $ that
	\begin{equation} 
	\label{eq:Define versions}
	\mathcal{O}_t( \omega ) 
	= 
	\begin{cases} 
	 O_t(\omega) 
	& \colon \omega \in \Sigma
	\\
	- e^{tA} \xi ( \omega ) 
	- \int_0^t e^{(t-s)A} f(0) \, ds 
	&\colon 
	\omega \in ( \Omega \backslash \Sigma ) 
	,
	\end{cases}
	\end{equation}
	and let
	$ \mathcal{X}^I \colon [0,T] \times \Omega \to P_{I} ( H ) $,
	$ I \in \mathcal{P}_0( \H ) $,
	be the stochastic processes
	which satisfy for every 
	$ I \in \mathcal{P}_0( \H ) $,
	$ t \in [0,T] $, 
	$ \omega \in \Sigma $
	that  
	\begin{equation} 
	\label{eq:Define versions 2}
	\mathcal{X}_t^{I} (\omega ) =
	\begin{cases}
	X_t^{I}(\omega ) & \colon \omega \in \Sigma \\
	0 & \colon \omega \in ( \Omega \backslash \Sigma )
	.
	\end{cases}
	\end{equation}
	%
	%
	%
	%
%
%
Moreover, note that
the fact that
$ ( \gamma + \nu )
\in (0, 1) $
shows that
for every 
$ t \in (0,T] $ it holds that
\begin{equation}
\begin{split}
%
\| e^{tA} \|_{ L( H_{ - \nu }, H_\gamma ) } 
\leq
t^{-\gamma - \nu } 
\leq
t^{ - \gamma - \nu - ( \nicefrac{\eta}{2} ) }
T^{ \nicefrac{\eta}{2} }
.
\end{split}
\end{equation}
This ensures that
\begin{equation}
\label{eq:Ass1a}
\sup\nolimits_{ t \in (0,T] }
\big( 
t^{ \gamma + \nu + ( \nicefrac{\eta}{2} ) } 
\| e^{tA} \|_{ L( H_{ - \nu }, H_\gamma ) } 
\big)
<
\infty.
\end{equation}
In addition, observe that 
the fact that
$ ( \gamma + \nu + ( \nicefrac{\eta}{2} ) )
\in (0, 1) $
implies that
for every 
$ n \in \N $,
$ t \in [0,T] $ 
it holds
that
\begin{equation}
\begin{split}
%
&
\| P_{ \H \backslash I_n }
e^{tA} 
\|_{L(H_{ - \nu }, H_\gamma)}
=
\| 
(-A)^{ - \nicefrac{\eta}{2} } 
P_{ \H \backslash I_n } 
 (-A)^{ \gamma + \nu + (\nicefrac{\eta}{2}) } e^{tA} 
\|_{L(H)}
\\
&
\leq
\| 
(-A)^{ - \nicefrac{\eta}{2} } 
P_{ \H \backslash I_n } 
\|_{L(H)}
\| 
(-A)^{ \gamma + \nu + ( \nicefrac{\eta}{2} ) } e^{tA} 
\|_{L(H)}
\leq 
[ c_0 \pi^2 (n+1)^2 ]^{ - \nicefrac{\eta}{2} } 
t^{ - \gamma - \nu - ( \nicefrac{\eta}{2} ) }
. 
\end{split}
\end{equation}
This proves that
\begin{equation}
\begin{split}
\label{eq:Ass1b}
\sup\nolimits_{n \in \N }
\sup\nolimits_{t \in (0,T] }
\big( 
t^{ \gamma + \nu + ( \nicefrac{\eta}{2} ) }
n^{ \eta } 
\| 
e^{tA}
-
P_{ I_n }
e^{tA} 
\|_{L(H_{ - \nu }, H_\gamma)}
\big)
<
\infty.
\end{split}
\end{equation}
Next note that 
the fact that
for every $ x \in ( \nicefrac{1}{8}, \nicefrac{1}{2} ] $
it holds that
\begin{equation}  
\big( \tfrac{ ( 2 - 4 x ) }{ 3 } \big)
\in 
\big( \big[\tfrac{1}{2} - x, 
\tfrac{1}{2} \big] \cap \big( \tfrac{3}{4} - 2 x, \infty \big) \big) 
\end{equation} 
%
%
and 
Lemma~\ref{lemma:Extension of Function F part1}
(with      
$ \gamma = \min \{ \gamma, \nicefrac{1}{2} \} $,  
$ \nu = \nicefrac{ ( 2 - 4 \min \{ \gamma,  1/2 \} ) }{ 3 } $  
in the
notation of Lemma~\ref{lemma:Extension of Function F part1})
ensure that 
there exists 
$ C \in [0, \infty) $ 
such that
for every 
$ v, w \in H_{ \gamma }
\subseteq 
H_{ \min \{ \gamma, 1/2 \} } $ 
it holds that
\begin{equation}
\begin{split}
\label{eq:Ass2}
&
\| F(v) - F(w) \|_{ H_{ - \nicefrac{ ( 2 - 4 \min \{ \gamma, 1/2 \} ) }{ 3 } } }
\leq  
C 
\| v - w \|_{ H_{ \min \{ \gamma,  1/2 \} } } 
(   
1
+
\| v \|_{ H_{ \min \{ \gamma, 1/2 \} } }   
+
\| w \|_{ H_{ \min \{ \gamma, 1/2 \} } }
)   
\\
&
\leq 
C 
\big[ 
\max \{ 1, 
\| (-A)^{\min \{ 0, (1/2)  - \gamma \} } \|_{L(H)} \} \big]^2
\| v - w \|_{ H_{\gamma } }
(   
1
+
\| v \|_{ H_{ \gamma } }   
+
\| w \|_{ H_{ \gamma } }
)   
.
\end{split}
\end{equation}
%
%
%
%
This demonstrates that
there exists $ C \in \R $
such that
for every
$ v, w \in H_{ \gamma } $
it holds that
\begin{equation} 
\label{eq:Local lip}
\| F(v) - F(w) \|_{ H_{ - \nu } }
=
\|  F(v) -  F(w) \|_{ H_{ - \nicefrac{ ( 2 - 4 \min \{ \gamma, 1/2 \} ) }{ 3 } } }
\leq  
C 
\| v - w \|_{ H_{ \gamma } } 
(   
1
+
\| v \|_{ H_{ \gamma } }   
+
\| w \|_{ H_{ \gamma } }
)   
.
\end{equation} 
Furthermore,
observe that~\eqref{eq:Sigma2},
the fact that
$ \eta \in (0, 1 + 2 ( \beta - \gamma ) ) $,  
and
Lemma~\ref{lemma:PathwiseRates}
(with
$ T = T $,
$ \beta =  \beta $,
$ \gamma = \gamma $,
$ B = B $,
$ P_I = P_I $,
$ ( \Omega, \F, \P ) = ( \Omega, \F, \P ) $,
$ ( W_t )_{ t \in [0,T] } = ( W_t )_{ t \in [0,T] } $,
$ O = O $
for 
$ I \in \mathcal{P}(\H) $
in the notation of 
Lemma~\ref{lemma:PathwiseRates})
show that $ \P ( \Sigma ) = 1 $.
%
This and~\eqref{eq:Define versions}
prove that
\begin{equation} 
\label{eq:modif2}
\P ( \forall \, t \in [0,T] \colon \O_t = O_t ) = 1 
.
\end{equation}
In the next step we note that
the fact that
$ f(0) = 0 $ ensures that
for every 
$ n \in \N $,
$ \omega \in ( \Omega \backslash \Sigma ) $,
$ t \in [0,T] $ 
it holds that
\begin{equation}
\begin{split}
\label{eq:FirstRegularity}
&
\| P_{ \H \backslash I_n}  
(
\O_t(\omega)  
+
e^{tA} \xi(\omega) 
)
\|_{H_\gamma}
=
\Big\|
 P_{ \H \backslash I_n}  
\int_0^t e^{(t-s)A} f(0) \, ds 
\Big\|_{H_\gamma} 
=
0.
%
\end{split}
\end{equation}
Furthermore, observe that for every 
$ n \in \N $, 
$ \omega \in \Omega $,
$ t \in [0,T] $ 
it holds that
\begin{equation}
\| P_{ \H \backslash I_n } e^{tA} \xi(\omega) \|_{H_\gamma}
\leq
(
\sup\nolimits_{ h \in \H \backslash I_n }
| \values_h |^{-\nicefrac{\eta}{2}}
) 
\| \xi (\omega) \|_{ H_{\gamma +  ( \nicefrac{\eta}{2} )  } } 
=
[ c_0 \pi^2 (n+1)^2 ]^{ - \nicefrac{\eta}{2} }
\| \xi(\omega) \|_{ H_{\gamma + ( \nicefrac{\eta}{2} ) } } 
.
\end{equation}
Combining
this, 
\eqref{eq:Sigma2},
\eqref{eq:FirstRegularity},
and the triangle inequality
demonstrates that for every
$ \omega \in \Omega $ 
it holds that
	\begin{equation}
	\begin{split}
	\label{eq:Galerking_Speed}
	\sup\nolimits_{ n \in \N }
	(
	n^\eta
	\sup\nolimits_{ t \in [0,T] }
	\| P_{ \H \backslash I_n }
	( \O_t(\omega)
	+ e^{tA} \xi(\omega) ) \|_{H_\gamma} 
	)
	< \infty 
	.
	\end{split}
	\end{equation}
Moreover, note that~\eqref{eq:processEx},
\eqref{eq:Define versions},
and~\eqref{eq:Define versions 2}
ensure that for every 
$ I \in \mathcal{P}_0( \H ) $,
$ \omega \in \Omega $,
$ t \in [0,T] $
it holds that
\begin{equation}
\begin{split}
\label{eq:Important for extension}
\mathcal{X}_t^{ I } ( \omega ) = e^{tA} P_{I} \xi (\omega)  
+
\int_0^t e^{(t-s)A} P_{I}  f( \mathcal{X}_s^I(\omega) ) \, ds 
+
P_{I} \O_t(\omega)
.
\end{split}
\end{equation}
In addition,
observe that
the fact that $ \O \colon [0,T] \times \Omega \to H_\gamma $ has continuous sample paths 
establishes 
for every $ \omega \in \Omega $ that
\begin{equation}
\begin{split}  
\sup\nolimits_{ t \in [0,T] } 
\|  \O_t(\omega) \|_{H_\gamma} < \infty 
.
\end{split}
\end{equation}
The fact that $ \gamma < 1 $,
\eqref{eq:Important for extension},
and
Lemma~\ref{lemma:AprioriBound3}
(with
%
$ F = f $,
$ T = T $,     
$ \iota = \gamma $,
$ \gamma = \gamma $,
$ \xi = \xi(\omega) $,
$ O_t^{ I } = P_{I }\O_t(\omega) $,
$ X_t^{ I } = \mathcal{X}_t^{I }(\omega) $
for
$ n \in \N $,
$ I \in \mathcal{P}_0( \H ) $,
$ t \in [0,T] $,
$ \omega \in \Omega $
in the notation of 
Lemma~\ref{lemma:AprioriBound3})
therefore prove that
for every
$ \omega \in \Omega $ it holds that
\begin{equation}
\begin{split} 
\label{eq:unifBound}
\sup\nolimits_{ n \in \N }
\sup\nolimits_{ t \in [0,T] } 
\| \mathcal{X}_t^{ I_n } (\omega) \|_{H_\gamma} < \infty 
.
\end{split}
\end{equation}
Furthermore, note that
item~\eqref{item:Continuous extension} 
and~\eqref{eq:Important for extension}
show that 
for every 
$ I \in \mathcal{P}_0( \H ) $,
$ \omega \in \Omega $,
$ t \in [0,T] $
it holds that
\begin{equation}
\begin{split} 
\mathcal{X}_t^{ I } ( \omega ) = e^{tA} P_{I} \xi (\omega)  
+
\int_0^t e^{(t-s)A} P_{I}  F( \mathcal{X}_s^I(\omega) ) \, ds 
+
P_{I} \O_t(\omega)
.
\end{split}
\end{equation}
Combining
the fact that
$ 0 < \gamma + \nu + ( \nicefrac{\eta}{2} ) < 1 $,
\eqref{eq:Ass1a}, 
\eqref{eq:Ass1b}, 
\eqref{eq:Local lip},
\eqref{eq:Galerking_Speed}, 
and
\eqref{eq:unifBound} 
with Bl\"omker \& Jentzen~\cite[Theorem~3.1]{BloemkerJentzen2013}  
(with
$ T = T $,
$ ( \Omega, \F, \P )
= ( \Omega, \F, \P ) $,
$ V = H_\gamma $,
$ W = H_{ - \nu } $, 
$ P_n = P_{I_n} $,
$ \alpha = \gamma + \nu + ( \nicefrac{\eta}{2} ) $,
$ \gamma = \eta $,
$ S = ( (0,T] \ni s \mapsto e^{sA} \in L(H_{-\nu}, H_\gamma) ) $,
$ F = ( H_\gamma \ni v \mapsto F(v) \in H_{-\nu} ) $,
$ O_t = \O_t + e^{tA} \xi $,
$ X^n_t = \mathcal{X}^{ I_n }_t $
for 
$ t \in [0,T] $,
$ n \in \N $
in the notation of Bl\"omker \& Jentzen~\cite[Theorem~3.1]{BloemkerJentzen2013})
therefore shows that
\begin{enumerate}[(a)]
	\item \label{item:ProvesExistence} there exists a  unique   
stochastic process
$ X \colon [0,T] \times \Omega \to H_\gamma $
with continuous sample paths
which satisfies for every{\tiny }
$ t \in [0,T] $ that
\begin{equation}
\begin{split} 
\label{eq:ProvesExistence}
X_t = e^{tA} \xi + \int_0^t e^{(t-s)A} F(X_s) \, ds + \O_t
\end{split} 
\end{equation}
and
\item \label{item:B} there exists
a $ \F / \B([0,\infty) ) $-measurable
function 
$ K \colon \Omega \to [0, \infty) $
such that for every 
$ \omega \in \Omega $,
$ n \in \N $
it holds that
\begin{equation}
\sup\nolimits_{ t \in [0,T] }
\| X_t(\omega) - \X_t^{ I_n }(\omega) \|_{ H_\gamma } 
\leq
\tfrac{ 
K(\omega)
}{   n^{\eta} }
.
\end{equation}
\end{enumerate}
Observe that
the fact that
for every 
$ n \in \N $ 
it holds that
$ (\X_t^{ I_n } )_{ t \in [0,T]} $
is $ ( \f_t )_{ t \in [0,T] } $-adapted  
and
item~\eqref{item:B} 
imply that
$ ( X_t )_{ t \in [0,T] } $
is $ ( \f_t )_{ t \in [0,T] } $-adapted.
Combining~\eqref{eq:more O},
\eqref{eq:modif2}, and
item~\eqref{item:ProvesExistence}
hence 
establishes item~\eqref{item:main statement}.
This 
completes the proof of Theorem~\ref{theorem:existence_Burgers}.
\end{proof}
\subsubsection*{Acknowledgements}
This project has been partially supported through the SNSF-Research project $ 200021\_156603 $ ''Numerical 
approximations of nonlinear stochastic ordinary and partial differential equations''.
\newpage
\bibliographystyle{acm}
\bibliography{../Bib/bibfile}

\end{document}